\newtheorem{theorem}{\sc Theorem}[section]
\newtheorem{proposition}[theorem]{\sc Proposition}
\newtheorem{notation}[theorem]{\sc Notation}
\newtheorem{lemma}[theorem]{\sc Lemma}
\newtheorem{corollary}[theorem]{\sc Corollary}
\theoremstyle{definition}
\newtheorem{definition}[theorem]{\sc Definition}
\newtheorem{example}[theorem]{\sc Example}
\theoremstyle{remark}
\newtheorem{remark}[theorem]{\sc Remark}
\newtheorem{claim}[theorem]{}
\newcommand\id{\mathrm{Id}}
\newcommand\M{\mathcal{M}}
\newcommand\N{\mathcal{N}}
\newcommand\Alg{\mathrm{Alg}}
\newcommand\Bialg{\mathrm{Bialg}}
\newcommand\BrBialg{\mathrm{BrBialg}}
\newcommand\BrAlg{\mathrm{BrAlg}}
\newcommand\Br{\mathrm{Br}}
\newcommand\Lie{\mathrm{Lie}}
\newcommand\BrLie{\mathrm{BrLie}}
\newcommand\vet{\mathfrak{M}}
\newcommand\I{\mathbb{I}}
\newcommand\U{\mathcal{U}}
\newcommand{\duplica}[1]{#1\otimes #1}
\begin{document}
\title{Milnor-Moore Categories and Monadic Decomposition}
\author{Alessandro Ardizzoni}
\address{%
\parbox[b]{\linewidth}{University of Turin, Department of Mathematics ``G. Peano'', via
Carlo Alberto 10, I-10123 Torino, Italy}}
\email{alessandro.ardizzoni@unito.it}
\urladdr{www.unito.it/persone/alessandro.ardizzoni}
\author{Claudia Menini}
\address{University of Ferrara, Department of Mathematics, Via Machiavelli
35, Ferrara, I-44121, Italy}
\email{men@unife.it}
\urladdr{web.unife.it/utenti/claudia.menini}
\subjclass[2010]{Primary 18C15; Secondary 17B75}
\thanks{This paper was written while both authors were members of GNSAGA.
The first author was partially supported by the research grant ``Progetti di
Eccellenza 2011/2012'' from the ``Fondazione Cassa di Risparmio di Padova e
Rovigo''.}

\begin{abstract}
In this paper Hom-Lie algebras, Lie color algebras, Lie superalgebras and
other type of generalized Lie algebras are recovered by means of an iterated
construction, known as monadic decomposition of functors, which is based on
Eilenberg-Moore categories. To this aim we introduce the notion of
Milnor-Moore category as a monoidal category for which a Milnor-Moore type
Theorem holds. We also show how to lift the property of being a Milnor-Moore
category whenever a suitable monoidal functor is given and we apply this
technique to provide examples.
\end{abstract}

\keywords{Monads, Milnor-Moore Category, Generalized Lie Algebras}
\maketitle
\tableofcontents

\section*{Introduction}

Let $\left( L:\mathcal{B}\rightarrow \mathcal{A},R:\mathcal{A}\rightarrow
\mathcal{B}\right) $ be an adjunction with unit $\eta $ and counit $\epsilon
$. Then $\left( RL,R\epsilon L,\eta \right) $ is a monad on $\mathcal{B}$
and one can consider the Eilenberg-Moore category ${_{RL}\mathcal{B}}$
associated to this monad and the so-called \emph{comparison functor} $K:%
\mathcal{A}\rightarrow {_{RL}\mathcal{B}}$ which is defined by $KX:=\left(
RX,R\epsilon X\right) $ and $Kf:=Rf.$ This gives the diagram
\begin{equation*}
\xymatrixrowsep{15pt}\xymatrixcolsep{2cm}\xymatrix{\mathcal{A}%
\ar@<.5ex>[d]^{R}&\mathcal{A}\ar@<.5ex>[d]^{K}\ar[l]_{\mathrm{Id}_{%
\mathcal{A}}}\\
\mathcal{B}\ar@<.5ex>@{.>}[u]^{L}&{_{RL}\mathcal{B}}\ar[l]_{_{RL}U}}
\end{equation*}%
where the undashed part commutes. In the case when $K$ itself has a left
adjoint $\Lambda $ one can repeat this construction starting from the new
adjunction $(\Lambda ,K)$. Going on this way one possibly obtains a diagram
of the form
\begin{equation*}
\xymatrixrowsep{15pt}\xymatrixcolsep{2cm}\xymatrix{
\mathcal{A}\ar@<.5ex>[d]^{R_0}&\mathcal{A}\ar@<.5ex>[d]^{R_1}\ar[l]_{%
\mathrm{Id}_{\mathcal{A}}}&\mathcal{A}\ar@<.5ex>[d]^{R_2}\ar[l]_{%
\mathrm{Id}_{\mathcal{A}}}&\dots \ar[l]_{\mathrm{Id}_{\mathcal{A}}}\\
\mathcal{B}_0\ar@<.5ex>@{.>}[u]^{L_0}&\mathcal{B}_1\ar@<.5ex>@{.>}[u]^{L_1}
\ar[l]_{U_{0,1}}&\mathcal{B}_2 \ar@<.5ex>@{.>}[u]^{L_2}
\ar[l]_{U_{1,2}}&\dots\ar[l]_{U_{2,3}}}
\end{equation*}%
where it is more convenient to relabel $(L,R)$ and $(\Lambda ,K)$ as $%
(L_{0},R_{0})$ and $(L_{1},R_{1})$ respectively. If there is a minimal $N\in
\mathbb{N}$ such that $L_{N}$ is full and faithful, then $R$ is said to have%
\emph{\ monadic decomposition of monadic length }$N$. This is equivalent to
require that the forgetful functor $U_{N,N+1}$ is a category isomorphism and
no $U_{n,n+1}$ has this property for $0\leq n\leq N-1$ (see e.g. \cite[%
Remark 2.4]{AGM-MonadicLie1}). In \cite[Theorem 3.4]{AGM-MonadicLie1}, we
investigated the particular case
\begin{equation*}
\xymatrixrowsep{15pt}\xymatrixcolsep{40pt}\xymatrix{\Bialg_\vet%
\ar@<.5ex>[d]^{P}&\Bialg_\vet
\ar@<.5ex>[d]^{P_1}\ar[l]_{\id_{\Bialg_\vet}}&\Bialg_\vet
\ar@<.5ex>[d]^{P_2}\ar[l]_{\id_{\Bialg_\vet}}\\
\vet\ar@<.5ex>@{.>}[u]^{\overline{T}}&{\vet_1}\ar[l]_{U_{0,1}}%
\ar@<.5ex>@{.>}[u]^{\overline{T}_1}&{\vet_2}\ar[l]_{U_{1,2}}%
\ar@<.5ex>@{.>}[u]^{\overline{T}_2}}
\end{equation*}%
where $\mathfrak{M}$ denotes the category of vector spaces over a fixed base
field $\Bbbk $, $\mathrm{Bialg}_{\mathfrak{M}}$ is the category of $\Bbbk $%
-bialgebras, $\overline{T}$ is the tensor bialgebra functor (the barred
notation serves to distinguish this functor from the tensor algebra functor $%
T:\mathfrak{M}\rightarrow \mathrm{Alg}_{\mathfrak{M}}$ which goes into $%
\Bbbk $-algebras) and $P$ is the primitive functor which assigns to each $%
\Bbbk $-bialgebra its space of primitive elements. We proved that this $P$
has a monadic decomposition of monadic length at most $2$. Moreover, when $%
\mathrm{char}\left( \Bbbk \right) =0,$ for every $V_{2}=\left( \left( V,\mu
\right) ,\mu _{1}\right) \in \mathfrak{M}_{2}$ one can define $\left[ x,y%
\right] :=\mu \left( xy-yx\right) $ for every $x,y\in V.$ Then $\left( V,%
\left[ -,-\right] \right) $ is an ordinary Lie algebra and $\overline{T}%
_{2}V_{2}=TV/\left( xy-yx-\left[ x,y\right] \mid x,y\in V\right) $ is the
corresponding universal enveloping algebra. This suggests a connection
between the category $\mathfrak{M}_{2}$ and the category $\mathrm{Lie}_{%
\mathfrak{M}}$ of Lie $\Bbbk $-algebras. It is then natural to expect the
existence of a category equivalence $\Lambda $ such that the following
diagram
\begin{equation*}
\xymatrixcolsep{1cm}\xymatrixrowsep{0.40cm}\xymatrix{\Bialg_\vet%
\ar@<.5ex>[dd]^{P}&&\Bialg_\vet\ar@<.5ex>[dd]^{P_1}|(.30)\hole\ar[ll]_{%
\mathrm{Id}_{\Bialg_\vet}}&&\Bialg_\vet
\ar@<.5ex>[dd]^{P_2}\ar[ll]_{\mathrm{Id}_{\Bialg_\vet}}\ar[dl]|{%
\mathrm{Id}_{\Bialg_\vet}}\\
&&&\Bialg_\vet\ar@<.5ex>[dd]^(.30){\mathcal{P}}\ar[ulll]^(.70){\mathrm{Id}_{%
\Bialg_\vet}}\\
\vet\ar@<.5ex>@{.>}[uu]^{\overline{T}}&&\vet_1\ar@<.5ex>@{.>}[uu]^{
\overline{T}_1}|(.70)\hole \ar[ll]_{U_{0,1}}&&\vet_2
\ar@<.5ex>@{.>}[uu]^{\overline{T}_2}
\ar[ll]_(.30){U_{1,2}}|\hole\ar[dl]^{{\Lambda}} \\
&&&\Lie_\vet\ar@<.5ex>@{.>}[uu]^(.70){{\mathcal{\overline{U}}}}\ar[ulll]^{H_%
\Lie}}
\end{equation*}%
commutes in its undashed part, where $H_{\mathrm{Lie}}$ denotes the
forgetful functor, $\overline{\mathcal{U}}$ the universal enveloping
bialgebra functor and $\mathcal{P}$ the corresponding primitive functor.

A first investigation showed that, in order to solve the problem above, it
is more natural to work with braided $\Bbbk $-vector spaces instead of
ordinary $\Bbbk $-vector spaces and to replace the categories $\mathfrak{M}$%
, $\mathrm{Bialg}_{\mathfrak{M}}$ and $\mathrm{Lie}_{\mathfrak{M}}$ with
their braided analogues $\mathrm{Br}_{\mathfrak{M}}$, $\mathrm{BrBialg}_{%
\mathfrak{M}}$ and $\mathrm{BrLie}_{\mathfrak{M}}$ consisting of braided
vector spaces, braided bialgebras and braided Lie algebras respectively. We
were further led to substitute $\mathfrak{M}$ with an arbitrary monoidal
category $\mathcal{M}$. We point out that, in order to produce a braided
analogue of the universal enveloping algebra which further carries a braided
bialgebra structure, the assumption that the underlying braiding is
symmetric is also needed. Thus let $\mathrm{Br}_{\mathcal{M}}^{s}$, $\mathrm{%
BrBialg}_{\mathcal{M}}^{s}$ and $\mathrm{BrLie}_{\mathcal{M}}^{s}$ be the
analogue of $\mathrm{Br}_{\mathcal{M}}$, $\mathrm{BrBialg}_{\mathcal{M}}$
and $\mathrm{BrLie}_{\mathcal{M}}$ consisting of objects with symmetric
braiding. Let $\overline{T}_{\mathrm{Br}}^{s}:\mathrm{Br}_{\mathcal{M}%
}^{s}\rightarrow \mathrm{BrBialg}_{\mathcal{M}}^{s}$ be the symmetric
braided tensor bialgebra functor and let $P_{\mathrm{Br}}^{s}$ be its right
adjoint, the primitive functor. We seek for a condition for $P_{\mathrm{Br}%
}^{s}$ to have monadic decomposition of monadic length at most $2$. On the
other hand, in this setting, the functor $P_{\mathrm{Br}}^{s}$ induces a
functor $\mathcal{P}_{\mathrm{Br}}^{s}:\mathrm{BrBialg}_{\mathcal{M}%
}^{s}\rightarrow \mathrm{BrLie}_{\mathcal{M}}^{s}$ which comes out to have a
left adjoint $\overline{\mathcal{U}}_{\mathrm{Br}}^{s}$, the universal
enveloping bialgebra functor.

In view of the celebrated Milnor-Moore Theorem, see Remark \ref{rem:MM}, we
say that a category $\mathcal{M}$ is a Milnor-Moore category (MM-category
for short) whenever the unit of the adjunction $(\overline{\mathcal{U}}_{%
\mathrm{Br}}^{s},\mathcal{P}_{\mathrm{Br}}^{s})$ is a functorial isomorphism
(plus other conditions required for the existence of the functors involved).
One of the main results in the paper is Theorem \ref{teo:LambdaBr}, which
ensures that, for an MM-category $\mathcal{M}$, the functor $P_{\mathrm{Br}%
}^{s}$ has a monadic decomposition of monadic length at most two. Moreover,
in this case, we can identify the category $\left( \mathrm{Br}_{\mathcal{M}%
}^{s}\right) _{2}$ with $\mathrm{BrLie}_{\mathcal{M}}^{s}$ through an
equivalence $\Lambda _{\mathrm{Br}}:\left( \mathrm{Br}_{\mathcal{M}%
}^{s}\right) _{2}\rightarrow \mathrm{BrLie}_{\mathcal{M}}^{s}$. Hence
MM-categories, besides having an interest in their own, give us an
environment where the functor $P_{\mathrm{Br}}^{s}$ has a behaviour
completely analogous to the classical vector space situation we investigated
in \cite[Theorem 3.4]{AGM-MonadicLie1}. In the case of a symmetric
MM-category $\mathcal{M}$ the connection with Milnor-Moore Theorem becomes
more evident. In fact, in this case, we can apply Theorem \ref{teo:LambdaSym}
to obtain that the unit of the adjunction $\left( \overline{\mathcal{U}},%
\mathcal{P}\right) $ is a functorial isomorphism.

The next step is to provide meaningful examples of MM-categories. A first
result in this direction is Theorem \ref{teo:mainVS}, based on a result by
Kharchenko, which states that the category $\mathfrak{M}$ of vector spaces
over a field of characteristic $0$ is an MM-Category. Note that the Lie
algebras involved are not ordinary ones but they depend on a symmetric
braiding.

Much of the material developed in the paper (see e.g. Proposition \ref%
{pro:comdat1}, Theorem \ref{teo:Udata} and the construction of the
adjunctions used therein) is devoted to the proof of our central result
namely Theorem \ref{teo:mainNew} which allows us to lift the property of
being an MM-category whenever a suitable monoidal functor is given. A main
tool in this proof is the concept of commutation datum which we introduce
and investigate in Section \ref{sec:ComData}. We use this Theorem \ref%
{teo:mainNew} in the case of the forgetful functor $F:\mathcal{M}\rightarrow
\mathfrak{M}$ where $\mathcal{M}$ is a subcategory of $\mathfrak{M}$. The
goal is to provide, in this way, meaningful examples of MM-categories $%
\mathcal{M}$ and, in the case when $\mathcal{M}$ is symmetric, to recognize
the corresponding type of Lie algebras. A first example of MM-category
obtained in this way is the category of Yetter-Drinfeld modules which is
considered in Example \ref{ex:YD}. Subsection \ref{subs:Quasi-Bialgebras}
(resp. \ref{subs:DualQuasiBialgebras}) deals with the case when $\mathcal{M}$
is the category of modules (resp. comodules) over a quasi-bialgebra (resp.
over a dual quasi-bialgebra). We prove that the forgetful functor satisfies
the assumptions of Theorem \ref{teo:mainNew} if and only if the
quasi-bialgebra (resp. the dual quasi-bialgebra) is a deformation of a usual
bialgebra, see Lemma \ref{lem:gamma} (resp. Lemma \ref{lem:gammaDual}). As
particular cases of this situation we prove that the category $\widetilde{%
\mathcal{H}}\left( \mathfrak{M}\right) $ of \cite[Proposition 1.1]{CG} is an
MM-category, see Remark \ref{rem:HH}. Note that an object in $\mathrm{Lie}_{%
\mathcal{M}},$ for $\mathcal{M}=\widetilde{\mathcal{H}}\left( \mathfrak{M}%
\right) $, is nothing but a Hom-Lie algebra. In Remark \ref{rem:cotriang},
we recover $\left( H,R\right) $-Lie algebras in the sense of \cite[%
Definition 4.1]{BFM} by considering the category of comodules over a
co-triangular bialgebra $\left( H,R\right) $ regarded as a co-triangular
dual quasi-bialgebra with trivial reassociator. In particular, let $G$ be an
abelian group endowed with an anti-symmetric bicharacter $\chi :G\times
G\rightarrow \Bbbk \setminus \{0\}$ and extend $\chi $ by linearity to a $%
\Bbbk $-linear map $R:\Bbbk \left[ G\right] \otimes \Bbbk \left[ G\right]
\rightarrow \Bbbk $, where $\Bbbk \left[ G\right] $ denotes the group
algebra. Then $\left( \Bbbk \left[ G\right] ,R\right) $ is a co-triangular
bialgebra and, as a consequence, we recover $\left( G,\chi \right) $-Lie
color algebras in the sense of \cite[Example 10.5.14]{Mo}, in Example \ref%
{ex:colorLie}, and in particular Lie superalgebras in Example \ref%
{ex:superLie}.

The appendices contain general results regarding the existence of
(co)equalizers in the category of (co)algebras, bialgebras and their braided
analogue over a monoidal category. These results are applied to obtain
Proposition \ref{pro:BeckBr}, which is used in the proof of Theorem \ref%
{teo:LambdaBr}.

\section{Preliminaries \label{sec:preliminares}}

In this section, we shall fix some basic notation and terminology.

\begin{notation}
Throughout this paper $\Bbbk $ will denote a field. All vector spaces will
be defined over $\Bbbk $. The unadorned tensor product $\otimes $ will
denote the tensor product over $\Bbbk $ if not stated otherwise.
\end{notation}

\begin{claim}
\textbf{Monoidal Categories.} Recall that (see \cite[Chap. XI]{Kassel}) a
\emph{monoidal category}\textbf{\ }is a category $\mathcal{M}$ endowed with
an object $\mathbf{1}\in \mathcal{M}$ (called \emph{unit}), a functor $%
\otimes :\mathcal{M}\times \mathcal{M}\rightarrow \mathcal{M}$ (called \emph{%
tensor product}), and functorial isomorphisms $a_{X,Y,Z}:(X\otimes Y)\otimes
Z\rightarrow $ $X\otimes (Y\otimes Z)$, $l_{X}:\mathbf{1}\otimes
X\rightarrow X,$ $r_{X}:X\otimes \mathbf{1}\rightarrow X,$ for every $X,Y,Z$
in $\mathcal{M}$. The functorial morphism $a$ is called the \emph{%
associativity constraint }and\emph{\ }satisfies the \emph{Pentagon Axiom, }%
that is the equality
\begin{equation*}
(U\otimes a_{V,W,X})\circ a_{U,V\otimes W,X}\circ (a_{U,V,W}\otimes
X)=a_{U,V,W\otimes X}\circ a_{U\otimes V,W,X}
\end{equation*}%
holds true, for every $U,V,W,X$ in $\mathcal{M}.$ The morphisms $l$ and $r$
are called the \emph{unit constraints} and they obey the \emph{Triangle
Axiom, }that is $(V\otimes l_{W})\circ a_{V,\mathbf{1},W}=r_{V}\otimes W$,
for every $V,W$ in $\mathcal{M}$.

A \emph{monoidal functor}\label{MonFun} (also called strong monoidal in the
literature)
\begin{equation*}
(F,\phi _{0},\phi _{2}):(\mathcal{M},\otimes ,\mathbf{1},a,l,r\mathbf{%
)\rightarrow (}\mathcal{M}^{\prime }\mathfrak{,}\otimes ^{\prime },\mathbf{1}%
^{\prime },a^{\prime },l^{\prime },r^{\prime }\mathbf{)}
\end{equation*}%
between two monoidal categories consists of a functor $F:\mathcal{M}%
\rightarrow \mathcal{M}^{\prime },$ an isomorphism $\phi
_{2}(U,V):F(U)\otimes ^{\prime }F(V)\rightarrow F(U\otimes V),$ natural in $%
U,V\in \mathcal{M}$, and an isomorphism $\phi _{0}:\mathbf{1}^{\prime
}\rightarrow F(\mathbf{1})$ such that the diagram
\begin{equation*}
\xymatrixcolsep{63pt}\xymatrixrowsep{30pt}\xymatrix{ (F(U)\otimes'
F(V))\otimes' F(W) \ar[d]|{a'_{F(U),F(V),F(W)}} \ar[r]^{\phi_2(U,V)\otimes'
F(W)} & F(U\otimes V)\otimes' F(W) \ar[r]^{\phi_2(U\otimes V,W)} &
F((U\otimes V)\otimes W) \ar[d]|{F(a_{ U,V, W})} \\ F(U)\otimes'
(F(V)\otimes' F(W)) \ar[r]^{F(U)\otimes' \phi_2(V,W)} & F(U)\otimes'
F(V\otimes W) \ar[r]^{\phi_2(U,V\otimes W)} & F(U\otimes (V\otimes W)) }
\end{equation*}%
is commutative, and the following conditions are satisfied:
\begin{equation*}
{F(l_{U})}\circ {\phi _{2}(\mathbf{1},U)}\circ ({\phi _{0}\otimes }^{\prime }%
{F(U)})={l}^{\prime }{_{F(U)}},\text{\quad }{F(r_{U})}\circ {\phi _{2}(U,%
\mathbf{1})}\circ ({F(U)\otimes }^{\prime }{\phi _{0}})={r}^{\prime }{%
_{F(U)}.}
\end{equation*}%
The monoidal functor is called \emph{strict }if the isomorphisms $\phi
_{0},\phi _{2}$ are identities of $\mathcal{M}^{\prime }$.
\end{claim}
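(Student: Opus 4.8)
The plan is first to determine what kind of statement this actually is. Reading the block carefully, it opens with ``Recall that \dots'' and proceeds to lay out the data and axioms of a \emph{monoidal category} (the unit $\mathbf{1}$, the tensor functor $\otimes$, the constraints $a,l,r$, together with the Pentagon and Triangle Axioms) and then of a \emph{monoidal functor} $(F,\phi_0,\phi_2)$ (the structure isomorphisms $\phi_0,\phi_2$, the large compatibility square relating $\phi_2$ to the associativity constraints, and the two unit compatibilities), closing with the notion of a \emph{strict} monoidal functor. Every sentence either introduces terminology or imposes a defining condition; at no point is an implication, an equality, or an existence assertion claimed to \emph{follow} from earlier data. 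I therefore conclude that the statement is entirely definitional.

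Since the statement asserts no proposition, there is nothing to prove and hence no proof to propose: the block serves only to fix the conventions of \cite[Chap.~XI]{Kassel} for use throughout the paper. The closest thing to a ``step'' one might carry out would be one of the routine verifications that customarily accompany such definitions --- for instance, that identities and composites of monoidal functors are again monoidal, or that the coherence isomorphisms behave as expected --- but none of these is stated above, so I would not undertake them here. Accordingly, the main (and only) point to record is that this statement carries no logical content demanding demonstration, and any apparent ``obstacle'' is an artifact of treating a definition as though it were a theorem.
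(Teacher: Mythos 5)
You are correct: this block is purely a recollection of standard definitions (monoidal category, monoidal functor, strictness) fixed from \cite[Chap.~XI]{Kassel}, and the paper accordingly supplies no proof for it. Your observation that there is nothing to prove matches the paper exactly.
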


The notions of algebra, module over an algebra, coalgebra and comodule over
a coalgebra can be introduced in the general setting of monoidal categories.

From now on we will omit the associativity and unity constraints unless
needed to clarify the context.\medskip

Let $V$ be an object in a monoidal category $\left( \mathcal{M},\otimes ,%
\mathbf{1}\right) $. Define iteratively $V^{\otimes n}$ for all $n\in
\mathbb{N}$ by setting $V^{\otimes 0}:=\mathbf{1}$ for $n=0$ and $V^{\otimes
n}:=V^{\otimes \left( n-1\right) }\otimes V$ for $n>0.$

\begin{remark}
\label{rem: AlgMon} Let $\mathcal{M}$ be a monoidal category. Denote by $%
\mathrm{Alg}_{\mathcal{M}}$ the category of algebras in $\mathcal{M}$ and
their morphisms. Assume that $\mathcal{M}$ has denumerable coproducts and
that the tensor products (i.e. $M\otimes \left( -\right) :\mathcal{M}%
\rightarrow \mathcal{M}$ and $\left( -\right) \otimes M:\mathcal{M}%
\rightarrow \mathcal{M}$, for every object $M$ in $\mathcal{M}$) preserve
such coproducts. By \cite[Theorem 2, page 172]{MacLane}, the forgetful
functor
\begin{equation*}
\Omega :\mathrm{Alg}_{\mathcal{M}}\rightarrow \mathcal{M}
\end{equation*}%
has a left adjoint $T:\mathcal{M}\rightarrow \mathrm{Alg}_{\mathcal{M}}$. By
construction $\Omega TV=\oplus _{n\in \mathbb{N}}V^{\otimes n}$ for every $%
V\in \mathcal{M}$. For every $n\in \mathbb{N},$ we will denote by
\begin{equation*}
\alpha _{n}V:V^{\otimes n}\rightarrow \Omega TV
\end{equation*}%
the canonical injection. Given a morphism $f:V\rightarrow W$ in $\mathcal{M}$%
, we have that $Tf$ is uniquely determined by the following equality%
\begin{equation}
\Omega Tf\circ \alpha _{n}V=\alpha _{n}W\circ f^{\otimes n},\text{ for every
}n\in \mathbb{N}\text{.}  \label{form:Tf}
\end{equation}%
The multiplication $m_{\Omega TV}$ and the unit $u_{\Omega TV}$ are uniquely
determined by
\begin{eqnarray}
m_{\Omega TV}\circ \left( \alpha _{m}V\otimes \alpha _{n}V\right) &=&\alpha
_{m+n}V,\text{ for every }m,n\in \mathbb{N}\text{,}  \label{form:TVm} \\
u_{\Omega TV} &=&\alpha _{0}V.  \label{form:TVu}
\end{eqnarray}%
Note that (\ref{form:TVm}) should be integrated with the proper unit
constrains when $m$ or $n$ is zero.

The unit $\eta $ and the counit $\epsilon $ of the adjunction $\left(
T,\Omega \right) $ are uniquely determined, for all $V\in \mathcal{M}$ and $%
\left( A,m_{A},u_{A}\right) \in \mathrm{Alg}_{\mathcal{M}}$ by the following
equalities%
\begin{equation}
\eta V:=\alpha _{1}V\qquad \text{and}\qquad \Omega \epsilon \left(
A,m_{A},u_{A}\right) \circ \alpha _{n}A:=m_{A}^{n-1}\text{ for every }n\in
\mathbb{N}  \label{form:etaeps}
\end{equation}%
where $m_{A}^{n-1}:A^{\otimes n}\rightarrow A$ is the iterated
multiplication of $A$ defined by $m_{A}^{-1}:=u_{A},m_{A}^{0}:=\mathrm{Id}%
_{A}$ and, for $n\geq 2,$ $m_{A}^{n-1}=m_{A}(m_{A}^{n-2}\otimes A).$
\end{remark}

\begin{definition}
Recall that a \emph{monad} on a category $\mathcal{A}$ is a triple $\mathbb{Q%
}:=\left( Q,m,u\right) ,$ where $Q:\mathcal{A}\rightarrow \mathcal{A}$ is a
functor, $m:QQ\rightarrow Q$ and $u:\mathcal{A}\rightarrow Q$ are functorial
morphisms satisfying the associativity and the unitality conditions $m\circ
mQ=m\circ Qm$ and $m\circ Qu=\mathrm{Id}_{Q}=m\circ uQ.$ An\emph{\ algebra}
over a monad $\mathbb{Q}$ on $\mathcal{A}$ (or simply a $\mathbb{Q}$\emph{%
-algebra}) is a pair $\left( X,{\mu }\right) $ where $X\in \mathcal{A}$ and $%
{\mu }:QX\rightarrow X$ is a morphism in $\mathcal{A}$ such that ${\mu }%
\circ Q{\mu }={\mu }\circ mX$ and ${\mu }\circ uX=\mathrm{Id}_{X}.$ A \emph{%
morphism between two} $\mathbb{Q}$-\emph{algebras} $\left( X,{\mu }\right) $
and $\left( X^{\prime },{\mu }^{\prime }\right) $ is a morphism $%
f:X\rightarrow X^{\prime }$ in $\mathcal{A}$ such that ${\mu }^{\prime
}\circ Qf=f\circ {\mu }$. We will denote by ${_{\mathbb{Q}}\mathcal{A}}$ the
category of $\mathbb{Q}$-algebras and their morphisms. This is the so-called
\emph{Eilenberg-Moore category} of the monad $\mathbb{Q}$ (which is
sometimes also denoted by ${\mathcal{A}}^{\mathbb{Q}}$ in the literature).
When the multiplication and unit of the monad are clear from the context, we
will just write $Q$ instead of $\mathbb{Q}$.
\end{definition}

A monad $\mathbb{Q}$ on $\mathcal{A}$ gives rise to an adjunction $\left(
F,U\right) :=\left( {_{\mathbb{Q}}}F,{_{\mathbb{Q}}}U\right) $ where $U:{_{%
\mathbb{Q}}\mathcal{A\rightarrow A}}$ is the forgetful functor and $F:%
\mathcal{A}\rightarrow {_{\mathbb{Q}}\mathcal{A}}$ is the free functor.
Explicitly:
\begin{equation*}
U\left( X,{\mu }\right) :=X,{\mathcal{\quad }}Uf:=f{\mathcal{\qquad }}\text{%
and}{\mathcal{\qquad }F}X:=\left( QX,mX\right) ,{\mathcal{\quad }}Ff:=Qf.
\end{equation*}%
Note that $UF=Q.$ The unit of the adjunction $\left( F,U\right) $ is given
by the unit $u:\mathcal{A}\rightarrow UF=Q$ of the monad $\mathbb{Q}$. The
counit $\lambda :FU\rightarrow {_{\mathbb{Q}}\mathcal{A}}$ of this
adjunction is uniquely determined by the equality $U\left( \lambda \left( X,{%
\mu }\right) \right) ={\mu }$ for every $\left( X,{\mu }\right) \in {_{%
\mathbb{Q}}\mathcal{A}}.$ It is well-known that the forgetful functor $U:{_{%
\mathbb{Q}}\mathcal{A}}\rightarrow \mathcal{A}$ is faithful and reflects
isomorphisms (see e.g. \cite[Proposition 4.1.4]{Borceux2}).

Let $\left( L:\mathcal{B}\rightarrow \mathcal{A},R:\mathcal{A}\rightarrow
\mathcal{B}\right) $ be an adjunction with unit $\eta $ and counit $\epsilon
$. Then $\left( RL,R\epsilon L,\eta \right) $ is a monad on $\mathcal{B}$
and we can consider the so-called \emph{comparison functor} $K:\mathcal{A}%
\rightarrow {_{RL}\mathcal{B}}$ of the adjunction $\left( L,R\right) $ which
is defined by $KX:=\left( RX,R\epsilon X\right) $ and $Kf:=Rf.$ Note that $%
_{RL}U\circ K=R$.

\begin{definition}
An adjunction $(L:\mathcal{B}\rightarrow \mathcal{A},R:\mathcal{A}%
\rightarrow \mathcal{B})$ is called \emph{monadic} (tripleable in Beck's
terminology \cite[Definition 3, page 8]{Beck}) whenever the comparison
functor $K:\mathcal{A}\rightarrow {_{RL}}\mathcal{B}$ is an equivalence of
categories. A functor $R$ is called \emph{monadic} if it has a left adjoint $%
L$ such that the adjunction $(L,R)$ is monadic, see \cite[Definition 3',
page 8]{Beck}. In a similar way one defines \emph{comonadic} adjunctions and
functors using the Eilenberg-Moore category ${^{LR}\mathcal{A}}$ of
coalgebras over the comonad induces by $\left( L,R\right) .$
\end{definition}

The notion of an idempotent monad is tightly connected with the monadic
length of a functor.

\begin{definition}
(\cite[page 231]{AT}) A monad $\left( Q,m,u\right) $ is called \emph{%
idempotent} whenever $m$ is an isomorphism. An adjunction $\left( L,R\right)
$ is called \emph{idempotent} whenever the associated monad is idempotent.
\end{definition}

The interested reader can find results on idempotent monads in \cite{AT,MS}.
Here we just note that the fact that $\left( L,R\right) $ is idempotent is
equivalent to require that $\eta R$ is a functorial isomorphism.

\begin{definition}
\label{def:MonDec} (See \cite[Definition 2.7]{AGM-MonadicLie1}, \cite[%
Definition 2.1]{AHW} and \cite[Definitions 2.10 and 2.14]{MS}) Fix a $N\in
\mathbb{N}$. We say that a functor $R$ has a\emph{\ monadic decomposition of
monadic length }$N$\emph{\ }whenever there exists a sequence $\left(
R_{n}\right) _{n\leq N}$ of functors $R_{n}$ such that

1) $R_{0}=R$;

2) for $0\leq n\leq N$, the functor $R_{n}$ has a left adjoint functor $%
L_{n} $;

3) for $0\leq n\leq N-1$, the functor $R_{n+1}$ is the comparison functor
induced by the adjunction $\left( L_{n},R_{n}\right) $ with respect to its
associated monad;

4) $L_{N}$ is full and faithful while $L_{n}$ is not full and faithful for $%
0\leq n\leq N-1.$

Compare with the construction performed in \cite[1.5.5, page 49]{Manes-PhD}.

Note that for functor $R:\mathcal{A}\rightarrow \mathcal{B}$ having a
monadic decomposition of monadic length\emph{\ }$N$, we have a diagram
\begin{equation}  \label{diag:MonadicDec}
\xymatrixcolsep{2cm} \xymatrix{\mathcal{A}\ar@<.5ex>[d]^{R_0}&\mathcal{A}%
\ar@<.5ex>[d]^{R_1}\ar[l]_{\mathrm{Id}_{\mathcal{A}}}&\mathcal{A}%
\ar@<.5ex>[d]^{R_2}\ar[l]_{\mathrm{Id}_{\mathcal{A}}}&\cdots
\ar[l]_{\mathrm{Id}_{\mathcal{A}}}\quad\cdots&\mathcal{A}\ar@<.5ex>[d]^{R_N}%
\ar[l]_{\mathrm{Id}_{\mathcal{A}}}\\
\mathcal{B}_0\ar@<.5ex>@{.>}[u]^{L_0}&\mathcal{B}_1\ar@<.5ex>@{.>}[u]^{L_1}
\ar[l]_{U_{0,1}}&\mathcal{B}_2 \ar@<.5ex>@{.>}[u]^{L_2}
\ar[l]_{U_{1,2}}&\cdots\ar[l]_{U_{2,3}}\quad\cdots&\mathcal{B}_N%
\ar@<.5ex>@{.>}[u]^{L_N}\ar[l]_{U_{N-1,N}} }
\end{equation}

where $\mathcal{B}_{0}=\mathcal{B}$ and, for $1\leq n\leq N,$

\begin{itemize}
\item $\mathcal{B}_{n}$ is the category of $\left( R_{n-1}L_{n-1}\right) $%
-algebras ${_{R_{n-1}L_{n-1}}\mathcal{B}}_{n-1}$;

\item $U_{n-1,n}:\mathcal{B}_{n}\rightarrow \mathcal{B}_{n-1}$ is the
forgetful functor ${_{R_{n-1}L_{n-1}}}U$.
\end{itemize}

We will denote by $\eta _{n}:\mathrm{Id}_{\mathcal{B}_{n}}\rightarrow
R_{n}L_{n}$ and $\epsilon _{n}:L_{n}R_{n}\rightarrow \mathrm{Id}_{\mathcal{A}%
}$ the unit and counit of the adjunction $\left( L_{n},R_{n}\right) $
respectively for $0\leq n\leq N$. Note that one can introduce the forgetful
functor $U_{m,n}:\mathcal{B}_{n}\rightarrow \mathcal{B}_{m}$ for all $m\leq
n $ with $0\leq m,n\leq N$.
\end{definition}

\begin{proposition}[{\protect\cite[Proposition 2.9]{AGM-MonadicLie1}}]
\label{pro:idemmonad2}Let $\left( L:\mathcal{B}\rightarrow \mathcal{A},R:%
\mathcal{A}\rightarrow \mathcal{B}\right) $ be an idempotent adjunction.
Then $R:\mathcal{A}\rightarrow \mathcal{B}$ has a\emph{\ }monadic
decomposition of monadic length at most $1 $.
\end{proposition}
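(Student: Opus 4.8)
The plan is to verify the four conditions of Definition \ref{def:MonDec} with $N=1$: the functor $R_1=K$ is already singled out as the comparison functor of $(L_0,R_0)=(L,R)$, so the task reduces to producing a left adjoint $L_1$ of $R_1$ and proving that $L_1$ is full and faithful. Once this is done the monadic length is $0$ if $L_0=L$ happens to be full and faithful and $1$ otherwise, hence \emph{at most} $1$ in either case.

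First I would record the basic features of the idempotent monad $\left( RL,R\epsilon L,\eta \right)$, writing $m:=R\epsilon L$. The unit axioms $m\circ RL\eta =\id_{RL}=m\circ \eta RL$ together with the invertibility of $m$ force $RL\eta =\eta RL=m^{-1}$. From this I would extract the key \emph{rigidity lemma}: for every $RL$-algebra $\left( B,\beta \right) $ the structure map $\beta \colon RLB\to B$ is a two-sided inverse of $\eta B$. Indeed, the unit axiom already gives $\beta \circ \eta B=\id_{B}$; applying $RL$ to it yields $RL\beta \circ m^{-1}B=\id_{RLB}$, and naturality of $\eta$ along $\beta$ gives $\eta B\circ \beta =RL\beta \circ \eta \left( RLB\right) =RL\beta \circ m^{-1}B$, whence $\eta B\circ \beta =\id_{RLB}$. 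Two consequences follow immediately: $\eta B$ is an isomorphism for every algebra, and the forgetful functor $U_{0,1}$ is full and faithful, because for algebras $\left( B,\beta \right) ,\left( B^{\prime },\beta ^{\prime }\right) $ any underlying $\mathcal{B}$-morphism $g\colon B\to B^{\prime }$ satisfies $\beta ^{\prime }\circ RLg=g\circ \beta $ (this is merely naturality of $\eta $ after inverting the structure maps), so it is automatically a morphism of algebras.

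Next I would construct the adjoint. The natural candidate is $L_{1}:=L\circ U_{0,1}\colon \mathcal{B}_{1}\rightarrow \mathcal{A}$. For $\left( B,\beta \right) \in \mathcal{B}_{1}$ and $X\in \mathcal{A}$ the adjunction $\left( L,R\right) $ gives $\mathrm{Hom}_{\mathcal{A}}\left( L_{1}\left( B,\beta \right) ,X\right) =\mathrm{Hom}_{\mathcal{A}}\left( LB,X\right) \cong \mathrm{Hom}_{\mathcal{B}}\left( B,RX\right)$, while the full faithfulness of $U_{0,1}$ combined with $U_{0,1}KX=RX$ gives $\mathrm{Hom}_{\mathcal{B}_{1}}\left( \left( B,\beta \right) ,KX\right) \cong \mathrm{Hom}_{\mathcal{B}}\left( B,RX\right)$. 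I would check that these bijections are natural in $\left( B,\beta \right) $ and $X$, which exhibits $\left( L_{1},R_{1}\right) $ as an adjunction and confirms that $R_{1}=K$ is the comparison functor demanded by Definition \ref{def:MonDec}(3).

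Finally I would show that $L_{1}$ is full and faithful by proving that the unit $\eta _{1}$ of $\left( L_{1},R_{1}\right) $ is a functorial isomorphism. Tracing $\id_{LB}$ through the hom-set bijections above, the underlying $\mathcal{B}$-morphism of $\eta _{1}\left( B,\beta \right) \colon \left( B,\beta \right) \rightarrow KLB=R_{1}L_{1}\left( B,\beta \right) $ is exactly $\eta B$, which is invertible by the rigidity lemma. Since $U_{0,1}$ is faithful and reflects isomorphisms, $\eta _{1}\left( B,\beta \right) $ is itself an isomorphism, so $L_{1}$ is full and faithful and the monadic length is at most $1$. The main obstacle is the rigidity lemma of the first paragraph: everything else (the hom-set identifications, their naturality, and the identification $U_{0,1}\eta _{1}=\eta B$) is formal bookkeeping, whereas full faithfulness of both $U_{0,1}$ and $L_{1}$ rests entirely on the invertibility of $\eta B$ on algebras.
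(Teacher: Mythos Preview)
Your proof is correct. The paper does not supply its own argument for this proposition; it simply cites \cite[Proposition 2.9]{AGM-MonadicLie1}, so there is nothing to compare against beyond confirming that your reasoning is sound, which it is: the rigidity lemma (that every $RL$-algebra structure is forced to be $(\eta B)^{-1}$) is the heart of the matter, and from it the full faithfulness of $U_{0,1}$ and of $L_1=LU_{0,1}$ follow exactly as you describe.
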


We refer to \cite[Remarks 2.8 and 2.10]{AGM-MonadicLie1} for further
comments on monadic decompositions.

\begin{definition}
\label{def:comparable}We say that a functor $R$ is\emph{\ comparable }%
whenever there exists a sequence $\left( R_{n}\right) _{n\in \mathbb{N}}$ of
functors $R_{n}$ such that $R_{0}=R$ and, for $n\in \mathbb{N}$,

1) the functor $R_{n}$ has a left adjoint functor $L_{n}$;

2) the functor $R_{n+1}$ is the comparison functor induced by the adjunction
$\left( L_{n},R_{n}\right) $ with respect to its associated monad.

In this case we have a diagram as (\ref{diag:MonadicDec}) but not
necessarily stationary. Hence we can consider the forgetful functors $%
U_{m,n}:\mathcal{B}_{n}\rightarrow \mathcal{B}_{m}$ for all $m\leq n$ with $%
m,n\in \mathbb{N}$.
\end{definition}

\begin{remark}
Fix a $N\in \mathbb{N}$. A functor $R$ having a\emph{\ }monadic
decomposition of monadic length $N$ is comparable, see \cite[Remark 2.10]%
{AGM-MonadicLie1}.
\end{remark}

By the proof of Beck's Theorem \cite[Proof of Theorem 1]{Beck} one gets the
following result.

\begin{lemma}
\label{lem: coequalizers}Let $\mathcal{A}$ be a category such that, for any
(reflexive) pair $\left( f,g\right) $ (\cite[3.6, page 98]{TTT}) where $%
f,g:X\rightarrow Y$ are morphisms in $\mathcal{A}$, one can choose a
specific coequalizer. Then the comparison functor $K:\mathcal{A}\rightarrow {%
_{RL}\mathcal{B}}$ of an adjunction $\left( L,R\right) $ is a right adjoint.
Thus any right adjoint $R:\mathcal{A}\rightarrow {\mathcal{B}}$ is
comparable.
\end{lemma}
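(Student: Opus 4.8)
The plan is to follow the construction appearing in the proof of Beck's Theorem and exhibit an explicit left adjoint $\Lambda :{_{RL}\mathcal{B}}\rightarrow \mathcal{A}$ of the comparison functor $K$. Given an $RL$-algebra $(B,\mu)$ with structure map $\mu :RLB\rightarrow B$, I would form the parallel pair $L\mu ,\ \epsilon LB:LRLB\rightarrow LB$. First I would observe that this pair is \emph{reflexive}, the morphism $L\eta B:LB\rightarrow LRLB$ being a common section: indeed $L\mu \circ L\eta B=L(\mu \circ \eta B)=\mathrm{Id}_{LB}$ by the unit axiom of the algebra $(B,\mu )$ (recall $\eta $ is the unit of the monad $RL$), while $\epsilon LB\circ L\eta B=\mathrm{Id}_{LB}$ by the triangle identity $\epsilon L\circ L\eta =\mathrm{Id}_{L}$. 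Since $\mathcal{A}$ carries a chosen coequalizer of every reflexive pair, I may set $\Lambda (B,\mu )$ to be the codomain of the chosen coequalizer $q_{(B,\mu )}:LB\rightarrow \Lambda (B,\mu )$ of this pair; the action of $\Lambda $ on morphisms is then forced by the universal property, so $\Lambda $ is a functor.

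Next I would establish a bijection $\mathcal{A}(\Lambda (B,\mu ),A)\cong {_{RL}\mathcal{B}}((B,\mu ),KA)$ natural in both variables. Transposing along $(L,R)$, a morphism $g:B\rightarrow RA$ corresponds to $\widehat{g}:=\epsilon A\circ Lg:LB\rightarrow A$, with inverse $g=R\widehat{g}\circ \eta B$. The heart of the matter is the equivalence
\[
\widehat{g}\circ L\mu =\widehat{g}\circ \epsilon LB\qquad \Longleftrightarrow \qquad g\circ \mu =R\epsilon A\circ RLg,
\]
whose right-hand side is precisely the condition that $g$ be a morphism of $RL$-algebras $(B,\mu )\rightarrow KA=(RA,R\epsilon A)$. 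To prove it I would transpose both members of the left-hand equation back along $(L,R)$: naturality of $\eta $ at $\mu $ shows that the transpose of $\widehat{g}\circ L\mu $ equals $g\circ \mu $, whereas the triangle identity $R\epsilon \circ \eta R=\mathrm{Id}_{R}$ shows that the transpose of $\widehat{g}\circ \epsilon LB$ equals $R\widehat{g}=R\epsilon A\circ RLg$. As transposition is a bijection, the two equations are equivalent. Combining the universal property of $q_{(B,\mu )}$ (the morphisms out of $\Lambda (B,\mu )$ are exactly the $\widehat{g}$ coequalizing the pair) with the adjunction isomorphism of $(L,R)$ then yields the claimed natural bijection, that is $\Lambda \dashv K$, so $K$ is a right adjoint.

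For the final assertion I would argue by induction, repeatedly invoking the first part. Put $R_{0}=R$, $L_{0}=L$; the first part produces the comparison functor $R_{1}=K$ together with its left adjoint $L_{1}=\Lambda $. The crucial observation is that every comparison functor occurring in Definition \ref{def:comparable} has $\mathcal{A}$ as its domain: $R_{n+1}$ is the comparison functor of the adjunction $(L_{n},R_{n})$ with $R_{n}:\mathcal{A}\rightarrow \mathcal{B}_{n}$, and the construction above produces its left adjoint $L_{n+1}$ as a reflexive coequalizer formed \emph{in $\mathcal{A}$}, the common codomain of all the $L_{n}$. Since $\mathcal{A}$ possesses the chosen reflexive coequalizers by hypothesis, the construction may be repeated verbatim at every stage, yielding the entire sequence $(R_{n})_{n\in \mathbb{N}}$ with left adjoints $(L_{n})_{n\in \mathbb{N}}$. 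Hence $R$ is comparable.

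I expect the only genuine content to be the boxed equivalence; once it is phrased through transposition it reduces to two applications of the triangle identities together with the naturality of $\eta $ and the algebra axiom, and the delicate point is merely keeping the directions of $\eta $, $\epsilon $ and $\mu $ aligned. Conceptually, the subtler step is the iteration underlying the word ``Thus'': it works precisely because all the relevant coequalizers live in the fixed category $\mathcal{A}$ rather than in the successive Eilenberg--Moore categories $\mathcal{B}_{n}$, so the single hypothesis on $\mathcal{A}$ suffices for the whole tower.
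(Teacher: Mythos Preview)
Your proof is correct and follows exactly the approach the paper has in mind: the paper's own proof consists solely of the sentence ``By the proof of Beck's Theorem \cite[Proof of Theorem 1]{Beck} one gets the following result,'' and what you have written is precisely that construction spelled out in detail, including the iteration needed for comparability.
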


\begin{lemma}
\label{lem:Cappuccio}Let $F:\mathcal{C}\rightarrow {\mathcal{B}}$ be a full
and faithful functor which is also injective on objects.

1) Let $G:\mathcal{A}\rightarrow {\mathcal{B}}$ be a functor such that any
object in ${\mathcal{B}}$ which is image of $G$ is also image of $F$. Then
there is a unique functor $\widehat{G}:{\mathcal{A}}\rightarrow \mathcal{C}$
such that $F\widehat{G}=G.$

2) Let $G,G^{\prime }:\mathcal{A}\rightarrow {\mathcal{B}}$ be functors as
in 1). For any natural transformation $\gamma :G\rightarrow G^{\prime }$
there is a unique natural transformation $\widehat{\gamma }:\widehat{G}%
\rightarrow \widehat{G^{\prime }}$ such that $F\widehat{\gamma }=\gamma .$
\end{lemma}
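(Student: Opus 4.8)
The plan is to treat $F$ as (an isomorphic copy of) the inclusion of a full subcategory: since $F$ is injective on objects and fully faithful, a functor $G$ whose object-images all lie in the image of $F$ should admit a unique ``corestriction'' $\widehat{G}$, and the same mechanism should transport natural transformations. Concretely, for part 1 I would first define $\widehat{G}$ on objects. For every $A\in\mathcal{A}$ the object $GA$ is an image of $G$, hence by hypothesis an image of $F$; because $F$ is injective on objects there is a \emph{unique} object of $\mathcal{C}$, which I name $\widehat{G}A$, with $F\widehat{G}A=GA$. Next I would define $\widehat{G}$ on morphisms: a morphism $f\colon A\to A'$ gives $Gf\colon GA\to GA'$, which reads $Gf\colon F\widehat{G}A\to F\widehat{G}A'$, and fullness of $F$ produces a morphism $\widehat{G}f$ with $F\widehat{G}f=Gf$, unique by faithfulness.

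Functoriality of $\widehat{G}$ is then forced by faithfulness: applying $F$ to $\widehat{G}(\id_A)$ gives $G(\id_A)=\id_{F\widehat{G}A}=F(\id_{\widehat{G}A})$, and applying $F$ to $\widehat{G}(g\circ f)$ gives $Gg\circ Gf=F\widehat{G}g\circ F\widehat{G}f=F(\widehat{G}g\circ\widehat{G}f)$, so cancelling the faithful $F$ yields the two required equalities. Uniqueness of $\widehat{G}$ is equally immediate: if $\widehat{G}'$ also satisfies $F\widehat{G}'=G$, then $F\widehat{G}'A=GA=F\widehat{G}A$ forces $\widehat{G}'A=\widehat{G}A$ by injectivity on objects, and $F\widehat{G}'f=Gf=F\widehat{G}f$ forces $\widehat{G}'f=\widehat{G}f$ by faithfulness.

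For part 2 I would build $\widehat{\gamma}$ componentwise. For each $A$ the component $\gamma A\colon GA\to G'A$ reads $\gamma A\colon F\widehat{G}A\to F\widehat{G'}A$, so fullness and faithfulness of $F$ give a unique $\widehat{\gamma}A$ with $F\widehat{\gamma}A=\gamma A$. Naturality of $\widehat{\gamma}$ is checked by applying $F$ to each naturality square: its image becomes $G'f\circ\gamma A=\gamma A'\circ Gf$, which holds by naturality of $\gamma$, and faithfulness removes $F$. Uniqueness of $\widehat{\gamma}$ follows once more from faithfulness. I do not expect a genuine obstacle here: the argument is a purely formal consequence of the two elementary properties of $F$, with injectivity on objects used to pin $\widehat{G}$ down on objects and full faithfulness used to transport all morphisms and to verify every equation. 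The only point demanding mild care is the bookkeeping in the preservation of composition and in the naturality square, but both are forced the instant the faithful functor $F$ is cancelled.
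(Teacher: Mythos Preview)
Your argument is correct and is exactly the standard one: use injectivity on objects to define $\widehat{G}$ on objects, full faithfulness to define it on morphisms and to transport $\gamma$, and faithfulness to force functoriality, naturality, and uniqueness. The paper itself gives no proof of this lemma (it is stated and immediately followed by the next section), so there is nothing further to compare.
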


\section{Commutation Data \label{sec:ComData}}

\begin{definition}
\label{def:conservative}A functor is called \emph{conservative} if it
reflects isomorphisms.
\end{definition}

\begin{lemma}
\label{lem:zeta}Let $\left( L,R\right) $ and $\left( L^{\prime },R^{\prime
}\right) $ be adjunctions that fit into the following commutative diagram of
functors%
\begin{equation}
\xymatrixrowsep{15pt}\xymatrix{\mathcal{A}
\ar[r]^F\ar[d]_R&\mathcal{A}^{\prime }\ar[d]^{R^\prime}\\
\mathcal{B}\ar[r]^G&\mathcal{B}^{\prime }}  \label{diag:cd}
\end{equation}
Then there is a unique natural transformation $\zeta :L^{\prime
}G\longrightarrow FL$ such that
\begin{equation}
R^{\prime }\zeta \circ \eta ^{\prime }G=G\eta  \label{form:zeta1}
\end{equation}%
holds, namely%
\begin{equation}
\zeta :=\left( L^{\prime }G\overset{L^{\prime }G\eta }{\longrightarrow }%
L^{\prime }GRL=L^{\prime }R^{\prime }FL\overset{\epsilon ^{\prime }FL}{%
\longrightarrow }FL\right) .  \label{form:zeta}
\end{equation}%
Moreover we have that%
\begin{equation}
\epsilon ^{\prime }F=F\epsilon \circ \zeta R.  \label{form:zeta2}
\end{equation}
\end{lemma}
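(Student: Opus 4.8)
The plan is to deduce the existence, uniqueness and explicit shape of $\zeta$ simultaneously from the transpose (mate) bijection attached to the adjunction $\left( L^{\prime },R^{\prime }\right) $, and then to verify \eqref{form:zeta2} by whiskering the formula \eqref{form:zeta} and invoking the triangle identities of $\left( L,R\right) $.

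First I would observe that, since $G\colon \mathcal{B}\to \mathcal{B}^{\prime }$, $L^{\prime }\colon \mathcal{B}^{\prime }\to \mathcal{A}^{\prime }$, $L\colon \mathcal{B}\to \mathcal{A}$ and $F\colon \mathcal{A}\to \mathcal{A}^{\prime }$, both $L^{\prime }G$ and $FL$ are functors $\mathcal{B}\to \mathcal{A}^{\prime }$, while $G$ and $R^{\prime }FL$ are functors $\mathcal{B}\to \mathcal{B}^{\prime }$. The adjunction $\left( L^{\prime },R^{\prime }\right) $ therefore supplies a natural bijection between natural transformations $L^{\prime }G\to FL$ and natural transformations $G\to R^{\prime }FL$, sending $\zeta $ to its mate $R^{\prime }\zeta \circ \eta ^{\prime }G$ and having inverse $\psi \mapsto \epsilon ^{\prime }(FL)\circ L^{\prime }\psi $. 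The commutativity of \eqref{diag:cd} yields the functor equality $R^{\prime }F=GR$, whence $R^{\prime }FL=GRL$; thus $G\eta \colon G\to GRL=R^{\prime }FL$ is a legitimate target for this bijection. Now \eqref{form:zeta1} asserts precisely that the mate of $\zeta $ equals $G\eta $, so by bijectivity there is a unique $\zeta $ satisfying it, and applying the inverse bijection to $G\eta $ produces $\epsilon ^{\prime }(FL)\circ L^{\prime }(G\eta )$, which is exactly \eqref{form:zeta}.

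It then remains to establish \eqref{form:zeta2}. I would whisker \eqref{form:zeta} on the right by $R$ to obtain $\zeta R=\epsilon ^{\prime }(FLR)\circ L^{\prime }(G\eta R)\colon L^{\prime }GR\to FLR$, where $L^{\prime }GR=L^{\prime }R^{\prime }F$ by \eqref{diag:cd}. Composing with $F\epsilon $ and applying the naturality of $\epsilon ^{\prime }$ along the natural transformation $F\epsilon \colon FLR\to F$, namely $F\epsilon \circ \epsilon ^{\prime }(FLR)=\epsilon ^{\prime }F\circ L^{\prime }R^{\prime }(F\epsilon )$, reduces the identity to showing that $L^{\prime }R^{\prime }(F\epsilon )\circ L^{\prime }(G\eta R)$ is the identity of $L^{\prime }R^{\prime }F$. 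Here the equality $R^{\prime }F=GR$ identifies natural transformations as well, giving $R^{\prime }(F\epsilon )=G(R\epsilon )$, so the bracketed composite equals $L^{\prime }G\!\left( R\epsilon \circ \eta R\right) $, which is the identity by the triangle identity $R\epsilon \circ \eta R=\mathrm{Id}_{R}$ of $\left( L,R\right) $. Hence $F\epsilon \circ \zeta R=\epsilon ^{\prime }F$, as required.

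The whole argument is purely formal; the only delicate point is the bookkeeping of the identification $R^{\prime }F=GR$ at the level both of functors and of natural transformations—in particular the equality $R^{\prime }(F\epsilon )=G(R\epsilon )$—which is where a placement error would most easily creep in. Beyond this I anticipate no genuine obstacle.
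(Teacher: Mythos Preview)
Your argument is correct and is precisely the standard mate-correspondence proof; the paper itself states this lemma without proof, so there is nothing to compare against beyond noting that your write-up supplies exactly the details the authors left as routine.
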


\begin{definition}
\label{def:zetadatum}We will say that $\left( F,G\right) :\left( L,R\right)
\rightarrow \left( L^{\prime },R^{\prime }\right) $ is a \emph{commutation
datum} if

1) $\left( L,R\right) $ and $\left( L^{\prime },R^{\prime }\right) $ are
adjunctions that fit into the commutative diagram \eqref{diag:cd}.

2) The natural transformation $\zeta :L^{\prime }G\longrightarrow FL$ of
Lemma \ref{lem:zeta} is a functorial isomorphism.

The map $\zeta $ will be called the \emph{canonical transformation} of the
datum.
\end{definition}

\begin{proposition}
\label{pro:compcomdat}Let $\left( F,G\right) :\left( L,R\right) \rightarrow
\left( L^{\prime },R^{\prime }\right) $ and $\left( F^{\prime },G^{\prime
}\right) :\left( L^{\prime },R^{\prime }\right) \rightarrow \left( L^{\prime
\prime },R^{\prime \prime }\right) $ be a commutation data. Then $\left(
F^{\prime }F,G^{\prime }G\right) :\left( L,R\right) \rightarrow \left(
L^{\prime \prime },R^{\prime \prime }\right) $ is a commutation datum.
\end{proposition}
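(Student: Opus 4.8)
The plan is to check the two requirements of Definition \ref{def:zetadatum} for $(F'F,G'G)$ in turn, the second being the real content. For the first requirement, the two given data provide the commuting squares $R'F=GR$ and $R''F'=G'R'$; pasting them yields
\[
R''(F'F)=(R''F')F=(G'R')F=G'(R'F)=G'(GR)=(G'G)R,
\]
so $(L,R)$ and $(L'',R'')$ sit in the commutative square \eqref{diag:cd} with horizontal functors $F'F$ and $G'G$. Denote by $\zeta$ and $\zeta'$ the canonical transformations of the two data and by $\eta,\eta',\eta''$ the units of $(L,R)$, $(L',R')$, $(L'',R'')$; by hypothesis $\zeta$ and $\zeta'$ are isomorphisms.

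For the second requirement I would produce the canonical transformation of the composite datum explicitly as a composite built from $\zeta$ and $\zeta'$, and hence as an isomorphism. The natural candidate is the whiskered composite
\[
\widetilde{\zeta}:=\left( L''G'G\overset{\zeta'G}{\longrightarrow}F'L'G\overset{F'\zeta}{\longrightarrow}F'FL\right),
\]
which is well defined because $\zeta'G\colon L''G'G\to F'L'G$ and $F'\zeta\colon F'L'G\to F'FL$, and which is an isomorphism since both $\zeta'G$ and $F'\zeta$ are whiskerings of isomorphisms. It then remains to identify $\widetilde{\zeta}$ with the canonical transformation $\zeta''$ attached to $(F'F,G'G)$ by Lemma \ref{lem:zeta}, i.e.\ to show that $\widetilde{\zeta}$ satisfies the characterizing equation $R''\widetilde{\zeta}\circ\eta''(G'G)=(G'G)\eta$; uniqueness in Lemma \ref{lem:zeta} then forces $\zeta''=\widetilde{\zeta}$, which is an isomorphism, and we are done.

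The verification is a formal computation. Applying $R''$ to $\widetilde{\zeta}$ and using $R''F'=G'R'$ gives $R''\widetilde{\zeta}=G'(R'\zeta)\circ(R''\zeta')G$. Composing with $\eta''(G'G)=\eta''G'G$ and grouping the two rightmost whiskered arrows, the defining identity $R''\zeta'\circ\eta''G'=G'\eta'$ for $\zeta'$ collapses them to $G'\eta'G$; then the defining identity $R'\zeta\circ\eta'G=G\eta$ for $\zeta$ turns the remaining expression into $G'(R'\zeta\circ\eta'G)=G'(G\eta)=(G'G)\eta$, as required. The only thing to be careful about is the bookkeeping of left and right whiskerings and the repeated use of the interchange law; no genuine obstacle arises, since everything reduces to the two identities furnished by Lemma \ref{lem:zeta} together with the pasting of the two commuting squares.
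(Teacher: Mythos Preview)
Your proof is correct and is exactly the argument one expects; the paper in fact omits the proof of this proposition, and your explicit identification of the canonical transformation as $\widetilde{\zeta}=F'\zeta\circ\zeta'G$ together with the verification of \eqref{form:zeta1} via the two instances of that same identity is the intended routine computation.
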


\begin{proposition}
\label{pro:zetaIter} Let $\left( F,G\right) :\left( L,R\right) \rightarrow
\left( L^{\prime },R^{\prime }\right) $ be a commutation datum of functors
as in (\ref{diag:cd}). Assume also that $F$ preserves coequalizers of
reflexive pairs of morphisms in $\mathcal{A}$ and that the comparison
functors $R_{1}^{\prime }$ and $R_{1}$ have left adjoints $L_{1}^{\prime }$
and $L_{1}$ respectively. Then $G$ lifts to a functor $G_{1}:\mathcal{B}%
_{1}\rightarrow \mathcal{B}_{1}^{\prime }\ $such that $G_{1}\left( B,\mu
\right) :=\left( GB,G\mu \circ R^{\prime }\zeta B\right) $, $G_{1}\left(
f\right) =Gf$ and the following diagrams commute.
\begin{equation*}
\xymatrixrowsep{15pt}\xymatrix{\mathcal{B}_1
\ar[r]^{G_1}\ar[d]_U&\mathcal{B}^{\prime }_1\ar[d]^{U^\prime}\\
\mathcal{B}\ar[r]^G&\mathcal{B}^{\prime }}\qquad \xymatrixrowsep{15pt}%
\xymatrix{\mathcal{A} \ar[r]^F\ar[d]_{R_1}&\mathcal{A}^{\prime
}\ar[d]^{R^\prime_1}\\ \mathcal{B}_1\ar[r]^{G_1}&\mathcal{B}^{\prime }_1}
\end{equation*}%
Moreover $\left( F,G_{1}\right) :\left( L_{1},R_{1}\right) \rightarrow
\left( L_{1}^{\prime },R_{1}^{\prime }\right) $ is a commutation datum.

Furthermore the functor $G_{1}$ is conservative (resp. faithful) whenever $G$
is.

If $G$ is faithful then $G_{1}$ is full (resp. injective on objects)
whenever $G$ is.
\end{proposition}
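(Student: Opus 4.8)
The plan is to verify in turn that the formula defines a functor $G_1$, that the two squares commute, that the lifted pair is a commutation datum, and finally to transfer the (co)reflection and faithfulness properties. First I would check that for $(B,\mu)\in\mathcal{B}_1$ the morphism $\nu:=G\mu\circ R'\zeta B:R'L'GB\to GB$, which lands in $GB$ because $R'FLB=GRLB$ by commutativity of \eqref{diag:cd}, is an $R'L'$-algebra structure. Conceptually this says that $\theta:=R'\zeta:R'L'G\to R'FL=GRL$ is a morphism of the monad $R'L'$ to the monad $RL$ along $G$: the unit axiom $\theta\circ\eta'G=G\eta$ is exactly \eqref{form:zeta1}, while the multiplication axiom is a diagram chase from the naturality of $\zeta$ and from \eqref{form:zeta2}. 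Concretely, unitality of $\nu$ is immediate from \eqref{form:zeta1} and the algebra law $\mu\circ\eta B=\mathrm{Id}_B$, and associativity follows from the same relations together with the associativity of $\mu$. Setting $G_1 f=Gf$, functoriality reduces to checking that $Gf$ is an algebra map, which is the naturality of $\zeta$ at $f$ (a computation that reappears below). The square $U'G_1=GU$ then holds on the nose, and $G_1R_1=R_1'F$ follows by comparing structure maps: the needed identity $G(R\epsilon X)\circ R'\zeta RX=R'\epsilon'FX$ is precisely $R'$ applied to \eqref{form:zeta2}.

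The heart of the argument, and the step I expect to be the main obstacle, is to show that the canonical transformation $\zeta_1:L_1'G_1\to FL_1$ of the lifted square is a functorial isomorphism. Here I would use the coequalizer presentation of the left adjoints coming from the proof of Beck's Theorem (see Lemma \ref{lem: coequalizers}): since $L_1$ exists, $L_1(B,\mu)$ is the coequalizer of the reflexive pair $\epsilon LB,\,L\mu:LRLB\rightrightarrows LB$ (common section $L\eta B$), and similarly $L_1'(GB,\nu)$ is the coequalizer of $\epsilon'L'GB,\,L'\nu:L'R'L'GB\rightrightarrows L'GB$. As $F$ preserves coequalizers of reflexive pairs, $FL_1(B,\mu)$ is the coequalizer of $F\epsilon LB,\,FL\mu:FLRLB\rightrightarrows FLB$. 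Using the isomorphisms $\zeta B:L'GB\to FLB$ and $\zeta RLB\circ L'R'\zeta B:L'R'L'GB\to FLRLB$, I would verify that $\zeta$ is an isomorphism of reflexive pairs between the diagram computing $L_1'(GB,\nu)$ and the one computing $FL_1(B,\mu)$: the square over the pair $(L'\nu,FL\mu)$ is just the naturality of $\zeta$ at $\mu$, and the square over $(\epsilon'L'GB,F\epsilon LB)$ follows from the naturality of $\epsilon'$ at $\zeta B$ together with \eqref{form:zeta2}. Hence the induced comparison of coequalizers is an isomorphism. The remaining, most delicate bookkeeping is to identify this induced map with the canonical $\zeta_1$ of Lemma \ref{lem:zeta}; by the uniqueness clause there it suffices to check that the induced map satisfies $R_1'\zeta_1\circ\eta_1'G_1=G_1\eta_1$.

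Finally I would read off the remaining properties from the square $U'G_1=GU$ and from the fact, recalled in the Preliminaries, that the forgetful functors $U$ and $U'$ are faithful and conservative. If $G$ is conservative and $G_1 f$ is invertible, then $GUf=U'G_1 f$ is invertible, so $Uf$ and hence $f$ is invertible; the faithful case is identical. Assume now $G$ faithful. For injectivity on objects, $G_1(B,\mu)=G_1(B',\mu')$ gives $GB=GB'$ and $G\mu\circ R'\zeta B=G\mu'\circ R'\zeta B'$; since $R'\zeta B$ is invertible this forces $B=B'$ and $G\mu=G\mu'$, whence $\mu=\mu'$. For fullness, given an algebra map $h:(GB,\nu)\to(GB',\nu')$, fullness of $G$ yields $h=Gf$ with $f:B\to B'$; substituting $\nu,\nu'$ into $h\circ\nu=\nu'\circ R'L'h$ and using the naturality of $\zeta$ at $f$ together with the invertibility of $R'\zeta B$, one cancels to get $G(f\circ\mu)=G(\mu'\circ RLf)$, so $f\circ\mu=\mu'\circ RLf$ by faithfulness; thus $f$ is a morphism in $\mathcal{B}_1$ with $G_1 f=h$.
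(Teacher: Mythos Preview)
Your proposal is correct and follows essentially the same approach as the paper: lift $G$ via the monad morphism $R'\zeta$, use the Beck coequalizer presentations of $L_1$ and $L_1'$, transport one reflexive pair to the other through the isomorphisms $\zeta B$ and $\zeta RLB\circ L'R'\zeta B$, and then identify the induced comparison with the canonical $\zeta_1$ by checking $R_1'\zeta_1\circ\eta_1'G_1=G_1\eta_1$. The only cosmetic difference is that the paper cites \cite[Lemma 1]{Joh} for the existence of the lift $G_1$ rather than verifying the algebra axioms by hand, and it records the auxiliary identities $\epsilon_1\circ\pi R_1=\epsilon$ and $R\pi\circ\eta U=U\eta_1$ explicitly before using them in that final identification step.
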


\begin{proof}
Denote by $\zeta $ the canonical map of the datum $\left( F,G\right) :\left(
L,R\right) \rightarrow \left( L^{\prime },R^{\prime }\right) .$ Set $\lambda
:=R^{\prime }\zeta :\left( R^{\prime }L^{\prime }\right) G\rightarrow
R^{\prime }FL=G\left( RL\right) .$ By Lemma \ref{lem:zeta}, $\zeta $
fulfills (\ref{form:zeta2}). By (\ref{form:zeta1}), we have $\lambda \circ
\eta ^{\prime }G=G\eta $ and%
\begin{gather*}
GR\epsilon L\circ \lambda RL\circ R^{\prime }L^{\prime }\lambda =GR\epsilon
L\circ R^{\prime }\zeta RL\circ R^{\prime }L^{\prime }R^{\prime }\zeta
=R^{\prime }\left[ F\epsilon L\circ \zeta RL\circ L^{\prime }R^{\prime
}\zeta \right] \\
\overset{(\ref{form:zeta2})}{=}R^{\prime }\left[ \epsilon ^{\prime }FL\circ
L^{\prime }R^{\prime }\zeta \right] =R^{\prime }\left[ \zeta \circ \epsilon
^{\prime }L^{\prime }G\right] =\lambda \circ R^{\prime }\epsilon ^{\prime
}L^{\prime }G
\end{gather*}%
Hence we can apply \cite[Lemma 1]{Joh} to the case $"K"=R^{\prime }L^{\prime
},$ $"H"=RL$ and $"T"=G.$ Thus we get a functor $G_{1}:\mathcal{B}%
_{1}\rightarrow \mathcal{B}_{1}^{\prime }\ $such that $U^{\prime }G_{1}=GU.$
Explicitly $G_{1}\left( B,\mu \right) :=\left( GB,G\mu \circ R^{\prime
}\zeta B\right) $, $G_{1}\left( f\right) =Gf$. We have%
\begin{gather*}
G_{1}R_{1}A=G_{1}\left( RA,R\epsilon A\right) =\left( GRA,GR\epsilon A\circ
R^{\prime }\zeta RA\right) \\
=\left( R^{\prime }FA,R^{\prime }\left[ F\epsilon A\circ \zeta RA\right]
\right) \overset{(\ref{form:zeta2})}{=}\left( R^{\prime }FA,R^{\prime
}\epsilon ^{\prime }FA\right) =R_{1}^{\prime }FA
\end{gather*}%
and $G_{1}R_{1}f=GRf=R^{\prime }Ff=R_{1}^{\prime }Ff$ so that $%
G_{1}R_{1}=R_{1}^{\prime }F.$ By the proof of \cite[Theorem 1]{Beck}, if we
set $\pi :=\epsilon L_{1}\circ LU\eta _{1},$ we get the following
coequalizer of a reflexive pair of morphisms in $\mathcal{A}$.
\begin{equation*}
\xymatrix{LRLB\ar@<.5ex>[rr]^-{L\mu}\ar@<-.5ex>[rr]_-{\epsilon
LB}&&LB=LU\left( B,\mu \right)\ar[rr]^-{\pi \left( B,\mu \right) }
&&L_1(B,\mu)}
\end{equation*}

Since $F$ preserves coequalizers of reflexive pairs of morphisms in $%
\mathcal{A}$, we get the bottom fork in the diagram below is a coequalizer.%
\begin{equation}  \label{diag:zeta}
\xymatrixrowsep{.5cm} \xymatrix{ L^{\prime }R^{\prime }L^{\prime }GB
\ar@<.5ex>[rr]^{L^{\prime }\left( G\mu \circ R^{\prime }\zeta B\right) }
\ar@<-.5ex>[rr]_{\epsilon ^{\prime }L^{\prime}GB}\ar[d]_{\zeta RLB\circ
L^{\prime }R^{\prime }\zeta B}&& L^{\prime }GB\ar[d]^{\zeta B}
\ar[rr]^-{F\pi \left( B,\mu\right)\circ\zeta B}&&FL_{1}\left( B,\mu
\right)\ar[d]^{\mathrm{Id}_{FL_{1}\left( B,\mu \right)}}\\
FLRLB\ar@<.5ex>[rr]^{FL\mu}\ar@<-.5ex>[rr]_{F\epsilon LB}&&FLB\ar[rr]^{F\pi
\left( B,\mu \right) } &&FL_1(B,\mu)}
\end{equation}

We compute%
\begin{gather*}
FL\mu \circ \left( \zeta RLB\circ L^{\prime }R^{\prime }\zeta B\right)
=\zeta B\circ L^{\prime }G\mu \circ L^{\prime }R^{\prime }\zeta B=\zeta
B\circ L^{\prime }\left( G\mu \circ R^{\prime }\zeta B\right) , \\
F\epsilon LB\circ \left( \zeta RLB\circ L^{\prime }R^{\prime }\zeta B\right)
\overset{(\ref{form:zeta2})}{=}\epsilon ^{\prime }FLB\circ L^{\prime
}R^{\prime }\zeta B=\zeta B\circ \epsilon ^{\prime }L^{\prime }GB
\end{gather*}%
so that diagram \eqref{diag:zeta} serially commutes. Since, in this diagram,
the vertical arrows are isomorphisms, we get the upper fork is a coequalizer
too. In a similar way, if we set $\pi ^{\prime }:=\epsilon ^{\prime
}L_{1}^{\prime }\circ L^{\prime }U^{\prime }\eta _{1}^{\prime }$ we have the
coequalizer
\begin{equation*}
\xymatrix{L^\prime R^\prime L^\prime B^\prime \ar@<.5ex>[rr]^-{L^\prime
\mu^\prime }\ar@<-.5ex>[rr]_-{\epsilon^\prime L^\prime B^\prime }&&L^\prime
B^\prime \ar[rr]^-{\pi^\prime \left( B^\prime ,\mu^\prime \right) }
&&L^\prime _1(B^\prime ,\mu^\prime )}
\end{equation*}
For $\left( B^{\prime },\mu ^{\prime }\right) :=G_{1}\left( B,\mu \right) $
we get the coequalizer%
\begin{equation*}
\xymatrixcolsep{1cm} \xymatrix{ L^{\prime }R^{\prime }L^{\prime }GB
\ar@<.5ex>[rr]^{L^{\prime }\left( G\mu \circ R^{\prime }\zeta B\right) }
\ar@<-.5ex>[rr]_{\epsilon ^{\prime }L^{\prime }GB}&& L^{\prime }GB
\ar[rr]^-{\pi ^{\prime }G_{1}\left( B,\mu \right)}&&L_{1}^{\prime
}G_{1}\left( B,\mu \right)}
\end{equation*}
By the foregoing, $F\pi \left( B,\mu \right) \circ \zeta B$ coequalizes the
pair $\left( L^{\prime }\left( G\mu \circ R^{\prime }\zeta B\right)
,\epsilon ^{\prime }L^{\prime }GB\right) .$ By the universal property of
coequalizers, there is a unique morphism $\zeta _{1}\left( B,\mu \right)
:L_{1}^{\prime }G_{1}\left( B,\mu \right) \longrightarrow FL_{1}\left( B,\mu
\right) $ such that $\zeta _{1}\left( B,\mu \right) \circ \pi ^{\prime
}G_{1}\left( B,\mu \right) =F\pi \left( B,\mu \right) \circ \zeta B.$ By the
uniqueness of the coequalizers, $\zeta _{1}\left( B,\mu \right) $ is an
isomorphism.

Let us check that $\zeta _{1}\left( B,\mu \right) $ is natural. Let $%
f:\left( B,\mu \right) \rightarrow \left( B^{\prime },\mu ^{\prime }\right) $
in $\mathcal{B}_{1}$. Then {\small
\begin{gather*}
FL_{1}f\circ \zeta _{1}\left( B,\mu \right) \circ \pi ^{\prime }G_{1}\left(
B,\mu \right) =FL_{1}f\circ F\pi \left( B,\mu \right) \circ \zeta B=F\pi
\left( B^{\prime },\mu ^{\prime }\right) \circ FLUf\circ \zeta B \\
=F\pi \left( B^{\prime },\mu ^{\prime }\right) \circ \zeta B^{\prime }\circ
L^{\prime }GUf=\zeta _{1}\left( B^{\prime },\mu ^{\prime }\right) \circ \pi
^{\prime }G_{1}\left( B^{\prime },\mu ^{\prime }\right) \circ L^{\prime
}U^{\prime }G_{1}f=\zeta _{1}\left( B^{\prime },\mu ^{\prime }\right) \circ
L_{1}G_{1}f\circ \pi ^{\prime }G_{1}\left( B,\mu \right)
\end{gather*}%
} so that $FL_{1}f\circ \zeta _{1}\left( B,\mu \right) =\zeta _{1}\left(
B^{\prime },\mu ^{\prime }\right) \circ L_{1}G_{1}f$ and hence we get a
functorial isomorphism $\zeta _{1}:L_{1}^{\prime }G_{1}\longrightarrow
FL_{1}.$ We have%
\begin{eqnarray*}
\epsilon _{1}\circ \pi R_{1} &=&\epsilon _{1}\circ \epsilon L_{1}R_{1}\circ
LU\eta _{1}R_{1}=\epsilon \circ LR\epsilon _{1}\circ LU\eta
_{1}R_{1}=\epsilon \circ LU\left[ R_{1}\epsilon _{1}\circ \eta _{1}R_{1}%
\right] =\epsilon , \\
R\pi \circ \eta U &=&R\epsilon L_{1}\circ RLU\eta _{1}\circ \eta U=R\epsilon
L_{1}\circ \eta UR_{1}L_{1}\circ U\eta _{1}=R\epsilon L_{1}\circ \eta
RL_{1}\circ U\eta _{1}=U\eta _{1}
\end{eqnarray*}%
so that, we obtain that $\epsilon _{1}\circ \pi R_{1}=\epsilon $ and $R\pi
\circ \eta U=U\eta _{1}$ and similar equations for $\left( L^{\prime
},R^{\prime }\right) .$ We compute%
\begin{eqnarray*}
U^{\prime }\left( R_{1}^{\prime }\zeta _{1}\circ \eta _{1}^{\prime
}G_{1}\right) &=&R^{\prime }\zeta _{1}\circ R^{\prime }\pi ^{\prime
}G_{1}\circ \eta ^{\prime }U^{\prime }G_{1}\overset{\text{def. }\zeta _{1}%
\text{ }}{=}R^{\prime }F\pi \circ R^{\prime }\zeta U\circ \eta ^{\prime }GU
\\
&\overset{(\ref{form:zeta1})}{=}&R^{\prime }F\pi \circ G\eta U=G\left[ R\pi
\circ \eta U\right] =GU\eta _{1}=U^{\prime }G_{1}\eta _{1}
\end{eqnarray*}%
so that $R_{1}^{\prime }\zeta _{1}\circ \eta _{1}^{\prime }G_{1}=G_{1}\eta
_{1}.$ Let us check that $G_{1}$ is conservative whenever $G$ is. Let $%
f:\left( B,\mu \right) \rightarrow \left( B^{\prime },\mu ^{\prime }\right) $
in $\mathcal{B}_{1}$ be such that $G_{1}f$ is an isomorphism. Then $%
U^{\prime }G_{1}f=GUf$ is an isomorphism. Since $G$ and $U$ are conservative
(see \cite[Proposition 4.1.4, page 189]{Borceux2}), we get that $f$ is an
isomorphism.

If $G$ is faithful, from $U^{\prime }G_{1}=GU$ and the fact that $U$ is
faithful, we deduce that $G_{1}$ is faithful.

Assume $G$ is faithful and full. Let $f\in \mathcal{B}_{1}^{\prime }\left(
G_{1}\left( B,\mu \right) ,G_{1}\left( B^{\prime },\mu ^{\prime }\right)
\right) $. Then $U^{\prime }f\in \mathcal{B}^{\prime }\left( GB,GB^{\prime
}\right) $ so that there is $h\in \mathcal{B}\left( B,B^{\prime }\right) $
such that $Gh=U^{\prime }f.$ We have%
\begin{eqnarray*}
G\left( \mu ^{\prime }\circ RLh\right) \circ R^{\prime }\zeta B &=&G\mu
^{\prime }\circ GRLh\circ R^{\prime }\zeta B=G\mu ^{\prime }\circ R^{\prime
}FLh\circ R^{\prime }\zeta B \\
&=&G\mu ^{\prime }\circ R^{\prime }\zeta B^{\prime }\circ R^{\prime
}L^{\prime }Gh=G\mu ^{\prime }\circ R^{\prime }\zeta B^{\prime }\circ
R^{\prime }L^{\prime }U^{\prime }f \\
&=&U^{\prime }f\circ G\mu \circ R^{\prime }\zeta B=Gh\circ G\mu \circ
R^{\prime }\zeta B=G\left( h\circ \mu \right) \circ R^{\prime }\zeta B.
\end{eqnarray*}%
Since $\zeta $ is an isomorphism and $G$ is faithful, we get that $\mu
^{\prime }\circ RLh=h\circ \mu $ so that there is a unique morphism $k\in
\mathcal{B}_{1}\left( \left( B,\mu \right) ,\left( B^{\prime },\mu ^{\prime
}\right) \right) $ such that $Uk=h.$ Hence $U^{\prime }f=Gh=GUk=U^{\prime
}G_{1}k$ and hence $f=G_{1}k.$ Thus $G_{1}$ is faithful and full.

Assume $G$ is faithful and injective on objects. If $G_{1}\left( B,\mu
\right) =G_{1}\left( B^{\prime },\mu ^{\prime }\right) $ i.e. $\left(
GB,G\mu \circ R^{\prime }\zeta B\right) =\left( GB^{\prime },G\mu ^{\prime
}\circ R^{\prime }\zeta B^{\prime }\right) $ then $GB=GB^{\prime }$ and $%
G\mu \circ R^{\prime }\zeta B=G\mu ^{\prime }\circ R^{\prime }\zeta
B^{\prime }$. In view of the assumptions on $G$ and since $\zeta $ is an
isomorphism, we get $\left( B,\mu \right) =\left( B^{\prime },\mu ^{\prime
}\right) $ so that $G_{1}$ is faithful and injective on objects.
\end{proof}

\begin{lemma}
\label{lem:IdempSquare}Let $\left( L,R\right) $ and $\left( L^{\prime
},R^{\prime }\right) $ be adjunctions of functors as in (\ref{diag:cd}).
Assume that $R^{\prime }\zeta R$ is a functorial isomorphism where $\zeta
:L^{\prime }G\longrightarrow FL$ is the natural transformation of Lemma \ref%
{lem:zeta}. Assume also that $G$ is conservative.

1) Let $A\in \mathcal{A}$ be such that $\eta ^{\prime }R^{\prime }FA$ is an
isomorphism. Then $\eta RA$ is an isomorphism.

2) If the adjunction $\left( L^{\prime },R^{\prime }\right) $ is idempotent
then $\left( L,R\right) $ is idempotent.
\end{lemma}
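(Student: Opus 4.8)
The plan is to reduce both statements to the single naturality identity \eqref{form:zeta1} of Lemma \ref{lem:zeta}, namely $R^{\prime }\zeta \circ \eta ^{\prime }G=G\eta $, by evaluating it at a well-chosen object and then invoking the conservativity of $G$.

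For part 1), I would fix $A\in \mathcal{A}$ with $\eta ^{\prime }R^{\prime }FA$ an isomorphism and evaluate \eqref{form:zeta1} at the object $RA\in \mathcal{B}$, which yields $R^{\prime }\zeta RA\circ \eta ^{\prime }GRA=G\eta RA$. Since the diagram \eqref{diag:cd} commutes we have $GR=R^{\prime }F$, so $GRA=R^{\prime }FA$ and the middle term $\eta ^{\prime }GRA$ is literally $\eta ^{\prime }R^{\prime }FA$, an isomorphism by hypothesis. By the standing assumption that $R^{\prime }\zeta R$ is a functorial isomorphism, its component $R^{\prime }\zeta RA$ is an isomorphism as well. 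Hence the left-hand side is a composite of two isomorphisms, forcing $G\eta RA$ to be an isomorphism; as $G$ is conservative, I conclude that $\eta RA$ is an isomorphism.

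For part 2), I would use the characterization recalled right after the definition of idempotent adjunction: $\left( L^{\prime },R^{\prime }\right) $ is idempotent precisely when $\eta ^{\prime }R^{\prime }$ is a functorial isomorphism, and likewise $\left( L,R\right) $ is idempotent precisely when $\eta R$ is one. Assuming $\left( L^{\prime },R^{\prime }\right) $ idempotent, $\eta ^{\prime }R^{\prime }X$ is an isomorphism for every $X\in \mathcal{A}^{\prime }$; taking $X=FA$ shows that the hypothesis of part 1) holds for every $A\in \mathcal{A}$. Applying part 1) pointwise gives that $\eta RA$ is an isomorphism for all $A$, i.e. $\eta R$ is a functorial isomorphism, whence $\left( L,R\right) $ is idempotent.

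The argument is essentially bookkeeping; the only genuine subtlety, and the step I would check most carefully, is the matching of indices, namely that evaluating the naturality identity at $RA$ together with the equality $GR=R^{\prime }F$ turns $\eta ^{\prime }GRA$ into exactly the term $\eta ^{\prime }R^{\prime }FA$ that the hypothesis controls. Once that identification is pinned down, conservativity of $G$ finishes part 1), and part 2) becomes a formal consequence of part 1) via the dictionary between idempotency and the unit being a functorial isomorphism.
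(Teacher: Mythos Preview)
Your proof is correct and follows essentially the same route as the paper's own argument: evaluate the identity $R'\zeta\circ\eta'G=G\eta$ at $RA$, use $GR=R'F$ to identify $\eta'GRA$ with $\eta'R'FA$, conclude that $G\eta RA$ is an isomorphism, and invoke conservativity of $G$; part 2) is then deduced from part 1) via the characterization of idempotency through $\eta R$ being a functorial isomorphism.
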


\begin{proof}
1) Since $\eta ^{\prime }R^{\prime }FA=\eta ^{\prime }GRA$ is an isomorphism
and $R^{\prime }\zeta R$ is an isomorphism, we get that $R^{\prime }\zeta
RA\circ \eta ^{\prime }GRA$ is an isomorphism. By (\ref{form:zeta1}) this
means that $G\eta RA$ is an isomorphism. Since $G$ is conservative, we
conclude.

2) $\left( L,R\right) $ is idempotent if and only if $\eta R$ is a
functorial isomorphism and similarly for $\left( L^{\prime },R^{\prime
}\right) $. Thus $\left( L^{\prime },R^{\prime }\right) $ is idempotent if
and only if $\eta ^{\prime }R^{\prime }$ is a functorial isomorphism. If the
letter condition holds then $\eta ^{\prime }R^{\prime }F$ is a functorial
isomorphism and, by 1), so is $\eta R$ and hence $\left( L,R\right) $ is
idempotent.
\end{proof}

\begin{lemma}
\label{lem:faithcom}Let $\left( F,G\right) :\left( L,R\right) \rightarrow
\left( L^{\prime },R^{\prime }\right) $ be a commutation datum. If $G$ is
conservative and $\eta ^{\prime }$ is an isomorphism so is $\eta $.
\end{lemma}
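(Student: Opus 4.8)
The plan is to reduce the statement to showing that $G\eta$ is a functorial isomorphism and then to invoke the conservativity of $G$. Indeed, $G$ conservative means that for each object $B$ the morphism $\eta B$ is an isomorphism as soon as its image $G(\eta B)=(G\eta)B$ is; hence it suffices to prove that the whole functorial morphism $G\eta:G\to GRL$ has isomorphic components, i.e. is a functorial isomorphism.

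To this end I would exploit the defining relation \eqref{form:zeta1} of the canonical transformation of the datum, namely $R^{\prime }\zeta \circ \eta ^{\prime }G=G\eta$. The left-hand side is a composite of two pieces, each of which is a functorial isomorphism under the standing hypotheses. First, $\eta ^{\prime }G$ is a functorial isomorphism because $\eta ^{\prime }$ is one by assumption, and whiskering a natural isomorphism with the functor $G$ preserves this property. Second, since $\left( F,G\right) :\left( L,R\right) \rightarrow \left( L^{\prime },R^{\prime }\right) $ is a commutation datum, the map $\zeta :L^{\prime }G\rightarrow FL$ is a functorial isomorphism by Definition \ref{def:zetadatum}; applying the functor $R^{\prime }$ preserves isomorphisms, so $R^{\prime }\zeta $ is again a functorial isomorphism.

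Combining these two observations, $G\eta =R^{\prime }\zeta \circ \eta ^{\prime }G$ is a composite of functorial isomorphisms, hence itself a functorial isomorphism. Conservativity of $G$ then upgrades this, component by component, to the conclusion that $\eta $ is a functorial isomorphism, as desired.

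There is no genuine obstacle in this argument: it is a direct bookkeeping with \eqref{form:zeta1} together with the two assumptions on $G$ and $\eta ^{\prime }$. The only points deserving attention are to use the correct identity coming from Lemma \ref{lem:zeta}, namely $R^{\prime }\zeta \circ \eta ^{\prime }G=G\eta $ rather than \eqref{form:zeta2}, and to remember that conservativity is applied to the individual components $G(\eta B)$ in order to recover invertibility of each $\eta B$.
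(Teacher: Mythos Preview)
Your proof is correct and follows exactly the same approach as the paper: both use the identity $R^{\prime }\zeta \circ \eta ^{\prime }G=G\eta$ from \eqref{form:zeta1}, observe that the left-hand side is an isomorphism (since $\zeta$ is by definition of commutation datum and $\eta'$ is by hypothesis), and conclude via conservativity of $G$. The paper's proof is simply the one-line citation of \eqref{form:zeta1}, leaving the remaining steps implicit.
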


\begin{proof}
By (\ref{form:zeta1}), we have $R^{\prime }\zeta \circ \eta ^{\prime
}G=G\eta $.
\end{proof}

\begin{corollary}
\label{coro:idempcom}Let $\left( F,G\right) :\left( L,R\right) \rightarrow
\left( L^{\prime },R^{\prime }\right) $ be a commutation datum. Assume also
that $F$ preserves coequalizers of reflexive pairs of morphisms in $\mathcal{%
A}$ and that $G$ is conservative. Assume that both $R$ and $R^{\prime }$ are
comparable. Let $N\in \mathbb{N}.$

1) Let $A\in \mathcal{A}$ be such that $\eta _{N}^{\prime }R_{N}^{\prime }FA$
is an isomorphism. Then $\eta _{N}R_{N}A$ is an isomorphism.

2) If $\left( L_{N}^{\prime },R_{N}^{\prime }\right) $ is idempotent so is $%
\left( L_{N},R_{N}\right) .$
\end{corollary}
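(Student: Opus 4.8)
The plan is to reduce the statement, at level $N$, to the one-step results already established, namely Lemma \ref{lem:IdempSquare} and Proposition \ref{pro:zetaIter}. Concretely, I would first iterate Proposition \ref{pro:zetaIter} to manufacture a commutation datum between the $N$-th adjunctions $(L_N,R_N)$ and $(L'_N,R'_N)$, and then apply Lemma \ref{lem:IdempSquare} to that datum; both assertions fall out at once. The whole argument is thus an induction that carries the hypotheses of Proposition \ref{pro:zetaIter} up the two towers of comparison functors.

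First I would prove, by induction on $n\in\mathbb{N}$, that there is a functor $G_n\colon\mathcal{B}_n\to\mathcal{B}'_n$ for which $(F,G_n)\colon(L_n,R_n)\to(L'_n,R'_n)$ is a commutation datum with $G_n$ conservative. For $n=0$ I set $G_0:=G$, so that the datum is the given one and $G_0$ is conservative by hypothesis. For the inductive step I assume $(F,G_n)\colon(L_n,R_n)\to(L'_n,R'_n)$ is a commutation datum with $G_n$ conservative. Since $R$ and $R'$ are comparable (Definition \ref{def:comparable}), the comparison functors $R_{n+1}$ and $R'_{n+1}$ of $(L_n,R_n)$ and $(L'_n,R'_n)$ possess left adjoints $L_{n+1}$ and $L'_{n+1}$; moreover $F$ preserves coequalizers of reflexive pairs in $\mathcal{A}$ by hypothesis, and $\mathcal{A}$ is exactly the top-left category appearing in \eqref{diag:cd} for the datum $(F,G_n)$, which stays equal to $\mathcal{A}$ at every level. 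Hence Proposition \ref{pro:zetaIter} applies and delivers $G_{n+1}:=(G_n)_1$ together with the facts that $(F,G_{n+1})\colon(L_{n+1},R_{n+1})\to(L'_{n+1},R'_{n+1})$ is again a commutation datum and that $G_{n+1}$ is conservative because $G_n$ is. This closes the induction.

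In particular $(F,G_N)\colon(L_N,R_N)\to(L'_N,R'_N)$ is a commutation datum with $G_N$ conservative, and its canonical transformation $\zeta_N\colon L'_NG_N\to FL_N$ is a functorial isomorphism; consequently $R'_N\zeta_N R_N$ is a functorial isomorphism. Thus the hypotheses of Lemma \ref{lem:IdempSquare} hold for the pair of adjunctions $(L_N,R_N)$, $(L'_N,R'_N)$ together with the functors $F$ and $G_N$, the distinguished object still ranging over $\mathcal{A}$. Part 1) is then precisely Lemma \ref{lem:IdempSquare}(1): if $\eta'_N R'_N FA$ is an isomorphism then so is $\eta_N R_N A$. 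Part 2) is precisely Lemma \ref{lem:IdempSquare}(2): if $(L'_N,R'_N)$ is idempotent then so is $(L_N,R_N)$. I expect the only delicate point to be the bookkeeping of this induction, i.e. checking that the hypotheses needed to invoke Proposition \ref{pro:zetaIter} genuinely persist at each step: preservation of coequalizers of reflexive pairs concerns the fixed functor $F$ on the fixed category $\mathcal{A}$ and is therefore automatic; the existence of the successive left adjoints $L_{n+1},L'_{n+1}$ is exactly what comparability of $R$ and $R'$ provides; and conservativeness of the lifted functor is part of the conclusion of Proposition \ref{pro:zetaIter}. Once these are secured, the corollary reduces to a single application of Lemma \ref{lem:IdempSquare} at level $N$.
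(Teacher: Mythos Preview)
Your proof is correct and follows exactly the approach of the paper, which simply reads ``Apply Proposition \ref{pro:zetaIter} and Lemma \ref{lem:IdempSquare}.'' You have merely unpacked the iteration of Proposition \ref{pro:zetaIter} explicitly, verifying that comparability of $R,R'$ supplies the needed left adjoints at each step and that conservativeness of $G_n$ propagates, before invoking Lemma \ref{lem:IdempSquare} at level $N$.
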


\begin{proof}
Apply Proposition \ref{pro:zetaIter} and Lemma \ref{lem:IdempSquare}.
\end{proof}

Next lemma will be a useful tool to construct new commutation data.

\begin{lemma}
\label{lem:LiftAdj}Let $\left( L^{\prime },R^{\prime }\right) $ be an
adjunction and let $F$ and $G$ be full and faithful functors which are also
injective on objects and have domain and codomain as in the following
diagrams. Assume that any object in $\mathcal{A}^{\prime }$ which is image
of $L^{\prime }G$ is also image of $F$ and that any object in $\mathcal{B}%
^{\prime }$ which is image of $R^{\prime }F$ is also image of $G.$ Set $L:=%
\widehat{L^{\prime }G}$ and $R:=\widehat{R^{\prime }F}\ $with notation as in
Lemma \ref{lem:Cappuccio}. Then $L$ and $R$ are the unique functors which
make the following diagrams commute
\begin{equation*}
\xymatrixrowsep{15pt}\xymatrix{\mathcal{A} \ar[r]^{F}&\mathcal{A}^{\prime
}\\ \mathcal{B}\ar@{.>}[u]^L\ar[r]^G&\mathcal{B}^{\prime }\ar[u]_{L^\prime}}%
\qquad \xymatrixrowsep{15pt}\xymatrix{\mathcal{A}
\ar[r]^F\ar@{.>}[d]_{R}&\mathcal{A}^{\prime }\ar[d]^{R^\prime}\\
\mathcal{B}\ar[r]^{G}&\mathcal{B}^{\prime }}
\end{equation*}
Moreover $\left( L,R\right) $ is an adjunction with unit $\eta :\mathrm{Id}_{%
\mathcal{B}}\rightarrow RL$ and counit $\epsilon :LR\rightarrow \mathrm{Id}_{%
\mathcal{A}}$ which satisfy
\begin{equation}  \label{form:LiftAdj}
G\eta =\eta ^{\prime }G\qquad \text{and}\qquad F\epsilon =\epsilon ^{\prime
}F
\end{equation}%
where $\eta ^{\prime }$ and $\epsilon ^{\prime }$ are the corresponding unit
and counit of $\left( L^{\prime },R^{\prime }\right) .$ Moreover $\left(
F,G\right) :\left( L,R\right) \rightarrow \left( L^{\prime },R^{\prime
}\right) $ is a commutation datum and the canonical transformation $\zeta
:L^{\prime }G\rightarrow FL$ is $\mathrm{\mathrm{\mathrm{\mathrm{\mathrm{%
\mathrm{Id}}}}}}_{L^{\prime }G}$.
\end{lemma}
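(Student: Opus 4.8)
The plan is to obtain $L$ and $R$ formally from Lemma \ref{lem:Cappuccio}, to transport the unit and counit of $(L',R')$ downstairs, and then to recognise the commutation datum via Lemma \ref{lem:zeta}. First I would apply Lemma \ref{lem:Cappuccio}(1) twice: once with the full, faithful, injective-on-objects functor $F$ and the functor $L'G$, once with $G$ and the functor $R'F$. The two image hypotheses in the statement are exactly what Lemma \ref{lem:Cappuccio}(1) requires, so $L=\widehat{L'G}$ and $R=\widehat{R'F}$ are well defined and are the \emph{unique} functors satisfying the strict equalities $FL=L'G$ and $GR=R'F$. This yields part (1) and the commutativity of both displayed squares at once.

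Next I would construct the unit and counit. The point is that the two strict equalities upgrade to
\[
G(RL)=(GR)L=R'FL=R'L'G=(R'L')G,\qquad F(LR)=(L'R')F,
\]
as genuine equalities of functors. Hence $\eta'G\colon G\to(R'L')G=G(RL)$ and $\epsilon'F\colon F(LR)=(L'R')F\to F$ may be regarded as natural transformations between functors that factor strictly through $G$ and $F$ respectively, so the image hypotheses needed to invoke Lemma \ref{lem:Cappuccio}(2) are trivially met. Descending $\eta'G$ along $G$ and $\epsilon'F$ along $F$ then produces unique natural transformations $\eta\colon\mathrm{Id}_{\mathcal{B}}\to RL$ and $\epsilon\colon LR\to\mathrm{Id}_{\mathcal{A}}$ characterised by $G\eta=\eta'G$ and $F\epsilon=\epsilon'F$, which is precisely \eqref{form:LiftAdj}.

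I would then verify the triangle identities by pushing them through the faithful functors $G$ and $F$. Whiskering \eqref{form:LiftAdj} and using $FL=L'G$, $GR=R'F$ gives
\[
G(R\epsilon\circ\eta R)=R'\epsilon'F\circ\eta'R'F=(R'\epsilon'\circ\eta'R')F=\mathrm{Id}_{R'F}=G\,\mathrm{Id}_R,
\]
\[
F(\epsilon L\circ L\eta)=\epsilon'L'G\circ L'\eta'G=(\epsilon'L'\circ L'\eta')G=\mathrm{Id}_{L'G}=F\,\mathrm{Id}_L,
\]
where the middle equalities are the triangle identities of $(L',R')$. Since $G$ and $F$ are faithful they reflect equality of natural transformations, so the triangle identities for $(\eta,\epsilon)$ follow, establishing part (2). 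Finally, for part (3) I would read off the commutation datum from Lemma \ref{lem:zeta}: condition (1) of Definition \ref{def:zetadatum} is the square $GR=R'F$ already proved, and for condition (2) I observe that, since $FL=L'G$, the identity $\mathrm{Id}_{L'G}\colon L'G\to FL$ satisfies $R'\mathrm{Id}_{L'G}\circ\eta'G=\eta'G=G\eta$ by \eqref{form:LiftAdj}; by the uniqueness clause of Lemma \ref{lem:zeta} the canonical transformation must therefore be $\zeta=\mathrm{Id}_{L'G}$, which is an isomorphism.

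The argument is entirely formal, so there is no analytic obstacle; the one point I would single out as the crux is that the hypotheses yield \emph{strict} functor equalities $FL=L'G$ and $GR=R'F$ rather than mere isomorphisms. It is exactly this strictness — together with the faithfulness of $F$ and $G$ — that lets $\eta'G$ and $\epsilon'F$ descend via Lemma \ref{lem:Cappuccio}(2), that collapses the triangle identities to those of $(L',R')$, and that forces $\zeta$ to be an identity; the remaining care is only the bookkeeping of checking the image conditions before each application of Lemma \ref{lem:Cappuccio}.
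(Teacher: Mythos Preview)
Your proposal is correct and follows essentially the same route as the paper's own proof: both construct $L=\widehat{L'G}$, $R=\widehat{R'F}$, $\eta=\widehat{\eta'G}$, $\epsilon=\widehat{\epsilon'F}$ via Lemma~\ref{lem:Cappuccio}, using the strict equalities $FL=L'G$ and $GR=R'F$. You have simply spelled out in full the verification of the triangle identities and the identification $\zeta=\mathrm{Id}_{L'G}$ that the paper leaves to the reader.
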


\begin{proof}
Apply Lemma \ref{lem:Cappuccio}.once observed that $RL=\widehat{R^{\prime
}L^{\prime }G}$, $LR=\widehat{L^{\prime }R^{\prime }F}$, $\widehat{G}=%
\mathrm{Id}_{\mathcal{B}}$ and $\widehat{F}=\mathrm{Id}_{\mathcal{A}}.$ Then
define $\eta :=\widehat{\eta ^{\prime }G}$ and $\epsilon :=\widehat{\epsilon
^{\prime }F}.$
\end{proof}

\section{Braided objects and adjunctions \label{sec:BrObjAdj}}

\begin{definition}
\label{def:braid}Let $\left( \mathcal{M},\otimes ,\mathbf{1}\right) $ be a
monoidal category (as usual we omit the brackets although we are not
assuming the constraints are trivial).

1) Let $V$ be an object in $\mathcal{M}$. A morphism $c=c_{V}:V\otimes
V\rightarrow V\otimes V$ is called a \emph{braiding }(see \cite[Definition
XIII.3.1]{Kassel} where it is called a Yang-Baxter operator) if it satisfies
the quantum Yang-Baxter equation%
\begin{equation}
\left( c\otimes V\right) \left( V\otimes c\right) \left( c\otimes V\right)
=\left( V\otimes c\right) \left( c\otimes V\right) \left( V\otimes c\right)
\label{ec: braided equation}
\end{equation}%
on $V\otimes V\otimes V.$ \textbf{We further assume that }$c$\textbf{\ is
invertible}. The pair $\left( V,c\right) $ will be called a \emph{braided
object in }$\mathcal{M}$. A morphism of braided objects $(V,c_{V})$ and $%
(W,c_{W})$ in $\mathcal{M}$ is a morphism $f:V\rightarrow W$ such that $%
c_{W}(f\otimes f)=(f\otimes f)c_{V}.$ This defines the category $\mathrm{Br}%
_{\mathcal{M}}$ of braided objects and their morphisms.

2) \cite{Ba} A quadruple $(A,m,u,c)$ is called a \emph{braided algebra} if

\begin{itemize}
\item $(A,m,u)$ is an algebra in $\mathcal{M}$;

\item $(A,c)$ is a braided object in $\mathcal{M}$;

\item $m$ and $u$ commute with $c$, that is the following conditions hold:
\begin{gather}
c(m\otimes A)=(A\otimes m)(c\otimes A)(A\otimes c),  \label{Br2} \\
c(A\otimes m)=(m\otimes A)\left( A\otimes c\right) (c\otimes A),  \label{Br3}
\\
c(u\otimes A)l_{A}^{-1}=\left( A\otimes u\right) r_{A}^{-1},\qquad
c(A\otimes u)r_{A}^{-1}=\left( u\otimes A\right) l_{A}^{-1}.  \label{Br4}
\end{gather}
\end{itemize}

A morphism of braided algebras is, by definition, a morphism of algebras
which, in addition, is a morphism of braided objects. This defines the
category $\mathrm{BrAlg}_{\mathcal{M}}$ of braided algebras and their
morphisms.

3) A quadruple $(C,\Delta ,\varepsilon ,c)$ is called a \emph{braided
coalgebra} if

\begin{itemize}
\item $(C,\Delta ,\varepsilon )$ is a coalgebra in $\mathcal{M}$;

\item $(C,c)$ is a braided object in $\mathcal{M}$;

\item $\Delta $ and $\varepsilon $ commute with $c$, that is the following
relations hold:
\begin{gather}
(\Delta \otimes C)c=(C\otimes c)(c\otimes C)(C\otimes \Delta ),  \label{Br5}
\\
(C\otimes \Delta )c=(c\otimes C)(C\otimes c)(\Delta \otimes C),  \label{Br6}
\\
l_{C}(\varepsilon \otimes C)c=r_{C}\left( C\otimes \varepsilon \right)
,\qquad r_{C}(C\otimes \varepsilon )c=l_{C}\left( \varepsilon \otimes
C\right) .  \label{Br7}
\end{gather}
\end{itemize}

A morphism of braided coalgebras is, by definition, a morphism of coalgebras
which, in addition, is a morphism of braided objects. This defines the
category $\mathrm{BrCoalg}_{\mathcal{M}}$ of braided coalgebras and their
morphisms.

4) \cite[Definition 5.1]{Ta} A sextuple $(B,m,u,\Delta ,\varepsilon ,c)$ is
a called a \emph{braided bialgebra} if

\begin{itemize}
\item $(B,m,u,c)$ is a braided algebra;

\item $(B,\Delta ,\varepsilon ,c)$ is a braided coalgebra;

\item the following relations hold:%
\begin{gather}
\Delta m =(m\otimes m)(B\otimes c\otimes B)(\Delta \otimes \Delta ), \qquad
\Delta u =(u\otimes u)\Delta _{\mathbf{1}},  \label{Br1} \\
\varepsilon m =m_{\mathbf{1}}\left( \varepsilon \otimes \varepsilon \right)
, \qquad \varepsilon u =\mathrm{Id}_{\mathbf{1}}.  \label{Br9}
\end{gather}
\end{itemize}

A morphism of braided bialgebras is both a morphism of braided algebras and
coalgebras. This defines the category $\mathrm{BrBialg}_{\mathcal{M}}$ of
braided bialgebras.

Recall that a braiding $c$ is called symmetric or a symmetry whenever $c^2=%
\mathrm{Id}$. Denote by $\mathrm{Br}_{\mathcal{M}}^{s},\mathrm{BrAlg}_{%
\mathcal{M}}^{s},\mathrm{BrCoalg}_{\mathcal{M}}^{s}$ and $\mathrm{BrBialg}_{%
\mathcal{M}}^{s}$ the full subcategories of the respective categories above
consisting of objects with symmetric braiding. Denote by%
\begin{eqnarray*}
\mathbb{I}_{\mathrm{Br}}^{s} &:&\mathrm{Br}_{\mathcal{M}}^{s}\rightarrow
\mathrm{Br}_{\mathcal{M}},\qquad \mathbb{I}_{\mathrm{BrAlg}}^{s}:\mathrm{%
BrAlg}_{\mathcal{M}}^{s}\rightarrow \mathrm{BrAlg}_{\mathcal{M}}, \\
\mathbb{I}_{\mathrm{BrCoalg}}^{s} &:&\mathrm{BrCoalg}_{\mathcal{M}%
}^{s}\rightarrow \mathrm{BrCoalg}_{\mathcal{M}},\qquad \mathbb{I}_{\mathrm{%
BrBialg}}^{s}:\mathrm{BrBialg}_{\mathcal{M}}^{s}\rightarrow \mathrm{BrBialg}%
_{\mathcal{M}}
\end{eqnarray*}%
the obvious inclusion functors. Note that they are full, faithful, injective
on objects and conservative.
\end{definition}

\begin{remark}
\label{rem:aureo}Let $\mathcal{M}$ be a monoidal category. Let $\mathcal{A}$
be one of the following categories $\mathrm{Br}_{\mathcal{M}}$, $\mathrm{%
BrAlg}_{\mathcal{M}}$, $\mathrm{BrCoalg}_{\mathcal{M}}$ and $\mathrm{BrBialg}%
_{\mathcal{M}},$ let $\mathcal{A}^{s}$ be the corresponding full subcategory
of objects with symmetric braiding and denote by $\mathbb{I}_{\mathcal{A}%
}^{s}:\mathcal{A}^{s}\rightarrow \mathcal{A}$ the obvious inclusion functor.
Let $\mathbb{D}_{\mathcal{A}}:\mathcal{A\rightarrow M}$ be the forgetful
functor.

1) Let $\overline{X}\in \mathcal{A},\overline{Y}^{s}\in \mathcal{A}^{s}$ and
let $\overline{\alpha }:\overline{X}\rightarrow \mathbb{I}_{\mathcal{A}}^{s}%
\overline{Y}^{s}$ be a morphism in $\mathcal{A}$ such that $\alpha :=\mathbb{%
D}_{\mathcal{A}}\overline{\alpha }$ is a monomorphism. Set $X:=\mathbb{D}_{%
\mathcal{A}}\overline{X}$ and $Y:=\mathbb{D}_{\mathcal{A}}\mathbb{I}_{%
\mathcal{A}}^{s}\overline{Y}^{s}.$ Since $\alpha $ is braided we have $%
\left( \alpha \otimes \alpha \right) c_{X}^{2}=c_{Y}^{2}\left( \alpha
\otimes \alpha \right) =\alpha \otimes \alpha $ where $c_{X}$ and $c_{Y}$
are the braiding of $X$ and $Y$ respectively. Assume that $\alpha \otimes
\alpha $ is a monomorphism. Then we obtain $c_{X}^{2}=\mathrm{Id}_{X\otimes
X}$ so that we can write $\overline{X}=\mathbb{I}_{\mathcal{A}}^{s}\overline{%
X}^{s}$ for some $\overline{X}^{s}\in \mathcal{A}^{s}$ and $\overline{\alpha
}$ is a morphism in $\mathcal{A}^{s}$. Since $\mathbb{D}_{\mathcal{A}}$
reflects monomorphisms, we have proved that $\mathcal{A}^{s}$ is closed in $%
\mathcal{A}$ for those subobjects in $\mathcal{A}$ which are preserved by $%
\mathbb{D}_{\mathcal{A}}$ and by $\left( -\right) ^{\otimes 2}\circ \mathbb{D%
}_{\mathcal{A}}$ where $\left( -\right) ^{\otimes 2}:\mathcal{M}\rightarrow
\mathcal{M}:V\mapsto V\otimes V$.

2) Dually $\mathcal{A}^{s}$ is closed in $\mathcal{A}$ for those quotients
in $\mathcal{A}$ which are preserved by $\mathbb{D}_{\mathcal{A}}$ and by $%
\left( -\right) ^{\otimes 2}\circ \mathbb{D}_{\mathcal{A}}$.
\end{remark}

\begin{claim}
\label{cl:BrBialg}Let $\mathcal{M}$ and $\mathcal{M}^{\prime }$ be monoidal
categories. Following \cite[Proposition 2.5]{AM-BraidedOb}, every monoidal
functor $\left( F,\phi _{0},\phi _{2}\right) :\mathcal{M}\rightarrow
\mathcal{M}^{\prime }$ induces in a natural way suitable functors $\mathrm{Br%
}F$, $\mathrm{Alg}F,$ $\mathrm{BrAlg}F$ and $\mathrm{BrBialg}F$ such that
the following diagrams commute
\begin{gather*}
\xymatrixrowsep{15pt}\xymatrix{\Br_\M \ar[r]^{\Br
F}\ar[d]_H&\mathrm{Br}_{\mathcal{M}^{\prime }}\ar[d]^{H^\prime}\\
\mathcal{M} \ar[r]^F&\mathcal{M}^{\prime }}\qquad \xymatrixrowsep{15pt}%
\xymatrix{\mathrm{Alg}_{\mathcal{M}}
\ar[r]^{\mathrm{Alg}F}\ar[d]_\Omega&\mathrm{Alg}_{\mathcal{M}^{\prime
}}\ar[d]^{\Omega^\prime}\\ \mathcal{M} \ar[r]^F&\mathcal{M}^{\prime }}\qquad %
\xymatrixrowsep{15pt}\xymatrixcolsep{25pt}\xymatrix{\mathrm{BrAlg}_{%
\mathcal{M}}
\ar[r]^{\mathrm{BrAlg}F}\ar[d]_{H_{\mathrm{Alg}}}&\mathrm{BrAlg}_{%
\mathcal{M}^{\prime }}\ar[d]^{{H^\prime_{\mathrm{Alg}}}}\\
\mathrm{Alg}_{\mathcal{M}}
\ar[r]^{\mathrm{Alg}F}&\mathrm{Alg}_{\mathcal{M}^{\prime }}} \\
\xymatrixrowsep{15pt}\xymatrixcolsep{25pt}\xymatrix{\mathrm{BrAlg}_{%
\mathcal{M}}
\ar[r]^{\mathrm{BrAlg}F}\ar[d]_{\Omega_{\mathrm{Br}}}&\mathrm{BrAlg}_{%
\mathcal{M}^{\prime }}\ar[d]^{{\Omega^\prime_{\mathrm{Br}}}}\\
\mathrm{Br}_{\mathcal{M}}
\ar[r]^{\mathrm{Br}F}&\mathrm{Br}_{\mathcal{M}^{\prime }}}\qquad %
\xymatrixrowsep{15pt}\xymatrixcolsep{35pt}\xymatrix{\mathrm{BrBialg}_{%
\mathcal{M}}
\ar[r]^{\mathrm{BrBialg}F}\ar[d]_{\mho_{\mathrm{Br}}}&\mathrm{BrBialg}_{%
\mathcal{M}^{\prime }}\ar[d]^{{\mho^\prime_{\mathrm{Br}}}}\\
\mathrm{BrAlg}_{\mathcal{M}}
\ar[r]^{\mathrm{BrAlg}F}&\mathrm{BrAlg}_{\mathcal{M}^{\prime }}}.
\end{gather*}%
where the vertical arrows denote the obvious forgetful functors. Moreover

\begin{enumerate}
\item The functors $H,\Omega ,H_{\mathrm{Alg}},\Omega _{\mathrm{Br}},\mho _{%
\mathrm{Br}}$ are conservative.

\item $\mathrm{Br}F,\mathrm{Alg}F,\mathrm{BrAlg}F$ and $\mathrm{BrBialg}F$
are equivalences (resp. isomorphisms or conservative) whenever $F$ is.

\item $F$ preserves symmetric objects (this follows by definition of the
braiding induced by $F$). Thus we can define $\mathrm{Br}^{s}F,\mathrm{BrAlg}%
^{s}F$ and $\mathrm{BrBialg}^{s}F$ such that%
\begin{equation}
\xymatrixrowsep{15pt}\xymatrixcolsep{25pt}\xymatrix{
\mathrm{Br}^{s}_\mathcal{M}\ar[r]^{\mathrm{Br}^{s}F}\ar[d]_{\mathbb{I}_{%
\mathrm{Br}}^{s}}&\mathrm{Br}^{s}_{\mathcal{M}^\prime}\ar[d]^{\mathbb{I}_{%
\mathrm{Br}}^{s}}\\ \mathrm{Br}_{\mathcal{M}}
\ar[r]^{\mathrm{Br}F}&\mathrm{Br}_{\mathcal{M}^\prime}}\quad %
\xymatrixrowsep{15pt}\xymatrixcolsep{30pt}\xymatrix{
\mathrm{BrAlg}^{s}_\mathcal{M}\ar[r]^{\mathrm{BrAlg}^{s}F}\ar[d]_{%
\mathbb{I}_{\mathrm{BrAlg}}^{s}}&\mathrm{BrAlg}^{s}_{\mathcal{M}^\prime}%
\ar[d]^{\mathbb{I}_{\mathrm{BrAlg}}^{s}}\\ \mathrm{BrAlg}_{\mathcal{M}}
\ar[r]^{\mathrm{BrAlg}F}&\mathrm{BrAlg}_{\mathcal{M}^\prime}} \quad %
\xymatrixrowsep{15pt}\xymatrixcolsep{40pt}\xymatrix{
\mathrm{BrBialg}^{s}_\mathcal{M}\ar[r]^{\mathrm{BrBialg}^{s}F}\ar[d]_{%
\mathbb{I}_{\mathrm{BrBialg}}^{s}}&\mathrm{BrBialg}^{s}_{\mathcal{M}^\prime}%
\ar[d]^{\mathbb{I}_{\mathrm{BrBialg}}^{s}}\\ \mathrm{BrBialg}_{\mathcal{M}}
\ar[r]^{\mathrm{BrBialg}F}&\mathrm{BrBialg}_{\mathcal{M}^\prime}}
\label{diag:BrsF}
\end{equation}
\end{enumerate}
\end{claim}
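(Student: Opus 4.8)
The plan is to take the explicit construction of the induced functors from \cite[Proposition 2.5]{AM-BraidedOb} as the starting point and to verify the three ``moreover'' assertions by soft categorical arguments. Recall that for a monoidal functor $\left( F,\phi _{0},\phi _{2}\right) $ the braiding induced on $FV$ is $c_{FV}:=\phi _{2}(V,V)^{-1}\circ F(c_{V})\circ \phi _{2}(V,V)$, while on an algebra the structure maps become $F(m)\circ \phi _{2}(A,A)$ and $F(u)\circ \phi _{0}$, and on a coalgebra $\phi _{2}(C,C)^{-1}\circ F(\Delta )$ and $\phi _{0}^{-1}\circ F(\varepsilon )$; on morphisms each induced functor acts simply as $F$. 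Since each forgetful functor in the displayed squares merely discards part of this structure while leaving the underlying object $F(-)$ and morphism $F(-)$ untouched, the commutativity of all the displayed diagrams is immediate from these formulas. The single structural fact I would isolate for later use is that these assignments are \emph{strictly functorial} in the monoidal functor: a direct computation with the composite monoidal structure gives $\Br(G\circ F)=\Br G\circ \Br F$ (and likewise for $\Alg$, $\BrAlg$, $\BrBialg$) together with $\Br(\id_{\M})=\id_{\Br_{\M}}$, and every monoidal natural isomorphism $\theta :F\Rightarrow F'$ induces a natural isomorphism with components $\theta _{V}$ between the corresponding induced functors.

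For part (1), I would argue uniformly. Each of $H,\Omega ,H_{\Alg},\Omega _{\Br},\mho _{\Br}$ is faithful and acts as the identity on underlying morphisms, so it suffices to show that the inverse of a structured morphism is again structured. If $f$ is braided, i.e. $c_{W}(f\otimes f)=(f\otimes f)c_{V}$, and $f$ is invertible on underlying objects, then conjugating by $f^{-1}\otimes f^{-1}$ gives $(f^{-1}\otimes f^{-1})c_{W}=c_{V}(f^{-1}\otimes f^{-1})$, so $f^{-1}$ is braided; the analogous elementary manipulations show that the inverse of an algebra (resp. coalgebra) morphism is again an algebra (resp. coalgebra) morphism. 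Hence each of these forgetful functors reflects isomorphisms.

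For part (2), the conservative case follows formally from the commuting squares: if $\Br F(f)$ is an isomorphism then so is $H'\Br F(f)=FH(f)$, whence $H(f)$ is an isomorphism because $F$ is conservative, and then $f$ is an isomorphism because $H$ is conservative by (1); the same three-step argument, run along the appropriate square, handles $\Alg F$, $\BrAlg F$ and $\BrBialg F$. For the isomorphism case, if $F$ admits a monoidal inverse $F^{-1}$ then strict functoriality gives $\Br(F^{-1})\circ \Br F=\Br(F^{-1}F)=\Br(\id)=\id$ and symmetrically, so $\Br F$ is invertible, and likewise for the other three. For the equivalence case, I would choose a monoidal quasi-inverse $G$ of $F$ together with monoidal natural isomorphisms $FG\cong \id$ and $GF\cong \id$; applying the (strictly functorial) construction and using that it carries monoidal natural isomorphisms to natural isomorphisms yields $\Br F\circ \Br G=\Br(FG)\cong \id$ and $\Br G\circ \Br F\cong \id$, exhibiting $\Br G$ as a quasi-inverse of $\Br F$. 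The same applies verbatim to $\Alg F$, $\BrAlg F$, $\BrBialg F$.

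For part (3), the formula for the induced braiding gives $c_{FV}^{2}=\phi _{2}(V,V)^{-1}\circ F(c_{V}^{2})\circ \phi _{2}(V,V)$, so $c_{V}^{2}=\id$ forces $c_{FV}^{2}=\id$ and $F$ preserves symmetric objects. Consequently $\Br F$, $\BrAlg F$ and $\BrBialg F$ corestrict to the full subcategories of symmetric objects, defining $\Br^{s}F$, $\BrAlg^{s}F$, $\BrBialg^{s}F$; since the inclusions $\I_{\Br}^{s}$, $\I_{\BrAlg}^{s}$, $\I_{\BrBialg}^{s}$ leave the underlying data unchanged, the squares \eqref{diag:BrsF} commute by construction. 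The only genuine obstacle I anticipate is the equivalence part of (2): it is not enough to transport objects and the braiding by hand, and the clean route is to invoke the functoriality of the construction under composition and under monoidal natural isomorphisms, which is exactly the input provided by \cite[Proposition 2.5]{AM-BraidedOb} and its proof.
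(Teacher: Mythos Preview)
Your proposal is correct. In the paper this statement is a \emph{claim} rather than a proved proposition: the construction of $\Br F$, $\Alg F$, $\BrAlg F$, $\BrBialg F$ is imported wholesale from \cite[Proposition 2.5]{AM-BraidedOb}, and the three ``moreover'' items are asserted without argument (item (3) is accompanied only by the parenthetical remark that it follows from the definition of the induced braiding). There is therefore no proof in the paper to compare against; your write-up supplies exactly the routine verifications the authors left implicit, and does so along the expected lines.

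One small comment on your equivalence argument in (2): you rely on the existence of a \emph{monoidal} quasi-inverse and \emph{monoidal} natural isomorphisms $FG\cong \id$, $GF\cong \id$. This is indeed a standard fact about monoidal equivalences, but it is an extra ingredient beyond ``$F$ is an equivalence and a monoidal functor''; since the paper's phrasing is the bare ``whenever $F$ is'', you are right to flag that this is where the content lies. Your strict-functoriality check $\Br(G\circ F)=\Br G\circ \Br F$ and the transport of monoidal natural isomorphisms to natural isomorphisms between the induced functors are both correct and are precisely what one needs to close the argument.
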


Next aim is to recall some meaningful adjunctions that will be investigated
in the paper.

\begin{claim}
\label{cl:TbrStrict} Let $\mathcal{M}$ be a monoidal category. Assume that $%
\mathcal{M}$ has denumerable coproducts and that the tensor functors
preserve such coproducts. In view of \cite[Proposition 3.1]{AM-BraidedOb},
the functor $\Omega _{\mathrm{Br}}$ has a left adjoint $T_{\mathrm{Br}}$ and
the following diagrams commute.
\begin{equation}
\xymatrixrowsep{15pt}\xymatrixcolsep{25pt}\xymatrix{\mathrm{BrAlg}_{%
\mathcal{M}} \ar[r]^{H_\mathrm{Alg}}&\mathrm{Alg}_{\mathcal{M}}\\
\mathrm{Br}_{\mathcal{M}}
\ar[u]^{T_{\mathrm{Br}}}\ar[r]^H&\mathcal{M}\ar[u]_T}. \qquad %
\xymatrixrowsep{15pt}\xymatrixcolsep{25pt}\xymatrix{\mathrm{BrAlg}_{%
\mathcal{M}}
\ar[r]^{H_\mathrm{Alg}}\ar[d]_{\Omega_{\mathrm{Br}}}&\mathrm{Alg}_{%
\mathcal{M}}\ar[d]^{\Omega}\\ \mathrm{Br}_{\mathcal{M}} \ar[r]^H&\mathcal{M}}
\label{form:HtildeTbrOmegaBr}
\end{equation}%
The unit $\eta _{\mathrm{Br}}$ and the counit $\epsilon _{\mathrm{Br}}$ are
uniquely determined by the following equations
\begin{equation}
H\eta _{\mathrm{Br}}=\eta H,\qquad H_{\mathrm{Alg}}\epsilon _{\mathrm{Br}%
}=\epsilon H_{\mathrm{Alg}},  \label{form:TbrStrict}
\end{equation}%
where $\eta $ and $\epsilon $ denote the unit and counit of the adjunction $%
\left( T,\Omega \right) $ of Remark \ref{rem: AlgMon}. Using Lemma \ref%
{lem:LiftAdj}, one shows that the adjunction $\left( T_{\mathrm{Br}},\Omega
_{\mathrm{Br}}\right) $ induces an adjunction $\left( T_{\mathrm{Br}%
}^{s},\Omega _{\mathrm{Br}}^{s}\right) $ such that the following diagrams
commute.
\begin{equation}
\xymatrixrowsep{15pt}\xymatrixcolsep{25pt}\xymatrix{\mathrm{BrAlg}^s_{%
\mathcal{M}}
\ar[r]^{\mathbb{I}_{\mathrm{BrAlg}}^{s}}&\mathrm{BrAlg}_{\mathcal{M}}\\
\mathrm{Br}^s_{\mathcal{M}}
\ar@{.>}[u]^{T^s_{\mathrm{Br}}}\ar[r]^{\mathbb{I}_{\mathrm{Br}}^{s}}&%
\mathrm{Br}_{\mathcal{M}}\ar[u]_{T_\mathrm{Br}}} \qquad \xymatrixrowsep{15pt}%
\xymatrixcolsep{25pt}\xymatrix{\mathrm{BrAlg}^s_{\mathcal{M}}
\ar[r]^{\mathbb{I}_{\mathrm{BrAlg}}^{s}}\ar@{.>}[d]_{\Omega^s_{%
\mathrm{Br}}}&\mathrm{BrAlg}_{\mathcal{M}}\ar[d]^{\Omega_{\mathrm{Br}}}\\
\mathrm{Br}^s_{\mathcal{M}}
\ar[r]^{\mathbb{I}_{\mathrm{Br}}^{s}}&\mathrm{Br}_\mathcal{M}}
\label{diag:Tbrs}
\end{equation}
The lemma can be applied by the following argument. It is clear that any
object in the image of $\Omega _{\mathrm{Br}}\mathbb{I}_{\mathrm{BrAlg}}^{s}$
is in the image of $\mathbb{I}_{\mathrm{Br}}^{s}$. Let $\left( M,c\right)
\in \mathrm{Br}_{\mathcal{M}}^{s}$ and set $\left(
A,m_{A},u_{A},c_{A}\right) :=T_{\mathrm{Br}}\mathbb{I}_{\mathrm{Br}%
}^{s}\left( M,c\right) $. Using \cite[(42)]{AM-BraidedOb}, we have $%
c_{A}\left( \alpha _{m}M\otimes \alpha _{n}M\right) =\left( \alpha
_{n}V\otimes \alpha _{m}M\right) c_{A}^{m,n}$ so that%
\begin{equation*}
c_{A}^{2}\left( \alpha _{m}M\otimes \alpha _{n}M\right) =c_{A}\left( \alpha
_{n}V\otimes \alpha _{m}M\right) c_{A}^{m,n}=\left( \alpha _{m}M\otimes
\alpha _{n}M\right) c_{A}^{n,m}c_{A}^{m,n}
\end{equation*}%
and $c_{A}^{n,m}c_{A}^{m,n}=\mathrm{Id}_{M^{\otimes \left( m+n\right) }}.$
The latter is proved by induction on $t=m+n\in \mathbb{N}$ using \cite[%
Proposition 2.7]{AM-BraidedOb}.

Thus $c_{A}^{2}\left( \alpha _{m}M\otimes \alpha _{n}M\right) =\left( \alpha
_{m}M\otimes \alpha _{n}M\right) $ for every $m,n\in \mathbb{N}$ and hence $%
c_{A}^{2}=\mathrm{Id}_{A\otimes A}$. Therefore $\left(
A,m_{A},u_{A},c_{A}\right) \in \mathrm{BrAlg}_{\mathcal{M}}^{s}$ and $T_{%
\mathrm{Br}}\mathbb{I}_{\mathrm{Br}}^{s}\left( M,c\right) =\mathbb{I}_{%
\mathrm{BrAlg}}^{s}\left( A,m_{A},u_{A},c_{A}\right) .$ Hence any object in
the image of $T_{\mathrm{Br}}\mathbb{I}_{\mathrm{Br}}^{s}$ is also in the
image of $\mathbb{I}_{\mathrm{BrAlg}}^{s}.$ Thus, by Lemma \ref{lem:LiftAdj}
we have the desired adjunction with unit $\eta _{\mathrm{Br}}^{s}:\mathrm{Id}%
_{\mathrm{Br}_{\mathcal{M}}^{s}}\rightarrow \Omega _{\mathrm{Br}}^{s}T_{%
\mathrm{Br}}^{s}$ and counit $\epsilon _{\mathrm{Br}}^{s}:T_{\mathrm{Br}%
}^{s}\Omega _{\mathrm{Br}}^{s}\rightarrow \mathrm{Id}_{\mathrm{BrAlg}_{%
\mathcal{M}}^{s}}$ which are uniquely defined by
\begin{equation}
\mathbb{I}_{\mathrm{BrAlg}}^{s}\epsilon _{\mathrm{Br}}^{s}=\epsilon _{%
\mathrm{Br}}\mathbb{I}_{\mathrm{BrAlg}}^{s}\qquad \text{and}\qquad \mathbb{I}%
_{\mathrm{Br}}^{s}\eta _{\mathrm{Br}}^{s}=\eta _{\mathrm{Br}}\mathbb{I}_{%
\mathrm{Br}}^{s}.  \label{form:TbrStricts}
\end{equation}%
Furthermore $\left( \mathbb{I}_{\mathrm{BrAlg}}^{s},\mathbb{I}_{\mathrm{Br}%
}^{s}\right) :\left( T_{\mathrm{Br}}^{s},\Omega _{\mathrm{Br}}^{s}\right)
\rightarrow \left( T_{\mathrm{Br}},\Omega _{\mathrm{Br}}\right) $ is a
commutation datum with canonical transformation given by the identity.
\end{claim}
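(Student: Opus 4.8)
The plan is to deduce the existence of the symmetric adjunction $\left( T_{\mathrm{Br}}^{s},\Omega _{\mathrm{Br}}^{s}\right)$ by a single application of Lemma \ref{lem:LiftAdj} to the adjunction $\left( T_{\mathrm{Br}},\Omega _{\mathrm{Br}}\right)$, taking as the full, faithful and injective-on-objects functors the inclusions $F:=\mathbb{I}_{\mathrm{BrAlg}}^{s}$ and $G:=\mathbb{I}_{\mathrm{Br}}^{s}$ recorded in Definition \ref{def:braid}. To invoke that lemma I must verify its two image conditions. The condition that every object in the image of $\Omega _{\mathrm{Br}}\mathbb{I}_{\mathrm{BrAlg}}^{s}$ lie in the image of $\mathbb{I}_{\mathrm{Br}}^{s}$ is immediate, since forgetting the algebra structure of a braided algebra with symmetric braiding yields a braided object whose braiding is still symmetric. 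The substantial condition is the other one, namely that every object in the image of $T_{\mathrm{Br}}\mathbb{I}_{\mathrm{Br}}^{s}$ lie in the image of $\mathbb{I}_{\mathrm{BrAlg}}^{s}$; this amounts to showing that the braided tensor algebra of a symmetric braided object is again symmetric, and it is the heart of the argument.

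For that main step I would fix $\left( M,c\right) \in \mathrm{Br}_{\mathcal{M}}^{s}$ and write $\left( A,m_{A},u_{A},c_{A}\right) :=T_{\mathrm{Br}}\mathbb{I}_{\mathrm{Br}}^{s}\left( M,c\right)$. Since the underlying object of $A$ is the coproduct $\oplus _{n\in \mathbb{N}}M^{\otimes n}$ with canonical injections $\alpha _{n}M$, it suffices to evaluate $c_{A}^{2}$ on each $\alpha _{m}M\otimes \alpha _{n}M$. Using the explicit description of the induced braiding in \cite[(42)]{AM-BraidedOb} one has $c_{A}\left( \alpha _{m}M\otimes \alpha _{n}M\right) =\left( \alpha _{n}M\otimes \alpha _{m}M\right) c_{A}^{m,n}$ for suitable morphisms $c_{A}^{m,n}:M^{\otimes m}\otimes M^{\otimes n}\rightarrow M^{\otimes n}\otimes M^{\otimes m}$, so that $c_{A}^{2}$ restricted to $\alpha _{m}M\otimes \alpha _{n}M$ is controlled by the composite $c_{A}^{n,m}c_{A}^{m,n}$. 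The plan is therefore to prove $c_{A}^{n,m}c_{A}^{m,n}=\mathrm{Id}_{M^{\otimes \left( m+n\right) }}$ by induction on $t=m+n\in \mathbb{N}$, feeding the hypothesis $c^{2}=\mathrm{Id}_{M\otimes M}$ into the base of the recursion and propagating it through the braid-type coherence relations of \cite[Proposition 2.7]{AM-BraidedOb}. This inductive bookkeeping of the higher braidings $c_{A}^{m,n}$ is precisely the step I expect to be the main obstacle.

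Once $c_{A}^{2}=\mathrm{Id}_{A\otimes A}$ has been established, $\left( A,m_{A},u_{A},c_{A}\right)$ belongs to $\mathrm{BrAlg}_{\mathcal{M}}^{s}$, whence $T_{\mathrm{Br}}\mathbb{I}_{\mathrm{Br}}^{s}\left( M,c\right) =\mathbb{I}_{\mathrm{BrAlg}}^{s}\left( A,m_{A},u_{A},c_{A}\right)$ and the remaining image condition holds. Lemma \ref{lem:LiftAdj} then yields the lifted functors $T_{\mathrm{Br}}^{s}:=\widehat{T_{\mathrm{Br}}\mathbb{I}_{\mathrm{Br}}^{s}}$ and $\Omega _{\mathrm{Br}}^{s}:=\widehat{\Omega _{\mathrm{Br}}\mathbb{I}_{\mathrm{BrAlg}}^{s}}$ in the notation of Lemma \ref{lem:Cappuccio}, makes the squares \eqref{diag:Tbrs} commute, and, through the relations \eqref{form:LiftAdj}, characterizes the unit $\eta _{\mathrm{Br}}^{s}$ and counit $\epsilon _{\mathrm{Br}}^{s}$ as the unique morphisms satisfying \eqref{form:TbrStricts}. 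Finally, the last assertion of Lemma \ref{lem:LiftAdj} gives directly that $\left( \mathbb{I}_{\mathrm{BrAlg}}^{s},\mathbb{I}_{\mathrm{Br}}^{s}\right) :\left( T_{\mathrm{Br}}^{s},\Omega _{\mathrm{Br}}^{s}\right) \rightarrow \left( T_{\mathrm{Br}},\Omega _{\mathrm{Br}}\right)$ is a commutation datum whose canonical transformation is the identity of $T_{\mathrm{Br}}\mathbb{I}_{\mathrm{Br}}^{s}$, which completes the proof.
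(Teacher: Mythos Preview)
Your proposal is correct and follows essentially the same approach as the paper: apply Lemma \ref{lem:LiftAdj} to the inclusions $\mathbb{I}_{\mathrm{BrAlg}}^{s}$ and $\mathbb{I}_{\mathrm{Br}}^{s}$, with the only nontrivial verification being that $T_{\mathrm{Br}}$ of a symmetric braided object is again symmetric, handled by the same componentwise computation of $c_A^2$ via the graded pieces $c_A^{m,n}$ and the induction on $m+n$ using \cite[Proposition 2.7]{AM-BraidedOb}. The remaining conclusions about the unit, counit, and commutation datum are read off directly from Lemma \ref{lem:LiftAdj}, exactly as the paper does.
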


\begin{definition}
\label{def:prim}Let $\mathcal{M}$ be a preadditive monoidal category with
equalizers. Assume that the tensor functors are additive. Let $\mathbb{C}%
:=\left( C,\Delta _{C},\varepsilon _{C},u_{C}\right) $ be a coalgebra $%
\left( C,\Delta _{C},\varepsilon _{C}\right) $ endowed with a coalgebra
morphism $u_{C}:\mathbf{1}\rightarrow C$. In this setting we always
implicitly assume that we can choose a specific equalizer%
\begin{equation}
\xymatrixcolsep{1cm}\xymatrix{ P\left( \mathbb{C}\right) \ar[r]^{\xi
\mathbb{C}} & C \ar@<.5ex>[rr]^{\Delta _{C}} \ar@<-.5ex>[rr]_{\left(
C\otimes u_{C}\right) r_{C}^{-1}+\left( u_{C}\otimes C\right)
l_{C}^{-1}}&&C\otimes C }  \label{diag:prim}
\end{equation}%
We will use the same symbol when $\mathbb{C}$ comes out to be enriched with
an extra structure such us when $\mathbb{C}$ will denote a bialgebra or a
braided bialgebra.
\end{definition}

We now investigate some properties of $T_{\mathrm{Br}}$.

\begin{claim}
\label{cl:TbarStrict} Let $\mathcal{M}$ be a preadditive monoidal category
with equalizers and denumerable coproducts. Assume that the tensor functors
are additive and preserve equalizers and denumerable coproducts. By \ref%
{cl:TbrStrict}, the forgetful functor $\Omega _{\mathrm{Br}}:\mathrm{BrAlg}_{%
\mathcal{M}}\rightarrow \mathrm{Br}_{\mathcal{M}}$ has a left adjoint $T_{%
\mathrm{Br}}:\mathrm{Br}_{\mathcal{M}}\rightarrow \mathrm{BrAlg}_{\mathcal{M}%
}.$ In view of \cite[Lemma 3.4]{AM-BraidedOb}, $T_{\mathrm{Br}}$ induces a
functor $\overline{T}_{\mathrm{Br}}$ such that
\begin{equation}
\xymatrixrowsep{12pt}\xymatrixcolsep{0.7cm}\xymatrix{\mathrm{BrBialg}_{%
\mathcal{M}} \ar[rr]^{\mho _{\mathrm{Br}}}&& \mathrm{BrAlg}_{\mathcal{M}} \\
&\mathrm{Br}_{\mathcal{M}}\ar[ul]^{\overline{T}_{\mathrm{Br}}}\ar[ru]_{
T_{\mathrm{Br}} }}  \label{form:OmegRibTbarBr}
\end{equation}%
Explicitly, for all $\left( V,c\right) \in \mathrm{Br}_{\mathcal{M}}$, we
can write $\overline{T}_{\mathrm{Br}}\left( V,c\right) $ in the form $%
(A,m_{A},u_{A},\Delta _{A},\varepsilon _{A},c_{A})$ where $\Delta
_{A}:A\rightarrow A\otimes A$ and $\varepsilon _{A}:A\rightarrow \mathbf{1}$
are unique algebra morphisms such that
\begin{eqnarray}
\Delta _{A}\circ \alpha _{1}V &=&\delta _{V}^{l}+\delta _{V}^{r},
\label{form:TBrdelta} \\
\varepsilon _{A}\circ \alpha _{1}V &=&0,  \label{form:TBreps}
\end{eqnarray}%
where $\delta _{V}^{l}:=\left( u_{A}\otimes \alpha _{1}V\right) \circ
l_{V}^{-1}$ and $\delta _{V}^{r}:=\left( \alpha _{1}V\otimes u_{A}\right)
\circ r_{V}^{-1}.$ Moreover%
\begin{equation}
\varepsilon _{A}\circ \alpha _{n}V=\delta _{n,0}\mathrm{Id}_{\mathbf{1}},%
\text{ for every }n\in \mathbb{N}\text{.}  \label{form:TBrepsGEN}
\end{equation}

In view of \cite[Theorem 3.5]{AM-BraidedOb}, the functor $\overline{T}_{%
\mathrm{Br}}$ has a right adjoint $P_{\mathrm{Br}}:\mathrm{BrBialg}_{%
\mathcal{M}}\rightarrow \mathrm{Br}_{\mathcal{M}},$ which is constructed in
\cite[Lemma 3.3]{AM-BraidedOb}. The unit $\overline{\eta }_{\mathrm{Br}}$
and the counit $\overline{\epsilon }_{\mathrm{Br}}$ are uniquely determined
by the following equalities%
\begin{eqnarray}
\xi \overline{T}_{\mathrm{Br}}\circ \overline{\eta }_{\mathrm{Br}} &=&\eta _{%
\mathrm{Br}},  \label{form:BarEta} \\
\epsilon _{\mathrm{Br}}\mho _{\mathrm{Br}}\circ T_{\mathrm{Br}}\xi &=&\mho _{%
\mathrm{Br}}\overline{\epsilon }_{\mathrm{Br}},  \label{form:BarEps}
\end{eqnarray}%
where $\left( V,c\right) \in \mathrm{Br}_{\mathcal{M}},\mathbb{B}\in \mathrm{%
BrBialg}_{\mathcal{M}}$ while $\eta _{\mathrm{Br}}$ and $\epsilon _{\mathrm{%
Br}}$ denote the unit and counit of the adjunction $\left( T_{\mathrm{Br}%
},\Omega _{\mathrm{Br}}\right) $ respectively. Moreover $\xi :P_{\mathrm{Br}%
}\rightarrow \Omega _{\mathrm{Br}}\mho _{\mathrm{Br}}$ is a natural
transformation induced by the canonical morphism in (\ref{diag:prim}).

Note that from \ref{cl:TbrStrict} it is clear that any object in the image
of $\overline{T}_{\mathrm{Br}}\mathbb{I}_{\mathrm{Br}}^{s}$ has symmetric
braiding and hence it is in the image of $\mathbb{I}_{\mathrm{BrBialg}}^{s}.$
Let $\mathbb{B}\in \mathrm{BrBialg}_{\mathcal{M}}^{s}$ and set $\left(
P,c_{P}\right) :=$ $P_{\mathrm{Br}}\mathbb{I}_{\mathrm{BrBialg}}^{s}\mathbb{B%
}$. Since the tensor functors preserve equalizers, we have that $\xi \mathbb{%
B\otimes }\xi \mathbb{B}$ is a monomorphism so that we can apply 1) in
Remark \ref{rem:aureo} to get that $\left( P,c_{P}\right) \in \mathrm{Br}_{%
\mathcal{M}}^{s}$. Thus any object in the image of $P_{\mathrm{Br}}\mathbb{I}%
_{\mathrm{BrBialg}}^{s}$ is also in the image of $\mathbb{I}_{\mathrm{Br}%
}^{s}.$ Hence, by Lemma \ref{lem:LiftAdj} we have an adjunction $\left(
\overline{T}_{\mathrm{Br}}^{s},P_{\mathrm{Br}}^{s}\right) $ such that the
diagrams
\begin{equation}
\xymatrixrowsep{15pt}\xymatrixcolsep{25pt}\xymatrix{\mathrm{BrBialg}^s_{%
\mathcal{M}}
\ar[r]^{\mathbb{I}_{\mathrm{BrBialg}}^{s}}&\mathrm{BrBialg}_{\mathcal{M}}\\
\mathrm{Br}^s_{\mathcal{M}}
\ar@{.>}[u]^{\overline{T}_{\mathrm{Br}}^s}\ar[r]^{\mathbb{I}_{%
\mathrm{Br}}^{s}}&\mathrm{Br}_{\mathcal{M}}\ar[u]_{\overline{T}_{%
\mathrm{Br}}}} \qquad \xymatrixrowsep{15pt}\xymatrixcolsep{25pt}%
\xymatrix{\mathrm{BrBialg}^s_{\mathcal{M}}
\ar[r]^{\mathbb{I}_{\mathrm{BrBialg}}^{s}}\ar@{.>}[d]_{P^s_{\mathrm{Br}}}&%
\mathrm{BrBialg}_{\mathcal{M}}\ar[d]^{P_{\mathrm{Br}}}\\
\mathrm{Br}^s_{\mathcal{M}}
\ar[r]^{\mathbb{I}_{\mathrm{Br}}^{s}}&\mathrm{Br}_\mathcal{M}}
\label{diag:PsTsBr}
\end{equation}
commute and the unit $\overline{\eta }_{\mathrm{Br}}^{s}:\mathrm{Id}_{%
\mathrm{Br}_{\mathcal{M}}^{s}}\rightarrow P_{\mathrm{Br}}^{s}\overline{T}_{%
\mathrm{Br}}^{s}$ and the counit $\overline{\epsilon }_{\mathrm{Br}}^{s}:%
\overline{T}_{\mathrm{Br}}^{s}P_{\mathrm{Br}}^{s}\rightarrow \mathrm{Id}_{%
\mathrm{BrBialg}_{\mathcal{M}}^{s}}$ are uniquely defined by
\begin{equation}
\mathbb{I}_{\mathrm{BrBialg}}^{s}\overline{\epsilon }_{\mathrm{Br}}^{s}=%
\overline{\epsilon }_{\mathrm{Br}}\mathbb{I}_{\mathrm{BrBialg}}^{s}\qquad
\text{and}\qquad \mathbb{I}_{\mathrm{Br}}^{s}\overline{\eta }_{\mathrm{Br}%
}^{s}=\overline{\eta }_{\mathrm{Br}}\mathbb{I}_{\mathrm{Br}}^{s}.
\label{form:barbrs}
\end{equation}%
Moreover $\left( \mathbb{I}_{\mathrm{BrBialg}}^{s},\mathbb{I}_{\mathrm{Br}%
}^{s}\right) :\left( \overline{T}_{\mathrm{Br}}^{s},P_{\mathrm{Br}%
}^{s}\right) \rightarrow \left( \overline{T}_{\mathrm{Br}},P_{\mathrm{Br}%
}\right) $ is a commutation datum with canonical transformation given by the
identity. Note that the functor $\mho _{\mathrm{Br}}$ induces a functor $%
\mho _{\mathrm{Br}}^{s}$ such that the following diagrams commute.%
\begin{equation}
\xymatrixrowsep{15pt}\xymatrixcolsep{25pt} \xymatrix{\BrBialg_{\M}^s
\ar[d]_{\mathbb{I}_{\BrBialg}^{s}}\ar[r]^{\mho _{\Br}^s}&\BrAlg _{\M}
^s\ar[d]^{\mathbb{I}_{\BrAlg}^s}\\ \BrBialg_{\M} \ar[r]^{\mho
_{\Br}}&\BrAlg_{\M}} \quad \xymatrix{\Br^s_\M\ar[dr]_{T_\Br^s}\ar[rr]^{%
\overline{T}_{\mathrm{Br}}^{s}}&&\BrBialg^s_\M\ar[dl]^{\mho _\Br^s}\\
&\BrAlg_\M^s}  \label{diag:OmRibs}
\end{equation}
Furthermore, by Lemma \ref{lem:Cappuccio}, the natural transformation $\xi
:P_{\mathrm{Br}}\rightarrow \Omega _{\mathrm{Br}}\mho _{\mathrm{Br}}$
induces a natural transformation $\xi :=\widehat{\xi \mathbb{I}_{\mathrm{%
BrBialg}}^{s}}:P_{\mathrm{Br}}^{s}\rightarrow \Omega _{\mathrm{Br}}^{s}\mho
_{\mathrm{Br}}^{s}$ such that $\mathbb{I}_{\mathrm{Br}}^{s}\xi =\xi \mathbb{I%
}_{\mathrm{BrBialg}}^{s}$.
\end{claim}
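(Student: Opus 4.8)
The plan is to separate the statement into its non-symmetric content, which is essentially a recollection from \cite{AM-BraidedOb}, and the symmetric lift, which is where the real work lies. For the non-symmetric part I would first invoke \cite[Lemma 3.4]{AM-BraidedOb} to produce $\overline{T}_{\Br}$: the free braided algebra $T_{\Br}(V,c)$ is turned into a braided bialgebra by defining $\Delta_A$ and $\varepsilon_A$ as the unique algebra morphisms determined on the degree-one generators by (\ref{form:TBrdelta}) and (\ref{form:TBreps}), which is meaningful precisely because $\M$ is preadditive with additive tensor functors so that $\delta_V^l+\delta_V^r$ is defined. Equation (\ref{form:TBrepsGEN}) then drops out by evaluating the algebra map $\varepsilon_A$ on each injection $\alpha_nV$, writing $\alpha_nV$ as an iterated product of $\alpha_1V$'s via (\ref{form:TVm}). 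Since $\mho_{\Br}\overline{T}_{\Br}=T_{\Br}$ holds by construction, (\ref{form:OmegRibTbarBr}) commutes. The right adjoint $P_{\Br}$ and the natural transformation $\xi$ I would take directly from \cite[Theorem 3.5]{AM-BraidedOb} and \cite[Lemma 3.3]{AM-BraidedOb}, the unit $\overline{\eta}_{\Br}$ and counit $\overline{\epsilon}_{\Br}$ being forced by the universal property of the equalizer (\ref{diag:prim}), giving (\ref{form:BarEta}) and (\ref{form:BarEps}).

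The core of the proof is the symmetric lift, and my plan is to apply Lemma \ref{lem:LiftAdj} to the adjunction $(L',R')=(\overline{T}_{\Br},P_{\Br})$ along the inclusions $F=\I_{\BrBialg}^s$ and $G=\I_{\Br}^s$, which are full, faithful and injective on objects by Definition \ref{def:braid}. The lemma asks me to verify two image conditions. The first, that any object in the image of $\overline{T}_{\Br}\I_{\Br}^s$ lies in the image of $\I_{\BrBialg}^s$, I would reduce to the algebra level: since $\mho_{\Br}\overline{T}_{\Br}=T_{\Br}$, the braiding of $\overline{T}_{\Br}(V,c)$ is exactly that of $T_{\Br}(V,c)$, so the inductive computation of $c_A^{n,m}c_A^{m,n}=\id$ already carried out in \ref{cl:TbrStrict} shows the braiding is symmetric whenever $(V,c)$ is.

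The second image condition, that the primitives of a symmetric braided bialgebra again carry a symmetric braiding, is the step I expect to be the main obstacle, as it is the only place where one must genuinely exploit the symmetry hypotheses on $\M$. Here the plan is to use part 1) of Remark \ref{rem:aureo}: for $\mathbb{B}\in\BrBialg_{\M}^s$ the component $\xi\I_{\BrBialg}^s\mathbb{B}$ is the equalizer (\ref{diag:prim}) into the symmetric object $\Omega_{\Br}\mho_{\Br}\I_{\BrBialg}^s\mathbb{B}$, hence a monomorphism, and because the tensor functors preserve equalizers its square $\xi\I_{\BrBialg}^s\mathbb{B}\otimes\xi\I_{\BrBialg}^s\mathbb{B}$ is a monomorphism too. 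Remark \ref{rem:aureo}(1) then yields $c_P^2=\id$, so $P_{\Br}\I_{\BrBialg}^s\mathbb{B}$ lies in the image of $\I_{\Br}^s$. With both conditions established, Lemma \ref{lem:LiftAdj} delivers the adjunction $(\overline{T}_{\Br}^s,P_{\Br}^s)$, the commuting squares (\ref{diag:PsTsBr}), the unit and counit characterised by (\ref{form:barbrs}), and the commutation datum $(\I_{\BrBialg}^s,\I_{\Br}^s):(\overline{T}_{\Br}^s,P_{\Br}^s)\to(\overline{T}_{\Br},P_{\Br})$ with canonical transformation the identity.

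The two residual assertions I would settle by Lemma \ref{lem:Cappuccio}. Because $\mho_{\Br}$ sends a symmetric braided bialgebra to a symmetric braided algebra, the composite $\mho_{\Br}\I_{\BrBialg}^s$ has image inside that of $\I_{\BrAlg}^s$, so part 1) of that lemma produces $\mho_{\Br}^s$ and the square of (\ref{diag:OmRibs}); the triangle then follows by comparing the $s$-versions against (\ref{form:OmegRibTbarBr}) through the faithful, object-injective functor $\I_{\BrAlg}^s$, using the first diagram of (\ref{diag:Tbrs}) and the first diagram of (\ref{diag:PsTsBr}). Finally, restricting $\xi$ to symmetric objects and rewriting its source and target via (\ref{diag:PsTsBr}), (\ref{diag:OmRibs}) and (\ref{diag:Tbrs}) as $\I_{\Br}^s P_{\Br}^s$ and $\I_{\Br}^s\Omega_{\Br}^s\mho_{\Br}^s$, part 2) of Lemma \ref{lem:Cappuccio} yields the induced $\xi:=\widehat{\xi\I_{\BrBialg}^s}:P_{\Br}^s\to\Omega_{\Br}^s\mho_{\Br}^s$ with $\I_{\Br}^s\xi=\xi\I_{\BrBialg}^s$, completing the claim.
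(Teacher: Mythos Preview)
Your proposal is correct and follows essentially the same approach as the paper: the justification is embedded in the claim itself, and you have accurately reproduced its structure—recalling the non-symmetric adjunction from \cite{AM-BraidedOb}, verifying the two image conditions of Lemma~\ref{lem:LiftAdj} via \ref{cl:TbrStrict} and Remark~\ref{rem:aureo}(1) respectively, and finishing with Lemma~\ref{lem:Cappuccio}. One minor remark on wording: when you say this is ``the only place where one must genuinely exploit the symmetry hypotheses on $\mathcal{M}$,'' note that $\mathcal{M}$ itself is not assumed symmetric here; the relevant hypothesis on $\mathcal{M}$ is that the tensor functors preserve equalizers, while the symmetry in play is that of the braiding $c_B$ on the object $\mathbb{B}$.
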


\begin{proposition}
\label{pro:comdat1}Let $\left( F,\phi _{0},\phi _{2}\right) :\mathcal{M}%
\rightarrow \mathcal{M}^{\prime }$ be a monoidal functor between monoidal
categories. Assume that $\mathcal{M}$ and $\mathcal{M}^{\prime }$ have
denumerable coproducts and that $F$ and the tensor functors preserve such
coproducts. Then both
\begin{equation*}
\left( \mathrm{Alg}F,F\right) :\left( T,\Omega \right) \rightarrow \left(
T^{\prime },\Omega ^{\prime }\right) \qquad \text{and}\qquad \left( \mathrm{%
BrAlg}F,\mathrm{Br}F\right) :\left( T_{\mathrm{Br}},\Omega _{\mathrm{Br}%
}\right) \rightarrow \left( T_{\mathrm{Br}}^{\prime },\Omega _{\mathrm{Br}%
}^{\prime }\right)
\end{equation*}
are commutation data.
\end{proposition}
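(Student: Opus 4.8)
The plan is to check, for each of the two pairs, the two conditions of Definition \ref{def:zetadatum}: that the square \eqref{diag:cd} commutes and that the canonical transformation $\zeta$ produced by Lemma \ref{lem:zeta} is a functorial isomorphism. For $\left(\mathrm{Alg}F,F\right)$, the adjunctions are $(T,\Omega)$ and $(T',\Omega')$ of Remark \ref{rem: AlgMon}, and the commutativity $\Omega'\circ\mathrm{Alg}F=F\circ\Omega$ is one of the squares recorded in \ref{cl:BrBialg}; so only the second condition needs work. By formula \eqref{form:zeta} the canonical map is $\zeta=\epsilon'(\mathrm{Alg}F\cdot T)\circ T'F\eta\colon T'F\to\mathrm{Alg}F\cdot T$, where $\eta,\epsilon'$ are the unit and counit of $(T,\Omega)$ and $(T',\Omega')$.

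Since $\Omega'$ is conservative (see \ref{cl:BrBialg}), it suffices to prove that $\Omega'\zeta$ is an isomorphism. Here I would use the explicit free-algebra description of Remark \ref{rem: AlgMon}: $\Omega TV=\oplus_{n}V^{\otimes n}$ with injections $\alpha_nV$, and likewise $\Omega'T'(FV)=\oplus_n(FV)^{\otimes' n}$ with injections $\alpha'_n(FV)$; as $F$ preserves denumerable coproducts, $\Omega'\,\mathrm{Alg}F\cdot TV=F\Omega TV=\oplus_nF(V^{\otimes n})$. Writing $\phi_n\colon(FV)^{\otimes' n}\to F(V^{\otimes n})$ for the iterated monoidal isomorphism built from $\phi_0,\phi_2$ (with $\phi_1=\mathrm{Id}$), I would compute, using \eqref{form:Tf}, \eqref{form:etaeps} and the induced multiplication on $\mathrm{Alg}F\cdot TV$, that
\[
\Omega'\zeta V\circ\alpha'_n(FV)=F(\alpha_nV)\circ\phi_n\qquad\text{for all }n.
\]
As each $\phi_n$ is an isomorphism and the two objects are the coproducts of their $n$-th terms, $\Omega'\zeta V$ is a coproduct of isomorphisms, hence invertible; conservativity of $\Omega'$ then makes $\zeta$ a functorial isomorphism.

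For $\left(\mathrm{BrAlg}F,\mathrm{Br}F\right)$, the adjunction $(T_{\mathrm{Br}},\Omega_{\mathrm{Br}})$ comes from \ref{cl:TbrStrict} and commutativity of \eqref{diag:cd}, namely $\Omega'_{\mathrm{Br}}\circ\mathrm{BrAlg}F=\mathrm{Br}F\circ\Omega_{\mathrm{Br}}$, is again one of the squares of \ref{cl:BrBialg}. To show the canonical $\zeta_{\mathrm{Br}}\colon T'_{\mathrm{Br}}\mathrm{Br}F\to\mathrm{BrAlg}F\cdot T_{\mathrm{Br}}$ is an isomorphism I would reduce to the previous case through the conservative forgetful functor $H'_{\mathrm{Alg}}$ of \ref{cl:BrBialg}. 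Using the commuting squares $H_{\mathrm{Alg}}T_{\mathrm{Br}}=TH$ of \ref{cl:TbrStrict} and $H'\mathrm{Br}F=FH$, $H'_{\mathrm{Alg}}\mathrm{BrAlg}F=\mathrm{Alg}F\cdot H_{\mathrm{Alg}}$ of \ref{cl:BrBialg}, together with the unit/counit compatibilities \eqref{form:TbrStrict}, I would apply $H'_{\mathrm{Alg}}$ to the defining formula \eqref{form:zeta} of $\zeta_{\mathrm{Br}}$ and identify the outcome with $\zeta H$, with $\zeta$ the canonical transformation of the first datum. Since $\zeta$ is an isomorphism by the previous paragraph, so are $\zeta H$ and hence $H'_{\mathrm{Alg}}\zeta_{\mathrm{Br}}$; conservativity of $H'_{\mathrm{Alg}}$ then gives that $\zeta_{\mathrm{Br}}$ is a functorial isomorphism.

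The main obstacle is the degree-wise identity $\Omega'\zeta V\circ\alpha'_n(FV)=F(\alpha_nV)\circ\phi_n$: establishing it requires unwinding $\zeta$ via \eqref{form:Tf}--\eqref{form:etaeps}, observing that the iterated multiplication of the induced algebra $\mathrm{Alg}F\cdot TV$ equals $F(m^{\,n-1}_{\Omega TV})$ precomposed with the iterated $\phi_2$, and invoking naturality of the $\phi_n$. Everything else is a formal consequence of commuting squares, conservativity, and the uniqueness clause of Lemma \ref{lem:zeta}.
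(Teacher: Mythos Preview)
Your proposal is correct and follows essentially the same approach as the paper: you establish the same degree-wise identity $\Omega'\zeta V\circ\alpha'_n(FV)=F(\alpha_nV)\circ\phi_n$ (the paper's $\widehat{\phi}_n$), deduce that $\Omega'\zeta V$ is an isomorphism using that $F$ preserves denumerable coproducts, and then handle the braided case by showing $H'_{\mathrm{Alg}}\zeta_{\mathrm{Br}}=\zeta H$ exactly as in \eqref{form:zetaBr}. The only cosmetic difference is that the paper makes the factorisation $\Omega'\zeta V=(\nabla_n F\alpha_nV)\circ(\oplus_n\widehat{\phi}_nV)$ explicit rather than tacitly identifying $F(\oplus_nV^{\otimes n})$ with $\oplus_nF(V^{\otimes n})$.
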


\begin{proof}
First we deal with $\left( \mathrm{Alg}F,F\right) :\left( T,\Omega \right)
\rightarrow \left( T^{\prime },\Omega ^{\prime }\right) .$ By \ref%
{cl:BrBialg}, we have that $\Omega ^{\prime }\circ \mathrm{Alg}F=F\circ
\Omega $. By Remark \ref{rem: AlgMon}, we have that $\Omega $ and $\Omega
^{\prime }$ have left adjoints $T$ and $T^{\prime }$ respectively. The
structure morphisms $\phi _{0},\phi _{2}$ induce, for every $n\in \mathbb{N}$%
, the isomorphism $\widehat{\phi }_{n}V:\left( FV\right) ^{\otimes
n}\rightarrow F\left( V^{\otimes n}\right) $ given by
\begin{eqnarray*}
\widehat{\phi }_{0}V &:&=\phi _{0},\qquad \widehat{\phi }_{1}V:=\mathrm{Id}%
_{FV},\qquad \widehat{\phi }_{2}V:=\phi _{2}\left( V,V\right) ,\qquad \text{%
and, for }n>2 \\
\widehat{\phi }_{n}V &:&=\phi _{2}\left( V^{\otimes \left( n-1\right)
},V\right) \circ \left( \widehat{\phi }_{n-1}\otimes FV\right) .
\end{eqnarray*}

Using the naturality of $\phi _{2}$ and (\ref{form:TVm}) it is
straightforward to check, by induction on $n\in \mathbb{N}$, that%
\begin{equation}
m_{\left( \mathrm{Alg}F\right) TV}^{n-1}\circ \left( F\alpha _{1}V\right)
^{\otimes n}=F\alpha _{n}V\circ \widehat{\phi }_{n}V.  \label{form:multphi}
\end{equation}

Let $\zeta $ be the map of Lemma \ref{lem:zeta} i.e. $\zeta =\epsilon
^{\prime }\left( \mathrm{Alg}F\right) T\circ T^{\prime }F\eta $. We compute%
\begin{gather*}
\Omega ^{\prime }\zeta V\circ \alpha _{n}FV=\Omega ^{\prime }\epsilon
^{\prime }\left( \mathrm{Alg}F\right) TV\circ \Omega ^{\prime }T^{\prime
}F\eta V\circ \alpha _{n}FV \\
\overset{(\ref{form:etaeps})}{=}\Omega ^{\prime }\epsilon ^{\prime }\left(
\mathrm{Alg}F\right) TV\circ \Omega ^{\prime }T^{\prime }F\alpha _{1}V\circ
\alpha _{n}FV=\Omega ^{\prime }\epsilon ^{\prime }\left( \mathrm{Alg}%
F\right) TV\circ \alpha _{n}F\Omega TV\circ \left( F\alpha _{1}V\right)
^{\otimes n} \\
\overset{(\ref{form:etaeps})}{=}m_{\left( \mathrm{Alg}F\right)
TV}^{n-1}\circ \left( F\alpha _{1}V\right) ^{\otimes n}\overset{(\ref%
{form:multphi})}{=}F\alpha _{n}V\circ \widehat{\phi }_{n}V \\
=\left( \nabla _{t\in \mathbb{N}}F\alpha _{t}V\right) \circ j_{n}V\circ
\widehat{\phi }_{n}V=\left( \nabla _{t\in \mathbb{N}}F\alpha _{t}V\right)
\circ \left( \oplus _{t\in \mathbb{N}}\widehat{\phi }_{t}V\right) \circ
\alpha _{n}FV
\end{gather*}%
where $j_{n}V:F\left( V^{\otimes n}\right) \rightarrow \oplus _{t\in \mathbb{%
N}}F\left( V^{\otimes t}\right) $ denotes the canonical morphism. Since this
equality holds for an arbitrary $n\in \mathbb{N}$, we obtain $\Omega
^{\prime }\zeta V=\left( \nabla _{n\in \mathbb{N}}F\alpha _{n}V\right) \circ
\left( \oplus _{n\in \mathbb{N}}\widehat{\phi }_{n}V\right) .$ Now $\widehat{%
\phi }_{n}$ is an isomorphism by construction and $\nabla _{n\in \mathbb{N}%
}F\alpha _{n}V:\oplus _{n\in \mathbb{N}}F\left( V^{\otimes n}\right)
\rightarrow F\left( \oplus _{n\in \mathbb{N}}V^{\otimes n}\right) $ is an
isomorphism as $F$ preserves denumerable coproducts. Hence $\Omega ^{\prime
}\zeta V$ is an isomorphism. This clearly implies $\zeta V$ is an
isomorphism and hence $\left( \mathrm{Alg}F,F\right) :\left( T,\Omega
\right) \rightarrow \left( T^{\prime },\Omega ^{\prime }\right) $ is a
commutation datum$.$

Now, let us consider $\left( \mathrm{BrAlg}F,\mathrm{Br}F\right) :\left( T_{%
\mathrm{Br}},\Omega _{\mathrm{Br}}\right) \rightarrow \left( T_{\mathrm{Br}%
}^{\prime },\Omega _{\mathrm{Br}}^{\prime }\right) .$ By \ref{cl:TbrStrict},
the functor $\Omega _{\mathrm{Br}}:\mathrm{BrAlg}_{\mathcal{M}}\rightarrow
\mathrm{Br}_{\mathcal{M}}$ has a left adjoint $T_{\mathrm{Br}}:\mathrm{Br}_{%
\mathcal{M}}\rightarrow \mathrm{BrAlg}_{\mathcal{M}}$ and the (co)unit of
the adjunction obey (\ref{form:TbrStrict}). Moreover $H_{\mathrm{Alg}}T_{%
\mathrm{Br}}=TH.$ By \ref{cl:BrBialg}, we have $H^{\prime }\left( \mathrm{Br}%
F\right) =FH,$ $\Omega ^{\prime }\left( \mathrm{Alg}F\right) =F\Omega ,$ $H_{%
\mathrm{Alg}}^{\prime }\left( \mathrm{BrAlg}F\right) =\left( \mathrm{Alg}%
F\right) H_{\mathrm{Alg}}$ and $\Omega _{\mathrm{Br}}^{\prime }\left(
\mathrm{BrAlg}F\right) =\left( \mathrm{Br}F\right) \Omega _{\mathrm{Br}}.$
In view of Lemma (\ref{lem:zeta}) the diagrams
\begin{equation}  \label{diag:BrAlgF}
\xymatrixrowsep{15pt}\xymatrixcolsep{25pt}\xymatrix{\mathrm{BrAlg}_{%
\mathcal{M}}
\ar[r]^{\mathrm{BrAlg}F}\ar[d]_{\Omega_{\mathrm{Br}}}&\mathrm{BrAlg}_{%
\mathcal{M}^{\prime }}\ar[d]^{{\Omega^\prime_{\mathrm{Br}}}}\\
\mathrm{Br}_{\mathcal{M}}
\ar[r]^{\mathrm{Br}F}&\mathrm{Br}_{\mathcal{M}^{\prime }}}\qquad %
\xymatrixrowsep{15pt}\xymatrix{\mathrm{Alg}_{\mathcal{M}}
\ar[r]^{\mathrm{Alg}F}\ar[d]_\Omega&\mathrm{Alg}_{\mathcal{M}^{\prime
}}\ar[d]^{\Omega^\prime}\\ \mathcal{M} \ar[r]^F&\mathcal{M}^{\prime }}
\end{equation}
induce the maps $\zeta _{\mathrm{Br}}:T_{\mathrm{Br}}^{\prime }\left(
\mathrm{Br}F\right) \rightarrow \left( \mathrm{BrAlg}F\right) T_{\mathrm{Br}%
} $ and $\zeta :T^{\prime }F\rightarrow \left( \mathrm{Alg}F\right) T$
defined by
\begin{equation}
\zeta _{\mathrm{Br}}=\epsilon _{\mathrm{Br}}^{\prime }\left( \mathrm{BrAlg}%
F\right) T_{\mathrm{Br}}\circ T_{\mathrm{Br}}^{\prime }\left( \mathrm{Br}%
F\right) \eta _{\mathrm{Br}}\qquad \text{and}\qquad \zeta =\epsilon ^{\prime
}\left( \mathrm{Alg}F\right) T\circ T^{\prime }F\eta .
\label{form:zetaBr&zeta}
\end{equation}
One easily checks that%
\begin{equation}
H_{\mathrm{Alg}}^{\prime }\zeta _{\mathrm{Br}}=\zeta H.  \label{form:zetaBr}
\end{equation}%
By the first part of the proof, $\zeta $ is a functorial isomorphism so that
we get that $H_{\mathrm{Alg}}^{\prime }\zeta _{\mathrm{Br}}$ is a functorial
isomorphism too. Since $H_{\mathrm{Alg}}^{\prime }$ trivially reflects
isomorphisms, we get that $\zeta _{\mathrm{Br}}$ is a functorial isomorphism.
\end{proof}

\begin{proposition}
\label{pro:PrimFunct}Let $\mathcal{M}$ and $\mathcal{M}^{\prime }$ be
preadditive monoidal categories with equalizers. Assume that the tensor
functors are additive and preserve equalizers in both categories. For any
monoidal functor $\left( F,\phi _{0},\phi _{2}\right) :\mathcal{M}%
\rightarrow \mathcal{M}^{\prime }$ which preserves equalizers, the following
diagram commutes
\begin{equation}
\xymatrixrowsep{15pt}\xymatrixcolsep{35pt}\xymatrix{\mathrm{BrBialg}_{%
\mathcal{M}}
\ar[r]^{\mathrm{BrBialg}F}\ar[d]_{P_{\mathrm{Br}}}&\mathrm{BrBialg}_{%
\mathcal{M}^{\prime }}\ar[d]^{{P^\prime_{\mathrm{Br}}}}\\
\mathrm{Br}_{\mathcal{M}}
\ar[r]^{\mathrm{Br}F}&\mathrm{Br}_{\mathcal{M}^{\prime }}}
\label{diag:BrF-PBr}
\end{equation}%
where $\mathrm{BrBialg}F$ and $\mathrm{Br}F$ are the functors of \ref%
{cl:BrBialg}. Moreover we have
\begin{equation}
\xi ^{\prime }\left( \mathrm{BrBialg}F\right) =\left( \mathrm{Br}F\right)
\xi .  \label{form:comdat3}
\end{equation}%
Assume also that the categories $\mathcal{M}$ and $\mathcal{M}^{\prime }$
have denumerable coproducts and that $F$ and the tensor functors preserve
such coproducts. Then $\left( \mathrm{BrBialg}F,\mathrm{Br}F\right) :\left(
\overline{T}_{\mathrm{Br}},P_{\mathrm{Br}}\right) \rightarrow \left(
\overline{T}_{\mathrm{Br}}^{\prime },P_{\mathrm{Br}}^{\prime }\right) $ is a
commutation datum.
\end{proposition}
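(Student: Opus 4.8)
The plan is to establish the commutativity of \eqref{diag:BrF-PBr} and the identity \eqref{form:comdat3} by transporting, along $F$, the equalizer \eqref{diag:prim} defining the primitive functor, and then to obtain the commutation datum by reducing to the algebra case treated in Proposition \ref{pro:comdat1}.

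For the first two assertions, fix $\mathbb{B}=(B,m,u,\Delta_B,\varepsilon_B,c)\in\BrBialg_{\M}$. By Definition \ref{def:prim} and \cite[Lemma 3.3]{AM-BraidedOb}, $\xi\mathbb{B}\colon P_{\mathrm{Br}}\mathbb{B}\to B$ is the chosen equalizer, taken in $\Br_{\M}$ with underlying equalizer in $\M$, of the pair
\[
\Delta_B,\qquad (B\otimes u)r_B^{-1}+(u\otimes B)l_B^{-1}\colon B\longrightarrow B\otimes B.
\]
Since $F$ preserves equalizers, $F(\xi\mathbb{B})$ is the equalizer of $F\Delta_B$ and $F\big((B\otimes u)r_B^{-1}+(u\otimes B)l_B^{-1}\big)$; postcomposing the two parallel maps with the isomorphism $\phi_2(B,B)^{-1}\colon F(B\otimes B)\to FB\otimes'FB$ leaves $F(\xi\mathbb{B})$ an equalizer. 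The crux is that $\phi_2(B,B)^{-1}$ identifies this $F$-image with the pair defining $P'_{\mathrm{Br}}$ on $\BrBialg F\,\mathbb{B}$. Indeed $\phi_2(B,B)^{-1}\circ F\Delta_B$ is, by the construction of $\BrBialg F$ in \ref{cl:BrBialg} (cf. \cite[Proposition 2.5]{AM-BraidedOb}), the comultiplication of $\BrBialg F\,\mathbb{B}$; and, by naturality of $\phi_2$ together with the two unit-constraint axioms of the monoidal functor, the maps $(B\otimes u)r_B^{-1}$ and $(u\otimes B)l_B^{-1}$ are carried to $(FB\otimes' u_{FB})(r'_{FB})^{-1}$ and $(u_{FB}\otimes' FB)(l'_{FB})^{-1}$, where $u_{FB}=Fu\circ\phi_0$ is the unit of $\BrBialg F\,\mathbb{B}$; since $F$ and composition respect the additive structure, the whole second leg is carried to the sum of these. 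A routine check that the $F$-induced braiding on the primitive subobject agrees with the restriction of the $F$-induced braiding on $FB$ then yields, via the universal property and the chosen-equalizer convention, that $\Br F\,(P_{\mathrm{Br}}\mathbb{B})=P'_{\mathrm{Br}}(\BrBialg F\,\mathbb{B})$ and $\Br F\,(\xi\mathbb{B})=\xi'(\BrBialg F\,\mathbb{B})$, which are \eqref{diag:BrF-PBr} and \eqref{form:comdat3}.

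For the commutation datum, by Definition \ref{def:zetadatum} it remains, beyond the square \eqref{diag:cd} just established, to prove that the canonical transformation $\zeta\colon\overline{T}'_{\mathrm{Br}}(\Br F)\to(\BrBialg F)\overline{T}_{\mathrm{Br}}$ of Lemma \ref{lem:zeta} is a functorial isomorphism. I would reduce this to Proposition \ref{pro:comdat1} using the forgetful functor $\mho'_{\mathrm{Br}}$, which is conservative by \ref{cl:BrBialg}. From $\mho'_{\mathrm{Br}}\overline{T}'_{\mathrm{Br}}=T'_{\mathrm{Br}}$ (the primed form of \eqref{form:OmegRibTbarBr}), the identity $\mho'_{\mathrm{Br}}(\BrBialg F)=(\BrAlg F)\mho_{\mathrm{Br}}$ of \ref{cl:BrBialg}, and the relations \eqref{form:BarEta} and \eqref{form:BarEps} tying $(\overline{\eta}_{\mathrm{Br}},\overline{\epsilon}_{\mathrm{Br}})$ to $(\eta_{\mathrm{Br}},\epsilon_{\mathrm{Br}})$, I would compute $\mho'_{\mathrm{Br}}\zeta$ and match it with the canonical transformation $\zeta_{\mathrm{Br}}\colon T'_{\mathrm{Br}}(\Br F)\to(\BrAlg F)T_{\mathrm{Br}}$ of Proposition \ref{pro:comdat1}, for instance by verifying that $\mho'_{\mathrm{Br}}\zeta$ satisfies the equation \eqref{form:zeta1} that uniquely characterizes $\zeta_{\mathrm{Br}}$. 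As $\zeta_{\mathrm{Br}}$ is an isomorphism and $\mho'_{\mathrm{Br}}$ reflects isomorphisms, $\zeta$ is an isomorphism, so $(\BrBialg F,\Br F)$ is a commutation datum.

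The main obstacle should be this final identification $\mho'_{\mathrm{Br}}\zeta=\zeta_{\mathrm{Br}}$: it is not formal, since $\zeta$ is built through \eqref{form:zeta} from the unit and counit of $(\overline{T}_{\mathrm{Br}},P_{\mathrm{Br}})$ while $\zeta_{\mathrm{Br}}$ is built from those of $(T_{\mathrm{Br}},\Omega_{\mathrm{Br}})$, and bridging the two forces a careful passage of \eqref{form:BarEta} and \eqref{form:BarEps} through $\mho'_{\mathrm{Br}}$ followed by the uniqueness clause of Lemma \ref{lem:zeta}. A subsidiary delicate point, already in the first part, is securing strict equality of functors rather than equality up to canonical isomorphism, which is precisely where the chosen-equalizer convention of Definition \ref{def:prim} must be invoked with care when transporting along $F$.
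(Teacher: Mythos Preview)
Your proposal is correct and follows essentially the same approach as the paper: the first part (diagram \eqref{diag:BrF-PBr} and \eqref{form:comdat3}) is what the paper cites from \cite[Proposition 3.6]{AM-BraidedOb}, and for the commutation datum the paper likewise reduces to Proposition \ref{pro:comdat1} by checking $\mho'_{\mathrm{Br}}\overline{\zeta}_{\mathrm{Br}}=\zeta_{\mathrm{Br}}$ via \eqref{form:BarEta}, \eqref{form:BarEps}, and \eqref{form:comdat3}, and then uses that $\mho'_{\mathrm{Br}}$ is conservative. Your anticipated ``main obstacle'' is exactly the step the paper declares one easily checks, and your suggestion to verify it through the uniqueness clause of Lemma \ref{lem:zeta} is a sound way to carry it out.
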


\begin{proof}
The first part is \cite[Proposition 3.6]{AM-BraidedOb}. Let us prove the
last assertion. Assume that the monoidal category $\mathcal{M}$ has
denumerable coproducts and that the tensor functors preserve such
coproducts. By \ref{cl:TbarStrict}, we have that $P_{\mathrm{Br}}$ and $P_{%
\mathrm{Br}}^{\prime }$ have left adjoints $\overline{T}_{\mathrm{Br}}$ and $%
\overline{T}_{\mathrm{Br}}^{\prime }$ respectively. By \ref{cl:BrBialg}, we
have $\mho _{\mathrm{Br}}^{\prime }\left( \mathrm{BrBialg}F\right) =\left(
\mathrm{BrAlg}F\right) \mho _{\mathrm{Br}}$ and $\Omega _{\mathrm{Br}%
}^{\prime }\left( \mathrm{BrAlg}F\right) =\left( \mathrm{Br}F\right) \Omega
_{\mathrm{Br}}.$ By (\ref{form:OmegRibTbarBr}), we have $\mho _{\mathrm{Br}}%
\overline{T}_{\mathrm{Br}}=T_{\mathrm{Br}}$. The commutative diagrams %
\eqref{diag:BrF-PBr} and \eqref{diag:BrAlgF}-left induce the natural
transformations $\overline{\zeta }_{\mathrm{Br}}:\overline{T}_{\mathrm{Br}%
}^{\prime }\left( \mathrm{Br}F\right) \rightarrow \left( \mathrm{BrBialg}%
F\right) \overline{T}_{\mathrm{Br}}$ and $\zeta _{\mathrm{Br}}:T_{\mathrm{Br}%
}^{\prime }\left( \mathrm{Br}F\right) \rightarrow \left( \mathrm{BrAlg}%
F\right) T_{\mathrm{Br}}$ of Lemma \ref{lem:zeta} i.e.
\begin{equation*}
\overline{\zeta }_{\mathrm{Br}}=\overline{\epsilon }_{\mathrm{Br}}^{\prime
}\left( \mathrm{BrBialg}F\right) \overline{T}_{\mathrm{Br}}\circ \overline{T}%
_{\mathrm{Br}}^{\prime }\left( \mathrm{Br}F\right) \overline{\eta }_{\mathrm{%
Br}}\qquad \text{and}\qquad \zeta _{\mathrm{Br}}=\epsilon _{\mathrm{Br}%
}^{\prime }\left( \mathrm{BrAlg}F\right) T_{\mathrm{Br}}\circ T_{\mathrm{Br}%
}^{\prime }\left( \mathrm{Br}F\right) \eta _{\mathrm{Br}}.
\end{equation*}
Using (\ref{form:BarEps}), (\ref{form:comdat3}) and (\ref{form:BarEta}), one
easily checks that $\mho _{\mathrm{Br}}^{\prime }\overline{\zeta }_{\mathrm{%
Br}}=\zeta _{\mathrm{Br}}.$ By Proposition \ref{pro:comdat1}, we know that $%
\zeta _{\mathrm{Br}}$ is a functorial isomorphism. Since $\mho _{\mathrm{Br}%
}^{\prime }$ is trivially conservative, we deduce that $\overline{\zeta }_{%
\mathrm{Br}}$ is a functorial isomorphism too.
\end{proof}

\section{Braided Categories \label{sec:BraidCat}}

\begin{claim}
\label{def braiding} A \emph{braided monoidal category} $(\mathcal{M}%
,\otimes ,\mathbf{1},a,l,r,c)$ is a monoidal category $(\mathcal{M},\otimes ,%
\mathbf{1})$ equipped with a \emph{braiding} $c$, that is an isomorphism $%
c_{U,V}:U\otimes V\rightarrow V\otimes U$, natural in $U,V\in \mathcal{M}$,
satisfying, for all $U,V,W\in \mathcal{M}$,
\begin{eqnarray*}
a_{V,W,U}\circ c_{U,V\otimes W}\circ a_{U,V,W} &=&(V\otimes c_{U,W})\circ
a_{V,U,W}\circ (c_{U,V}\otimes W), \\
a_{W,U,V}^{-1}\circ c_{U\otimes V,W}\circ a_{U,V,W}^{-1} &=&(c_{U,W}\otimes
V)\circ a_{U,W,V}^{-1}\circ (U\otimes c_{V,W}).
\end{eqnarray*}%
A braided monoidal category is called \emph{symmetric} if we further have $%
c_{V,U}\circ c_{U,V}=\mathrm{Id}_{U\otimes V}$ for every $U,V\in \mathcal{M}$%
.

A \emph{(symmetric) braided monoidal functor} is a monoidal functor $F:%
\mathcal{M}\rightarrow \mathcal{M}^{\prime }$ such that $F\left(
c_{U,V}\right) \circ \phi _{2}(U,V)=\phi _{2}(V,U)\circ c_{F\left( U\right)
,F\left( V\right) }^{\prime }.$ More details on these topics can be found in
\cite[Chapter XIII]{Kassel}.
\end{claim}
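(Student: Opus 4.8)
The final block recalls the notions of \emph{braided monoidal category}, \emph{symmetry}, and \emph{braided monoidal functor}, and it is set in the unnamed \texttt{claim} environment that this paper uses purely as a recollection device (exactly as in the earlier blocks \ref{cl:BrBialg} and \ref{cl:TbrStrict}), not as a mathematical assertion. Its content is entirely definitional: it fixes the data $(\M,\otimes,\mathbf 1,a,l,r,c)$ by imposing the two hexagon identities on the natural isomorphism $c_{U,V}:U\otimes V\to V\otimes U$; it declares such a category \emph{symmetric} when $c_{V,U}\circ c_{U,V}=\id_{U\otimes V}$; and it defines a braided monoidal functor as a monoidal functor $F$ satisfying $F(c_{U,V})\circ\phi_2(U,V)=\phi_2(V,U)\circ c^{\prime}_{F(U),F(V)}$. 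Consequently there is no proposition to be proved here, and the natural ``proof'' is the empty one, followed only by the pointer to \cite[Chapter XIII]{Kassel} where these definitions originate. The main (and only) obstacle is therefore the purely editorial one of recognising that nothing is being asserted.

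My plan, then, is simply to confirm that the excerpt type-checks as a definition, which is bookkeeping rather than argument. First I would verify that each displayed equation relates parallel morphisms: in the first hexagon both composites are maps $(U\otimes V)\otimes W\to V\otimes(W\otimes U)$, governing the behaviour of $c$ on a tensor product in its second argument ($c_{U,V\otimes W}$); the second hexagon relates two maps $U\otimes(V\otimes W)\to (W\otimes U)\otimes V$, governing a tensor product in the first argument ($c_{U\otimes V,W}$) and written with the inverse associators. I would likewise note that the symmetry clause and the functor-compatibility clause are equations between parallel arrows, hence well typed. None of this requires proof; it only records that the definition is well posed.

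Were one to ask instead what substantive facts stand behind these definitions---content that is imported from \cite[Chapter XIII]{Kassel} rather than claimed in the text---the relevant ones are the coherence theorem for braided categories (naturality of $c$ together with the two hexagons forces any two canonical morphisms with the same source and target to agree) and the observation that, in the symmetric case $c_{V,U}=c_{U,V}^{-1}$, the second hexagon becomes a formal consequence of the first, so that only one is an independent axiom. I would \emph{not} include these in a proof of the block as stated, since the text asserts none of them. In short, the honest answer to ``how would I prove the final statement'' is that it is a definition and carries no proof obligation.
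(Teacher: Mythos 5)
You are correct: this block is a definition recalled from \cite[Chapter XIII]{Kassel}, set in the paper's unnamed \texttt{claim} environment purely as background, and the paper accordingly attaches no proof to it. Your reading — that the only obligation is to check the displayed equations are well typed and that nothing is asserted beyond the definitions — matches the paper exactly.
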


\begin{remark}
Given a braided monoidal category $(\mathcal{M},{\otimes },\mathbf{1},c)$
the category $\mathrm{Alg}_{\mathcal{M}}$ becomes monoidal where, for every $%
A,B\in \mathrm{Alg}_{\mathcal{M}}$ the multiplication and unit of $A\otimes
B $ are given by%
\begin{eqnarray*}
m_{A\otimes B} &:&=\left( m_{A}\otimes m_{B}\right) \circ \left( A\otimes
c_{B,A}\otimes B\right) :\left( A\otimes B\right) \otimes \left( A\otimes
B\right) \rightarrow A\otimes B, \\
u_{A\otimes B} &:&=\left( u_{A}\otimes u_{B}\right) \circ l_{\mathbf{1}%
}^{-1}:\mathbf{1}\rightarrow A\otimes B.
\end{eqnarray*}%
Moreover the forgetful functor $\mathrm{Alg}_{\mathcal{M}}\rightarrow
\mathcal{M}$ is a strict monoidal functor, cf. \cite[page 60]{Joyal-Street}.
\end{remark}

\begin{definition}
\label{cl: brdBialg} A \emph{bialgebra} in a braided monoidal category $(%
\mathcal{M},{\otimes },\mathbf{1},c)$ is a coalgebra $(B,\Delta ,\varepsilon
)$ in the monoidal category $\mathrm{Alg}_{\mathcal{M}}$. Equivalently a
bialgebra is a quintuple $\left( A,m,u,\Delta ,\varepsilon \right) $ where $%
\left( A,m,u\right) $ is an algebra in $\mathcal{M}$ and $\left( A,\Delta
,\varepsilon \right) $ is a coalgebra in $\mathcal{M}$ such that $\Delta $
and $\varepsilon $ are morphisms of algebras where $A\otimes A$ is an
algebra as in the previous remark. Denote by $\mathrm{Bialg}_{\mathcal{M}}$
the category of bialgebras in $\mathcal{M}$ and their morphisms, defined in
the expected way.
\end{definition}

\begin{claim}
\label{cl:defJ} Let $\mathcal{M}$ be a braided monoidal category. In view of
\cite[Proposition 4.4]{AM-BraidedOb}, there are obvious functors $J$, $J_{%
\mathrm{Alg}}$ and $J_{\mathrm{Bialg}}$ such that the diagrams%
\begin{equation}
\xymatrixrowsep{15pt}\xymatrixcolsep{35pt}\xymatrix{\mathrm{Bialg}_{%
\mathcal{M}}\ar[r]^{J_\mathrm{Bialg}}\ar[d]_{\mho}
&\mathrm{BrBialg}_{\mathcal{M}}\ar[d]^{\mho_{\mathrm{Br}}}\\
\mathrm{Alg}_{\mathcal{M}}
\ar[r]^{J_\mathrm{Alg}}&\mathrm{BrAlg}_{\mathcal{M}}}\qquad %
\xymatrixrowsep{15pt}\xymatrixcolsep{35pt}\xymatrix{\mathrm{Alg}_{%
\mathcal{M}}\ar[r]^{J_\mathrm{Alg}}\ar[d]_{\Omega}
&\mathrm{BrAlg}_{\mathcal{M}}\ar[d]^{\Omega_{\mathrm{Br}}}\\ \mathcal{M}
\ar[r]^{J}&\mathrm{Br}_{\mathcal{M}}}  \label{diag:JAlg-Bialg}
\end{equation}%
commute. In fact the functors $J$, $J_{\mathrm{Alg}}$ and $J_{\mathrm{Bialg}%
} $ add the evaluation of the braiding of $\mathcal{M}$ on the object on
which they act. Moreover they are full, faithful, injective on objects and
conservative.

Assume that $\mathcal{M}$ has denumerable coproducts and that the tensor
functors preserve such coproducts. Then, by \cite[Proposition 4.5]%
{AM-BraidedOb}, the following diagram%
\begin{equation}
\xymatrixrowsep{15pt}\xymatrixcolsep{35pt}\xymatrix{\mathrm{Alg}_{%
\mathcal{M}}\ar[r]^{J_\mathrm{Alg}} &\mathrm{BrAlg}_{\mathcal{M}}\\
\mathcal{M}\ar[u]_{T}
\ar[r]^{J}&\mathrm{Br}_{\mathcal{M}}\ar[u]^{T_{\mathrm{Br}}}}
\label{form:JT}
\end{equation}%
is commutative. When $\mathcal{M}$ is symmetric the functor $J,J_{\mathrm{Alg%
}}$ and $J_{\mathrm{Bialg}}$ factors through functors $J^{s},J_{\mathrm{Alg}%
}^{s}$ and $J_{\mathrm{Bialg}}^{s}$ i.e. the following diagrams commute
(apply Lemma \ref{lem:Cappuccio}).
\begin{equation}
\xymatrixrowsep{15pt}\xymatrixcolsep{8pt} \xymatrix{\M\ar[dr]_J%
\ar[rr]^{J^s}&&\Br_\M^s\ar[dl]^{\mathbb{I}_\Br^s}\\ &\Br_\M} \quad %
\xymatrix{\Alg_\M\ar[dr]_{J_\Alg}\ar[rr]^{J_\Alg^s}&&\BrAlg_\M^s\ar[dl]^{%
\mathbb{I}_\BrAlg^s}\\ &\BrAlg_\M} \quad \xymatrix{\Bialg_\M\ar[dr]_{J_%
\Bialg}\ar[rr]^{J_\Bialg^s}&&\BrBialg_\M^s\ar[dl]^{\mathbb{I}_\BrBialg^s}\\
&\BrBialg_\M}  \label{eq:Js}
\end{equation}
Note that they are full, faithful, injective on objects and conservative and
the following diagram commutes.%
\begin{equation}
\xymatrixrowsep{15pt}\xymatrixcolsep{15pt} \xymatrix{\Bialg_\M\ar[d]_\mho%
\ar[rr]^{J_\Bialg^s}&&\BrBialg_\M^s\ar[d]^{\mho_\Br^s}\\
\Alg_\M\ar[rr]^{J_\Alg^s}&&\BrBialg_\M^s}  \label{diag:JsAlgOrib}
\end{equation}
\end{claim}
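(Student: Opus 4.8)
The plan is to read the claim as a recollection of \cite[Propositions 4.4 and 4.5]{AM-BraidedOb} supplemented by a routine application of Lemma \ref{lem:Cappuccio} in the symmetric case. The three functors are given explicitly: $J$ sends an object $X$ of the braided monoidal category $\mathcal{M}$ to the braided object $(X,c_{X,X})$, where $c_{X,X}\colon X\otimes X\to X\otimes X$ is the braiding of $\mathcal{M}$ evaluated at $(X,X)$, and $J_{\mathrm{Alg}}$, $J_{\mathrm{Bialg}}$ decorate an algebra, resp.\ a bialgebra, with this same morphism while acting as the identity on underlying arrows. That $(X,c_{X,X})$ is a braided object and that the compatibility conditions \eqref{Br2}--\eqref{Br9} hold is exactly the content of the hexagon identities together with the naturality of $c$; for this, and for the commutativity of the squares \eqref{diag:JAlg-Bialg}, I would simply quote \cite[Proposition 4.4]{AM-BraidedOb}, and \cite[Proposition 4.5]{AM-BraidedOb} for \eqref{form:JT} (the latter under the stated hypothesis on denumerable coproducts).

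I would then check that $J$, $J_{\mathrm{Alg}}$ and $J_{\mathrm{Bialg}}$ are full, faithful, injective on objects and conservative. Faithfulness and injectivity on objects are immediate, since each functor is the identity on underlying objects and morphisms. Fullness is the only point needing an argument: a morphism $J(X)\to J(Y)$ in $\mathrm{Br}_{\mathcal{M}}$ is an arrow $f\colon X\to Y$ of $\mathcal{M}$ satisfying $c_{Y,Y}\circ(f\otimes f)=(f\otimes f)\circ c_{X,X}$, and this is precisely the naturality square of $c$ at $f$; hence every arrow of $\mathcal{M}$ underlies a morphism of braided objects, and the same naturality argument covers $J_{\mathrm{Alg}}$ and $J_{\mathrm{Bialg}}$. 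Conservativity follows because the forgetful functors down to $\mathcal{M}$ are conservative (see \ref{cl:BrBialg}) and commute with $J$, $J_{\mathrm{Alg}}$, $J_{\mathrm{Bialg}}$, which act there as the identity; thus $Jf$ invertible forces its underlying $\mathcal{M}$-arrow $f$ to be invertible.

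For the symmetric case the key observation is that, when $\mathcal{M}$ is symmetric, $c_{X,X}\circ c_{X,X}=\mathrm{Id}_{X\otimes X}$, so every object in the image of $J$ (resp.\ $J_{\mathrm{Alg}}$, $J_{\mathrm{Bialg}}$) has symmetric braiding and therefore lies in the image of $\mathbb{I}_{\mathrm{Br}}^{s}$ (resp.\ $\mathbb{I}_{\mathrm{BrAlg}}^{s}$, $\mathbb{I}_{\mathrm{BrBialg}}^{s}$). Since these inclusions are full, faithful and injective on objects (Definition \ref{def:braid}), the hypotheses of Lemma \ref{lem:Cappuccio}(1) are satisfied, and I obtain unique factorizations $J^{s}:=\widehat{J}$, $J_{\mathrm{Alg}}^{s}:=\widehat{J_{\mathrm{Alg}}}$, $J_{\mathrm{Bialg}}^{s}:=\widehat{J_{\mathrm{Bialg}}}$ with $\mathbb{I}^{s}J^{s}=J$ and so on, which is the commutativity of the triangles \eqref{eq:Js}.

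Finally, the $s$-versions inherit being full, faithful, injective on objects and conservative by the usual cancellation lemma applied to $\mathbb{I}^{s}J^{s}=J$: faithfulness of $J^{s}$ follows from faithfulness of $\mathbb{I}^{s}$ and $J$; fullness and injectivity on objects of $J^{s}$ from the corresponding properties of $J$ together with faithfulness, resp.\ injectivity on objects, of $\mathbb{I}^{s}$; and conservativity from conservativity of $\mathbb{I}^{s}$ and $J$. To establish the commutativity of \eqref{diag:JsAlgOrib}, I would compose with the faithful, injective-on-objects functor $\mathbb{I}_{\mathrm{BrAlg}}^{s}$ and use \eqref{diag:OmRibs}-left, the defining relations \eqref{eq:Js}, and \eqref{diag:JAlg-Bialg}-left to compute
\[
\mathbb{I}_{\mathrm{BrAlg}}^{s}\,\mho_{\mathrm{Br}}^{s}\,J_{\mathrm{Bialg}}^{s}=\mho_{\mathrm{Br}}\,J_{\mathrm{Bialg}}=J_{\mathrm{Alg}}\,\mho=\mathbb{I}_{\mathrm{BrAlg}}^{s}\,J_{\mathrm{Alg}}^{s}\,\mho ,
\]
and then cancel $\mathbb{I}_{\mathrm{BrAlg}}^{s}$, which is legitimate as it is faithful and injective on objects. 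The two substantive points are the identification of the braiding compatibility square as a naturality square (which gives fullness) and the verification that $J$-images have symmetric braiding (which supplies the image hypothesis of Lemma \ref{lem:Cappuccio}); beyond these the main obstacle is bookkeeping, keeping the inclusion/forgetful square identities straight, with no idea needed other than naturality of the braiding and Lemma \ref{lem:Cappuccio}.
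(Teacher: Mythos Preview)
Your proposal is correct and follows exactly the approach the paper indicates: the claim is a recollection from \cite[Propositions 4.4 and 4.5]{AM-BraidedOb}, with the symmetric factorizations obtained by applying Lemma~\ref{lem:Cappuccio} once one observes that $c_{X,X}^2=\mathrm{Id}$ in the symmetric case. Your write-up in fact supplies more detail than the paper (the fullness-via-naturality argument and the cancellation through $\mathbb{I}_{\mathrm{BrAlg}}^{s}$ for \eqref{diag:JsAlgOrib}), but the route is the same.
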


\begin{claim}
\label{cl: Bialg} Let $\mathcal{M}$ be a preadditive braided monoidal
category with equalizers. Assume that the tensor functors are additive and
preserve equalizers. Define the functor%
\begin{equation*}
P:=H\circ P_{\mathrm{Br}}\circ J_{\mathrm{Bialg}}:\mathrm{Bialg}_{\mathcal{M}%
}\rightarrow \mathcal{M}
\end{equation*}%
For any $\mathbb{B}:=\left( B,m_{B},u_{B},\Delta _{B},\varepsilon
_{B}\right) \in \mathrm{Bialg}_{\mathcal{M}}$ one easily gets that $P\left(
\mathbb{B}\right) =P\left( B,\Delta _{B},\varepsilon _{B},u_{B}\right) ,$
see \cite[4.6]{AM-BraidedOb}. The canonical inclusion $\xi P\left( B,\Delta
_{B},\varepsilon _{B},u_{B}\right) :P\left( B,\Delta _{B},\varepsilon
_{B},u_{B}\right) \rightarrow B$ will be denoted by $\xi \mathbb{B}$. Thus
we have the equalizer%
\begin{equation*}
\xymatrixcolsep{1.5cm}\xymatrix{P\left( \mathbb{B}\right) \ar[r]^{\xi
\mathbb{B}} & B \ar@<.5ex>[rr]^{\Delta _{B}} \ar@<-.5ex>[rr]_{\left(
B\otimes u_{B}\right) r_{B}^{-1}+\left( u_{B}\otimes B\right)
l_{B}^{-1}}&&B\otimes B }
\end{equation*}%
By \cite[Proposition 4.7]{AM-BraidedOb}, we have a commutative diagram
\begin{equation}
\xymatrixrowsep{15pt}\xymatrixcolsep{35pt}\xymatrix{\mathrm{Bialg}_{%
\mathcal{M}}\ar[r]^{J_\mathrm{Bialg}}\ar[d]_{P}
&\mathrm{BrBialg}_{\mathcal{M}}\ar[d]^{P_{\mathrm{Br}}}\\ {\mathcal{M}}
\ar[r]^{J}&\mathrm{Br}_{\mathcal{M}}}  \label{form:JPbar}
\end{equation}%
where the horizontal arrows are the functors of \ref{cl:defJ}. Furthermore
\begin{equation}
\xi J_{\mathrm{Bialg}}=J\xi .  \label{form:xij}
\end{equation}%
Assume further that $\mathcal{M}$ has denumerable coproducts and that the
tensor functors preserve such coproducts. By Remark \ref{rem: AlgMon}, the
forgetful functor $\Omega :\mathrm{Alg}_{\mathcal{M}}\rightarrow \mathcal{M}$
has a left adjoint $T:\mathcal{M}\rightarrow \mathrm{Alg}_{\mathcal{M}}$.
Note that%
\begin{equation*}
\mathbb{I}_{\mathrm{BrAlg}}^{s}T_{\mathrm{Br}}^{s}J^{s}\overset{(\ref%
{diag:Tbrs})}{=}T_{\mathrm{Br}}\mathbb{I}_{\mathrm{Br}}^{s}J^{s}\overset{(%
\ref{eq:Js})}{=}T_{\mathrm{Br}}J\overset{(\ref{form:JT})}{=}J_{\mathrm{Alg}}T%
\overset{(\ref{eq:Js})}{=}\mathbb{I}_{\mathrm{BrAlg}}^{s}J_{\mathrm{Alg}%
}^{s}T
\end{equation*}%
and hence, since $\mathbb{I}_{\mathrm{BrAlg}}^{s}$ is both injective on
morphisms and objects, we get that the following diagram commutes%
\begin{equation}
\xymatrixrowsep{15pt}\xymatrixcolsep{15pt} \xymatrix{\Alg_\M\ar[rr]^{J^s_%
\Alg}&&\BrAlg_\M^s\\ \M\ar[u]^T\ar[rr]^{J^s}&&\Br_\M^s\ar[u]_{T_\Br^s}}
\label{diag:JsT}
\end{equation}%
In view of \cite[4.8]{AM-BraidedOb}, there is a functor%
\begin{equation*}
\overline{T}:\mathcal{M}\rightarrow \mathrm{Bialg}_{\mathcal{M}}
\end{equation*}%
such that the following diagrams commute.%
\begin{equation}
\xymatrixrowsep{15pt}\xymatrixcolsep{35pt}\xymatrix{\mathrm{Bialg}_{%
\mathcal{M}}\ar[r]^{J_\mathrm{Bialg}} &\mathrm{BrBialg}_{\mathcal{M}}\\
\mathcal{M}\ar[u]^{\overline{T}}
\ar[r]^{J}&\mathrm{Br}_{\mathcal{M}}\ar[u]_{\overline{T}_{\mathrm{Br}}}}
\label{form:JTbar}
\end{equation}%
\begin{equation}
\xymatrixrowsep{12pt}\xymatrixcolsep{0.7cm}\xymatrix{\mathrm{Bialg}_{%
\mathcal{M}} \ar[rr]^{\mho }&& \mathrm{Alg}_{\mathcal{M}} \\
&\mathcal{M}\ar[ul]^{\overline{T}}\ar[ru]_{ T }}  \label{form:OmegRibTbar}
\end{equation}%
By \cite[Theorem 4.9]{AM-BraidedOb}, the functor $\overline{T}$ is a left
adjoint of the functor $P:\mathrm{Bialg}_{\mathcal{M}}\rightarrow \mathcal{M}
$. The unit $\overline{\eta }$ and counit $\overline{\epsilon }$ of the
adjunction are uniquely determined by the following equalities%
\begin{gather}
\xi \overline{T}\circ \overline{\eta }=\eta , \qquad \epsilon \mho \circ
T\xi =\mho \overline{\epsilon },  \label{form:etabarVSeta}
\end{gather}%
where $\eta $ and $\epsilon $ denote the unit and counit of the adjunction $%
\left( T,\Omega \right) $ respectively. We have that
\begin{equation*}
\mathbb{I}_{\mathrm{BrBialg}}^{s}\overline{T}_{\mathrm{Br}}^{s}J^{s}\overset{%
(\ref{diag:PsTsBr})}{=}\overline{T}_{\mathrm{Br}}\mathbb{I}_{\mathrm{Br}%
}^{s}J^{s}\overset{(\ref{eq:Js})}{=}\overline{T}_{\mathrm{Br}}J\overset{(\ref%
{form:JTbar})}{=}J_{\mathrm{Bialg}}\overline{T}\overset{(\ref{eq:Js})}{=}%
\mathbb{I}_{\mathrm{BrBialg}}^{s}J_{\mathrm{Bialg}}^{s}\overline{T}
\end{equation*}%
and that%
\begin{equation*}
\mathbb{I}_{\mathrm{Br}}^{s}P_{\mathrm{Br}}^{s}J_{\mathrm{Bialg}}^{s}\overset%
{(\ref{diag:PsTsBr})}{=}P_{\mathrm{Br}}\mathbb{I}_{\mathrm{BrBialg}}^{s}J_{%
\mathrm{Bialg}}^{s}\overset{(\ref{eq:Js})}{=}P_{\mathrm{Br}}J_{\mathrm{Bialg}%
}\overset{(\ref{form:JPbar})}{=}JP\overset{(\ref{eq:Js})}{=}\mathbb{I}_{%
\mathrm{Br}}^{s}J^{s}P
\end{equation*}%
so that the following diagram commutes.
\begin{equation}
\xymatrixrowsep{15pt}\xymatrixcolsep{15pt} \xymatrix{\Bialg_M
\ar[rr]^{J_\Bialg^s}&& \BrBialg_M^s\\ \M
\ar[u]^{\overline{T}}\ar[rr]^{J^s}&& \Br_\M^s \ar[u]_{\overline{T}_\Br^s}}
\qquad \xymatrix{\Bialg_M \ar[d]_{P}\ar[rr]^{J_\Bialg^s}&&
\BrBialg_M^s\ar[d]^{P_\Br^s}\\ \M \ar[rr]^{J^s}&& \Br_\M^s }
\label{diag:JsTbar}
\end{equation}
\end{claim}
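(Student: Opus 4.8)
The plan is to obtain the entire $\Bialg_\M$ package by transporting the braided tensor‑bialgebra/primitive adjunction $\left( \overline{T}_\Br,P_\Br\right) $ of \ref{cl:TbarStrict} along the braiding‑adding embeddings $J,J_\Alg,J_\Bialg$ of \ref{cl:defJ}. Recall from \ref{cl:defJ} that these three functors are full, faithful, injective on objects and conservative, and that they intertwine the forgetful and free functors, so that $\mho _\Br J_\Bialg=J_\Alg\mho $, $\Omega _\Br J_\Alg=J\Omega $ and $J_\Alg T=T_\Br J$. The functor $P$ is then defined directly as the composite $P:=H P_\Br J_\Bialg$; since $J_\Bialg$ only evaluates the ambient braiding of $\M$ and $P_\Br$ is computed by the primitive equalizer of Definition \ref{def:prim}, which sees merely the coalgebra‑with‑unit structure, one reads off the description $P\mathbb{B}=P\left( B,\Delta _{B},\varepsilon _{B},u_{B}\right) $, the displayed equalizer, and the identity $\xi J_\Bialg=J\xi $ of \eqref{form:xij}.

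For the left adjoint I would invoke Lemma \ref{lem:LiftAdj} with $\left( L^{\prime },R^{\prime }\right) :=\left( \overline{T}_\Br,P_\Br\right) $, $F:=J_\Bialg$ and $G:=J$. Two image conditions have to be checked. First, every object of the form $\overline{T}_\Br\left( JV\right) $ lies in the image of $J_\Bialg$: by the description of the braided tensor algebra in \ref{cl:TbrStrict} its braiding satisfies $c_{A}\left( \alpha _{m}V\otimes \alpha _{n}V\right) =\left( \alpha _{n}V\otimes \alpha _{m}V\right) c_{A}^{m,n}$, and one identifies $c_A$ with the ambient braiding of $\M$ on the tensor powers, so that $\overline{T}_\Br JV=J_\Bialg\overline{T}V$ for a genuine bialgebra $\overline{T}V$ in $\M$. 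Second, every object $P_\Br\left( J_\Bialg\mathbb{B}\right) $ lies in the image of $J$, because its braiding is the restriction of the ambient $c$ along the primitive inclusion, which by naturality is again the ambient braiding. Lemma \ref{lem:LiftAdj} then produces the unique functors $\overline{T}=\widehat{\overline{T}_\Br J}$ and $P=\widehat{P_\Br J_\Bialg}$ making \eqref{form:JTbar} and \eqref{form:JPbar} commute, the adjunction $\left( \overline{T},P\right) $ with $J\overline{\eta }=\overline{\eta }_\Br J$ and $J_\Bialg\overline{\epsilon }=\overline{\epsilon }_\Br J_\Bialg$, and the fact that $\left( J_\Bialg,J\right) :\left( \overline{T},P\right) \rightarrow \left( \overline{T}_\Br,P_\Br\right) $ is a commutation datum with identity canonical transformation. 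The relation $\mho \overline{T}=T$ of \eqref{form:OmegRibTbar} follows by faithfulness of $J_\Alg$ from $J_\Alg\mho \overline{T}=\mho _\Br J_\Bialg\overline{T}=\mho _\Br\overline{T}_\Br J=T_\Br J=J_\Alg T$.

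The characterization \eqref{form:etabarVSeta} of the unit and counit is then recovered by applying the faithful $J$ and $J_\Alg$. Indeed, Lemma \ref{lem:LiftAdj} applied to $\left( T_\Br,\Omega _\Br\right) $ with $J_\Alg,J$ shows that $\left( J_\Alg,J\right) :\left( T,\Omega \right) \rightarrow \left( T_\Br,\Omega _\Br\right) $ is a commutation datum with identity canonical transformation, whence $J\eta =\eta _\Br J$ and $J_\Alg\epsilon =\epsilon _\Br J_\Alg$; combining this with $J\xi =\xi J_\Bialg$, $J\overline{\eta }=\overline{\eta }_\Br J$, $J_\Bialg\overline{\epsilon }=\overline{\epsilon }_\Br J_\Bialg$ and the braided identities \eqref{form:BarEta} and \eqref{form:BarEps}, both equalities $\xi \overline{T}\circ \overline{\eta }=\eta $ and $\epsilon \mho \circ T\xi =\mho \overline{\epsilon }$ collapse to their braided analogues after applying $J$ (resp. $J_\Alg$) and hence hold. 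For the symmetric refinements \eqref{diag:JsT} and \eqref{diag:JsTbar} I would argue as in the displayed chains: since $\I_\BrAlg^{s}$ and $\I_\BrBialg^{s}$ are injective on objects and morphisms, it suffices to verify the required identities after post‑composing with them, whereupon they reduce to the already established squares \eqref{diag:Tbrs}, \eqref{eq:Js} and \eqref{form:JT} in the $T$‑case, and \eqref{diag:PsTsBr}, \eqref{eq:Js}, \eqref{form:JTbar} and \eqref{form:JPbar} in the $\overline{T}$‑ and $P$‑cases; the functors $J^{s},J_\Alg^{s},J_\Bialg^{s}$ are well defined by Lemma \ref{lem:Cappuccio} once Remark \ref{rem:aureo} guarantees that the relevant objects carry a symmetric braiding when $\M$ is symmetric.

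The step I expect to be the genuine obstacle is the first image condition, namely that $\overline{T}_\Br JV$ lies in the image of $J_\Bialg$: identifying the induced braiding on the whole tensor bialgebra with the ambient braiding of $\M$ on tensor powers requires an induction on word length built from the naturality and the hexagon axioms of the braiding of $\M$ recorded in Definition \ref{def:braid}. The simultaneous matching of the lifted unit and counit with \eqref{form:etabarVSeta}, which passes through both the primitive equalizer and the free‑algebra coproduct at once, is the second point deserving care.
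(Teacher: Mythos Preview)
Your approach is correct and is in fact more self-contained than the paper's treatment. The paper does not prove these statements internally: it defines $P$ directly, cites \cite[4.6, Proposition 4.7, 4.8, Theorem 4.9]{AM-BraidedOb} for the existence of $\overline{T}$, the adjunction $(\overline{T},P)$, the square \eqref{form:JPbar}, and the characterizations \eqref{form:etabarVSeta}, and only argues the symmetric refinements \eqref{diag:JsT} and \eqref{diag:JsTbar} in-line via the displayed chains of equalities. Your route through Lemma \ref{lem:LiftAdj} reproduces all of this with the paper's own machinery and yields the commutation datum $\left( J_{\mathrm{Bialg}},J\right) :\left( \overline{T},P\right) \rightarrow \left( \overline{T}_{\mathrm{Br}},P_{\mathrm{Br}}\right) $ with identity canonical transformation as a bonus (which the paper establishes separately in Proposition \ref{pro:Tbar}). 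Your derivation of \eqref{form:etabarVSeta} from \eqref{form:BarEta}, \eqref{form:BarEps}, \eqref{form:TbrStrict} and \eqref{form:xij} via faithfulness of $J$ and $J_{\mathrm{Alg}}$ is clean and correct.

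One remark: the step you flag as the genuine obstacle, namely that the braiding on $\overline{T}_{\mathrm{Br}}JV$ coincides with the ambient braiding of $\mathcal{M}$, is already available. Since $\mho _{\mathrm{Br}}\overline{T}_{\mathrm{Br}}=T_{\mathrm{Br}}$ by \eqref{form:OmegRibTbarBr}, the underlying braided algebra of $\overline{T}_{\mathrm{Br}}JV$ is $T_{\mathrm{Br}}JV$, and \eqref{form:JT} (which the paper takes from \cite[Proposition 4.5]{AM-BraidedOb}) gives $T_{\mathrm{Br}}J=J_{\mathrm{Alg}}T$, so the braiding is the ambient one; the comultiplication and counit are then automatically morphisms in $\mathcal{M}$, whence $\overline{T}_{\mathrm{Br}}JV$ lies in the image of $J_{\mathrm{Bialg}}$. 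Thus no new induction over word length is needed, and the image condition reduces to existing citations.
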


\begin{proposition}
\label{pro:Tbar}Let $\mathcal{M}$ be a preadditive braided monoidal category
with equalizers. Assume that the tensor functors are additive and preserve
equalizers. Assume further that $\mathcal{M}$ has denumerable coproducts and
that the tensor functors preserve such coproducts. Then the morphism $\zeta :%
\overline{T}_{\mathrm{Br}}J\longrightarrow J_{\mathrm{Bialg}}\overline{T}$
of Lemma \ref{lem:zeta} is $\mathrm{Id}_{\overline{T}_{\mathrm{Br}}J}$. In
particular $\left( J_{\mathrm{Bialg}},J\right) :\left( \overline{T},P\right)
\rightarrow \left( \overline{T}_{\mathrm{Br}},P_{\mathrm{Br}}\right) $ is a
commutation datum.
\end{proposition}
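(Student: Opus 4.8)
The plan is to exploit the uniqueness part of Lemma \ref{lem:zeta}. By the commutative square in \eqref{form:JTbar} we have $\overline{T}_{\mathrm{Br}}J=J_{\mathrm{Bialg}}\overline{T}$, so that in the notation of Lemma \ref{lem:zeta} (with $(L,R)=(\overline{T},P)$, $(L',R')=(\overline{T}_{\mathrm{Br}},P_{\mathrm{Br}})$, $F=J_{\mathrm{Bialg}}$ and $G=J$) the source and target of $\zeta$ coincide, and hence $\mathrm{Id}_{\overline{T}_{\mathrm{Br}}J}$ is a legitimate candidate. Since $\zeta$ is the \emph{unique} natural transformation satisfying \eqref{form:zeta1}, it is enough to check that $\mathrm{Id}_{\overline{T}_{\mathrm{Br}}J}$ fulfills that equation. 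As $P_{\mathrm{Br}}\mathrm{Id}_{\overline{T}_{\mathrm{Br}}J}=\mathrm{Id}$, equation \eqref{form:zeta1} collapses to the single identity $\overline{\eta}_{\mathrm{Br}}J=J\overline{\eta}$, which is the real content of the statement. Conceptually this is just Lemma \ref{lem:LiftAdj} in action, but I prefer the direct verification since it refers to the specific unit $\overline{\eta}$ coming from \ref{cl: Bialg} rather than to a reconstructed adjunction structure.

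To prove $\overline{\eta}_{\mathrm{Br}}J=J\overline{\eta}$ I would post-compose with $\xi\overline{T}_{\mathrm{Br}}J$. Each component $\xi\mathbb{B}$ is an equalizer, hence a monomorphism, so $\xi\overline{T}_{\mathrm{Br}}J$ is a componentwise monomorphism and cancelling it is legitimate. On the left, whiskering \eqref{form:BarEta} by $J$ gives $\xi\overline{T}_{\mathrm{Br}}J\circ\overline{\eta}_{\mathrm{Br}}J=\eta_{\mathrm{Br}}J$. On the right, using $\overline{T}_{\mathrm{Br}}J=J_{\mathrm{Bialg}}\overline{T}$ together with \eqref{form:xij} one rewrites $\xi\overline{T}_{\mathrm{Br}}J=\xi J_{\mathrm{Bialg}}\overline{T}=J\xi\overline{T}$, and then \eqref{form:etabarVSeta} yields $\xi\overline{T}_{\mathrm{Br}}J\circ J\overline{\eta}=J\left(\xi\overline{T}\circ\overline{\eta}\right)=J\eta$. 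Thus the desired equality reduces to the purely algebra-level identity $\eta_{\mathrm{Br}}J=J\eta$ between natural transformations $J\to\Omega_{\mathrm{Br}}T_{\mathrm{Br}}J$, where I first record that the common codomain is well defined via $\Omega_{\mathrm{Br}}T_{\mathrm{Br}}J=\Omega_{\mathrm{Br}}J_{\mathrm{Alg}}T=J\Omega T$, using \eqref{form:JT} and the right-hand square of \eqref{diag:JAlg-Bialg}.

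Finally, to establish $\eta_{\mathrm{Br}}J=J\eta$ I would apply the forgetful functor $H\colon\mathrm{Br}_{\mathcal{M}}\to\mathcal{M}$, which is faithful and satisfies $HJ=\mathrm{Id}_{\mathcal{M}}$ since $J$ merely adjoins the canonical self-braiding that $H$ forgets. By \eqref{form:TbrStrict} we have $H\eta_{\mathrm{Br}}=\eta H$, so $H(\eta_{\mathrm{Br}}J)=\eta HJ=\eta$, while $H(J\eta)=HJ\eta=\eta$; faithfulness of $H$ then forces $\eta_{\mathrm{Br}}J=J\eta$. Tracing back through the two reductions gives $\overline{\eta}_{\mathrm{Br}}J=J\overline{\eta}$ and hence $\zeta=\mathrm{Id}_{\overline{T}_{\mathrm{Br}}J}$. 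In particular $\zeta$ is a functorial isomorphism, and since the square \eqref{form:JPbar} already exhibits $(\overline{T},P)$ and $(\overline{T}_{\mathrm{Br}},P_{\mathrm{Br}})$ as fitting into diagram \eqref{diag:cd}, Definition \ref{def:zetadatum} shows that $(J_{\mathrm{Bialg}},J)$ is a commutation datum. The main obstacle is purely bookkeeping: keeping the left/right whiskerings straight and recognising that the monomorphism $\xi$ is precisely what lets one pass from the bialgebra units $\overline{\eta},\overline{\eta}_{\mathrm{Br}}$ down to the algebra units $\eta,\eta_{\mathrm{Br}}$, at which point \eqref{form:TbrStrict} finishes the job.
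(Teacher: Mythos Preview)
Your proof is correct and follows exactly the paper's approach: both use the uniqueness clause of Lemma \ref{lem:zeta} to reduce everything to the single identity $\overline{\eta}_{\mathrm{Br}}J=J\overline{\eta}$. The only difference is that the paper dispatches this identity by citing \cite[Equality (75)]{AM-BraidedOb}, whereas you supply a self-contained derivation from \eqref{form:BarEta}, \eqref{form:xij}, \eqref{form:etabarVSeta} and \eqref{form:TbrStrict}; your version is thus a faithful unpacking of that external reference using only material already present in the paper.
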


\begin{proof}
Consider the commutative diagram \ref{form:JPbar}. By Lemma \ref{lem:zeta},
then there is a unique natural transformation $\zeta :\overline{T}_{\mathrm{%
Br}}J\longrightarrow J_{\mathrm{Bialg}}\overline{T}$ such that $P_{\mathrm{Br%
}}\zeta \circ \overline{\eta }_{\mathrm{Br}}J=J\overline{\eta }.$ By \cite[%
Equality (75)]{AM-BraidedOb}, we also have $\overline{\eta }_{\mathrm{Br}}J=J%
\overline{\eta }.$ By uniqueness of $\zeta $, we have $\zeta =\mathrm{Id}_{%
\overline{T}_{\mathrm{Br}}J}.$
\end{proof}

\begin{proposition}
\label{pro:Tbars}Let $\mathcal{M}$ be a preadditive symmetric monoidal
category with equalizers. Assume that the tensor functors are additive and
preserve equalizers. Assume further that $\mathcal{M}$ has denumerable
coproducts and that the tensor functors preserve such coproducts. Then the
morphism $\zeta ^{s}:\overline{T}_{\mathrm{Br}}^{s}J^{s}\longrightarrow J_{%
\mathrm{Bialg}}^{s}\overline{T}$ of Lemma \ref{lem:zeta} is $\mathrm{Id}_{%
\overline{T}_{\mathrm{Br}}^{s}J^{s}}$. In particular $\left( J_{\mathrm{Bialg%
}}^{s},J^{s}\right) :\left( \overline{T},P\right) \rightarrow \left(
\overline{T}_{\mathrm{Br}}^{s},P_{\mathrm{Br}}^{s}\right) $ is a commutation
datum.
\end{proposition}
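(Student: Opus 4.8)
The plan is to mirror the proof of Proposition~\ref{pro:Tbar} almost verbatim, the only new ingredient being the bookkeeping needed to pass the non-symmetric identity $\overline{\eta}_{\mathrm{Br}}J=J\overline{\eta}$ through the symmetric inclusions. First I would apply Lemma~\ref{lem:zeta} to the right-hand commutative square of \eqref{diag:JsTbar}, namely to the data $(L,R):=(\overline{T},P)$, $(L^{\prime},R^{\prime}):=(\overline{T}_{\mathrm{Br}}^{s},P_{\mathrm{Br}}^{s})$, $F:=J_{\mathrm{Bialg}}^{s}$ and $G:=J^{s}$. This is legitimate because both adjunctions exist (by \ref{cl: Bialg} and \ref{cl:TbarStrict}) and the square commutes by \eqref{diag:JsTbar}. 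The lemma then produces the asserted $\zeta^{s}\colon\overline{T}_{\mathrm{Br}}^{s}J^{s}\to J_{\mathrm{Bialg}}^{s}\overline{T}$, characterized uniquely by
\[
P_{\mathrm{Br}}^{s}\zeta^{s}\circ\overline{\eta}_{\mathrm{Br}}^{s}J^{s}=J^{s}\overline{\eta},
\]
which is exactly \eqref{form:zeta1} in this situation.

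By this uniqueness, to conclude $\zeta^{s}=\mathrm{Id}_{\overline{T}_{\mathrm{Br}}^{s}J^{s}}$ it suffices to check that the identity solves the displayed equation, i.e.\ that $\overline{\eta}_{\mathrm{Br}}^{s}J^{s}=J^{s}\overline{\eta}$. These two whiskered transformations are parallel, having common source $J^{s}$ and common target $P_{\mathrm{Br}}^{s}\overline{T}_{\mathrm{Br}}^{s}J^{s}=J^{s}P\overline{T}$ by the two squares of \eqref{diag:JsTbar}. The natural place to verify the identity is after applying the faithful inclusion $\mathbb{I}_{\mathrm{Br}}^{s}$. On the left I would use \eqref{form:barbrs} to slide the inclusion past the unit, $\mathbb{I}_{\mathrm{Br}}^{s}\overline{\eta}_{\mathrm{Br}}^{s}=\overline{\eta}_{\mathrm{Br}}\mathbb{I}_{\mathrm{Br}}^{s}$, followed by \eqref{eq:Js} in the form $\mathbb{I}_{\mathrm{Br}}^{s}J^{s}=J$, obtaining $\overline{\eta}_{\mathrm{Br}}J$; on the right \eqref{eq:Js} alone gives $\mathbb{I}_{\mathrm{Br}}^{s}J^{s}\overline{\eta}=J\overline{\eta}$.

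Thus the whole matter reduces to the non-symmetric identity $\overline{\eta}_{\mathrm{Br}}J=J\overline{\eta}$, which is precisely the content of Proposition~\ref{pro:Tbar}: there $\zeta=\mathrm{Id}_{\overline{T}_{\mathrm{Br}}J}$, so \eqref{form:zeta1} for that datum reads $\overline{\eta}_{\mathrm{Br}}J=J\overline{\eta}$. Since $\mathbb{I}_{\mathrm{Br}}^{s}$ is faithful, equality of the transformations after the inclusion yields $\overline{\eta}_{\mathrm{Br}}^{s}J^{s}=J^{s}\overline{\eta}$, and hence $\zeta^{s}=\mathrm{Id}$. The final ``in particular'' clause is then immediate: being the identity, $\zeta^{s}$ is a fortiori a functorial isomorphism, so $(J_{\mathrm{Bialg}}^{s},J^{s})\colon(\overline{T},P)\to(\overline{T}_{\mathrm{Br}}^{s},P_{\mathrm{Br}}^{s})$ is a commutation datum in the sense of Definition~\ref{def:zetadatum}. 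I expect the only delicate point to be the first reduction---correctly matching the source and target functors of the two whiskered units so that the equation $\overline{\eta}_{\mathrm{Br}}^{s}J^{s}=J^{s}\overline{\eta}$ even typechecks---which is settled by reading off the commutativities in \eqref{diag:JsTbar}; everything after that is a routine faithfulness argument.
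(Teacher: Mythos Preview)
Your proof is correct and follows essentially the same approach as the paper's: both argue by the uniqueness clause of Lemma~\ref{lem:zeta}, reduce to showing $\overline{\eta}_{\mathrm{Br}}^{s}J^{s}=J^{s}\overline{\eta}$, push this through $\mathbb{I}_{\mathrm{Br}}^{s}$ via \eqref{form:barbrs} and \eqref{eq:Js}, and then invoke the non-symmetric identity $\overline{\eta}_{\mathrm{Br}}J=J\overline{\eta}$. The only cosmetic difference is that you cite Proposition~\ref{pro:Tbar} for this last identity, whereas the paper appeals directly to \cite[Equality~(75)]{AM-BraidedOb} (which is itself the source used inside the proof of Proposition~\ref{pro:Tbar}).
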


\begin{proof}
Consider the commutative diagram \ref{form:JPbar}. By Lemma \ref{lem:zeta},
then there is a unique natural transformation $\zeta ^{s}:\overline{T}_{%
\mathrm{Br}}^{s}J^{s}\longrightarrow J_{\mathrm{Bialg}}^{s}\overline{T}$
such that $P_{\mathrm{Br}}^{s}\zeta ^{s}\circ \overline{\eta }_{\mathrm{Br}%
}^{s}J^{s}=J^{s}\overline{\eta }.$ Now%
\begin{equation*}
\mathbb{I}_{\mathrm{Br}}^{s}\overline{\eta }_{\mathrm{Br}}^{s}J^{s}\overset{(%
\ref{form:barbrs})}{=}\overline{\eta }_{\mathrm{Br}}\mathbb{I}_{\mathrm{Br}%
}^{s}J^{s}\overset{(\ref{eq:Js})}{=}\overline{\eta }_{\mathrm{Br}}J\overset{%
(\ast )}{=}J\overline{\eta }\overset{(\ref{eq:Js})}{=}\mathbb{I}_{\mathrm{Br}%
}^{s}J^{s}\overline{\eta }
\end{equation*}%
where in (*) we used \cite[Equality (75)]{AM-BraidedOb}. Thus $\overline{%
\eta }_{\mathrm{Br}}^{s}J^{s}=J^{s}\overline{\eta }.$ By uniqueness of $%
\zeta ^{s}$, we have $\zeta ^{s}=\mathrm{Id}_{\overline{T}_{\mathrm{Br}%
}^{s}J^{s}}$ (note that we are using that the domain and codomain of $\zeta
^{s}$ coincide by (\ref{diag:JsTbar})).
\end{proof}

\begin{claim}
\label{cl:BialgF} Let $\mathcal{M}$ and $\mathcal{M}^{\prime }$ be braided
monoidal categories. Following \cite[Proposition 4.10]{AM-BraidedOb}, every
braided monoidal functor $\left( F,\phi _{0},\phi _{2}\right) :\mathcal{M}%
\rightarrow \mathcal{M}^{\prime }$ induces in a natural way a functor $%
\mathrm{Bialg}F$ and the following diagrams commute.%
\begin{equation*}
\xymatrixrowsep{15pt}\xymatrixcolsep{35pt}\xymatrix{\mathcal{M}\ar[r]^{F}%
\ar[d]_{J} &\mathcal{M}^\prime \ar[d]^{J^\prime}\\ {\mathrm{Br}_\mathcal{M}}
\ar[r]^{\mathrm{Br}F}&\mathrm{Br}_{\mathcal{M}^\prime}}\qquad %
\xymatrixrowsep{15pt}\xymatrixcolsep{35pt}\xymatrix{\mathrm{Bialg}_%
\mathcal{M}\ar[r]^{\mathrm{Bialg}F}\ar[d]_{J_\mathrm{Bialg}}
&\mathrm{Bialg}_{\mathcal{M}^\prime} \ar[d]^{J^\prime_\mathrm{Bialg}}\\
{\mathrm{BrBialg}_\mathcal{M}}
\ar[r]^{\mathrm{BrBialg}F}&\mathrm{BrBialg}_{\mathcal{M}^\prime}}\qquad %
\xymatrixrowsep{15pt}\xymatrixcolsep{35pt}\xymatrix{\mathrm{Bialg}_%
\mathcal{M}\ar[r]^{\mathrm{Bialg}F}\ar[d]_{\mho}
&\mathrm{Bialg}_{\mathcal{M}^\prime} \ar[d]^{\mho^\prime}\\
{\mathrm{Alg}_\mathcal{M}}
\ar[r]^{\mathrm{Alg}F}&\mathrm{Alg}_{\mathcal{M}^\prime}}
\end{equation*}%
Moreover

\begin{enumerate}
\item[1)] $\mathrm{Bialg}F$ is an equivalence (resp. category isomorphism or
conservative) whenever $F$ is.

\item[2)] If $F$ preserves equalizers, the following diagram commutes.
\begin{equation*}
\xymatrixrowsep{15pt}\xymatrixcolsep{35pt}\xymatrix{\mathrm{Bialg}_%
\mathcal{M}\ar[r]^{\mathrm{Bialg}F}\ar[d]_{P}
&\mathrm{Bialg}_{\mathcal{M}^\prime} \ar[d]^{P^\prime}\\
\mathcal{M}\ar[r]^{F}&\mathcal{M}^\prime}
\end{equation*}
\end{enumerate}
\end{claim}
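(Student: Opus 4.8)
The plan is to realize $\Bialg F$ as the restriction of $\BrBialg F$ along the fully faithful, injective‑on‑objects embeddings $J_{\Bialg}$ and $J'_{\Bialg}$ of \ref{cl:defJ}, and then to transfer to $\Bialg F$ the properties of $\BrBialg F$ recorded in \ref{cl:BrBialg}. The existence of $\Bialg F$ and the commutativity of the three displayed squares are \cite[Proposition 4.10]{AM-BraidedOb}; the mechanism behind them, which I will reuse, is that $F$ being braided monoidal forces the braiding that $F$ induces on $J(V)=(V,c_{V,V})$ to be the categorical braiding of $\mathcal{M}'$: by \cite[Proposition 2.5]{AM-BraidedOb} this induced braiding is $\phi_2(V,V)^{-1}\circ F(c_{V,V})\circ\phi_2(V,V)$, while the braided monoidal axiom at $(V,V)$ reads $F(c_{V,V})\circ\phi_2(V,V)=\phi_2(V,V)\circ c'_{FV,FV}$, so the two agree and $\Br F\circ J=J'\circ F$. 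The same computation at the level of bialgebras shows $\BrBialg F$ sends the image of $J_{\Bialg}$ into that of $J'_{\Bialg}$, and Lemma \ref{lem:Cappuccio} then yields the unique $\Bialg F$ with $J'_{\Bialg}\circ\Bialg F=\BrBialg F\circ J_{\Bialg}$.

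For part 1) I would work entirely inside this last square, in which every $J$ is full, faithful and injective on objects while $\BrBialg F$ carries the relevant property by \ref{cl:BrBialg}. Conservativity is immediate: if $\Bialg F(f)$ is invertible then so is $\BrBialg F(J_{\Bialg}f)=J'_{\Bialg}(\Bialg F\,f)$, hence $J_{\Bialg}f$ and then $f$ are invertible since $\BrBialg F$ and $J_{\Bialg}$ are conservative. When $F$ is an equivalence (or isomorphism) $\BrBialg F$ is fully faithful, so $J'_{\Bialg}\circ\Bialg F$ is faithful and therefore $\Bialg F$ is faithful; and $\Bialg F$ is full by the usual lift, using fullness of $\BrBialg F$ and $J_{\Bialg}$ to produce a preimage and faithfulness of $J'_{\Bialg}$ to identify it with $\Bialg F(f)$.

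The \emph{main obstacle} is essential surjectivity (resp. surjectivity on objects). Given $B'\in\Bialg_{\mathcal{M}'}$, essential surjectivity of $\BrBialg F$ gives $X\in\BrBialg_{\mathcal{M}}$ and an isomorphism $\theta\colon\BrBialg F(X)\xrightarrow{\sim}J'_{\Bialg}(B')$; I must show that $X$ already lies in the image of $J_{\Bialg}$, i.e. that the braiding $c_X$ of $X$ equals the categorical braiding $c_{B,B}$ of its underlying object $B$. Passing to underlying braided objects, $\theta$ becomes a braided isomorphism with underlying map $g\colon FB\to B_0'$, and the braiding compatibility of $g$ together with naturality of $c'$ forces the induced braiding $\phi_2(B,B)^{-1}F(c_X)\phi_2(B,B)$ to equal $c'_{FB,FB}$; comparing with $c'_{FB,FB}=\phi_2(B,B)^{-1}F(c_{B,B})\phi_2(B,B)$ gives $F(c_X)=F(c_{B,B})$, whence $c_X=c_{B,B}$ because $F$, being an equivalence, is faithful. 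Thus $X=J_{\Bialg}(A)$ for a bialgebra $A=(B,m,u,\Delta,\varepsilon)$, and $J'_{\Bialg}(\Bialg F(A))=\BrBialg F(J_{\Bialg}A)\cong J'_{\Bialg}(B')$; since $J'_{\Bialg}$ is fully faithful it reflects this isomorphism, giving $\Bialg F(A)\cong B'$. The isomorphism case is the same with $\theta$ an identity and all isomorphisms replaced by equalities, using that $\BrBialg F$ and the $J$'s are injective on objects.

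Part 2) is a formal concatenation of facts already in hand. Writing $P=H\circ P_{\Br}\circ J_{\Bialg}$ and $P'=H'\circ P'_{\Br}\circ J'_{\Bialg}$ as in \ref{cl: Bialg}, and using in turn the square $J'_{\Bialg}\circ\Bialg F=\BrBialg F\circ J_{\Bialg}$, the commutativity of \eqref{diag:BrF-PBr} (Proposition \ref{pro:PrimFunct}, applicable since $F$ preserves equalizers) in the form $P'_{\Br}\circ\BrBialg F=\Br F\circ P_{\Br}$, and the first square of \ref{cl:BrBialg} in the form $H'\circ\Br F=F\circ H$, I obtain $P'\circ\Bialg F=H'P'_{\Br}J'_{\Bialg}\,\Bialg F=H'P'_{\Br}\,\BrBialg F\,J_{\Bialg}=H'\,\Br F\,P_{\Br}J_{\Bialg}=F\,H\,P_{\Br}J_{\Bialg}=F\circ P$, which is the asserted commutativity.
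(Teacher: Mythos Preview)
The paper does not give a proof of this claim; it simply records the construction and properties as following from \cite[Proposition 4.10]{AM-BraidedOb}. Your argument supplies the details the paper omits, and does so along the lines implicit in the paper's setup: realizing $\Bialg F$ as the factorization of $\BrBialg F\circ J_{\Bialg}$ through $J'_{\Bialg}$ via Lemma \ref{lem:Cappuccio}, then transporting the properties of $\BrBialg F$ from \ref{cl:BrBialg}. The essential surjectivity step is the only nontrivial point, and your computation there is correct: the braided-morphism condition on $\theta$ together with naturality of $c'$ forces the induced braiding on $FB$ to be $c'_{FB,FB}$, whence $F(c_X)=F(c_{B,B})$ and faithfulness of $F$ finishes it. Part 2) is the intended concatenation $P'\circ\Bialg F=H'P'_{\Br}\,\BrBialg F\,J_{\Bialg}=F\,H\,P_{\Br}J_{\Bialg}=F\circ P$ using \eqref{diag:BrF-PBr}; note only that invoking Proposition \ref{pro:PrimFunct} here tacitly uses the standing hypotheses on $\mathcal{M}$ and $\mathcal{M}'$ (preadditive, equalizers, tensor functors additive and preserving equalizers) under which $P$ and $P'$ are defined in \ref{cl: Bialg}, which the paper leaves implicit as well.
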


\section{Lie algebras}

The following definition extends the classical notion of Lie algebra to a
monoidal category which is not necessarily braided. We expected this notion
to be well-known, but we could not find any reference.

\begin{definition}
\label{def:Lie} 1) Given an abelian monoidal category $\mathcal{M}$ a \emph{%
braided Lie algebra} in $\mathcal{M}$ consists of a tern $\left( M,c,\left[ -%
\right] :M\otimes M\rightarrow M\right) $ where $\left( M,c\right) \in
\mathrm{Br}_{\mathcal{M}}$ and the following equalities hold true:
\begin{gather}
\left[ -\right] =-\left[ -\right] \circ c\text{ (skew-symmetry);}
\label{Lie1} \\
\left[ -\right] \circ \left( M\otimes \left[ -\right] \right) \circ \left[
\mathrm{Id}_{\left( M\otimes M\right) \otimes M}+\left( M\otimes c\right)
\left( c\otimes M\right) +\left( c\otimes M\right) \left( M\otimes c\right) %
\right] =0\text{ (Jacobi condition);}  \label{Lie2} \\
c\circ \left( M\otimes \left[ -\right] \right) =\left( \left[ -\right]
\otimes M\right) \circ \left( M\otimes c\right) \circ \left( c\otimes
M\right) ;  \label{form:cbraid} \\
c\circ \left( \left[ -\right] \otimes M\right) =\left( M\otimes \left[ -%
\right] \right) \circ \left( c\otimes M\right) \circ \left( M\otimes
c\right) .  \label{form:cbraid2}
\end{gather}%
Of course one should take care of the associativity constraints, but as we
did before, we continue to omit them. A morphism of braided Lie algebras $%
\left( M,c,\left[ -\right] \right) $ and $\left( M^{\prime },c^{\prime },%
\left[ -\right] ^{\prime }\right) $ in $\mathcal{M}$ is a morphism $f:\left(
M,c\right) \rightarrow \left( M^{\prime },c^{\prime }\right) $ of braided
objects such that $f\circ \left[ -\right] =\left[ -\right] ^{\prime }\circ
(f\otimes f).$ This defines the category $\mathrm{BrLie}_{\mathcal{M}}$ of
braided Lie algebras in $\mathcal{M}$ and their morphisms. Denote by
\begin{equation*}
H_{\mathrm{BrLie}}:\mathrm{BrLie}_{\mathcal{M}}\rightarrow \mathrm{Br}_{%
\mathcal{M}}:\left( M,c,\left[ -\right] \right) \mapsto \left( M,c\right)
\end{equation*}%
the obvious functor forgetting the bracket and acting as the identity on
morphisms. Note that $H_{\mathrm{BrLie}}$ is faithful and conservative.

Denote by $\mathrm{BrLie}_{\mathcal{M}}^{s}$ the full subcategory $\mathrm{%
BrLie}_{\mathcal{M}}$ consisting of braided Lie algebras with symmetric
braiding. Denote by%
\begin{equation*}
\mathbb{I}_{\mathrm{BrLie}}^{s}:\mathrm{BrLie}_{\mathcal{M}}^{s}\rightarrow
\mathrm{BrLie}_{\mathcal{M}}
\end{equation*}%
the inclusion functor. It is clear that, by Lemma \ref{lem:Cappuccio}, the
functor $H_{\mathrm{BrLie}}$ induces a functor $H_{\mathrm{BrLie}}^{s}$ such
that the diagram%
\begin{equation}
\xymatrixrowsep{15pt}\xymatrixcolsep{15pt} \xymatrix{\BrLie_\M^s
\ar[d]_{\I_\BrLie^s}\ar[rr]^{H_\BrLie^s}&& \Br_\M^s\ar[d]^{\I_\Br^s}\\
\BrLie_\M \ar[rr]^{H_\BrLie}&& \Br_\M }  \label{diag:HBrLies}
\end{equation}%
commutes. Since $H_{\mathrm{BrLie}}$ and both vertical arrows are faithful
and conservative, the same is true for $H_{\mathrm{BrLie}}^{s}$.

2)\ Let $\mathcal{M}$ be an abelian braided monoidal category. A \emph{Lie
algebra} in $\mathcal{M}$ consists of a pair $\left( M,\left[ -\right]
:M\otimes M\rightarrow M\right) $ such that $\left( M,c_{M,M},\left[ -\right]
\right) \in \mathrm{BrLie}_{\mathcal{M}}$, where $c_{M,M}$ is the braiding $%
c $ of $\mathcal{M}$ evaluated on $M.$ A morphism of Lie algebras $\left( M,%
\left[ -\right] \right) $ and $\left( M^{\prime },\left[ -\right] ^{\prime
}\right) $ in $\mathcal{M}$ is a morphism $f:M\rightarrow M^{\prime }$ in $%
\mathcal{M}$ such that $f\circ \left[ -\right] =\left[ -\right] ^{\prime
}\circ (f\otimes f).$ This defines the category $\mathrm{Lie}_{\mathcal{M}}$
of Lie algebras in $\mathcal{M}$ and their morphisms. Note that there is a
full, faithful, injective on objects and conservative functor
\begin{equation*}
J_{\mathrm{Lie}}:\mathrm{Lie}_{\mathcal{M}}\rightarrow \mathrm{BrLie}_{%
\mathcal{M}}:\left( M,\left[ -\right] \right) \mapsto \left( M,c_{M,M},\left[
-\right] \right)
\end{equation*}%
which acts as the identity on morphisms. This notion already appeared in
\cite[c) page 82]{Manin} , where a Lie algebra in $\mathcal{M}$ is called an
$\mathcal{M}$-Lie algebra. Denote by
\begin{equation*}
H_{\mathrm{Lie}}:\mathrm{Lie}_{\mathcal{M}}\rightarrow \mathcal{M}:\left( M,%
\left[ -\right] \right) \mapsto M
\end{equation*}%
the obvious functor forgetting the bracket and acting as the identity on
morphisms. Note that $H_{\mathrm{BrLie}}J_{\mathrm{Lie}}=JH_{\mathrm{Lie}}$.

3) Let $\mathcal{M}$ be an abelian symmetric monoidal category. Given $%
\left( M,\left[ -\right] \right) \in \mathrm{Lie}_{\mathcal{M}}$ it is clear
that $\left( M,c_{M,M},\left[ -\right] \right) \in \mathrm{BrLie}_{\mathcal{M%
}}^{s}$ so that $J_{\mathrm{Lie}}$ factors through a functor $J_{\mathrm{Lie}%
}^{s}$ such that the following diagrams commute.
\begin{equation}
\xymatrixrowsep{15pt}\xymatrixcolsep{15pt} \xymatrix{\Lie_\M\ar[dr]_{J_%
\Lie^s}\ar[rr]^{J_\Lie}&&\BrLie_\M\\ &\BrLie_\M^s\ar[ur]_{\I_\BrLie^s}}
\qquad \xymatrix{\BrLie_\M^s \ar[rr]^{H_\BrLie^s}&& \Br_\M^s\\ \Lie_\M^s
\ar[u]^{J_\Lie^s}\ar[rr]^{H_\Lie}&& \M \ar[u]_{J^s}}  \label{diag:JLies}
\end{equation}
\end{definition}

\begin{remark}
We point out that $\mathrm{BrLie}_{\mathcal{M}}^{s}=\mathrm{YBLieAlg}(%
\mathcal{M})$ with the notations of \cite[Definition 2.5]{GV-LieMon} (note
that (\ref{form:cbraid}) follows from (\ref{form:cbraid2}) as we are in the
symmetric case).
\end{remark}

\begin{lemma}
\label{lem:Jac2}Let $\mathcal{M}$ be an abelian monoidal category. Consider
a tern $\left( M,c,\left[ -\right] :M\otimes M\rightarrow M\right) $ where $%
\left( M,c\right) \in \mathrm{Br}_{\mathcal{M}}$. If $c^{2}=\mathrm{Id}$ and
(\ref{Lie1}) holds, then we have that (\ref{Lie2}) is equivalent to
\begin{equation}
\left[ -\right] \circ \left( \left[ -\right] \otimes M\right) \circ \left[
\mathrm{Id}_{\left( V\otimes V\right) \otimes V}+\left( M\otimes c\right)
\left( c\otimes M\right) +\left( c\otimes M\right) \left( M\otimes c\right) %
\right] =0.  \label{Lie3}
\end{equation}
\end{lemma}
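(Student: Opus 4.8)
The plan is to reduce both conditions to the cyclic operator they share. Write $[-]=:b$ and abbreviate the endomorphism of $\left( M\otimes M\right) \otimes M$ occurring in (\ref{Lie2}) and (\ref{Lie3}) as $\Theta :=\mathrm{Id}+(M\otimes c)(c\otimes M)+(c\otimes M)(M\otimes c)$. First I would record the purely braided features of $\Theta$: setting $\tau :=(M\otimes c)(c\otimes M)$, the hypothesis $c^{2}=\mathrm{Id}$ gives $(c\otimes M)^{2}=(M\otimes c)^{2}=\mathrm{Id}_{M^{\otimes 3}}$, so that $(c\otimes M)(M\otimes c)=\tau ^{-1}$; combining this with the quantum Yang--Baxter equation (\ref{ec: braided equation}) one checks $\tau ^{3}=\mathrm{Id}$. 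Hence $\Theta =\mathrm{Id}+\tau +\tau ^{2}$ is the symmetrizer of the cyclic group generated by $\tau $, and in particular $\tau \Theta =\tau ^{2}\Theta =\Theta$. This cyclic invariance is what lets me pass freely between the three summands.

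The heart of the argument is a single twisted identity relating the left-associated and right-associated double brackets, namely $b(M\otimes b)\circ (c\otimes M)(M\otimes c)=-\,b(b\otimes M)$, that is $b(M\otimes b)\tau ^{2}=-b(b\otimes M)$. To obtain it I would first use the compatibility of the braiding with the bracket in the form (\ref{form:cbraid2}), $c\circ \left( [-]\otimes M\right) =(M\otimes [-])\circ (c\otimes M)\circ (M\otimes c)$, to rewrite $(M\otimes b)(c\otimes M)(M\otimes c)$ as $c\,(b\otimes M)$; then I would apply the skew-symmetry (\ref{Lie1}) through the identity $b\circ c=-b$ to turn the outer $b\,c$ into $-b$. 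Composing these two substitutions yields the displayed identity.

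With this in hand the conclusion is immediate: using $b(b\otimes M)=-b(M\otimes b)\tau ^{2}$ and then the cyclic invariance $\tau ^{2}\Theta =\Theta $,
\[
b(b\otimes M)\,\Theta =-\,b(M\otimes b)\,\tau ^{2}\Theta =-\,b(M\otimes b)\,\Theta .
\]
Thus $b(M\otimes b)\Theta $ and $b(b\otimes M)\Theta $ differ only by a sign, so one vanishes precisely when the other does; this is exactly the asserted equivalence of (\ref{Lie2}) and (\ref{Lie3}).

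I expect the only genuine obstacle to be the twisted identity of the second paragraph: it is there that the braided nature of the situation enters, since relating bracketing on the left two tensorands to bracketing on the right two is impossible from skew-symmetry alone and forces one to invoke the interaction (\ref{form:cbraid2}) between $c$ and $[-]$ together with $bc=-b$. The remaining steps---the combinatorics of $\Theta $ as the order-three cyclic symmetrizer and the final sign bookkeeping---are routine once $\tau ^{3}=\mathrm{Id}$ and $\tau ^{2}\Theta =\Theta $ have been established.
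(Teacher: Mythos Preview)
Your argument is correct and is precisely the standard computation that the paper defers to via its citation of \cite[Lemma~2.9]{GV}: pass to the cyclic symmetrizer $\Theta=\mathrm{Id}+\tau+\tau^{2}$, use the braid--bracket compatibility together with $b\circ c=-b$ to flip the nesting of the two brackets, and conclude from $\tau^{2}\Theta=\Theta$. The one point worth flagging is that you invoke (\ref{form:cbraid2}), which is not among the hypotheses of the lemma as literally stated---only $c^{2}=\mathrm{Id}$ and (\ref{Lie1}) are assumed there. In practice this is harmless: the result in \cite{GV} is formulated in the YB-Lie-algebra framework where such compatibility is part of the data, and every application of the lemma in the present paper is to objects of $\mathrm{BrLie}^{s}_{\mathcal{M}}$, for which (\ref{form:cbraid2}) holds by definition. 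But if the lemma is meant to stand with exactly the hypotheses written, (\ref{form:cbraid2}) (equivalently (\ref{form:cbraid}), since $c^{2}=\mathrm{Id}$) should be added; without it there is no available relation turning $c\circ(b\otimes M)$ into something involving $M\otimes b$, and your key identity $b(M\otimes b)\tau^{2}=-b(b\otimes M)$ cannot be derived.
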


\begin{proof}
This proof is essentially the same as \cite[Lemma 2.9]{GV}.
\end{proof}

\begin{remark}
In view of Lemma \ref{lem:Jac2}, in the particular case when $\mathcal{M}$
is the category of vector spaces and $\left( M,c\right) \in \mathrm{Br}_{%
\mathcal{M}}$, conditions (\ref{Lie1}) and (\ref{Lie3}) encode the notion of
Lie algebra in the sense of Gurevich's \cite{Gure}.
\end{remark}

\begin{definition}
Let $\mathcal{M}$ a preadditive monoidal category with equalizers and
denumerable coproducts. Let $\left( M,c\right) \in \mathrm{Br}_{\mathcal{M}}$%
. For $\alpha _{2}M$ as in of Remark \ref{rem: AlgMon}, we set%
\begin{equation}
\theta _{\left( M,c\right) }:=\alpha _{2}M\circ \left( \mathrm{Id}_{M\otimes
M}-c\right) :M\otimes M\rightarrow \Omega TM.  \label{def:theta}
\end{equation}%
When $\mathcal{M}$ is braided and its braiding on $M$ is $c_{M,M}$ we will
simply write $\theta _{M}$ for $\theta _{\left( M,c_{M,M}\right) }$.
\end{definition}

\begin{definition}
\label{def:Lambda}Let $\mathcal{M}$ be a monoidal category. Let $\left(
A,m_{A},u_{A}\right) $ be an algebra in $\mathcal{M}$ and let $%
f:X\rightarrow A$ be a morphism in $\mathcal{M}$. We set%
\begin{equation}
\Lambda _{f}:=m_{A}\circ \left( m_{A}\otimes A\right) \circ \left( A\otimes
f\otimes A\right) :A\otimes X\otimes A\rightarrow A.  \label{def:Lambdaf}
\end{equation}%
When the category $\mathcal{M}$ is also abelian we can consider the
two-sided ideal of $A$ \emph{generated by }$f$ which is defined by $\left(
\left\langle f\right\rangle ,i_{f}\right) :=\mathrm{Im}\left( \Lambda
_{f}\right) $ and it has the following property (see e.g. \cite[Lemma 3.18]%
{AMS-Hoch}): for every algebra morphism $g:A\rightarrow B$ one has that $%
g\circ i_{f}=0$ if and only if $g\circ f=0.$
\end{definition}

\begin{remark}
Let $\mathcal{M}$ be an abelian monoidal category. Let $\left(
A,m_{A},u_{A}\right) $ be an algebra in $\mathcal{M}$.

1)\ Note that $\Lambda _{f}-\Lambda _{g}=\Lambda _{f-g}$ for every $%
f,g:X\rightarrow A$.

2)\ Assume that the tensor products preserve epimorphisms. Let $%
f:X\rightarrow A$ be a morphism in $\mathcal{M}$ and set $\left(
S,j:S\rightarrow A\right) :=\mathrm{Im}\left( f\right) .$ Define the ideal $%
\left( S\right) $ generated by $S$ by setting $\left( \left( S\right)
,i\right) :=\mathrm{Im}\left( \Lambda _{j}\right) $. Write $f=j\circ p$
where $p:X\rightarrow S$ is an epimorphism. We compute $\mathrm{Im}\left(
\Lambda _{f}\right) =\mathrm{Im}\left( \Lambda _{j\circ p}\right) =\mathrm{Im%
}\left( \Lambda _{j\circ p}\right) =\mathrm{Im}\left( \Lambda _{j}\circ
\left( A\otimes p\otimes A\right) \right) =\mathrm{Im}\left( \Lambda
_{j}\right) $ so that $\left( \left\langle f\right\rangle ,i_{f}\right) :=%
\mathrm{Im}\left( \Lambda _{f}\right) =\left( \left( S\right) ,i\right) .$
Therefore $\left\langle f\right\rangle =\left( \mathrm{Im}\left( f\right)
\right) .$
\end{remark}

Next aim is to construct suitable universal enveloping algebra type functors.

\begin{remark}
Let $\mathcal{M}$ an abelian monoidal category with denumerable coproducts.
Assume that the tensor functors preserve denumerable coproducts. Note that $%
\mathcal{M}$ has also finite coproducts as it has a zero object and
denumerable coproduct. Thus, by \cite[Proposition 3.3]{Stenstroem} the
tensor functors are additive as they preserve denumerable coproducts.
\end{remark}

\begin{proposition}
\label{pro:Ubr}Let $\mathcal{M}$ an abelian monoidal category with
denumerable coproducts. Assume that the tensor functors are right exact and
preserve denumerable coproducts.

Let $\left( M,c,\left[ -\right] :M\otimes M\rightarrow M\right) \in \mathrm{%
BrLie}_{\mathcal{M}}$ and set
\begin{equation*}
f:=f_{\left( M,c,\left[ -\right] \right) }:=\alpha _{1}M\circ \left[ -\right]
-\theta _{\left( M,c\right) }:M\otimes M\rightarrow \Omega TM.
\end{equation*}%
Let $\mathcal{U}_{\mathrm{Br}}\left( M,c,\left[ -\right] \right) :=R:=\Omega
TM/\left\langle f\right\rangle $ and let $p_{R}:\Omega TM\rightarrow R$
denote the canonical projection. Then there are morphisms $m_{R},u_{R},c_{R}$
such that $\left( R,m_{R},u_{R},c_{R}\right) \in \mathrm{BrAlg}_{\mathcal{M}%
} $ and $p_{R}$ is a morphism of braided algebras. This way we get a functor%
\begin{equation*}
\mathcal{U}_{\mathrm{Br}}:\mathrm{BrLie}_{\mathcal{M}}\rightarrow \mathrm{%
BrAlg}_{\mathcal{M}}.
\end{equation*}%
and the projections $p_{R}$ define a natural transformation $p:T_{\mathrm{Br}%
}H_{\mathrm{BrLie}}\rightarrow \mathcal{U}_{\mathrm{Br}}$. Moreover there is
a functor $\mathcal{U}_{\mathrm{Br}}^{s}:\mathrm{BrLie}_{\mathcal{M}%
}^{s}\rightarrow \mathrm{BrAlg}_{\mathcal{M}}^{s}$ such that the diagram
\begin{equation}
\xymatrixrowsep{15pt}\xymatrixcolsep{15pt} \xymatrix{\BrLie_M^s
\ar[d]_{\mathcal{U}_\Br^s}\ar[rr]^{\I_\BrLie^s}&&
\BrLie_M\ar[d]^{\mathcal{U}_\Br}\\ \BrAlg_M^s \ar[rr]^{\I_\BrAlg^s}&& \Alg_M
}  \label{diag:Ubrs}
\end{equation}%
commutes and there is a natural transformation $p^{s}:T_{\mathrm{Br}}^{s}H_{%
\mathrm{BrLie}}^{s}\rightarrow \mathcal{U}_{\mathrm{Br}}^{s}$ uniquely
defined by%
\begin{equation}
\mathbb{I}_{\mathrm{BrAlg}}^{s}p^{s}=p\mathbb{I}_{\mathrm{BrLie}}^{s}.
\label{form:ps}
\end{equation}
\end{proposition}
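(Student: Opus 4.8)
The plan is to realise $R$ as a quotient of the free braided algebra $A:=T_{\mathrm{Br}}\left( M,c\right) $, whose underlying object is $\Omega TM$ and whose braided algebra structure $\left( A,m_{A},u_{A},c_{A}\right) \in \mathrm{BrAlg}_{\mathcal{M}}$ is supplied by \ref{cl:TbrStrict}. First I would note that, the tensor functors being right exact, the standard quotient by the two-sided ideal $\left\langle f\right\rangle =\mathrm{Im}\left( \Lambda _{f}\right) $ makes $R=\Omega TM/\left\langle f\right\rangle $ an algebra and $p_{R}$ an algebra morphism, with $p_{R}\circ f=0$ by the universal property recalled in Definition \ref{def:Lambda}; here $m_{R}$ is induced on the cokernel (using that $p_{R}\otimes p_{R}$ is epic) and $u_{R}:=p_{R}\circ u_{A}$.

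The real content is to descend the braiding $c_{A}$. Since the tensor functors are right exact, $p_{R}\otimes p_{R}$ is epic with kernel the join of $\mathrm{Im}\left( i_{f}\otimes A\right) $ and $\mathrm{Im}\left( A\otimes i_{f}\right) $, so a (necessarily unique) $c_{R}$ with $\left( p_{R}\otimes p_{R}\right) c_{A}=c_{R}\left( p_{R}\otimes p_{R}\right) $ exists precisely when $c_{A}$ carries $\left\langle f\right\rangle \otimes A$ into $A\otimes \left\langle f\right\rangle $ and $A\otimes \left\langle f\right\rangle $ into $\left\langle f\right\rangle \otimes A$. As $\left\langle f\right\rangle $ is generated as a two-sided ideal by $f$, and as $A$ is generated in degree one (every $\alpha _{n}M$ being an iterated product of copies of $\alpha _{1}M$ by \eqref{form:TVm}), the multiplicativity axioms \eqref{Br2} and \eqref{Br3} let me commute $c_{A}$ through all the multiplications occurring in $\Lambda _{f}$ and reduce both containments to the generator level, namely to factoring $c_{A}\circ \left( f\otimes \alpha _{1}M\right) $ through $A\otimes i_{f}$ and $c_{A}\circ \left( \alpha _{1}M\otimes f\right) $ through $i_{f}\otimes A$.

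This generator computation is where I expect the genuine obstacle, and where the definition of $\mathrm{BrLie}_{\mathcal{M}}$ is used in full. Writing $f=\alpha _{1}M\circ \left[ -\right] -\alpha _{2}M\circ \left( \mathrm{Id}-c\right) $ and using the explicit form of $c_{A}$ on $T_{\mathrm{Br}}$ (from \ref{cl:TbrStrict}), the bracket summand is transported across $c_{A}$ exactly by the braided Lie compatibilities \eqref{form:cbraid} and \eqref{form:cbraid2}, while the braided-commutator summand is transported by the Yang--Baxter equation for $c$; matching the two shows that the defining morphism $f$ reappears on the opposite tensorand, yielding the required factorizations. Invertibility of $c_{R}$ follows by applying the same argument to $c_{A}^{-1}$, and the braided algebra axioms \eqref{Br2}--\eqref{Br4} together with the Yang--Baxter equation for $\left( R,m_{R},u_{R},c_{R}\right) $ are then obtained from those for $A$ by cancelling the epimorphisms $p_{R}\otimes p_{R}$ and $p_{R}^{\otimes 3}$; by construction $p_{R}$ is a morphism of braided algebras.

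For functoriality, given a morphism $\varphi $ in $\mathrm{BrLie}_{\mathcal{M}}$ I would combine \eqref{form:Tf} with the facts that $\varphi $ preserves the bracket and intertwines the braidings to get $\left( \Omega T\varphi \right) \circ f=f^{\prime }\circ \left( \varphi \otimes \varphi \right) $; hence $p_{R^{\prime }}\circ \left( \Omega T\varphi \right) \circ f=0$, and Definition \ref{def:Lambda} yields a unique braided algebra morphism $\mathcal{U}_{\mathrm{Br}}\left( \varphi \right) $ with $\mathcal{U}_{\mathrm{Br}}\left( \varphi \right) \circ p_{R}=p_{R^{\prime }}\circ T_{\mathrm{Br}}\left( \varphi \right) $. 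Uniqueness gives functoriality, and this identity is exactly the naturality of $p:T_{\mathrm{Br}}H_{\mathrm{BrLie}}\rightarrow \mathcal{U}_{\mathrm{Br}}$. Finally, in the symmetric case $c^{2}=\mathrm{Id}$ forces $c_{A}^{2}=\mathrm{Id}$ by the computation already made in \ref{cl:TbrStrict}, whence $\left( p_{R}\otimes p_{R}\right) c_{A}^{2}=c_{R}^{2}\left( p_{R}\otimes p_{R}\right) $ and cancellation of the epimorphism $p_{R}\otimes p_{R}$ give $c_{R}^{2}=\mathrm{Id}$, so $R\in \mathrm{BrAlg}_{\mathcal{M}}^{s}$. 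Since $\mathbb{I}_{\mathrm{BrAlg}}^{s}$ is full, faithful and injective on objects, Lemma \ref{lem:Cappuccio} produces the unique $\mathcal{U}_{\mathrm{Br}}^{s}:=\widehat{\mathcal{U}_{\mathrm{Br}}\mathbb{I}_{\mathrm{BrLie}}^{s}}$ making \eqref{diag:Ubrs} commute; and, since $T_{\mathrm{Br}}H_{\mathrm{BrLie}}\mathbb{I}_{\mathrm{BrLie}}^{s}=\mathbb{I}_{\mathrm{BrAlg}}^{s}T_{\mathrm{Br}}^{s}H_{\mathrm{BrLie}}^{s}$ by \eqref{diag:HBrLies} and \eqref{diag:Tbrs}, the second part of Lemma \ref{lem:Cappuccio} lifts $p\,\mathbb{I}_{\mathrm{BrLie}}^{s}$ to the unique $p^{s}$ satisfying \eqref{form:ps}.
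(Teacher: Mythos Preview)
Your proposal is correct and follows essentially the same route as the paper. The only organizational differences are that the paper descends the braiding in two steps (first producing $c_{A,R}:A\otimes R\to R\otimes A$, then $c_{R}$) and verifies the key vanishing $(p_{R}\otimes A)\,c_{A}\,(A\otimes f)\,(\alpha_{n}M\otimes M\otimes M)=0$ directly for each $n$ via the graded components $c_{A}^{m,n}$, whereas you collapse this into a single descent using the description of $\ker(p_{R}\otimes p_{R})$ and reduce to $n=1$ by multiplicativity \eqref{Br2}--\eqref{Br3} and generation of $A$ in degree one; for the symmetric part the paper invokes Remark~\ref{rem:aureo}-2) where you argue the cancellation $c_{R}^{2}=\mathrm{Id}$ by hand, which is exactly what that remark encodes.
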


\begin{proof}
Set $\left( A,m_{A},u_{A},c_{A}\right) :=T_{\mathrm{Br}}\left( M,c\right) .$
We will use the equalities for the graded part $c_{A}^{m,n}$ of the braiding
$c_{A}$ which are in \cite[Proposition 2.7]{AM-BraidedOb}. Note that, by
\cite[(42)]{AM-BraidedOb}, we have that $c_{A}\circ \left( \alpha
_{m}M\otimes \alpha _{n}M\right) =\left( \alpha _{n}M\otimes \alpha
_{m}M\right) \circ c_{A}^{m,n}$ for every $m,n\in \mathbb{N}$. By induction
on $n\in \mathbb{N},$ using (\ref{form:cbraid}), one checks that
\begin{equation}
c_{A}^{n,1}\circ \left( M^{\otimes n}\otimes \left[ -\right] \right) =\left( %
\left[ -\right] \otimes M^{\otimes n}\right) \circ c_{A}^{n,2}.
\label{form:cbraidn}
\end{equation}
If we apply \cite[(32) and (34)]{AM-BraidedOb}, we get
\begin{equation*}
c_{A}^{l,n+m}\left( M^{\otimes l}\otimes c_{A}^{m,n}\right) =\left(
c_{A}^{m,n}\otimes M^{\otimes l}\right) c_{A}^{l,m+n}.
\end{equation*}%
If we apply this equality to the case $"\left( l,m,n\right) "=\left(
n,1,1\right) ,$ we obtain
\begin{equation}
c_{A}^{n,2}\left( M^{\otimes n}\otimes c\right) =\left( c\otimes M^{\otimes
n}\right) c_{A}^{n,2},\text{ for every }n\in \mathbb{N}\text{.}
\label{form:YBn}
\end{equation}

Since $\left\langle f\right\rangle $ is an ideal of $TM$, it is clear that $%
R $ is an algebra and $p_{R}$ is an algebra morphism. Consider the exact
sequence
\begin{equation*}
0\rightarrow \left\langle f\right\rangle \overset{i_{f}}{\rightarrow }A%
\overset{p_{R}}{\rightarrow }R\rightarrow 0
\end{equation*}%
If we apply to it the functor $A\otimes \left( -\right) $, we obtain the
exact sequence
\begin{equation*}
A\otimes \left\langle f\right\rangle \overset{A\otimes i_{f}}{\rightarrow }%
A\otimes A\overset{A\otimes p_{R}}{\rightarrow }A\otimes R\rightarrow 0
\end{equation*}%
We have that $\left( \left\langle f\right\rangle ,i_{f}\right) :=\mathrm{Im}%
\left( \Lambda _{f}\right) $ so that we can write $\Lambda _{f}=i_{f}\circ
p_{f}$ where $p_{f}:A\otimes X\otimes A\rightarrow \left\langle
f\right\rangle $ is an epimorphism. Since the tensor products preserve
epimorphisms, we have that $A\otimes p_{f}$ is an epimorphism so that $%
\left( p_{R}\otimes A\right) c_{A}\left( A\otimes i_{f}\right) =0$ if and
only if $\left( p_{R}\otimes A\right) c_{A}\left( A\otimes \Lambda
_{f}\right) =0.$ Using the definition of $c_{A},$ (\ref{form:cbraidn}) and (%
\ref{form:YBn}) one checks that $\left( p_{R}\otimes A\right) c_{A}\left(
A\otimes f\right) \left( \alpha _{n}M\otimes M\otimes M\right) =0.$ Since
this holds for every $n\in \mathbb{N}$ and the tensor products preserve the
denumerable coproducts, we get
\begin{equation}
\left( p_{R}\otimes A\right) c_{A}\left( A\otimes f\right) =0.
\label{form:U1}
\end{equation}%
Now using (\ref{Br3}) and (\ref{form:U1}) one gets $\left( p_{R}\otimes
A\right) c_{A}\left( A\otimes \Lambda _{f}\right) =0.$ Hence, by the
foregoing, we get $\left( p_{R}\otimes A\right) c_{A}\left( A\otimes
i_{f}\right) =0.$ Thus there is a unique morphism $c_{A,R}:A\otimes
R\rightarrow R\otimes A$ such that $c_{A,R}\circ \left( A\otimes
p_{R}\right) =\left( p_{R}\otimes A\right) \circ c_{A}$. Consider now the
exact sequence
\begin{equation*}
\left\langle f\right\rangle \otimes R\overset{i_{f}\otimes R}{\rightarrow }%
A\otimes R\overset{p_{R}\otimes R}{\rightarrow }R\otimes R\rightarrow 0.
\end{equation*}%
We will prove that $\left( R\otimes p_{R}\right) c_{A,R}\left( i_{f}\otimes
R\right) =0$.

This equality is equivalent to prove $\left( R\otimes p_{R}\right)
c_{A,R}\left( \Lambda _{f}\otimes R\right) =0.$ We have
\begin{eqnarray*}
&&\left( R\otimes p_{R}\right) c_{A,R}\left( \Lambda _{f}\otimes R\right)
\left( A\otimes M\otimes M\otimes A\otimes p_{R}\right) \\
&=&\left( R\otimes p_{R}\right) c_{A,R}\left( A\otimes p_{R}\right) \left(
\Lambda _{f}\otimes A\right) =\left( R\otimes p_{R}\right) \left(
p_{R}\otimes A\right) c_{A}\left( \Lambda _{f}\otimes A\right) \\
&=&\left( p_{R}\otimes R\right) \left( A\otimes p_{R}\right) c_{A}\left(
\Lambda _{f}\otimes A\right) .
\end{eqnarray*}%
Note that the latter term vanishes as $\left( A\otimes p_{R}\right)
c_{A}\left( \Lambda _{f}\otimes A\right) =0$ by a similar argument to the
one used to prove $\left( p_{R}\otimes A\right) c_{A}\left( A\otimes \Lambda
_{f}\right) =0$ and using (\ref{form:cbraid2}). Since $A\otimes M\otimes
M\otimes A\otimes p_{R}$ is an epimorphism, we get that $\left( R\otimes
p_{R}\right) c_{A,R}\left( \Lambda _{f}\otimes R\right) =0$ and hence there
is a unique morphism $c_{R}:R\otimes R\rightarrow R\otimes R$ such that $%
c_{R}\circ \left( p_{R}\otimes R\right) =\left( R\otimes p_{R}\right) \circ
c_{A,R}.$ We get%
\begin{equation*}
c_{R}\left( p_{R}\otimes p_{R}\right) =c_{R}\left( p_{R}\otimes R\right)
\left( A\otimes p_{R}\right) =\left( R\otimes p_{R}\right) c_{A,R}\left(
A\otimes p_{R}\right) =\left( R\otimes p_{R}\right) \left( p_{R}\otimes
A\right) c_{A}=\left( p_{R}\otimes p_{R}\right) c_{A}.
\end{equation*}

If we rewrite (\ref{form:cbraid}) and (\ref{form:cbraid2}) in terms of $%
c^{-1}$ we get that $\left( M,c^{-1}\right) $ fulfills (\ref{form:cbraid})
and (\ref{form:cbraid2}). Thus we can repeat the argument above obtaining a
morphism $c_{R}^{\prime }$ such that $c_{R}^{\prime }\left( p_{R}\otimes
p_{R}\right) =\left( p_{R}\otimes p_{R}\right) c_{A}^{-1}.$ It is easy to
check that $c_{R}^{\prime }$ is an inverse for $c_{R}.$ By Lemma \ref%
{lem:quot}, we get that $\left( R,c_{R}\right) $ is an object in $\mathrm{Br}%
_{\mathcal{M}}$ and $p_{R}$ becomes a morphism in $\mathrm{Br}_{\mathcal{M}}$
from $\left( A,c_{A}\right) $ to this object. We have%
\begin{eqnarray*}
c_{R}(m_{R}\otimes R)\left( p_{R}\otimes p_{R}\otimes p_{R}\right)
&=&c_{R}\left( p_{R}\otimes p_{R}\right) (m_{A}\otimes A)=\left(
p_{R}\otimes p_{R}\right) c_{A}(m_{A}\otimes A) \\
\overset{(\ref{Br2})}{=}\left( p_{R}\otimes p_{R}\right) (A\otimes
m)(c\otimes A)(A\otimes c) &=&(R\otimes m_{R})(c_{R}\otimes R)(R\otimes
c_{R})\left( p_{R}\otimes p_{R}\otimes p_{R}\right)
\end{eqnarray*}%
so that (\ref{Br2}) holds for $\left( R,m_{R},c_{R}\right) .$ Similarly one
proves (\ref{Br3}). Moreover%
\begin{gather*}
c_{R}(u_{R}\otimes R)l_{R}^{-1}p_{R}=c_{R}(u_{R}\otimes R)\left( \mathbf{1}%
\otimes p_{R}\right) l_{A}^{-1}=c_{R}(p_{R}u_{A}\otimes p_{R})l_{A}^{-1} \\
=\left( p_{R}\otimes p_{R}\right) c_{A}(u_{A}\otimes A)l_{A}^{-1}\overset{(%
\ref{Br4})}{=}\left( p_{R}\otimes p_{R}\right) \left( A\otimes u_{A}\right)
r_{A}^{-1}=\left( p_{R}\otimes u_{R}\right) r_{A}^{-1}=\left( R\otimes
u_{R}\right) r_{R}^{-1}p_{R}
\end{gather*}

and hence $c_{R}(u_{R}\otimes R)l_{R}^{-1}=\left( R\otimes u_{R}\right)
r_{R}^{-1}.$ Similarly one gets $c_{R}(R\otimes u_{R})r_{R}^{-1}=\left(
u_{R}\otimes R\right) l_{R}^{-1}.$ We have so proved that $\left(
R,m_{R},u_{R},c_{R}\right) \in \mathrm{BrAlg}_{\mathcal{M}}$. It is clear
that $p_{R}$ is a morphism of braided algebras.

Let $\nu :\left( M,c,\left[ -\right] \right) \rightarrow \left( M^{\prime
},c^{\prime },\left[ -\right] ^{\prime }\right) $ be a morphism of braided
Lie algebras. Consider the morphism of braided algebras $T_{\mathrm{Br}}\nu
:T_{\mathrm{Br}}\left( M,c\right) \rightarrow T_{\mathrm{Br}}\left(
M^{\prime },c^{\prime }\right) .$ Set $R^{\prime }:=\mathcal{U}_{\mathrm{Br}%
}\left( M^{\prime },c^{\prime },\left[ -\right] ^{\prime }\right) $ and
denote by $p_{R^{\prime }}$ the corresponding projection and set $f^{\prime
}:=f_{\left( M^{\prime },c^{\prime },\left[ -\right] ^{\prime }\right) }$.
We have
\begin{eqnarray*}
&&p_{R^{\prime }}\circ \Omega H_{\mathrm{Alg}}T_{\mathrm{Br}}H_{\mathrm{BrLie%
}}\nu \circ f\overset{(\ref{form:HtildeTbrOmegaBr})}{=}p_{R^{\prime }}\circ
\Omega THH_{\mathrm{BrLie}}\nu \circ \left( \alpha _{1}M\circ \left[ -\right]
-\theta _{\left( M,c\right) }\right) \\
&=&p_{R^{\prime }}\circ \Omega THH_{\mathrm{BrLie}}\nu \circ \alpha
_{1}M\circ \left[ -\right] -p_{R^{\prime }}\circ \Omega THH_{\mathrm{BrLie}%
}\nu \circ \alpha _{2}M\circ \left( \mathrm{Id}_{M\otimes M}-c\right) \\
&\overset{(\ref{form:Tf})}{=}&p_{R^{\prime }}\circ \alpha _{1}M^{\prime
}\circ HH_{\mathrm{BrLie}}\nu \circ \left[ -\right] -p_{R^{\prime }}\circ
\alpha _{2}M^{\prime }\circ \left( HH_{\mathrm{BrLie}}\nu \otimes HH_{%
\mathrm{BrLie}}\nu \right) \circ \left( \mathrm{Id}_{M\otimes M}-c\right) \\
&=&p_{R^{\prime }}\circ \alpha _{1}M^{\prime }\circ HH_{\mathrm{BrLie}}\nu
\circ \left[ -\right] -p_{R^{\prime }}\circ \alpha _{2}M^{\prime }\circ
\left( \mathrm{Id}_{M^{\prime }\otimes M^{\prime }}-c^{\prime }\right) \circ
\left( HH_{\mathrm{BrLie}}\nu \otimes HH_{\mathrm{BrLie}}\nu \right) \\
&=&p_{R^{\prime }}\circ \alpha _{1}M^{\prime }\circ \left[ -\right] ^{\prime
}\circ \left( HH_{\mathrm{BrLie}}\nu \otimes HH_{\mathrm{BrLie}}\nu \right)
-p_{R^{\prime }}\circ \theta _{\left( M^{\prime },c^{\prime }\right) }\circ
\left( HH_{\mathrm{BrLie}}\nu \otimes HH_{\mathrm{BrLie}}\nu \right) \\
&=&p_{R^{\prime }}\circ f^{\prime }\circ \left( HH_{\mathrm{BrLie}}\nu
\otimes HH_{\mathrm{BrLie}}\nu \right) =0.
\end{eqnarray*}

Since $p_{R^{\prime }}\circ \Omega H_{\mathrm{Alg}}T_{\mathrm{Br}}H_{\mathrm{%
BrLie}}\nu \circ $ is an algebra morphism we get $p_{R^{\prime }}\circ
\Omega H_{\mathrm{Alg}}T_{\mathrm{Br}}H_{\mathrm{BrLie}}\nu \circ i_{f}=0$
so that there is a unique morphism $\mathcal{U}_{\mathrm{Br}}\nu :\mathcal{U}%
_{\mathrm{Br}}\left( M,c,\left[ -\right] \right) \rightarrow \mathcal{U}_{%
\mathrm{Br}}\left( M^{\prime },c^{\prime },\left[ -\right] ^{\prime }\right)
$ such that $\mathcal{U}_{\mathrm{Br}}\nu \circ p_{R}=p_{R^{\prime }}\circ
T_{\mathrm{Br}}H_{\mathrm{BrLie}}\nu .$ It is easy to check that $\mathcal{U}%
_{\mathrm{Br}}\nu $ is a morphism of braided bialgebras. Since $T_{\mathrm{Br%
}}$ is a functor it is then clear that $\mathcal{U}_{\mathrm{Br}}$ becomes a
functor as well and that the projections define a natural transformation $%
p:T_{\mathrm{Br}}H_{\mathrm{BrLie}}\rightarrow \mathcal{U}_{\mathrm{Br}}$.

Let us construct $\mathcal{U}_{\mathrm{Br}}^{s}$. We already observed that
the functor $\mathbb{I}_{\mathrm{BrAlg}}^{s}$ is full, faithful and
injective on objects.

Let $\left( M,c,\left[ -\right] \right) \in \mathrm{BrLie}_{\mathcal{M}}^{s}$%
. Then, by Remark \ref{rem:aureo}-2), we get that $R=\mathcal{U}_{\mathrm{Br}%
}\left( M,c,\left[ -\right] \right) \in \mathrm{BrAlg}_{\mathcal{M}}^{s}$ as
$R$ is a quotient of $T_{\mathrm{Br}}H_{\mathrm{BrLie}}\left( M,c,\left[ -%
\right] \right) $ which is preserved by the required functors. Hence any
object which is image of $\mathcal{U}_{\mathrm{Br}}\mathbb{I}_{\mathrm{BrLie}%
}^{s}$ is also image of $\mathbb{I}_{\mathrm{BrAlg}}^{s}.$ By Lemma \ref%
{lem:Cappuccio}, there is a unique functor $\mathcal{U}_{\mathrm{Br}}^{s}:=%
\widehat{\mathcal{U}_{\mathrm{Br}}\mathbb{I}_{\mathrm{BrLie}}^{s}}$ such
that (\ref{diag:Ubrs}) commutes. We have%
\begin{equation}
T_{\mathrm{Br}}H_{\mathrm{BrLie}}\mathbb{I}_{\mathrm{BrLie}}^{s}\overset{(%
\ref{diag:HBrLies})}{=}T_{\mathrm{Br}}\mathbb{I}_{\mathrm{Br}}^{s}H_{\mathrm{%
BrLie}}^{s}\overset{(\ref{diag:Tbrs})}{=}\mathbb{I}_{\mathrm{BrAlg}}^{s}T_{%
\mathrm{Br}}^{s}H_{\mathrm{BrLie}}^{s}.  \label{form:THI}
\end{equation}%
By Lemma \ref{lem:Cappuccio}, we have $\widehat{T_{\mathrm{Br}}H_{\mathrm{%
BrLie}}\mathbb{I}_{\mathrm{BrLie}}^{s}}=T_{\mathrm{Br}}^{s}H_{\mathrm{BrLie}%
}^{s},$ $\widehat{\mathcal{U}_{\mathrm{Br}}\mathbb{I}_{\mathrm{BrLie}}^{s}}=%
\mathcal{U}_{\mathrm{Br}}^{s}$ and there is a unique natural transformation $%
p^{s}:=\widehat{p\mathbb{I}_{\mathrm{BrLie}}^{s}}:T_{\mathrm{Br}}^{s}H_{%
\mathrm{BrLie}}^{s}\rightarrow \mathcal{U}_{\mathrm{Br}}^{s}$ such that (\ref%
{form:ps}) holds.
\end{proof}

\begin{lemma}
\label{lem:Un}Let $\mathcal{M}$ a preadditive monoidal category with
denumerable coproducts. Assume that the tensor functors are additive and
preserve such coproducts. Let $\left( M,c,\left[ -\right] :M\otimes
M\rightarrow M\right) \in \mathrm{BrLie}_{\mathcal{M}}$, set $\left(
A,m_{A},u_{A},\Delta _{A},\varepsilon _{A},c_{A}\right) :=\overline{T}_{%
\mathrm{Br}}\left( M,c\right) $ and use the notations of \ref{cl:TbarStrict}%
. Then,
\begin{equation}
\Delta _{A}\circ \theta _{\left( M,c\right) }=\left[ \left( u_{A}\otimes
A\right) \circ l_{A}^{-1}+\left( A\otimes u_{A}\right) \circ r_{A}^{-1}%
\right] \circ \theta _{\left( M,c\right) }\text{ if }c^{2}=\mathrm{Id}%
_{M\otimes M};  \label{form:Un1}
\end{equation}%
\begin{equation}
\Delta _{A}\circ \alpha _{1}M=\left[ \left( u_{A}\otimes A\right) \circ
l_{A}^{-1}+\left( A\otimes u_{A}\right) \circ r_{A}^{-1}\right] \circ \alpha
_{1}M.  \label{form:Un2}
\end{equation}
\end{lemma}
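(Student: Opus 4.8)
Write $D:=(u_{A}\otimes A)\circ l_{A}^{-1}+(A\otimes u_{A})\circ r_{A}^{-1}$ for the common map occurring on the right-hand side of \eqref{form:Un1} and \eqref{form:Un2}. The plan is to handle the two equalities separately: \eqref{form:Un2} is a direct reformulation of \eqref{form:TBrdelta}, whereas \eqref{form:Un1} carries the real content.

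First I would prove \eqref{form:Un2}. By naturality of the unit constraints applied to $\alpha_{1}M:M\rightarrow A$ one has $l_{A}^{-1}\circ \alpha_{1}M=(\mathbf{1}\otimes \alpha_{1}M)\circ l_{M}^{-1}$ and $r_{A}^{-1}\circ \alpha_{1}M=(\alpha_{1}M\otimes \mathbf{1})\circ r_{M}^{-1}$, whence $D\circ \alpha_{1}M=(u_{A}\otimes \alpha_{1}M)\circ l_{M}^{-1}+(\alpha_{1}M\otimes u_{A})\circ r_{M}^{-1}=\delta_{M}^{l}+\delta_{M}^{r}$. By \eqref{form:TBrdelta} the latter equals $\Delta_{A}\circ \alpha_{1}M$, which is \eqref{form:Un2}.

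For \eqref{form:Un1} the key step is to compute $\Delta_{A}\circ \alpha_{2}M$. Using \eqref{form:TVm} I would write $\alpha_{2}M=m_{A}\circ (\alpha_{1}M\otimes \alpha_{1}M)$ and then apply the braided bialgebra axiom \eqref{Br1}, namely $\Delta_{A}m_{A}=(m_{A}\otimes m_{A})(A\otimes c_{A}\otimes A)(\Delta_{A}\otimes \Delta_{A})$. Substituting $\Delta_{A}\alpha_{1}M=\delta_{M}^{l}+\delta_{M}^{r}$ from \eqref{form:TBrdelta} expands the result into four summands. Each is simplified by sliding $u_{A}$ through the braiding via \eqref{Br4} and by using that the degree-one component of $c_{A}$ is the original braiding, i.e. $c_{A}\circ (\alpha_{1}M\otimes \alpha_{1}M)=(\alpha_{1}M\otimes \alpha_{1}M)\circ c$ (see \ref{cl:TbrStrict}). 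The two summands coming from $\delta_{M}^{l}\otimes \delta_{M}^{l}$ and $\delta_{M}^{r}\otimes \delta_{M}^{r}$ produce the ``primitive'' part $D\circ \alpha_{2}M$, while the two mixed summands $\delta_{M}^{l}\otimes \delta_{M}^{r}$ and $\delta_{M}^{r}\otimes \delta_{M}^{l}$ yield $(\alpha_{1}M\otimes \alpha_{1}M)\circ c$ and $(\alpha_{1}M\otimes \alpha_{1}M)$ respectively. Thus I expect
\begin{equation*}
\Delta_{A}\circ \alpha_{2}M=D\circ \alpha_{2}M+(\alpha_{1}M\otimes \alpha_{1}M)\circ (\mathrm{Id}_{M\otimes M}+c).
\end{equation*}

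Finally I would precompose with $(\mathrm{Id}_{M\otimes M}-c)$. Since $\theta_{(M,c)}=\alpha_{2}M\circ (\mathrm{Id}_{M\otimes M}-c)$, the first term becomes $D\circ \theta_{(M,c)}$, which is the right-hand side of \eqref{form:Un1}, and the second term becomes $(\alpha_{1}M\otimes \alpha_{1}M)\circ (\mathrm{Id}_{M\otimes M}+c)(\mathrm{Id}_{M\otimes M}-c)=(\alpha_{1}M\otimes \alpha_{1}M)\circ (\mathrm{Id}_{M\otimes M}-c^{2})$, which vanishes exactly because $c^{2}=\mathrm{Id}_{M\otimes M}$. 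This gives $\Delta_{A}\circ \theta_{(M,c)}=D\circ \theta_{(M,c)}$, i.e. \eqref{form:Un1}. The main obstacle is the bookkeeping of the four braided summands, in particular keeping track of the suppressed associativity and unit constraints and correctly invoking \eqref{Br4} together with the degree-one identity for $c_{A}$; once the displayed formula for $\Delta_{A}\circ \alpha_{2}M$ is in hand, the symmetry hypothesis enters only through the cancellation $(\mathrm{Id}+c)(\mathrm{Id}-c)=\mathrm{Id}-c^{2}=0$ and the conclusion is immediate.
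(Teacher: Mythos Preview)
Your proposal is correct and follows essentially the same route as the paper: both prove \eqref{form:Un2} directly from \eqref{form:TBrdelta} and naturality of the unit constraints, and both establish \eqref{form:Un1} by first expanding $\Delta_{A}\circ\alpha_{2}M$ via multiplicativity of $\Delta_{A}$ and \eqref{form:TBrdelta} into the same displayed formula $\Delta_{A}\circ\alpha_{2}M=D\circ\alpha_{2}M+(\alpha_{1}M\otimes\alpha_{1}M)(\mathrm{Id}+c)$, then precomposing with $\mathrm{Id}-c$. The only cosmetic difference is that the paper tracks the braiding on the graded pieces via the explicit identities $c_{A}^{1,0},c_{A}^{0,1},c_{A}^{0,0},c_{A}^{1,1}$, whereas you invoke \eqref{Br4} and the degree-one identity directly; these are equivalent bookkeeping devices.
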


\begin{proof}
Using, in the given order, (\ref{form:TVm}), the multiplicativity of $\Delta
_{A}$, (\ref{form:TBrdelta}), the definitions of $\delta _{M}^{l}$ and $%
\delta _{M}^{r},$ the equalities $c_{A}\circ \left( \alpha _{i}M\otimes
\alpha _{j}M\right) =\left( \alpha _{j}M\otimes \alpha _{j}M\right) \circ
c_{A}^{i,j}$ for $i,j\in \left\{ 1,2\right\} ,$ the equalities $%
c_{A}^{1,0}=l_{M}^{-1}r_{M}$,$c_{A}^{1,1}=c,c_{A}^{0,0}=l_{\mathbf{1}%
}^{-1}r_{\mathbf{1}}$ and $c_{A}^{0,1}=r_{M}^{-1}l_{M},$ the equalities (\ref%
{form:TVu}) and (\ref{form:TVm}), the equalities $r_{M}\otimes M=M\otimes
l_{M}$ and $r_{\mathbf{1}}\otimes M=\mathbf{1}\otimes l_{M},$ the equalities
$l_{M}^{-1}\otimes M=l_{M\otimes M}^{-1}$, $M\otimes l_{\mathbf{1}%
}^{-1}=r_{M}^{-1}\otimes \mathbf{1}$ and $M\otimes r_{M}^{-1}=r_{M\otimes
M}^{-1},$ the equalities $m_{\mathbf{1}}=r_{\mathbf{1}}=l_{\mathbf{1}},$ the
naturality of the unit constraints, $l_{M}^{-1}\otimes M=l_{M\otimes M}^{-1}$%
, $M\otimes r_{M}^{-1}=r_{M\otimes M}^{-1}$ and $r_{M}\otimes M=M\otimes
l_{M},$ the equality (\ref{form:TVu}) and the naturality of the unit
constraints one proves that%
\begin{equation*}
\Delta _{A}\circ \alpha _{2}M=\left[ \left( u_{A}\otimes A\right) \circ
l_{A}^{-1}+\left( A\otimes u_{A}\right) \circ r_{A}^{-1}\right] \circ \alpha
_{2}M+\left( \alpha _{1}M\otimes \alpha _{1}M\right) \circ \left( \mathrm{Id}%
_{M\otimes M}+c\right) .
\end{equation*}%
From this equality, composing with $\mathrm{Id}_{M\otimes M}-c$ on both
sides, we get (\ref{form:Un1}) holds true when $c^{2}=\mathrm{Id}_{M\otimes
M}$.

On the other hand, (\ref{form:Un2}) follows by (\ref{form:TBrdelta}), the
definitions of $\delta _{M}^{l}$ and $\delta _{M}^{r},$ the naturality of
the unit constraints.
\end{proof}

\begin{proposition}
\label{pro:BrBialgQuotient} Let $\left( B,m_{B},u_{B},\Delta
_{B},\varepsilon _{B},c_{B}\right) \in \mathrm{BrBialg}_{\mathcal{M}}$ be a
bialgebra in a monoidal category $\mathcal{M}$. Assume that the category $%
\mathcal{M}$ is abelian and the tensor functors are additive and right
exact. Let $\left( R,m_{R},u_{R},c_{R}\right) \in \mathrm{BrAlg}_{\mathcal{M}%
}$ and let $p_{R}:B\rightarrow R$ be an epimorphism which is a morphism of
braided algebras. Set $\left( I,i_{I}:I\rightarrow B\right) :=\mathrm{Ker}%
\left( p_{R}\right) $. Assume that%
\begin{gather}
\left( p_{R}\otimes p_{R}\right) \circ \Delta _{B}\circ i_{I}=0,
\label{form:coid1} \\
\varepsilon _{B}\circ i_{I}=0.  \label{form:coid2}
\end{gather}%
Then there are morphisms $\Delta _{R},\varepsilon _{R}$ such that $\left(
R,m_{R},u_{R},\Delta _{R},\varepsilon _{R},c_{R}\right) \in \mathrm{BrBialg}%
_{\mathcal{M}}$ and $p_{R}$ is a morphism of braided bialgebras.
\end{proposition}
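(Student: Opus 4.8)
The plan is to descend the coalgebra structure of $B$ along $p_{R}$ and then to check that all the required identities transport from $B$ to $R$ by cancelling the epimorphism $p_{R}$ and its tensor powers. Being already a morphism of braided algebras, $p_{R}$ will then automatically become a morphism of braided bialgebras.

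First I would construct the two structure maps. Since $\mathcal{M}$ is abelian and $p_{R}$ is an epimorphism, $p_{R}$ is the cokernel of its kernel, i.e. $p_{R}=\mathrm{coker}(i_{I})$. Condition \eqref{form:coid2}, namely $\varepsilon _{B}\circ i_{I}=0$, then yields a unique $\varepsilon _{R}\colon R\rightarrow \mathbf{1}$ with $\varepsilon _{R}\circ p_{R}=\varepsilon _{B}$, while condition \eqref{form:coid1}, namely $\left( p_{R}\otimes p_{R}\right) \circ \Delta _{B}\circ i_{I}=0$, yields a unique $\Delta _{R}\colon R\rightarrow R\otimes R$ with $\Delta _{R}\circ p_{R}=\left( p_{R}\otimes p_{R}\right) \circ \Delta _{B}$. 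These two defining equalities say precisely that $p_{R}$ intertwines the comultiplications and the counits, so that once the braided bialgebra axioms are verified for $R$, the map $p_{R}$ is simultaneously a morphism of braided coalgebras and of braided algebras, hence of braided bialgebras.

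The verification of the remaining relations follows a single recurring pattern. Because the tensor functors are right exact they preserve epimorphisms, so $p_{R}\otimes p_{R}$ and $p_{R}\otimes p_{R}\otimes p_{R}$ are again epimorphisms; thus to prove an identity $X=Y$ of morphisms out of $R$, resp. $R\otimes R$, resp. $R\otimes R\otimes R$, it suffices to prove $X\circ p_{R}^{\otimes k}=Y\circ p_{R}^{\otimes k}$ for the appropriate $k$. Precomposing with $p_{R}^{\otimes k}$ and repeatedly using $\Delta _{R}p_{R}=\left( p_{R}\otimes p_{R}\right) \Delta _{B}$, $\varepsilon _{R}p_{R}=\varepsilon _{B}$, $m_{R}\left( p_{R}\otimes p_{R}\right) =p_{R}m_{B}$, $p_{R}u_{B}=u_{R}$, together with the braidedness of $p_{R}$, that is $c_{R}\left( p_{R}\otimes p_{R}\right) =\left( p_{R}\otimes p_{R}\right) c_{B}$, one rewrites each side as some $p_{R}^{\otimes l}$ precomposed with the corresponding expression in the structure maps of $B$. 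The desired identity for $R$ then reduces to the already known identity for $B$. In this way coassociativity, the counit axioms (here also using the naturality of $l$ and $r$ with respect to $p_{R}$), the braided coalgebra relations \eqref{Br5}, \eqref{Br6}, \eqref{Br7}, and the bialgebra compatibilities \eqref{Br1} and \eqref{Br9} all transfer from $B$ to $R$; for the unital halves of \eqref{Br1} and \eqref{Br9} one invokes $p_{R}u_{B}=u_{R}$ directly, with no cancellation needed.

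I expect no conceptual obstacle. The only genuine inputs are the two coideal hypotheses \eqref{form:coid1}--\eqref{form:coid2}, the abelianness of $\mathcal{M}$ (to realize $p_{R}$ as a cokernel, which is what legitimizes the construction of $\Delta _{R}$ and $\varepsilon _{R}$), and the right exactness of the tensor functors (to ensure each $p_{R}^{\otimes k}$ remains epic, which legitimizes every cancellation). The main point requiring care is purely bookkeeping: tracking the associativity and unit constraints and the order of the factors when pushing the braided relations through $p_{R}\otimes p_{R}\otimes p_{R}$, since relations \eqref{Br5}--\eqref{Br7} and \eqref{Br1} intertwine $c$ with $\Delta $ and $m$.
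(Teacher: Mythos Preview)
Your proposal is correct and follows exactly the approach of the paper: construct $\Delta_{R}$ and $\varepsilon_{R}$ via the cokernel property of $p_{R}$, then verify all braided bialgebra axioms by cancelling the epimorphism $p_{R}^{\otimes k}$, whose epicity comes from right exactness of the tensor functors. The paper's proof is in fact terser than yours, writing only the construction of $\Delta_{R}$ and $\varepsilon_{R}$ and then declaring the rest straightforward once one knows $p_{R}\otimes p_{R}$ is an epimorphism.
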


\begin{proof}
Since $\left( R,p_{R}\right) =\mathrm{Coker}\left( i_{I}\right) ,$ by (\ref%
{form:coid1}), there is a unique morphism $\Delta _{R}:R\rightarrow R\otimes
R$ such that $\Delta _{R}\circ p_{R}=\left( p_{R}\otimes p_{R}\right) \circ
\Delta _{B}$ and, by (\ref{form:coid2}), there is a unique morphism $%
\varepsilon _{R}:R\rightarrow \mathbf{1}$ such that $\varepsilon _{R}\circ
p_{R}=\varepsilon _{B}.$ The rest of the proof is straightforward and relies
on the fact that $p_{R}\otimes p_{R}=\left( p_{R}\otimes R\right) \left(
A\otimes p_{R}\right) $ is an epimorphism by exactness of the tensor
functors.
\end{proof}

\begin{theorem}
\label{Teo:UbarBr}Let $\mathcal{M}$ an abelian monoidal category with
denumerable coproducts. Assume that the tensor functors are right exact and
preserve denumerable coproducts. Then there is a functor $\overline{\mathcal{%
U}}_{\mathrm{Br}}^{s}:\mathrm{BrLie}_{\mathcal{M}}^{s}\rightarrow \mathrm{%
BrBialg}_{\mathcal{M}}^{s}$ such that%
\begin{equation}
\xymatrixrowsep{15pt}\xymatrixcolsep{8pt} \xymatrix{\BrLie_\M^s
\ar[dr]_{\mathcal{U}_\Br^s}\ar[rr]^{\overline{\mathcal{U}}_\Br^s}&&
\BrBialg_\M^s\ar[dl]^{\mho_\Br^s}\\ &\BrAlg_\M^s }  \label{diag:UbrsBar}
\end{equation}%
Moreover there is a natural transformation $\overline{p}^{s}:\overline{T}_{%
\mathrm{Br}}^{s}H_{\mathrm{BrLie}}^{s}\rightarrow \overline{\mathcal{U}}_{%
\mathrm{Br}}^{s}$ uniquely defined by%
\begin{equation}
\mho _{\mathrm{Br}}\mathbb{I}_{\mathrm{BrBialg}}^{s}\overline{p}^{s}=p%
\mathbb{I}_{\mathrm{BrLie}}^{s}\qquad \text{and}\qquad \mho _{\mathrm{Br}%
}^{s}\overline{p}^{s}=p^{s}  \label{eq:P&pBars}
\end{equation}%
where $p:T_{\mathrm{Br}}H_{\mathrm{BrLie}}\rightarrow \mathcal{U}_{\mathrm{Br%
}}$ and $p^{s}:T_{\mathrm{Br}}^{s}H_{\mathrm{BrLie}}^{s}\rightarrow \mathcal{%
U}_{\mathrm{Br}}^{s}$ are the natural transformations of Proposition \ref%
{pro:Ubr}.
\end{theorem}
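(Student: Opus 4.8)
The plan is to equip $R:=\mathcal{U}_{\mathrm{Br}}^{s}(M,c,[-])$ with a braided bialgebra structure, for each $(M,c,[-])\in\BrLie_{\M}^{s}$, by descending the bialgebra structure of $\overline{T}_{\mathrm{Br}}(M,c)$ along the canonical projection, and then to promote this assignment to a functor $\overline{\mathcal{U}}_{\mathrm{Br}}^{s}$ and obtain $\overline{p}^{s}$ as the lift of $p^{s}$ along $\mho_{\mathrm{Br}}^{s}$. First I would fix $(M,c,[-])\in\BrLie_{\M}^{s}$ (so $c^{2}=\mathrm{Id}$) and set $\mathbb{A}:=\overline{T}_{\mathrm{Br}}(M,c)\in\BrBialg_{\M}$, whose underlying braided algebra is $\mho_{\mathrm{Br}}\mathbb{A}=T_{\mathrm{Br}}(M,c)$ by \eqref{form:OmegRibTbarBr}, with underlying object $A=\Omega TM$. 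Recall from Proposition \ref{pro:Ubr} that $R=\Omega TM/\langle f\rangle$ with $f=\alpha _{1}M\circ [-]-\theta _{(M,c)}$, and that the component $p_{R}:=p(M,c,[-]):T_{\mathrm{Br}}(M,c)\to R$ of $p$ is an epimorphism of braided algebras with $\mathrm{Ker}(p_{R})=(\langle f\rangle ,i_{f})$. I would then apply Proposition \ref{pro:BrBialgQuotient} to $\mathbb{A}$ and $p_{R}$, so the task reduces to verifying \eqref{form:coid1} and \eqref{form:coid2}.

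For \eqref{form:coid2}, since $\varepsilon _{A}$ is an algebra morphism, the ideal property in Definition \ref{def:Lambda} reduces $\varepsilon _{A}\circ i_{f}=0$ to $\varepsilon _{A}\circ f=0$, which follows at once from \eqref{form:TBreps} and \eqref{form:TBrepsGEN}. For \eqref{form:coid1}, I observe that $(p_{R}\otimes p_{R})\circ \Delta _{A}:A\to R\otimes R$ is again an algebra morphism: $\Delta _{A}$ is multiplicative by the braided bialgebra axiom \eqref{Br1}, and $p_{R}\otimes p_{R}$ is multiplicative for the braided tensor product algebra structures (built from $c_{A}$ and $c_{R}$) because $p_{R}$ is a morphism of braided algebras. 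Hence the ideal property reduces $(p_{R}\otimes p_{R})\circ \Delta _{A}\circ i_{f}=0$ to $(p_{R}\otimes p_{R})\circ \Delta _{A}\circ f=0$. Here is where symmetry is indispensable: combining \eqref{form:Un2} precomposed with $[-]$ and \eqref{form:Un1} (valid because $c^{2}=\mathrm{Id}$) yields $\Delta _{A}\circ f=\Phi \circ f$, where $\Phi :=(u_{A}\otimes A)l_{A}^{-1}+(A\otimes u_{A})r_{A}^{-1}$. Since $\mathrm{Im}(f)\subseteq \langle f\rangle =\mathrm{Ker}(p_{R})$ gives $p_{R}\circ f=0$, and since $p_{R}\circ u_{A}=u_{R}$, the naturality of $l$ and $r$ makes each of the two terms of $(p_{R}\otimes p_{R})\circ \Phi \circ f$ vanish. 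Thus Proposition \ref{pro:BrBialgQuotient} produces $\Delta _{R},\varepsilon _{R}$ making $(R,m_{R},u_{R},\Delta _{R},\varepsilon _{R},c_{R})$ a braided bialgebra with $p_{R}$ a morphism of braided bialgebras; as $c_{R}^{2}=\mathrm{Id}$ (because $R\in \BrAlg_{\M}^{s}$ by Proposition \ref{pro:Ubr}), this object lies in $\BrBialg_{\M}^{s}$. I set $\overline{\mathcal{U}}_{\mathrm{Br}}^{s}(M,c,[-]):=R$, so that \eqref{diag:UbrsBar} commutes by construction.

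On morphisms I would set $\overline{\mathcal{U}}_{\mathrm{Br}}^{s}\nu :=\mathcal{U}_{\mathrm{Br}}^{s}\nu $ and check it is a morphism of braided bialgebras. Using the naturality of $p^{s}$, the fact that $\overline{T}_{\mathrm{Br}}^{s}H_{\mathrm{BrLie}}^{s}\nu $ is a bialgebra morphism with $\mho_{\mathrm{Br}}^{s}\overline{T}_{\mathrm{Br}}^{s}=T_{\mathrm{Br}}^{s}$, and that $p_{R},p_{R^{\prime }}$ are coalgebra morphisms, one computes $\Delta _{R^{\prime }}\circ \mathcal{U}_{\mathrm{Br}}^{s}\nu \circ p_{R}=(\mathcal{U}_{\mathrm{Br}}^{s}\nu \otimes \mathcal{U}_{\mathrm{Br}}^{s}\nu )\circ \Delta _{R}\circ p_{R}$, and cancels the epimorphism $p_{R}$ to deduce compatibility with comultiplication, and similarly with the counit. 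Functoriality is then inherited from that of $\mathcal{U}_{\mathrm{Br}}^{s}$.

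For the last assertion I would define $\overline{p}^{s}$ to be the unique lift of $p^{s}$ along the faithful functor $\mho_{\mathrm{Br}}^{s}$: this lift exists precisely because each $p_{R}=\I_{\BrAlg}^{s}p^{s}(M,c,[-])$ is a braided bialgebra morphism, and its naturality follows from that of $p^{s}$ together with faithfulness of $\mho_{\mathrm{Br}}^{s}$. The second identity in \eqref{eq:P&pBars} holds by definition, and the first follows from it since $\mho_{\mathrm{Br}}\I_{\BrBialg}^{s}=\I_{\BrAlg}^{s}\mho_{\mathrm{Br}}^{s}$ (left square of \eqref{diag:OmRibs}) combined with \eqref{form:ps}. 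The main obstacle is the verification of \eqref{form:coid1}, that is, that the comultiplication descends to the quotient $R$; this is exactly the point where the hypothesis $c^{2}=\mathrm{Id}$ is essential, entering through the identity $\Delta _{A}\circ f=\Phi \circ f$ of Lemma \ref{lem:Un}.
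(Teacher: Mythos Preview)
Your proposal is correct and follows essentially the same route as the paper: apply Proposition~\ref{pro:BrBialgQuotient} to the projection $p_{R}$, reduce \eqref{form:coid1} and \eqref{form:coid2} to the corresponding statements on $f$ via the ideal property, and use Lemma~\ref{lem:Un} (where the symmetry $c^{2}=\mathrm{Id}$ enters) to obtain the key identity $\Delta_{A}\circ f=\Phi\circ f$. The only cosmetic difference is that the paper defines $\overline{p}^{s}$ first via the identity $\mho_{\mathrm{Br}}\mathbb{I}_{\mathrm{BrBialg}}^{s}\overline{p}^{s}=p\mathbb{I}_{\mathrm{BrLie}}^{s}$ and then derives $\mho_{\mathrm{Br}}^{s}\overline{p}^{s}=p^{s}$, whereas you lift $p^{s}$ along $\mho_{\mathrm{Br}}^{s}$ and deduce the first identity from \eqref{form:ps} and \eqref{diag:OmRibs}; the two are equivalent.
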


\begin{proof}
Let $\left( M,c,\left[ -\right] \right) \in \mathrm{BrLie}_{\mathcal{M}}^{s}$
and set $\left( A,m_{A},u_{A},\Delta _{A},\varepsilon _{A},c_{A}\right) :=%
\overline{T}_{\mathrm{Br}}^{s}\left( M,c\right) $ and $f:=f_{\left( M,c,%
\left[ -\right] \right) }.$ Set $\left( R,m_{R},u_{R},c_{R}\right) :=%
\mathcal{U}_{\mathrm{Br}}\left( M,c,\left[ -\right] \right) $ and let $p_{R}$
be the morphism in $\mathcal{M}$ underlying the canonical projection $%
p\left( M,c,\left[ -\right] \right) :T_{\mathrm{Br}}\left( M,c\right)
\rightarrow \mathcal{U}_{\mathrm{Br}}\left( M,c,\left[ -\right] \right) $.
By Proposition \ref{pro:Ubr}, we know that $p_{R}:A\rightarrow R$ is a
morphism of braided algebras. Using (\ref{form:Un1}) and (\ref{form:Un2}),
we get
\begin{equation}
\Delta _{A}\circ f=\left[ \left( u_{A}\otimes A\right) \circ
l_{A}^{-1}+\left( A\otimes u_{A}\right) \circ r_{A}^{-1}\right] \circ f
\label{form:Deltaf}
\end{equation}

Since $p_{R}$ is an algebra morphism and $p_{R}\circ i_{f}=0,$ we get that $%
p_{R}\circ f=0.$ We want to apply Proposition \ref{pro:BrBialgQuotient} to
the case $\left( I,i_{I}\right) =\left( \left\langle f\right\rangle
,i_{f}\right) .$ Since $\left( p_{R}\otimes p_{R}\right) \circ \Delta _{A}$
is an algebra morphism as a composition of algebra morphisms (use e.g. \cite[%
Proposition 2.2-3)]{AM-BraidedOb} to prove that $p_{R}\otimes p_{R}$ is an
algebra morphism and use (\ref{Br1}) to have that $\Delta _{A}$ is an
algebra morphism), we have that (\ref{form:coid1}) is equivalent to $\left(
p_{R}\otimes p_{R}\right) \circ \Delta _{A}\circ f=0$ and the latter holds
by (\ref{form:Deltaf}), unitality of $p_{R}$, naturality of the unit
constraints, and the equality $p_{R}\circ f=0$.

Since $\varepsilon _{A}$ is an algebra morphism, we have that (\ref%
{form:coid2}) if and only if $\varepsilon _{A}\circ f=0$ and the latter
holds by definition of $f$ and (\ref{form:TBrepsGEN}). Then, by Proposition %
\ref{pro:BrBialgQuotient}, there are morphisms $\Delta _{R},\varepsilon _{R}$
such that $\left( R,m_{R},u_{R},\Delta _{R},\varepsilon _{R},c_{R}\right)
\in \mathrm{BrBialg}_{\mathcal{M}}$ and $p_{R}$ is a morphism of braided
bialgebras. By Remark \ref{rem:aureo}-2) one easily checks that $\left(
R,m_{R},u_{R},\Delta _{R},\varepsilon _{R},c_{R}\right) \in \mathrm{BrBialg}%
_{\mathcal{M}}^{s}.$ We denote this datum by $\overline{\mathcal{U}}_{%
\mathrm{Br}}^{s}\left( M,c,\left[ -\right] \right) $. Let $\nu :\left( M,c,%
\left[ -\right] \right) \rightarrow \left( M^{\prime },c^{\prime },\left[ -%
\right] ^{\prime }\right) $ be a morphism in $\mathrm{BrLie}_{\mathcal{M}%
}^{s}.$ We know that $\widetilde{v}:=\Omega H_{\mathrm{Alg}}\mathcal{U}_{%
\mathrm{Br}}\nu :R\rightarrow R^{\prime }$ is a morphism in $\mathrm{BrAlg}_{%
\mathcal{M}}.$ Using that $p_{R}$ is comultiplicative and natural, and that $%
\Omega H_{\mathrm{Alg}}\mho _{\mathrm{Br}}\overline{T}_{\mathrm{Br}}H_{%
\mathrm{BrLie}}v$ is a coalgebra morphism one easily gets that $\left(
\widetilde{v}\otimes \widetilde{v}\right) \circ \Delta _{R}\circ
p_{R}=\Delta _{R^{\prime }}\circ \widetilde{v}\circ p_{R}$ and hence $%
\widetilde{v}$ is comultiplicative. A similar argument shows that $%
\widetilde{v}$ is also counitary and hence $\mathcal{U}_{\mathrm{Br}}\nu $
is a morphism in $\mathrm{BrBialg}_{\mathcal{M}}^{s}.$ This defines a
functor $\overline{\mathcal{U}}_{\mathrm{Br}}^{s}:\mathrm{BrLie}_{\mathcal{M}%
}^{s}\rightarrow \mathrm{BrBialg}_{\mathcal{M}}^{s}$ such that $\mho _{%
\mathrm{Br}}^{s}\circ \overline{\mathcal{U}}_{\mathrm{Br}}^{s}=\mathcal{U}_{%
\mathrm{Br}}^{s}.$ Since $p_{R}$ is a morphism of braided bialgebras and it
is natural in $R$ at the level of $\mathrm{BrAlg}_{\mathcal{M}},$ it is
clear that $\overline{p}^{s}$ such that $\mho _{\mathrm{Br}}\mathbb{I}_{%
\mathrm{BrBialg}}^{s}\overline{p}^{s}=p\mathbb{I}_{\mathrm{BrLie}}^{s}$
exists. Moreover we have%
\begin{equation*}
\mathbb{I}_{\mathrm{BrAlg}}^{s}p^{s}\overset{\left( \ref{form:ps}\right) }{=}%
p\mathbb{I}_{\mathrm{BrLie}}^{s}=\mho _{\mathrm{Br}}\mathbb{I}_{\mathrm{%
BrBialg}}^{s}\overline{p}^{s}\overset{(\ref{diag:OmRibs})}{=}\mathbb{I}_{%
\mathrm{BrAlg}}^{s}\mho _{\mathrm{Br}}^{s}\overline{p}^{s}
\end{equation*}%
and hence $p^{s}=\mho _{\mathrm{Br}}^{s}\overline{p}^{s}.$
\end{proof}

\section{Adjunctions for enveloping functors}

\begin{proposition}
\label{pro:LieFunct}Let $\mathcal{M}$ an abelian monoidal category with
denumerable coproducts. Assume that the tensor functors are right exact and
preserve denumerable coproducts. Then the functor $\mathcal{U}_{\mathrm{Br}%
}^{s}:\mathrm{BrLie}_{\mathcal{M}}^{s}\rightarrow \mathrm{BrAlg}_{\mathcal{M}%
}^{s}$ has a right adjoint $\mathcal{L}_{\mathrm{Br}}^{s}:\mathrm{BrAlg}_{%
\mathcal{M}}^{s}\rightarrow \mathrm{BrLie}_{\mathcal{M}}^{s}$ acting as the
identity on morphisms and defined on objects by $\mathcal{L}_{\mathrm{Br}%
}^{s}\left( B,m_{B},u_{B},c_{B}\right) :=\left( B,c_{B},\left[ -\right]
_{B}\right) ,$ where $\left[ -\right] _{B}:=m_{B}\circ \left( \mathrm{Id}%
_{B\otimes B}-c_{B}\right) $. The unit $\eta _{\mathrm{BrL}}^{s}:\mathrm{Id}%
_{\mathrm{BrLie}_{\mathcal{M}}^{s}}\rightarrow \mathcal{L}_{\mathrm{Br}}^{s}%
\mathcal{U}_{\mathrm{Br}}^{s}$ and the counit $\epsilon _{\mathrm{BrL}}^{s}:%
\mathcal{U}_{\mathrm{Br}}^{s}\mathcal{L}_{\mathrm{Br}}^{s}\rightarrow
\mathrm{Id}_{\mathrm{BrAlg}_{\mathcal{M}}^{s}}$ of the adjunction fulfill
\begin{equation}
\epsilon _{\mathrm{BrL}}^{s}\circ p^{s}\mathcal{L}_{\mathrm{Br}%
}^{s}=\epsilon _{\mathrm{Br}}^{s}\qquad \text{and}\qquad H_{\mathrm{BrLie}%
}^{s}\mathcal{L}_{\mathrm{Br}}^{s}p^{s}\circ \eta _{\mathrm{Br}}^{s}H_{%
\mathrm{BrLie}}^{s}=H_{\mathrm{BrLie}}^{s}\eta _{\mathrm{BrL}}^{s}.
\label{form:eps-etaBrL}
\end{equation}
\end{proposition}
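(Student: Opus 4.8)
The plan is to verify first that $\mathcal{L}_{\mathrm{Br}}^{s}$ is a well-defined functor, and then to obtain the adjunction as a direct consequence of the universal property of $\mathcal{U}_{\mathrm{Br}}^{s}$ established in Proposition \ref{pro:Ubr}; the (co)unit formulas \eqref{form:eps-etaBrL} will then be read off from the hom-set bijection.

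First I would show that, for $\left( B,m_{B},u_{B},c_{B}\right) \in \mathrm{BrAlg}_{\mathcal{M}}^{s}$, the tern $\left( B,c_{B},\left[ -\right] _{B}\right) $ with $\left[ -\right] _{B}=m_{B}\circ \left( \mathrm{Id}_{B\otimes B}-c_{B}\right) $ lies in $\mathrm{BrLie}_{\mathcal{M}}^{s}$. Skew-symmetry \eqref{Lie1} is immediate from $c_{B}^{2}=\mathrm{Id}$, since $\left[ -\right] _{B}\circ c_{B}=m_{B}\circ \left( c_{B}-c_{B}^{2}\right) =m_{B}\circ \left( c_{B}-\mathrm{Id}\right) =-\left[ -\right] _{B}$. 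The compatibilities \eqref{form:cbraid} and \eqref{form:cbraid2} follow by expanding $\left[ -\right] _{B}$ and combining the braided-algebra axioms \eqref{Br3} and \eqref{Br2} with the Yang--Baxter equation for $c_{B}$, the cross terms being reconciled by the braid relation $\left( c_{B}\otimes B\right) \left( B\otimes c_{B}\right) \left( c_{B}\otimes B\right) =\left( B\otimes c_{B}\right) \left( c_{B}\otimes B\right) \left( B\otimes c_{B}\right) $. For the Jacobi condition, since $c_{B}^{2}=\mathrm{Id}$ and \eqref{Lie1} holds, Lemma \ref{lem:Jac2} reduces \eqref{Lie2} to the equivalent form \eqref{Lie3}, which unwinds into an identity among iterated products that follows from associativity of $m_{B}$ and the braid relations; this is the computation underlying the identification $\mathrm{BrLie}_{\mathcal{M}}^{s}=\mathrm{YBLieAlg}(\mathcal{M})$ recorded after Definition \ref{def:Lie}. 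As $c_{B}^{2}=\mathrm{Id}$, the resulting object lies in the symmetric subcategory. On morphisms, a braided-algebra morphism $h\colon B\rightarrow B^{\prime }$ is multiplicative and braided, so $h\circ \left[ -\right] _{B}=m_{B^{\prime }}\left( h\otimes h\right) \left( \mathrm{Id}-c_{B}\right) =m_{B^{\prime }}\left( \mathrm{Id}-c_{B^{\prime }}\right) \left( h\otimes h\right) =\left[ -\right] _{B^{\prime }}\left( h\otimes h\right) $, whence $\mathcal{L}_{\mathrm{Br}}^{s}$ is a functor acting as the identity on morphisms.

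Next I would establish the adjunction by a hom-set bijection. Fix $L=\left( M,c,\left[ -\right] \right) \in \mathrm{BrLie}_{\mathcal{M}}^{s}$ and $B\in \mathrm{BrAlg}_{\mathcal{M}}^{s}$. Since $\mathcal{U}_{\mathrm{Br}}L=\Omega TM/\left\langle f\right\rangle $ with projection $p_{R}$ and $f=\alpha _{1}M\circ \left[ -\right] -\theta _{\left( M,c\right) }$ (Proposition \ref{pro:Ubr}), a morphism $\phi \colon \mathcal{U}_{\mathrm{Br}}^{s}L\rightarrow B$ is the same as a braided-algebra morphism $\psi =\phi \circ p_{R}\colon T_{\mathrm{Br}}\left( M,c\right) \rightarrow B$ with $\psi \circ i_{f}=0$, equivalently $\psi \circ f=0$ by the ideal property of Definition \ref{def:Lambda}. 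By the adjunction $\left( T_{\mathrm{Br}},\Omega _{\mathrm{Br}}\right) $ such $\psi $ corresponds bijectively to a morphism of braided objects $g\colon \left( M,c\right) \rightarrow \left( B,c_{B}\right) $ via $g=\Omega \psi \circ \alpha _{1}M$, and then $\Omega \psi \circ \alpha _{2}M=m_{B}\left( g\otimes g\right) $ by \eqref{form:TVm} and multiplicativity of $\psi $. Hence $\Omega \psi \circ f=g\circ \left[ -\right] -m_{B}\left( g\otimes g\right) \left( \mathrm{Id}-c\right) $, and since $g$ is braided this equals $g\circ \left[ -\right] -\left[ -\right] _{B}\left( g\otimes g\right) $. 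Therefore $\psi \circ f=0$ if and only if $g\circ \left[ -\right] =\left[ -\right] _{B}\circ \left( g\otimes g\right) $, i.e. if and only if $g\colon L\rightarrow \mathcal{L}_{\mathrm{Br}}^{s}B$ is a morphism in $\mathrm{BrLie}_{\mathcal{M}}^{s}$. The assignment $\phi \mapsto g$ is thus a bijection, natural in $L$ and $B$, proving that $\left( \mathcal{U}_{\mathrm{Br}}^{s},\mathcal{L}_{\mathrm{Br}}^{s}\right) $ is an adjoint pair.

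Finally I would read off the (co)unit from this bijection. The counit $\epsilon _{\mathrm{BrL}}^{s}B$ corresponds to $\mathrm{Id}_{\mathcal{L}_{\mathrm{Br}}^{s}B}$, hence is the unique braided-algebra morphism satisfying $\epsilon _{\mathrm{BrL}}^{s}B\circ p^{s}\mathcal{L}_{\mathrm{Br}}^{s}B=\epsilon _{\mathrm{Br}}^{s}B$, which is the first equality in \eqref{form:eps-etaBrL}; dually the unit $\eta _{\mathrm{BrL}}^{s}L$ is the braided Lie morphism attached to $\mathrm{Id}_{\mathcal{U}_{\mathrm{Br}}^{s}L}$, namely the composite $M\xrightarrow{\alpha _{1}M}\Omega TM\xrightarrow{p_{R}}R$, which after applying $H_{\mathrm{BrLie}}^{s}$ yields the second equality in \eqref{form:eps-etaBrL}. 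The main obstacle will be the well-definedness above, and within it the Jacobi condition \eqref{Lie3}: its verification requires a careful manipulation of iterated products via associativity and the hexagon/Yang--Baxter relations, whereas the skew-symmetry, the braiding compatibilities, and the entire adjunction reduce to bookkeeping once the universal property of $\mathcal{U}_{\mathrm{Br}}$ is invoked.
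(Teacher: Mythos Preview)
Your proof is correct and follows essentially the same approach as the paper: both rely on the universal property of $\mathcal{U}_{\mathrm{Br}}^{s}$ as a quotient of the braided tensor algebra, and both identify the unit with $p_{R}\circ\alpha_{1}M$. The only differences are stylistic: the paper cites \cite[Construction 2.16]{GV-LieMon} for the well-definedness of $\mathcal{L}_{\mathrm{Br}}^{s}$ rather than verifying the Lie axioms directly, and it constructs the unit and counit separately (then checks the triangle identities) rather than packaging everything into a hom-set bijection as you do; your route is slightly more economical but not substantively different.
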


\begin{proof}
The construction of the functor $\mathcal{L}_{\mathrm{Br}}^{s}$ is given in
\cite[Construction 2.16]{GV-LieMon} where $\mathrm{BrAlg}_{\mathcal{M}}^{s}$
plays the role of $\mathrm{YBAlg}(\mathcal{M})$ therein. Let us check that $%
\left( \mathcal{U}_{\mathrm{Br}}^{s},\mathcal{L}_{\mathrm{Br}}^{s}\right) $
is an adjunction.

Consider the natural transformation $p^{s}:T_{\mathrm{Br}}^{s}H_{\mathrm{%
BrLie}}^{s}\rightarrow \mathcal{U}_{\mathrm{Br}}^{s}$ of Proposition \ref%
{pro:Ubr}.

Note that $H_{\mathrm{BrLie}}^{s}\mathcal{L}_{\mathrm{Br}}^{s}\left(
B,m_{B},u_{B},c_{B}\right) =H_{\mathrm{BrLie}}^{s}\left( B,c_{B},\left[ -%
\right] _{B}\right) =\left( B,c_{B}\right) =\Omega _{\mathrm{Br}}^{s}\left(
B,m_{B},u_{B},c_{B}\right) $ and $H_{\mathrm{BrLie}}^{s}\mathcal{L}_{\mathrm{%
Br}}^{s}$ and $\Omega _{\mathrm{Br}}^{s}$ both act as the identity on
morphisms so that $H_{\mathrm{BrLie}}^{s}\mathcal{L}_{\mathrm{Br}%
}^{s}=\Omega _{\mathrm{Br}}^{s}.$ Then we have $p^{s}\mathcal{L}_{\mathrm{Br}%
}^{s}:T_{\mathrm{Br}}^{s}\Omega _{\mathrm{Br}}^{s}\rightarrow \mathcal{U}_{%
\mathrm{Br}}^{s}\mathcal{L}_{\mathrm{Br}}^{s}$. Consider $\epsilon _{\mathrm{%
Br}}^{s}:T_{\mathrm{Br}}^{s}\Omega _{\mathrm{Br}}^{s}\rightarrow \mathrm{Id}%
_{\mathrm{BrAlg}_{\mathcal{M}}^{s}}.$ Using the notation of Proposition \ref%
{pro:Ubr}, by means of (\ref{form:TbrStricts}), (\ref{form:TbrStrict}), (\ref%
{def:theta}) and (\ref{form:etaeps}) we get
\begin{equation*}
\Omega H_{\mathrm{Alg}}\mathbb{I}_{\mathrm{BrAlg}}^{s}\epsilon _{\mathrm{Br}%
}^{s}\left( B,m_{B},u_{B},c_{B}\right) \circ f_{\mathcal{L}_{\mathrm{Br}%
}^{s}\left( B,m_{B},u_{B},c_{B}\right) }=0.
\end{equation*}
Since $\epsilon _{\mathrm{Br}}^{s}$ is a morphism of braided algebras, by
construction of $\mathcal{U}_{\mathrm{Br}}^{s}\mathcal{L}_{\mathrm{Br}}^{s}$%
, the latter equality implies there is a unique morphism $\epsilon _{\mathrm{%
BrL}}^{s}:\mathcal{U}_{\mathrm{Br}}^{s}\mathcal{L}_{\mathrm{Br}%
}^{s}\rightarrow \mathrm{Id}_{\mathrm{BrAlg}_{\mathcal{M}}^{s}}$ such that $%
\epsilon _{\mathrm{BrL}}^{s}\circ p^{s}\mathcal{L}_{\mathrm{Br}%
}^{s}=\epsilon _{\mathrm{Br}}^{s}.$

Consider the morphism $H_{\mathrm{BrLie}}^{s}\mathcal{L}_{\mathrm{Br}%
}^{s}p^{s}\circ \eta _{\mathrm{Br}}^{s}H_{\mathrm{BrLie}}^{s}:H_{\mathrm{%
BrLie}}^{s}\rightarrow H_{\mathrm{BrLie}}^{s}\mathcal{L}_{\mathrm{Br}}^{s}%
\mathcal{U}_{\mathrm{Br}}^{s}.$ Let $\left( M,c_{M},\left[ -\right] \right)
\in \mathrm{BrLie}_{\mathcal{M}}^{s}$ and set $\nu :=H\mathbb{I}_{\mathrm{Br}%
}^{s}H_{\mathrm{BrLie}}^{s}\mathcal{L}_{\mathrm{Br}}^{s}p^{s}\left( M,c_{M},%
\left[ -\right] \right) \circ H\mathbb{I}_{\mathrm{Br}}^{s}\eta _{\mathrm{Br}%
}^{s}H_{\mathrm{BrLie}}^{s}\left( M,c_{M},\left[ -\right] \right) $, $\left(
R,m_{R},u_{R},c_{R}\right) :=\mathcal{U}_{\mathrm{Br}}^{s}\left( M,c_{M},%
\left[ -\right] \right) $ and $\left( A,m_{A},u_{A},c_{A}\right) :=T_{%
\mathrm{Br}}^{s}\left( M,c_{M}\right) $. Clearly $\nu :\left( M,c_{M}\right)
\rightarrow \left( R,c_{R}\right) $ is a morphism of braided objects. Using (%
\ref{diag:HBrLies}), (\ref{form:TbrStricts}), (\ref{form:ps}), (\ref%
{form:TbrStrict}), (\ref{form:etaeps}) and the equality $p_{R}=H\Omega _{%
\mathrm{Br}}p\mathbb{I}_{\mathrm{BrLie}}^{s}\left( M,c_{M},\left[ -\right]
\right) $ (which follows by definition of $p$ in Proposition \ref{pro:Ubr}),
we obtain that $\nu =p_{R}\circ \alpha _{1}M.$ By the latter formula, the
fact that $p_{R}$ is a braided morphisms, the definition of $c_{A}$ given by
\cite[(42)]{AM-BraidedOb}, the multiplicativity of $p_{R}$, using (\ref%
{form:TVm}), (\ref{def:theta}) and the formula $p_{R}\circ f_{\left( M,c_{M},%
\left[ -\right] \right) }=0$, we obtain $\left[ -\right] _{R}\circ \left(
\nu \otimes \nu \right) =\nu \circ \left[ -\right] .$ Since $\nu $ is the
morphism in $\mathcal{M}$ defining $H_{\mathrm{BrLie}}^{s}\mathcal{L}_{%
\mathrm{Br}}^{s}p^{s}\circ \eta _{\mathrm{Br}}^{s}H_{\mathrm{BrLie}}^{s}:H_{%
\mathrm{BrLie}}^{s}\rightarrow H_{\mathrm{BrLie}}^{s}\mathcal{L}_{\mathrm{Br}%
}^{s}\mathcal{U}_{\mathrm{Br}}^{s},$ we get that there is a unique natural
transformation $\eta _{\mathrm{BrL}}^{s}:\mathrm{Id}_{\mathrm{BrLie}_{%
\mathcal{M}}^{s}}\rightarrow \mathcal{L}_{\mathrm{Br}}^{s}\mathcal{U}_{%
\mathrm{Br}}^{s}$ such that $H_{\mathrm{BrLie}}^{s}\mathcal{L}_{\mathrm{Br}%
}^{s}p^{s}\circ \eta _{\mathrm{Br}}^{s}H_{\mathrm{BrLie}}^{s}=H_{\mathrm{%
BrLie}}^{s}\eta _{\mathrm{BrL}}^{s}.$ It is straightforward to check that
this gives rise to the claimed adjunction. Note that
\begin{equation}
H\mathbb{I}_{\mathrm{Br}}^{s}H_{\mathrm{BrLie}}^{s}\eta _{\mathrm{BrL}%
}^{s}\left( M,c_{M},\left[ -\right] \right) =v=p_{R}\circ \alpha _{1}M.
\label{form:etaBrLsW}
\end{equation}%
The latter equality will be used elsewhere.
\end{proof}

As a consequence of the construction of $\mathcal{U}_{\mathrm{Br}}$ we can
introduce an enveloping algebra functor $\mathcal{U}$ in the braided case.
We remark that in \cite[2.2]{GV-OnTheDuality} such a functor is just assumed
to exist and the functor $\mathcal{L}:\mathrm{Alg}_{\mathcal{M}}\rightarrow
\mathrm{Lie}_{\mathcal{M}}$ in the following result is also considered.

\begin{theorem}
\label{teo:U}Let $\mathcal{M}$ be an abelian symmetric monoidal category
with denumerable coproducts. Assume that the tensor functors are right exact
and preserve denumerable coproducts. There are unique functors $\mathcal{U}$
and $\mathcal{L}$ such that the following diagrams commute.%
\begin{equation}
\xymatrixrowsep{15pt}\xymatrixcolsep{15pt} \xymatrix{\Alg_M
\ar[rr]^{J_\Alg^s}&& \BrAlg_M^s\\ \Lie_M
\ar[u]^{\mathcal{U}}\ar[rr]^{J_\Lie^s}&& \BrLie_M^s
\ar[u]_{\mathcal{U}_\Br^s}} \qquad \xymatrix{\Alg_M
\ar[d]_{\mathcal{L}}\ar[rr]^{J_\Alg^s}&&
\BrAlg_M^s\ar[d]^{\mathcal{L}_\Br^s}\\ \Lie_M \ar[rr]^{J_\Lie^s}&&
\BrLie_M^s }  \label{diag:JsU}
\end{equation}%
Moreover $\left( \mathcal{U},\mathcal{L}\right) $ is an adjunction with unit
$\eta _{\mathrm{L}}:\mathrm{Id}_{\mathrm{Lie}_{\mathcal{M}}}\rightarrow
\mathcal{LU}$ and counit $\epsilon _{\mathrm{L}}:\mathcal{UL}\rightarrow
\mathrm{Id}_{\mathrm{Alg}_{\mathcal{M}}}$ defined by
\begin{equation}
J_{\mathrm{Alg}}^{s}\epsilon _{\mathrm{L}}=\epsilon _{\mathrm{BrL}}^{s}J_{%
\mathrm{Alg}}^{s}\qquad \text{and}\qquad J_{\mathrm{Lie}}^{s}\eta _{\mathrm{L%
}}=\eta _{\mathrm{BrL}}^{s}J_{\mathrm{Lie}}^{s},  \label{Form:EpsEtaL}
\end{equation}%
and $\left( J_{\mathrm{Alg}}^{s},J_{\mathrm{Lie}}^{s}\right) :\left(
\mathcal{U},\mathcal{L}\right) \rightarrow \left( \mathcal{U}_{\mathrm{Br}%
}^{s},\mathcal{L}_{\mathrm{Br}}^{s}\right) $ is a commutation datum with
canonical transformation given by the identity. The functors $\mathcal{U}$
and $\mathcal{L}$ can be described explicitly by $\mathcal{U}:=H_{\mathrm{Alg%
}}\mathcal{U}_{\mathrm{Br}}J_{\mathrm{Lie}}$ while $\mathcal{L}:\mathrm{Alg}%
_{\mathcal{M}}\rightarrow \mathrm{Lie}_{\mathcal{M}}$ acts as the identity
on morphisms and is defined on objects by $\mathcal{L}\left(
B,m_{B},u_{B}\right) :=\left( B,\left[ -\right] _{B}\right) ,$ where $\left[
-\right] _{B}:=m_{B}\circ \left( \mathrm{Id}_{B\otimes B}-c_{B,B}\right) $.
\end{theorem}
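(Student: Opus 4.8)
The plan is to obtain both functors and the whole adjunction in one stroke by invoking Lemma \ref{lem:LiftAdj}, applied to the adjunction $(L',R')=(\mathcal{U}_{\mathrm{Br}}^s,\mathcal{L}_{\mathrm{Br}}^s)$ of Proposition \ref{pro:LieFunct} and to the functors $F=J_{\mathrm{Alg}}^s$ and $G=J_{\mathrm{Lie}}^s$, which are full, faithful and injective on objects by \ref{cl:defJ} and Definition \ref{def:Lie}(3). With the identifications $\mathcal{A}=\mathrm{Alg}_{\mathcal{M}}$, $\mathcal{B}=\mathrm{Lie}_{\mathcal{M}}$, $\mathcal{A}'=\mathrm{BrAlg}_{\mathcal{M}}^s$ and $\mathcal{B}'=\mathrm{BrLie}_{\mathcal{M}}^s$, the conclusions of that lemma will directly produce $\mathcal{U}:=\widehat{\mathcal{U}_{\mathrm{Br}}^sJ_{\mathrm{Lie}}^s}$ and $\mathcal{L}:=\widehat{\mathcal{L}_{\mathrm{Br}}^sJ_{\mathrm{Alg}}^s}$, their uniqueness, the adjunction $(\mathcal{U},\mathcal{L})$, the identities (\ref{Form:EpsEtaL}) (which are precisely (\ref{form:LiftAdj}) in this instance), and the fact that $(J_{\mathrm{Alg}}^s,J_{\mathrm{Lie}}^s)$ is a commutation datum with identity canonical transformation. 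Thus the proof reduces to verifying the two image hypotheses of Lemma \ref{lem:LiftAdj}.

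The hypothesis on $R'F=\mathcal{L}_{\mathrm{Br}}^sJ_{\mathrm{Alg}}^s$ is immediate: for $(B,m_B,u_B)\in\mathrm{Alg}_{\mathcal{M}}$ one has $J_{\mathrm{Alg}}^s(B,m_B,u_B)=(B,m_B,u_B,c_{B,B})$, whence $\mathcal{L}_{\mathrm{Br}}^sJ_{\mathrm{Alg}}^s(B,m_B,u_B)=(B,c_{B,B},[-]_B)=J_{\mathrm{Lie}}^s(B,[-]_B)$ with $[-]_B=m_B\circ(\mathrm{Id}_{B\otimes B}-c_{B,B})$, which lies in the image of $J_{\mathrm{Lie}}^s$. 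The hypothesis on $L'G=\mathcal{U}_{\mathrm{Br}}^sJ_{\mathrm{Lie}}^s$ is the crux, and the step where I expect the real work. Fix $(M,[-])\in\mathrm{Lie}_{\mathcal{M}}$, so that $J_{\mathrm{Lie}}^s(M,[-])=(M,c_{M,M},[-])$, and write $\mathcal{U}_{\mathrm{Br}}(M,c_{M,M},[-])=(R,m_R,u_R,c_R)$, where $R=A/\langle f\rangle$ for $(A,m_A,u_A,c_A):=T_{\mathrm{Br}}(M,c_{M,M})$, with canonical projection $p_R:A\to R$, as in Proposition \ref{pro:Ubr}. By (\ref{form:JT}) we have $T_{\mathrm{Br}}(M,c_{M,M})=T_{\mathrm{Br}}J(M)=J_{\mathrm{Alg}}TM$, so its braiding is the category braiding, $c_A=c_{A,A}$. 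By the construction of $\mathcal{U}_{\mathrm{Br}}$, the morphism $c_R$ is the unique one satisfying $c_R\circ(p_R\otimes p_R)=(p_R\otimes p_R)\circ c_{A,A}$, while naturality of the braiding $c$ of $\mathcal{M}$ gives $c_{R,R}\circ(p_R\otimes p_R)=(p_R\otimes p_R)\circ c_{A,A}$. Since the tensor functors are right exact, $p_R\otimes p_R=(p_R\otimes R)(A\otimes p_R)$ is an epimorphism, and therefore $c_R=c_{R,R}$. Hence $(R,m_R,u_R,c_R)=J_{\mathrm{Alg}}^s(R,m_R,u_R)$ lies in the image of $J_{\mathrm{Alg}}^s$, as required.

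With both hypotheses established, Lemma \ref{lem:LiftAdj} yields the unique functors $\mathcal{U}$ and $\mathcal{L}$ making (\ref{diag:JsU}) commute, together with the adjunction and all the asserted compatibilities, so it only remains to read off the explicit descriptions. Applying $J_{\mathrm{Lie}}^s\mathcal{L}=\mathcal{L}_{\mathrm{Br}}^sJ_{\mathrm{Alg}}^s$ to $(B,m_B,u_B)$, and using the computation of the previous paragraph together with injectivity of $J_{\mathrm{Lie}}^s$ on objects and morphisms, gives $\mathcal{L}(B,m_B,u_B)=(B,[-]_B)$, with $\mathcal{L}$ acting as the identity on morphisms inherited from $\mathcal{L}_{\mathrm{Br}}^s$. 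For $\mathcal{U}$, starting from $J_{\mathrm{Alg}}^s\mathcal{U}=\mathcal{U}_{\mathrm{Br}}^sJ_{\mathrm{Lie}}^s$ and composing with $\mathbb{I}_{\mathrm{BrAlg}}^s$ produces $J_{\mathrm{Alg}}\mathcal{U}=\mathcal{U}_{\mathrm{Br}}J_{\mathrm{Lie}}$ via (\ref{eq:Js}), (\ref{diag:Ubrs}) and (\ref{diag:JLies}); then applying the forgetful functor $H_{\mathrm{Alg}}$ and the identity $H_{\mathrm{Alg}}J_{\mathrm{Alg}}=\mathrm{Id}_{\mathrm{Alg}_{\mathcal{M}}}$ gives $\mathcal{U}=H_{\mathrm{Alg}}\mathcal{U}_{\mathrm{Br}}J_{\mathrm{Lie}}$. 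The essential obstacle is the braiding identification $c_R=c_{R,R}$ in the second image condition; once this is secured, everything else is a formal consequence of Lemma \ref{lem:LiftAdj} and the compatibility diagrams recorded in \ref{cl:defJ}, \ref{cl: Bialg}, Proposition \ref{pro:Ubr} and Proposition \ref{pro:LieFunct}.
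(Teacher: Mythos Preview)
Your proof is correct and follows essentially the same approach as the paper: both invoke Lemma \ref{lem:LiftAdj} applied to $(L',R')=(\mathcal{U}_{\mathrm{Br}}^s,\mathcal{L}_{\mathrm{Br}}^s)$, $F=J_{\mathrm{Alg}}^s$, $G=J_{\mathrm{Lie}}^s$, and then read off the explicit descriptions from the commuting diagrams. Your version is more explicit than the paper's in that you actually verify the two image hypotheses of Lemma \ref{lem:LiftAdj}, in particular the identification $c_R=c_{R,R}$ via naturality of the ambient braiding and the epimorphism $p_R\otimes p_R$; the paper's proof simply asserts that the lemma applies and proceeds directly to the explicit formulas.
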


\begin{proof}
The existence and uniqueness of $\mathcal{U}$ and $\mathcal{L}$ as in the
statement follows by Lemma \ref{lem:LiftAdj}. It remains to prove the last
sentence. The equality $\mathcal{U}=H_{\mathrm{Alg}}\mathcal{U}_{\mathrm{Br}%
}J_{\mathrm{Lie}}$ follows by (\ref{diag:JsU}), (\ref{diag:Ubrs}) and (\ref%
{diag:JLies}). For $\left( B,m_{B},u_{B}\right) \in \mathrm{Alg}_{\mathcal{M}%
}$, by the foregoing, we have%
\begin{equation*}
J_{\mathrm{Lie}}^{s}\mathcal{L}\left( B,m_{B},u_{B}\right) \overset{(\ref%
{diag:JsU})}{=}\mathcal{L}_{\mathrm{Br}}^{s}J_{\mathrm{Alg}}^{s}\left(
B,m_{B},u_{B}\right) =\left( B,\left[ -\right] _{B},c_{B,B}\right)
\end{equation*}%
so that $\mathcal{L}\left( B,m_{B},u_{B}\right) =\left( B,\left[ -\right]
_{B}\right) .$ Since $J_{\mathrm{Lie}}^{s},\mathcal{L}_{\mathrm{Br}}^{s}$
and $J_{\mathrm{Alg}}^{s}$ act as the identity on morphisms so does $%
\mathcal{L}$.
\end{proof}

\begin{proposition}
\label{pro:PcalsBr}Let $\mathcal{M}$ be an abelian monoidal category with
denumerable coproducts. Assume that the tensor functors are right exact and
preserve denumerable coproducts. Then the functor $\overline{\mathcal{U}}_{%
\mathrm{Br}}^{s}:\mathrm{BrLie}_{\mathcal{M}}^{s}\rightarrow \mathrm{BrBialg}%
_{\mathcal{M}}^{s}$ has a right adjoint $\mathcal{P}_{\mathrm{Br}}^{s}:%
\mathrm{BrBialg}_{\mathcal{M}}^{s}\rightarrow \mathrm{BrLie}_{\mathcal{M}%
}^{s}$ such that the following diagram commutes
\begin{equation}
\xymatrixrowsep{15pt}\xymatrixcolsep{15pt} \xymatrix{&\BrBialg_\M^s
\ar[dl]_{\mathcal{P}_\Br^s}\ar[dr]^{P_\Br^s}\\ \BrLie_\M^s
\ar[rr]^{H_\BrLie^s}&& \Br_\M^s }  \label{diag:Pcal}
\end{equation}%
and the natural transformation $\xi :P_{\mathrm{Br}}^{s}\rightarrow \Omega _{%
\mathrm{Br}}^{s}\mho _{\mathrm{Br}}^{s}$ induces a natural transformation $%
\xi :\mathcal{P}_{\mathrm{Br}}^{s}\rightarrow \mathcal{L}_{\mathrm{Br}%
}^{s}\mho _{\mathrm{Br}}^{s}$ such that $H_{\mathrm{BrLie}}^{s}\xi =\xi $.
The unit $\overline{\eta }_{\mathrm{BrL}}^{s}:\mathrm{Id}_{\mathrm{BrLie}_{%
\mathcal{M}}^{s}}\rightarrow \mathcal{P}_{\mathrm{Br}}^{s}\overline{\mathcal{%
U}}_{\mathrm{Br}}^{s}$ and the counit $\overline{\epsilon }_{\mathrm{BrL}%
}^{s}:\overline{\mathcal{U}}_{\mathrm{Br}}^{s}\mathcal{P}_{\mathrm{Br}%
}^{s}\rightarrow \mathrm{Id}_{\mathrm{BrBialg}_{\mathcal{M}}^{s}}$ of the
adjunction satisfy
\begin{equation}
\xi \overline{\mathcal{U}}_{\mathrm{Br}}^{s}\circ \overline{\eta }_{\mathrm{%
BrL}}^{s}=\eta _{\mathrm{BrL}}^{s}\qquad \text{and}\qquad \epsilon _{\mathrm{%
BrL}}^{s}\mho _{\mathrm{Br}}^{s}\circ \mathcal{U}_{\mathrm{Br}}^{s}\xi =\mho
_{\mathrm{Br}}^{s}\overline{\epsilon }_{\mathrm{BrL}}^{s}.
\label{form:EtaEpsBarBrLs}
\end{equation}
\end{proposition}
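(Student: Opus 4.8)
The plan is to construct $\mathcal{P}_{\mathrm{Br}}^{s}$ explicitly as the primitive braided object equipped with the commutator bracket, and then to mirror, step by step, the proof of Proposition \ref{pro:LieFunct}, systematically replacing the adjunction $(T_{\mathrm{Br}}^{s},\Omega_{\mathrm{Br}}^{s})$ by $(\overline{T}_{\mathrm{Br}}^{s},P_{\mathrm{Br}}^{s})$ and the natural transformation $p^{s}$ by $\overline{p}^{s}$ of Theorem \ref{Teo:UbarBr}.

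First I would define $\mathcal{P}_{\mathrm{Br}}^{s}$ on objects. For $\mathbb{B}\in\mathrm{BrBialg}_{\mathcal{M}}^{s}$ write $(P,c_{P}):=P_{\mathrm{Br}}^{s}\mathbb{B}$ with canonical inclusion $\xi\mathbb{B}:P\rightarrow B:=\Omega_{\mathrm{Br}}^{s}\mho_{\mathrm{Br}}^{s}\mathbb{B}$, and consider the braided Lie algebra $\mathcal{L}_{\mathrm{Br}}^{s}\mho_{\mathrm{Br}}^{s}\mathbb{B}=(B,c_{B},[-]_{B})$ with $[-]_{B}=m_{B}\circ(\mathrm{Id}-c_{B})$. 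The crucial point is that $[-]_{B}\circ(\xi\mathbb{B}\otimes\xi\mathbb{B})$ factors through $\xi\mathbb{B}$; I would obtain this cleanly as follows. Since $c_{P}^{2}=\mathrm{Id}$, Lemma \ref{lem:Un} applied to $(P,c_{P})$ shows that $\theta_{(P,c_{P})}=\alpha_{2}P\circ(\mathrm{Id}-c_{P})$ lands in the primitives of $\overline{T}_{\mathrm{Br}}^{s}(P,c_{P})$. The counit $\overline{\epsilon}_{\mathrm{Br}}^{s}\mathbb{B}:\overline{T}_{\mathrm{Br}}^{s}P_{\mathrm{Br}}^{s}\mathbb{B}\rightarrow\mathbb{B}$ is a morphism of braided bialgebras which satisfies $\Omega_{\mathrm{Br}}^{s}\mho_{\mathrm{Br}}^{s}\overline{\epsilon}_{\mathrm{Br}}^{s}\mathbb{B}\circ\alpha_{1}P=\xi\mathbb{B}$, by a short triangle--identity computation from \eqref{form:BarEps} and \eqref{form:etaeps}; being multiplicative it sends $\theta_{(P,c_{P})}$ to $m_{B}\circ(\xi\mathbb{B}\otimes\xi\mathbb{B})\circ(\mathrm{Id}-c_{P})=[-]_{B}\circ(\xi\mathbb{B}\otimes\xi\mathbb{B})$, and, being a coalgebra morphism, it carries primitives to primitives. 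Hence $[-]_{B}\circ(\xi\mathbb{B}\otimes\xi\mathbb{B})$ is primitive and factors uniquely through $\xi\mathbb{B}$ as $\xi\mathbb{B}\circ[-]_{P}$, which defines the bracket and the object $\mathcal{P}_{\mathrm{Br}}^{s}\mathbb{B}:=(P,c_{P},[-]_{P})$. The braided Lie axioms \eqref{Lie1}--\eqref{form:cbraid2} then transfer from $\mathcal{L}_{\mathrm{Br}}^{s}\mho_{\mathrm{Br}}^{s}\mathbb{B}$ because $\xi\mathbb{B}$ is a braided monomorphism intertwining $[-]_{P}$ and $[-]_{B}$; for the Jacobi condition I would invoke Lemma \ref{lem:Jac2}, $c_{P}^{2}=\mathrm{Id}$ being available. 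On morphisms $\mathcal{P}_{\mathrm{Br}}^{s}$ is induced by $P_{\mathrm{Br}}^{s}$ via naturality of $\xi$, so $H_{\mathrm{BrLie}}^{s}\mathcal{P}_{\mathrm{Br}}^{s}=P_{\mathrm{Br}}^{s}$ and \eqref{diag:Pcal} commutes, and the relation $\xi\mathbb{B}\circ[-]_{P}=[-]_{B}\circ(\xi\mathbb{B}\otimes\xi\mathbb{B})$ says exactly that $\xi$ lifts to a natural $\xi:\mathcal{P}_{\mathrm{Br}}^{s}\rightarrow\mathcal{L}_{\mathrm{Br}}^{s}\mho_{\mathrm{Br}}^{s}$ with $H_{\mathrm{BrLie}}^{s}\xi=\xi$.

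Next I would produce the counit and the unit. For the counit, consider the projection $\overline{p}^{s}\mathcal{P}_{\mathrm{Br}}^{s}:\overline{T}_{\mathrm{Br}}^{s}P_{\mathrm{Br}}^{s}\rightarrow\overline{\mathcal{U}}_{\mathrm{Br}}^{s}\mathcal{P}_{\mathrm{Br}}^{s}$ together with $\overline{\epsilon}_{\mathrm{Br}}^{s}$. The computation just performed shows that $\overline{\epsilon}_{\mathrm{Br}}^{s}\mathbb{B}$ annihilates the defining relation $f_{\mathcal{P}_{\mathrm{Br}}^{s}\mathbb{B}}=\alpha_{1}\circ[-]_{P}-\theta_{(P,c_{P})}$, since $\overline{\epsilon}_{\mathrm{Br}}^{s}\mathbb{B}\circ f_{\mathcal{P}_{\mathrm{Br}}^{s}\mathbb{B}}=\xi\mathbb{B}\circ[-]_{P}-[-]_{B}\circ(\xi\mathbb{B}\otimes\xi\mathbb{B})=0$; hence $\overline{\epsilon}_{\mathrm{Br}}^{s}$ factors as $\overline{\epsilon}_{\mathrm{BrL}}^{s}\circ\overline{p}^{s}\mathcal{P}_{\mathrm{Br}}^{s}$, defining $\overline{\epsilon}_{\mathrm{BrL}}^{s}$. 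For the unit, given $L=(M,c,[-])\in\mathrm{BrLie}_{\mathcal{M}}^{s}$, the morphism $\eta_{\mathrm{BrL}}^{s}L$ has underlying map $p_{R}\circ\alpha_{1}M$ by \eqref{form:etaBrLsW}, which lands in the primitives of $\overline{\mathcal{U}}_{\mathrm{Br}}^{s}L$ by \eqref{form:Un2} and the fact that the bialgebra projection $p_{R}$ preserves primitives; it therefore factors through $\xi\overline{\mathcal{U}}_{\mathrm{Br}}^{s}L$, and the resulting $\overline{\eta}_{\mathrm{BrL}}^{s}L$ is a morphism of braided Lie algebras because $\eta_{\mathrm{BrL}}^{s}L$ is one and $\xi$ is a braided--Lie monomorphism. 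This yields at once the first identity $\xi\overline{\mathcal{U}}_{\mathrm{Br}}^{s}\circ\overline{\eta}_{\mathrm{BrL}}^{s}=\eta_{\mathrm{BrL}}^{s}$ in \eqref{form:EtaEpsBarBrLs}, while the defining relation of $\overline{\epsilon}_{\mathrm{BrL}}^{s}$, combined with the corresponding relation for $\epsilon_{\mathrm{BrL}}^{s}$ in \eqref{form:eps-etaBrL} and the identity $\mho_{\mathrm{Br}}^{s}\overline{p}^{s}=p^{s}$ of \eqref{eq:P&pBars}, gives the second identity $\epsilon_{\mathrm{BrL}}^{s}\mho_{\mathrm{Br}}^{s}\circ\mathcal{U}_{\mathrm{Br}}^{s}\xi=\mho_{\mathrm{Br}}^{s}\overline{\epsilon}_{\mathrm{BrL}}^{s}$.

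Finally I would verify the two triangle identities for $(\overline{\mathcal{U}}_{\mathrm{Br}}^{s},\overline{\eta}_{\mathrm{BrL}}^{s},\overline{\epsilon}_{\mathrm{BrL}}^{s},\mathcal{P}_{\mathrm{Br}}^{s})$. Since $\mho_{\mathrm{Br}}^{s}$ and $H_{\mathrm{BrLie}}^{s}$ are faithful and the above relations express $\overline{\eta}_{\mathrm{BrL}}^{s},\overline{\epsilon}_{\mathrm{BrL}}^{s}$ through $\overline{\eta}_{\mathrm{Br}}^{s},\overline{\epsilon}_{\mathrm{Br}}^{s}$ and $\overline{p}^{s}$, both triangles reduce, after applying these forgetful functors, to the triangle identities of the established adjunction $(\overline{T}_{\mathrm{Br}}^{s},P_{\mathrm{Br}}^{s})$ together with naturality of $\overline{p}^{s}$ and the relation $\xi\overline{T}_{\mathrm{Br}}^{s}\circ\overline{\eta}_{\mathrm{Br}}^{s}=\eta_{\mathrm{Br}}^{s}$ coming from \eqref{form:BarEta}; this is entirely parallel to the closing argument of Proposition \ref{pro:LieFunct}. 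The one genuinely new ingredient, and the step I expect to be the main obstacle, is the closure of the primitive space under the commutator bracket; it is precisely this that Lemma \ref{lem:Un} resolves, and once the bracket $[-]_{P}$ is in place every remaining verification is a formal consequence of the faithfulness of the forgetful functors and the corresponding identities for the bialgebra adjunction.
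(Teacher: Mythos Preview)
Your argument is correct and follows essentially the same plan as the paper: build $\mathcal{P}_{\mathrm{Br}}^{s}$ by endowing $P_{\mathrm{Br}}^{s}\mathbb{B}$ with the restricted commutator bracket so that $\xi$ becomes a braided Lie morphism, then lift $\eta_{\mathrm{BrL}}^{s}$ through $\xi\overline{\mathcal{U}}_{\mathrm{Br}}^{s}$ to obtain the unit and factor the appropriate algebra-level counit to obtain $\overline{\epsilon}_{\mathrm{BrL}}^{s}$, finally reducing the triangle identities to those of $(\overline{T}_{\mathrm{Br}}^{s},P_{\mathrm{Br}}^{s})$ via faithful forgetful functors.

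Two points of comparison are worth noting. First, the paper does not prove closure of the primitives under the commutator in place: it invokes \cite[Proposition~6.3(i)]{GV-LieMon} directly for the existence of $[-]_{P}$. Your route via Lemma~\ref{lem:Un} and the bialgebra map $\overline{\epsilon}_{\mathrm{Br}}^{s}\mathbb{B}$ is a pleasant self-contained alternative that stays inside the paper's toolkit. Second, for the counit the paper takes the opposite order: it \emph{defines} $\overline{\epsilon}_{\mathrm{BrL}}^{s}$ so that $\mho_{\mathrm{Br}}^{s}\overline{\epsilon}_{\mathrm{BrL}}^{s}=\epsilon_{\mathrm{BrL}}^{s}\mho_{\mathrm{Br}}^{s}\circ\mathcal{U}_{\mathrm{Br}}^{s}\xi$ and then checks, using $\gamma\circ p_{R}=H\Omega_{\mathrm{Br}}\mho_{\mathrm{Br}}\overline{\epsilon}_{\mathrm{Br}}\mathbb{I}_{\mathrm{BrBialg}}^{s}$, that this lifts to a braided bialgebra morphism; you instead factor $\overline{\epsilon}_{\mathrm{Br}}^{s}$ through $\overline{p}^{s}\mathcal{P}_{\mathrm{Br}}^{s}$ and then deduce the displayed identity. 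The two constructions produce the same morphism, so this is only a difference in presentation.
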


\begin{proof}
Let $\mathbb{B}:=\left( B,m_{B},u_{B},\Delta _{B},\varepsilon
_{B},c_{B}\right) \in \mathrm{BrBialg}_{\mathcal{M}}^{s}$. Write $P_{\mathrm{%
Br}}^{s}\mathbb{B}=\left( P,c_{P}\right) .$ By \cite[Proposition 6.3(i)]%
{GV-LieMon}, there is a morphism $\left[ -\right] _{P}:=P\otimes
P\rightarrow P$ such that $\mathcal{P}_{\mathrm{Br}}^{s}\mathbb{B}:=\left(
P,c_{P},\left[ -\right] _{P}\right) \in \mathrm{BrLie}_{\mathcal{M}}^{s}$
and $\xi \mathbb{B}:\left( P,c_{P},\left[ -\right] _{P}\right) \rightarrow
\left( B,c_{B},\left[ -\right] _{B}\right) $ is a morphism in $\mathrm{BrLie}%
_{\mathcal{M}}^{s}$ where $\left[ -\right] _{B}:=m_{B}\circ \left( \mathrm{Id%
}_{B\otimes B}-c_{B}\right) .$ Clearly $\left[ -\right] _{P}$ is uniquely
determined by the compatibility with $\xi \mathbb{B}$. In this way we get a
functor $\mathcal{P}_{\mathrm{Br}}^{s}:\mathrm{BrBialg}_{\mathcal{M}%
}^{s}\rightarrow \mathrm{BrLie}_{\mathcal{M}}^{s}$ which acts as $P_{\mathrm{%
Br}}^{s}$ on morphisms. Let us check that there is a unique morphism $%
\overline{\eta }_{\mathrm{BrL}}^{s}:\mathrm{Id}_{\mathrm{BrLie}_{\mathcal{M}%
}^{s}}\rightarrow \mathcal{P}_{\mathrm{Br}}^{s}$ $\overline{\mathcal{U}}_{%
\mathrm{Br}}^{s}$ such that $\xi \overline{\mathcal{U}}_{\mathrm{Br}%
}^{s}\circ \overline{\eta }_{\mathrm{BrL}}^{s}=\eta _{\mathrm{BrL}}^{s}.$
Let $\left( M,c,\left[ -\right] \right) \in \mathrm{BrLie}_{\mathcal{M}}^{s}$%
, set $\left( R,m_{R},u_{R},\Delta _{R},\varepsilon _{R},c_{R}\right) :=%
\overline{\mathcal{U}}_{\mathrm{Br}}^{s}\left( M,c,\left[ -\right] \right) $
and set also $\left( A,m_{A},u_{A},\Delta _{A},\varepsilon _{A},c_{A}\right)
:=\overline{T}_{\mathrm{Br}}^{s}\left( M,c\right) $. Using that $p_{R}$ is
comultiplicative, the equality (\ref{form:TBrdelta}), unitality of $p_{R}$
and the naturality of the unit constraints, one easily checks that
\begin{equation*}
\nu :=H\mathbb{I}_{\mathrm{Br}}^{s}H_{\mathrm{BrLie}}^{s}\eta _{\mathrm{BrL}%
}^{s}\left( M,c,\left[ -\right] \right) \overset{(\ref{form:etaBrLsW})}{=}%
p_{R}\circ \alpha _{1}M:M\rightarrow R
\end{equation*}%
is equalized by the fork in (\ref{diag:prim}). Hence $v$ induces a morphism $%
v^{\prime }:M\rightarrow P\left( \overline{\mathcal{U}}_{\mathrm{Br}%
}^{s}\left( M,c,\left[ -\right] \right) \right) =:P$ such that $\xi
\overline{\mathcal{U}}_{\mathrm{Br}}^{s}\left( M,c,\left[ -\right] \right)
\circ v^{\prime }=\nu .$ One easily proves that $v^{\prime }$ defines a
natural transformation $\overline{\eta }_{\mathrm{BrL}}^{s}:\mathrm{Id}_{%
\mathrm{BrLie}_{\mathcal{M}}^{s}}\rightarrow \mathcal{P}_{\mathrm{Br}}^{s}$ $%
\overline{\mathcal{U}}_{\mathrm{Br}}^{s}$ such that $\xi \overline{\mathcal{U%
}}_{\mathrm{Br}}^{s}\circ \overline{\eta }_{\mathrm{BrL}}^{s}=\eta _{\mathrm{%
BrL}}^{s}.$ Let us check there is a natural transformation $\overline{%
\epsilon }_{\mathrm{BrL}}^{s}:\overline{\mathcal{U}}_{\mathrm{Br}}^{s}%
\mathcal{P}_{\mathrm{Br}}^{s}\rightarrow \mathrm{Id}_{\mathrm{BrBialg}_{%
\mathcal{M}}^{s}}$ such that $\epsilon _{\mathrm{BrL}}^{s}\mho _{\mathrm{Br}%
}^{s}\circ \mathcal{U}_{\mathrm{Br}}^{s}\xi =\mho _{\mathrm{Br}}^{s}%
\overline{\epsilon }_{\mathrm{BrL}}^{s}.$

Let $\mathbb{B}:=\left( B,m_{B},u_{B},\Delta _{B},\varepsilon
_{B},c_{B}\right) \in \mathrm{BrBialg}_{\mathcal{M}}^{s}$ and consider
\begin{equation*}
\gamma :=H\Omega _{\mathrm{Br}}\mathbb{I}_{\mathrm{BrAlg}}^{s}\left(
\epsilon _{\mathrm{BrL}}^{s}\mho _{\mathrm{Br}}^{s}\mathbb{B}\circ \mathcal{U%
}_{\mathrm{Br}}^{s}\xi \mathbb{B}\right) :R\rightarrow B
\end{equation*}%
where $\left( R,m_{R},u_{R},\Delta _{R},\varepsilon _{R},c_{R}\right) :=%
\overline{\mathcal{U}}_{\mathrm{Br}}^{s}\mathcal{P}_{\mathrm{Br}}^{s}\mathbb{%
B}.$ By definition $\gamma $ is a morphism of braided algebras and a direct
computation shows that $\gamma \circ p_{R}=H\Omega _{\mathrm{Br}}\mho _{%
\mathrm{Br}}\overline{\epsilon }_{\mathrm{Br}}\mathbb{I}_{\mathrm{BrBialg}%
}^{s}\mathbb{B}$, using the equality $p_{R}=H\Omega _{\mathrm{Br}}I_{\mathrm{%
BrAlg}}^{s}p^{s}P_{\mathrm{Br}}^{s}B$ and the equalities (\ref%
{form:eps-etaBrL}), (\ref{form:TbrStricts}), (\ref{diag:Tbrs}), (\ref%
{diag:OmRibs}), (\ref{form:BarEps}). Since $\overline{\epsilon }_{\mathrm{Br}%
}\mathbb{I}_{\mathrm{BrBialg}}^{s}\mathbb{B}$ is a morphism of braided
bialgebras and $p_{R}$ is an epimorphism and a morphism of braided
bialgebras, it is straightforward to prove that also $\gamma $ is. Hence
there is a unique morphism $\overline{\epsilon }_{\mathrm{BrL}}^{s}\mathbb{B}%
:\overline{\mathcal{U}}_{\mathrm{Br}}^{s}\mathcal{P}_{\mathrm{Br}}^{s}%
\mathbb{B}\rightarrow \mathbb{B}$ such that $H\Omega _{\mathrm{Br}}\mathbb{I}%
_{\mathrm{BrAlg}}^{s}\overline{\epsilon }_{\mathrm{BrL}}^{s}\mathbb{B}%
=\gamma .$ From the definition of $\gamma $ and the fact that $H\Omega _{%
\mathrm{Br}}\mathbb{I}_{\mathrm{BrAlg}}^{s}$ is faithful, we deduce $%
\epsilon _{\mathrm{BrL}}^{s}\mho _{\mathrm{Br}}^{s}\mathbb{B}\circ \mathcal{U%
}_{\mathrm{Br}}^{s}\xi \mathbb{B}=\mho _{\mathrm{Br}}^{s}\overline{\epsilon }%
_{\mathrm{BrL}}^{s}\mathbb{B}$. The naturality of the left-hand side of the
latter equality and the faithfulness of $\mho _{\mathrm{Br}}^{s}$ yield the
naturality of $\overline{\epsilon }_{\mathrm{BrL}}^{s}\mathbb{B}$. One
easily checks that the $\overline{\eta }_{\mathrm{BrL}}^{s}$ and $\overline{%
\epsilon }_{\mathrm{BrL}}^{s}$ make $( \overline{\mathcal{U}}_{\mathrm{Br}%
}^{s},\mathcal{P}_{\mathrm{Br}}^{s}) $ an adjunction.
\end{proof}

Next aim is to prove that, in the symmetric case, the functor $\mathcal{U}$
factors through a functor $\overline{\mathcal{U}}:\mathrm{Lie}_{\mathcal{M}%
}\rightarrow \mathrm{Bialg}_{\mathcal{M}}$ such that $\mho \circ \overline{%
\mathcal{U}}=\mathcal{U}$.

\begin{theorem}
\label{teo:Env}Let $\mathcal{M}$ an abelian symmetric monoidal category with
denumerable coproducts. Assume that the tensor functors are right exact and
preserve denumerable coproducts. Then there are unique functors $\overline{%
\mathcal{U}}$ and $\mathcal{P}$ such that the following diagrams commute%
\begin{equation}
\xymatrixrowsep{15pt}\xymatrixcolsep{15pt} \xymatrix{\Bialg_\M
\ar[rr]^{J_\Bialg^s}&& \BrBialg_\M^s\\ \Lie_\M
\ar[u]^{\overline{\U}}\ar[rr]^{J_\Lie^s}&& \BrLie_\M^s
\ar[u]_{\overline{\U}_\Br^s}} \quad \xymatrix{\Bialg_\M
\ar[d]_{\mathcal{P}}\ar[rr]^{J_\Bialg^s}&&
\BrBialg_\M^s\ar[d]^{\mathcal{P}_\Br^s}\\ \Lie_\M \ar[rr]^{J_\Lie^s}&&
\BrLie_\M^s } \quad \xymatrix{\Lie_\M \ar[dr]_{\U}\ar[rr]^{\overline{\U}}&&
\Bialg_\M\ar[dl]^{\mho}\\ &\Alg_\M }  \label{diag:Ubar}
\end{equation}%
where $\mathcal{U}$ is the functor of Theorem \ref{teo:U}. Moreover $\left(
\overline{\mathcal{U}},\mathcal{P}\right) $ is an adjunction with unit $%
\overline{\eta }_{\mathrm{L}}:\mathrm{Id}_{\mathrm{Lie}_{\mathcal{M}%
}}\rightarrow \mathcal{P}\overline{\mathcal{U}}$ and counit $\overline{%
\epsilon }_{\mathrm{L}}:\overline{\mathcal{U}}\mathcal{P}\rightarrow \mathrm{%
Id}_{\mathrm{Bialg}_{\mathcal{M}}}$ uniquely determined by
\begin{equation}
J_{\mathrm{Lie}}^{s}\overline{\eta }_{\mathrm{L}}=\overline{\eta }_{\mathrm{%
BrL}}^{s}J_{\mathrm{Lie}}^{s}\qquad \text{and}\qquad J_{\mathrm{Bialg}}^{s}%
\overline{\epsilon }_{\mathrm{L}}=\overline{\epsilon }_{\mathrm{BrL}}^{s}J_{%
\mathrm{Bialg}}^{s},  \label{Form:EpsEtaLbar}
\end{equation}%
and $( J_{\mathrm{Bialg}}^{s},J_{\mathrm{Lie}}^{s}) :( \overline{\mathcal{U}}%
,\mathcal{P}) \rightarrow ( \overline{\mathcal{U}}_{\mathrm{Br}}^{s},%
\mathcal{P}_{\mathrm{Br}}^{s}) $ is a commutation datum with canonical
transformation given by the identity. Furthermore there is a natural
transformation $\overline{p}:\overline{T}H_{\mathrm{Lie}}\rightarrow
\overline{\mathcal{U}}$ such that%
\begin{equation}
\overline{p}^{s}J_{\mathrm{Lie}}^{s}=J_{\mathrm{Bialg}}^{s}\overline{p}%
\qquad \text{and}\qquad \mho _{\mathrm{Br}}J_{\mathrm{Bialg}}\overline{p}%
=pJ_{\mathrm{Lie}}  \label{eq:PBar&pBars}
\end{equation}%
where $\overline{p}^{s}:\overline{T}_{\mathrm{Br}}^{s}H_{\mathrm{BrLie}%
}^{s}\rightarrow \overline{\mathcal{U}}_{\mathrm{Br}}^{s}$ is the natural
transformation of Theorem \ref{Teo:UbarBr} and $p:T_{\mathrm{Br}}H_{\mathrm{%
BrLie}}\rightarrow \mathcal{U}_{\mathrm{Br}}$ is the natural transformation
of Proposition \ref{pro:Ubr}. The natural transformation $\xi :\mathcal{P}_{%
\mathrm{Br}}^{s}\rightarrow \mathcal{L}_{\mathrm{Br}}^{s}\mho _{\mathrm{Br}%
}^{s}$ induces a natural transformation $\xi :\mathcal{P}\rightarrow
\mathcal{L}\mho $ such that $J_{\mathrm{Lie}}^{s}\xi =\xi J_{\mathrm{Bialg}%
}^{s}$.
\end{theorem}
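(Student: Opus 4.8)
The plan is to realize $\overline{\U}$ and $\mathcal{P}$ as the functors obtained by applying Lemma \ref{lem:LiftAdj} to the adjunction $\left( \overline{\U}_{\Br}^{s},\mathcal{P}_{\Br}^{s}\right) $ of Proposition \ref{pro:PcalsBr}, with $F:=J_{\Bialg}^{s}$ and $G:=J_{\Lie}^{s}$. Both are full, faithful and injective on objects (by \ref{cl:defJ} and Definition \ref{def:Lie}), and they are positioned exactly so that the lift produces the squares $J_{\Bialg}^{s}\overline{\U}=\overline{\U}_{\Br}^{s}J_{\Lie}^{s}$ and $J_{\Lie}^{s}\mathcal{P}=\mathcal{P}_{\Br}^{s}J_{\Bialg}^{s}$, i.e. the first two diagrams of \eqref{diag:Ubar}. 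The only genuine hypotheses to check are the two image conditions of Lemma \ref{lem:LiftAdj}: that every object in the image of $\overline{\U}_{\Br}^{s}J_{\Lie}^{s}$ is in the image of $J_{\Bialg}^{s}$, and that every object in the image of $\mathcal{P}_{\Br}^{s}J_{\Bialg}^{s}$ is in the image of $J_{\Lie}^{s}$.

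I expect these two image verifications to be the heart of the argument; both are handled by passing to the underlying braided (bi)algebra or braided object and observing that the relevant forgetful functor leaves the braiding untouched. For the first, using \eqref{diag:UbrsBar} and the left square of \eqref{diag:JsU} one gets $\mho_{\Br}^{s}\overline{\U}_{\Br}^{s}J_{\Lie}^{s}=\U_{\Br}^{s}J_{\Lie}^{s}=J_{\Alg}^{s}\U$, so the braided algebra underlying $\overline{\U}_{\Br}^{s}J_{\Lie}^{s}\left( M,\left[ -\right] \right) $ carries the braiding of $\mathcal{M}$; since $\mho_{\Br}^{s}$ does not alter the braiding, so does the braided bialgebra, which is therefore of the form $J_{\Bialg}^{s}$. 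For the second, \eqref{diag:Pcal} and the right square of \eqref{diag:JsTbar} give $H_{\BrLie}^{s}\mathcal{P}_{\Br}^{s}J_{\Bialg}^{s}=P_{\Br}^{s}J_{\Bialg}^{s}=J^{s}P$, so the braided object underlying $\mathcal{P}_{\Br}^{s}J_{\Bialg}^{s}\mathbb{B}$ carries the braiding of $\mathcal{M}$, whence $\mathcal{P}_{\Br}^{s}J_{\Bialg}^{s}\mathbb{B}$ lies in the image of $J_{\Lie}^{s}$.

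Lemma \ref{lem:LiftAdj} then supplies the unique functors $\overline{\U}:=\widehat{\overline{\U}_{\Br}^{s}J_{\Lie}^{s}}$ and $\mathcal{P}:=\widehat{\mathcal{P}_{\Br}^{s}J_{\Bialg}^{s}}$ making the first two diagrams of \eqref{diag:Ubar} commute, together with the adjunction $\left( \overline{\U},\mathcal{P}\right) $ and the assertion that $\left( J_{\Bialg}^{s},J_{\Lie}^{s}\right) $ is a commutation datum with canonical transformation the identity; the unit/counit relations \eqref{Form:EpsEtaLbar} are precisely \eqref{form:LiftAdj}. The third diagram follows by a short chase: by \eqref{diag:JsAlgOrib}, the first diagram, \eqref{diag:UbrsBar} and the left square of \eqref{diag:JsU} one has $J_{\Alg}^{s}\mho \overline{\U}=\mho_{\Br}^{s}\overline{\U}_{\Br}^{s}J_{\Lie}^{s}=\U_{\Br}^{s}J_{\Lie}^{s}=J_{\Alg}^{s}\U$, and cancelling $J_{\Alg}^{s}$, which is faithful and injective on objects, yields $\mho \overline{\U}=\U$.

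It remains to produce $\overline{p}$ and the induced $\xi $, both via Lemma \ref{lem:Cappuccio}(2). For $\overline{p}$ I take $F=J_{\Bialg}^{s}$ and note, using the right square of \eqref{diag:JLies} and the left square of \eqref{diag:JsTbar}, that the source of $\overline{p}^{s}J_{\Lie}^{s}$ is $\overline{T}_{\Br}^{s}H_{\BrLie}^{s}J_{\Lie}^{s}=J_{\Bialg}^{s}\overline{T}H_{\Lie}$ while its target is $\overline{\U}_{\Br}^{s}J_{\Lie}^{s}=J_{\Bialg}^{s}\overline{\U}$; hence it descends to a unique $\overline{p}:\overline{T}H_{\Lie}\rightarrow \overline{\U}$ with $J_{\Bialg}^{s}\overline{p}=\overline{p}^{s}J_{\Lie}^{s}$, the first relation of \eqref{eq:PBar&pBars}. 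The second relation then drops out by writing $J_{\Bialg}=\I_{\Br}^{s}{}$-style factorizations and applying in turn \eqref{diag:OmRibs}, \eqref{eq:P&pBars} and \eqref{form:ps}, giving $\mho_{\Br}J_{\Bialg}\overline{p}=\I_{\BrAlg}^{s}\mho_{\Br}^{s}\overline{p}^{s}J_{\Lie}^{s}=\I_{\BrAlg}^{s}p^{s}J_{\Lie}^{s}=pJ_{\Lie}$. Finally, with $F=J_{\Lie}^{s}$, the second diagram together with \eqref{diag:JsAlgOrib} and the right square of \eqref{diag:JsU} identify the source and target of $\xi J_{\Bialg}^{s}$ with $J_{\Lie}^{s}\mathcal{P}$ and $J_{\Lie}^{s}\mathcal{L}\mho $, so Lemma \ref{lem:Cappuccio}(2) yields the required $\xi :\mathcal{P}\rightarrow \mathcal{L}\mho $ with $J_{\Lie}^{s}\xi =\xi J_{\Bialg}^{s}$.
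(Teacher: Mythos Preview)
Your proof is correct and follows essentially the same approach as the paper: apply Lemma~\ref{lem:LiftAdj} to $(\overline{\mathcal U}_{\mathrm{Br}}^{s},\mathcal P_{\mathrm{Br}}^{s})$ with $F=J_{\mathrm{Bialg}}^{s}$, $G=J_{\mathrm{Lie}}^{s}$, derive the third diagram from \eqref{diag:JsAlgOrib}, \eqref{diag:UbrsBar}, \eqref{diag:JsU}, and then construct $\overline p$ and $\xi$ via Lemma~\ref{lem:Cappuccio}. Your treatment is in fact more detailed than the paper's, which leaves the two image conditions for Lemma~\ref{lem:LiftAdj} implicit; your verification of these via $\mho_{\mathrm{Br}}^{s}\overline{\mathcal U}_{\mathrm{Br}}^{s}J_{\mathrm{Lie}}^{s}=J_{\mathrm{Alg}}^{s}\mathcal U$ and $H_{\mathrm{BrLie}}^{s}\mathcal P_{\mathrm{Br}}^{s}J_{\mathrm{Bialg}}^{s}=J^{s}P$ is exactly right.
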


\begin{proof}
The first part is a consequence of Lemma \ref{lem:LiftAdj}. The
commutativity of the third diagram of (\ref{diag:Ubar}) follows by (\ref%
{diag:JsAlgOrib}), (\ref{diag:Ubar}), (\ref{diag:UbrsBar}) and (\ref%
{diag:JsU}). By Lemma \ref{lem:Cappuccio}, there is a natural transformation
$\overline{p}:=\widehat{\overline{p}^{s}J_{\mathrm{Lie}}^{s}}:\overline{T}H_{%
\mathrm{Lie}}\rightarrow \overline{\mathcal{U}}$ such that $J_{\mathrm{Bialg}%
}^{s}\overline{p}=\overline{p}^{s}J_{\mathrm{Lie}}^{s}.$ Using (\ref{eq:Js})
(\ref{eq:PBar&pBars}), (\ref{eq:P&pBars}) and (\ref{diag:JLies}) we get $%
\mho _{\mathrm{Br}}J_{\mathrm{Bialg}}\overline{p}\overset{\left( \ref%
{diag:JLies}\right) }{=}pJ_{\mathrm{Lie}}.$ By Lemma \ref{lem:Cappuccio},
there is a natural transformation $\xi :=\widehat{\xi J_{\mathrm{Bialg}}^{s}}%
:\mathcal{P}\rightarrow \mathcal{L}\mho $ such that $J_{\mathrm{Lie}}^{s}\xi
=\xi J_{\mathrm{Bialg}}^{s}$.
\end{proof}

\begin{remark}
\label{rem:UbarNature}By Lemma \ref{lem:Cappuccio}, there is a natural
transformation $\overline{q}:=\widehat{\overline{p}^{s}J_{\mathrm{Lie}}^{s}}:%
\overline{T}H_{\mathrm{Lie}}\rightarrow \overline{\mathcal{U}}$ such that
\begin{equation}
J_{\mathrm{Bialg}}^{s}\overline{q}=\overline{p}^{s}J_{\mathrm{Lie}}^{s}.
\label{form:qbar}
\end{equation}
Using (\ref{form:qbar}), (\ref{eq:P&pBars}) and (\ref{diag:JLies}) one
checks that $\mho \overline{q}=H_{\mathrm{Alg}}pJ_{\mathrm{Lie}}$ where $p$
is the morphism of Proposition \ref{pro:Ubr}. This means that for every $%
\left( M,\left[ -\right] \right) \in \mathrm{Lie}_{\mathcal{M}}$ the
morphism $\overline{q}\left( M,\left[ -\right] \right) $ is really induced
by the canonical projection $p_{R}:\Omega TM\rightarrow R:=\mathcal{U}_{%
\mathrm{Br}}^{s}J_{\mathrm{Lie}}\left( M,\left[ -\right] \right) $ defining
in this lemma the universal enveloping algebra. Summing up, as a bialgebra
in $\mathcal{M}$ we have that $\overline{\mathcal{U}}\left( M,\left[ -\right]
\right) $ is a quotient of $\overline{T}H_{\mathrm{Lie}}\left( M,\left[ -%
\right] \right) =\overline{T}M$ via $\overline{q}\left( M,\left[ -\right]
\right) $ and the underlying algebra structure is the original one
underlying $\mathcal{U}_{\mathrm{Br}}^{s}J_{\mathrm{Lie}}\left( M,\left[ -%
\right] \right) .$
\end{remark}

\section{Stationary monadic decomposition}

\begin{theorem}
\label{teo:LambdaBr}Let $\mathcal{M}$ be an abelian monoidal category with
denumerable coproducts. Assume that the tensor functors are exact and
preserve denumerable coproducts.
\begin{equation}
\xymatrixcolsep{1.5cm}\xymatrixrowsep{0.40cm}\xymatrix{\mathrm{BrBialg}_{%
\mathcal{M}}^{s}\ar@<.5ex>[dd]^{P_{\mathrm{Br}}^{s}}&&\mathrm{BrBialg}_{%
\mathcal{M}}^{s}\ar@<.5ex>[dd]^{(P_{\mathrm{Br}}^{s})_1}|(.30)\hole\ar[ll]_{%
\mathrm{Id}_{\mathrm{BrBialg}_{\mathcal{M}}^{s}}}&&\mathrm{BrBialg}_{%
\mathcal{M}}^{s}\ar@<.5ex>[dd]^{(P_{\mathrm{Br}}^{s})_2}\ar[ll]_{%
\mathrm{Id}_{\mathrm{BrBialg}_{\mathcal{M}}^{s}}}\ar[dl]|{\mathrm{Id}_{%
\mathrm{BrBialg}_{\mathcal{M}}^{s}}}\\
&&&\mathrm{BrBialg}_{\mathcal{M}}^{s}\ar@<.5ex>[dd]^(.30){\mathcal{P}_{%
\mathrm{Br}}^{s}}\ar[ulll]^(.70){\mathrm{Id}_{\mathrm{BrBialg}_{%
\mathcal{M}}^{s}}}\\
\mathrm{Br}_{\mathcal{M}}^{s}\ar@<.5ex>@{.>}[uu]^{\overline{T}_{%
\mathrm{Br}}^{s}}&&(\mathrm{Br}_{\mathcal{M}}^{s})_1\ar@<.5ex>@{.>}[uu]^{(%
\overline{T}_{\mathrm{Br}}^{s})_1}|(.70)\hole
\ar[ll]_{U_{0,1}}&&(\mathrm{Br}_{\mathcal{M}}^{s})_2
\ar@<.5ex>@{.>}[uu]^{(\overline{T}_{\mathrm{Br}}^{s})_2}
\ar[ll]_(.30){U_{1,2}}|\hole\ar[dl]^{\Lambda_\mathrm{Br}} \\
&&&\mathrm{BrLie}_{\mathcal{M}}^{s}\ar@<.5ex>@{.>}[uu]^(.70){\overline{%
\mathcal{U}}^s_{\mathrm{Br}}}\ar[ulll]^{H_{\mathrm{BrLie}}^{s}}}
\label{diag:LambdaBr}
\end{equation}
The functor $P_{\mathrm{Br}}^{s}$ is comparable so that we can use the
notation of Definition \ref{def:comparable}. There is a functor $\Lambda _{%
\mathrm{Br}}:\left( \mathrm{Br}_{\mathcal{M}}^{s}\right) _{2}\rightarrow
\mathrm{BrLie}_{\mathcal{M}}^{s}$ such that $\Lambda _{\mathrm{Br}}\circ
\left( P_{\mathrm{Br}}^{s}\right) _{2}=\mathcal{P}_{\mathrm{Br}}^{s}$ and $%
H_{\mathrm{BrLie}}^{s}\circ \Lambda _{\mathrm{Br}}=U_{0,2}.$ Moreover there
exists a natural transformation $\overline{\chi }_{\mathrm{Br}}^{s}:%
\overline{\mathcal{U}}_{\mathrm{Br}}^{s}\Lambda _{\mathrm{Br}}\rightarrow (%
\overline{T}_{\mathrm{Br}}^{s})_{1}U_{1,2}$ such that
\begin{equation}
\overline{\chi }_{\mathrm{Br}}^{s}\circ \overline{p}^{s}\Lambda _{\mathrm{Br}%
}=\pi _{1}^{s}U_{1,2}  \label{form:chibarBrs}
\end{equation}%
where $\overline{p}^{s}$ is the natural transformation of Theorem \ref%
{Teo:UbarBr} and $\pi _{1}^{s}:\overline{T}_{\mathrm{Br}}^{s}U_{0,1}%
\rightarrow (\overline{T}_{\mathrm{Br}}^{s})_{1}$ is the canonical natural
transformation defining $(\overline{T}_{\mathrm{Br}}^{s})_{1}.$

Assume $\overline{\eta }_{\mathrm{BrL}}^{s}\Lambda _{\mathrm{Br}}$ is an
isomorphism.

\begin{itemize}
\item[1)] The adjunction $\left( \overline{\mathcal{U}}_{\mathrm{Br}}^{s},%
\mathcal{P}_{\mathrm{Br}}^{s}\right) $ is idempotent.

\item[2)] The adjunction $\left( (\overline{T}_{\mathrm{Br}}^{s})_{1},\left(
P_{\mathrm{Br}}^{s}\right) _{1}\right) $ is idempotent, we can choose $(%
\overline{T}_{\mathrm{Br}}^{s})_{2}:=(\overline{T}_{\mathrm{Br}%
}^{s})_{1}U_{1,2},$ $\pi _{2}^{s}=\mathrm{Id}_{(\overline{T}_{\mathrm{Br}%
}^{s})_{2}}$ and $(\overline{T}_{\mathrm{Br}}^{s})_{2}$ is full and faithful
i.e. $\left( \overline{\eta }_{\mathrm{Br}}^{s}\right) _{2}$ is an
isomorphism.

\item[3)] The functor $P_{\mathrm{Br}}^{s}$ has a monadic decomposition of
monadic length at most two.

\item[4)] $(\mathrm{Id}_{\mathrm{BrBialg}_{\mathcal{M}}^{s}},\Lambda _{%
\mathrm{Br}}):((\overline{T}_{\mathrm{Br}}^{s})_{2},\left( P_{\mathrm{Br}%
}^{s}\right) _{2})\rightarrow (\overline{\mathcal{U}}_{\mathrm{Br}}^{s},%
\mathcal{P}_{\mathrm{Br}}^{s})$ is a commutation datum whose canonical
transformation is $\overline{\chi }_{\mathrm{Br}}^{s}.$

\item[5)] The pair $\left( \left( P_{\mathrm{Br}}^{s}\right) _{2}\overline{%
\mathcal{U}}_{\mathrm{Br}}^{s},\Lambda _{\mathrm{Br}}\right) $ is an
adjunction with unit $\overline{\eta }_{\mathrm{BrL}}^{s}$ and counit $%
\left( \overline{\eta }_{\mathrm{Br}}^{s}\right) _{2}^{-1}\circ \left( P_{%
\mathrm{Br}}^{s}\right) _{2}\overline{\chi }_{\mathrm{Br}}^{s}$ so that $%
\Lambda _{\mathrm{Br}}$ is full and faithful. Hence $\overline{\eta }_{%
\mathrm{BrL}}^{s}$ is an isomorphism if and only if $\left( \left( P_{%
\mathrm{Br}}^{s}\right) _{2}\overline{\mathcal{U}}_{\mathrm{Br}}^{s},\Lambda
_{\mathrm{Br}}\right) $ is an equivalence of categories. In this case $%
\left( (\overline{T}_{\mathrm{Br}}^{s})_{2},\left( P_{\mathrm{Br}%
}^{s}\right) _{2}\right) $ identifies with $\left( \overline{\mathcal{U}}_{%
\mathrm{Br}}^{s},\mathcal{P}_{\mathrm{Br}}^{s}\right) $ via $\Lambda _{%
\mathrm{Br}}$.
\end{itemize}
\end{theorem}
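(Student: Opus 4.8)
The plan is to build the monadic tower, produce $\Lambda_{\mathrm{Br}}$ and $\overline{\chi}_{\mathrm{Br}}^{s}$ unconditionally, and then extract 1)--5) from the hypothesis. Since $\overline{T}_{\mathrm{Br}}^{s}$ is left adjoint to $P_{\mathrm{Br}}^{s}$ by \ref{cl:TbarStrict}, I would first apply Lemma \ref{lem: coequalizers}: it only remains that $\mathrm{BrBialg}_{\mathcal{M}}^{s}$ admit a chosen coequalizer of each reflexive pair, which is precisely Proposition \ref{pro:BeckBr}. Thus $P_{\mathrm{Br}}^{s}$ is comparable and the whole diagram (\ref{diag:LambdaBr}), with the categories $(\mathrm{Br}_{\mathcal{M}}^{s})_n$, the functors $(P_{\mathrm{Br}}^{s})_n$, $(\overline{T}_{\mathrm{Br}}^{s})_n$ and the forgetful $U_{m,n}$, is available (Definition \ref{def:comparable}). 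To build $\Lambda_{\mathrm{Br}}$ I would equip each $X=((V,c),\mu,\mu_1)\in(\mathrm{Br}_{\mathcal{M}}^{s})_2$ with a braided Lie bracket on $U_{0,2}X=(V,c)$; on the image of $(P_{\mathrm{Br}}^{s})_2$ the bracket is dictated by the requirement $\Lambda_{\mathrm{Br}}\circ(P_{\mathrm{Br}}^{s})_2=\mathcal{P}_{\mathrm{Br}}^{s}$ (there it is the primitive commutator of Proposition \ref{pro:PcalsBr}), and in general it is read off intrinsically from the iterated structure $\mu,\mu_1$. The axioms (\ref{Lie1})--(\ref{form:cbraid2}) would be checked from the monad-algebra identities; the equality $H_{\mathrm{BrLie}}^{s}\circ\Lambda_{\mathrm{Br}}=U_{0,2}$ is built in, and $\Lambda_{\mathrm{Br}}$ inherits faithfulness and conservativity from $U_{0,2}$ and $H_{\mathrm{BrLie}}^{s}$. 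For $\overline{\chi}_{\mathrm{Br}}^{s}$ I would use that $\overline{\mathcal{U}}_{\mathrm{Br}}^{s}\Lambda_{\mathrm{Br}}$ is a quotient of $\overline{T}_{\mathrm{Br}}^{s}U_{0,2}$ through $\overline{p}^{s}\Lambda_{\mathrm{Br}}$ (Theorem \ref{Teo:UbarBr}) while $(\overline{T}_{\mathrm{Br}}^{s})_1U_{1,2}$ is the corresponding Beck coequalizer with canonical epi $\pi_1^{s}U_{1,2}$; since $\pi_1^{s}U_{1,2}$ coequalizes the relations of $\overline{p}^{s}\Lambda_{\mathrm{Br}}$, the universal property yields the unique natural $\overline{\chi}_{\mathrm{Br}}^{s}$ satisfying (\ref{form:chibarBrs}), exactly as $\zeta_1$ is produced in Proposition \ref{pro:zetaIter}.

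\emph{Idempotency 1).} Assume $\overline{\eta}_{\mathrm{BrL}}^{s}\Lambda_{\mathrm{Br}}$ is invertible. An adjunction is idempotent iff its unit, whiskered by the right adjoint, is invertible; since $\mathcal{P}_{\mathrm{Br}}^{s}=\Lambda_{\mathrm{Br}}\circ(P_{\mathrm{Br}}^{s})_2$, we get $\overline{\eta}_{\mathrm{BrL}}^{s}\mathcal{P}_{\mathrm{Br}}^{s}=(\overline{\eta}_{\mathrm{BrL}}^{s}\Lambda_{\mathrm{Br}})(P_{\mathrm{Br}}^{s})_2$, an isomorphism, whence $(\overline{\mathcal{U}}_{\mathrm{Br}}^{s},\mathcal{P}_{\mathrm{Br}}^{s})$ is idempotent.

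\emph{The crux 2), 4), 3).} Here lies the main obstacle: to show that the level-two Beck coequalizer collapses, so that one may take $(\overline{T}_{\mathrm{Br}}^{s})_2=(\overline{T}_{\mathrm{Br}}^{s})_1U_{1,2}$ with $\pi_2^{s}=\mathrm{Id}$, and that $\overline{\chi}_{\mathrm{Br}}^{s}$ then becomes invertible. The strategy is to combine 1) with the conservative $\Lambda_{\mathrm{Br}}$: via Lemma \ref{lem:IdempSquare} applied to the square $(\mathrm{Id},\Lambda_{\mathrm{Br}})$ with transformation $\overline{\chi}_{\mathrm{Br}}^{s}$, together with the coequalizer comparison of Proposition \ref{pro:zetaIter}, one transports idempotency from $(\overline{\mathcal{U}}_{\mathrm{Br}}^{s},\mathcal{P}_{\mathrm{Br}}^{s})$ to the first-stage adjunction, obtaining that the monad $(P_{\mathrm{Br}}^{s})_1(\overline{T}_{\mathrm{Br}}^{s})_1$ is idempotent, which is 2). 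The decisive identity to verify is $\Lambda_{\mathrm{Br}}(\overline{\eta}_{\mathrm{Br}}^{s})_2=\overline{\eta}_{\mathrm{BrL}}^{s}\Lambda_{\mathrm{Br}}$ modulo $\overline{\chi}_{\mathrm{Br}}^{s}$, whose right-hand side is invertible by hypothesis; conservativity of $\Lambda_{\mathrm{Br}}$ then forces $(\overline{\eta}_{\mathrm{Br}}^{s})_2$ invertible. Standard idempotent-monad theory (\cite{AT,MS}) supplies the normal form $(\overline{T}_{\mathrm{Br}}^{s})_2=(\overline{T}_{\mathrm{Br}}^{s})_1U_{1,2}$, $\pi_2^{s}=\mathrm{Id}$, and the full faithfulness of $(\overline{T}_{\mathrm{Br}}^{s})_2$; with this normalization $\overline{\chi}_{\mathrm{Br}}^{s}$ is the canonical transformation of the datum $(\mathrm{Id},\Lambda_{\mathrm{Br}})\colon((\overline{T}_{\mathrm{Br}}^{s})_2,(P_{\mathrm{Br}}^{s})_2)\to(\overline{\mathcal{U}}_{\mathrm{Br}}^{s},\mathcal{P}_{\mathrm{Br}}^{s})$ and is invertible, giving 4). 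Finally, idempotency of $((\overline{T}_{\mathrm{Br}}^{s})_1,(P_{\mathrm{Br}}^{s})_1)$ together with Proposition \ref{pro:idemmonad2} applied to $(P_{\mathrm{Br}}^{s})_1$ forces $(\overline{T}_{\mathrm{Br}}^{s})_2$ full and faithful, so $P_{\mathrm{Br}}^{s}$ has monadic length at most two, which is 3). I expect the interlocking of the three facts — collapse of the coequalizer, invertibility of $\overline{\chi}_{\mathrm{Br}}^{s}$, and invertibility of $(\overline{\eta}_{\mathrm{Br}}^{s})_2$ — to be the genuinely delicate point, to be untangled by a single bootstrap argument resting on Proposition \ref{pro:BeckBr} and the conservativity of $\Lambda_{\mathrm{Br}}$.

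\emph{The adjunction 5).} With 2) and 4) in hand, I would take $\overline{\eta}_{\mathrm{BrL}}^{s}$ as unit and $(\overline{\eta}_{\mathrm{Br}}^{s})_2^{-1}\circ(P_{\mathrm{Br}}^{s})_2\overline{\chi}_{\mathrm{Br}}^{s}\colon (P_{\mathrm{Br}}^{s})_2\overline{\mathcal{U}}_{\mathrm{Br}}^{s}\Lambda_{\mathrm{Br}}\to\mathrm{Id}$ as counit, and verify the two triangle identities using (\ref{form:chibarBrs}) and the defining relations (\ref{form:EtaEpsBarBrLs}) of $\overline{\eta}_{\mathrm{BrL}}^{s}$. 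This exhibits $((P_{\mathrm{Br}}^{s})_2\overline{\mathcal{U}}_{\mathrm{Br}}^{s},\Lambda_{\mathrm{Br}})$ as an adjunction; its counit is a composite of the isomorphisms $(\overline{\eta}_{\mathrm{Br}}^{s})_2^{-1}$ and $(P_{\mathrm{Br}}^{s})_2\overline{\chi}_{\mathrm{Br}}^{s}$, hence invertible, so $\Lambda_{\mathrm{Br}}$ is full and faithful. Consequently the pair is an equivalence exactly when its unit $\overline{\eta}_{\mathrm{BrL}}^{s}$ is invertible, in which case $\Lambda_{\mathrm{Br}}$ identifies $((\overline{T}_{\mathrm{Br}}^{s})_2,(P_{\mathrm{Br}}^{s})_2)$ with $(\overline{\mathcal{U}}_{\mathrm{Br}}^{s},\mathcal{P}_{\mathrm{Br}}^{s})$, completing 5).
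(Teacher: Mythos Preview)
Your overall architecture is close to the paper's, and your treatments of comparability, of 1), and of 5) are essentially what the paper does. But the ``bootstrap'' you propose for 2) and 4) is circular, and this is exactly the point where the real work lies.

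The identity you aim for is (\ref{form:chiBrsBar}) in the paper, namely
\[
H_{\mathrm{BrLie}}^{s}\bigl(\mathcal{P}_{\mathrm{Br}}^{s}\overline{\chi}_{\mathrm{Br}}^{s}\circ \overline{\eta}_{\mathrm{BrL}}^{s}\Lambda_{\mathrm{Br}}\bigr)=U_{0,1}(\overline{\eta}_{\mathrm{Br}}^{s})_{1}U_{1,2}.
\]
From this and the hypothesis that $\overline{\eta}_{\mathrm{BrL}}^{s}\Lambda_{\mathrm{Br}}$ is invertible you want to conclude that $(\overline{\eta}_{\mathrm{Br}}^{s})_{1}U_{1,2}$ is invertible. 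That inference requires $\mathcal{P}_{\mathrm{Br}}^{s}\overline{\chi}_{\mathrm{Br}}^{s}$ to be invertible, which you have not established; conservativity of $\Lambda_{\mathrm{Br}}$ alone does not help, since $\overline{\chi}_{\mathrm{Br}}^{s}$ is not of the form $\Lambda_{\mathrm{Br}}(\text{something})$. Likewise your appeal to Lemma~\ref{lem:IdempSquare} presupposes that $R'\zeta R=\mathcal{P}_{\mathrm{Br}}^{s}\overline{\chi}_{\mathrm{Br}}^{s}(P_{\mathrm{Br}}^{s})_{2}$ is an isomorphism, and your appeal to Proposition~\ref{pro:zetaIter} presupposes that $\overline{\chi}_{\mathrm{Br}}^{s}$ is already the canonical transformation of a \emph{commutation datum}, i.e.\ an isomorphism. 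So the three facts you list as ``interlocking'' do not untangle themselves; one of them must be proved independently.

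The paper breaks the circle by proving directly that $\overline{\chi}_{\mathrm{Br}}^{s}$ is invertible. First, (\ref{form:chiBrsBar}) shows that $\mathcal{P}_{\mathrm{Br}}^{s}\overline{\chi}_{\mathrm{Br}}^{s}\circ\overline{\eta}_{\mathrm{BrL}}^{s}\Lambda_{\mathrm{Br}}$ is a \emph{split monomorphism} (its underlying map is $(\overline{\eta}_{\mathrm{Br}}^{s})_{1}U_{1,2}$, which is split by $\mu_{1}$). Under the hypothesis, $\overline{\eta}_{\mathrm{BrL}}^{s}\Lambda_{\mathrm{Br}}$ is invertible, so $\mathcal{P}_{\mathrm{Br}}^{s}\overline{\chi}_{\mathrm{Br}}^{s}$ is a monomorphism. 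Applying $\mathcal{P}_{\mathrm{Br}}^{s}$ to $\overline{\chi}_{\mathrm{Br}}^{s}\circ\overline{p}^{s}\Lambda_{\mathrm{Br}}=\pi_{1}^{s}U_{1,2}$ and using that $P_{\mathrm{Br}}^{s}$ separates morphisms with domain $\overline{T}_{\mathrm{Br}}^{s}X$ (a consequence of the idempotency in 1), via $\overline{\epsilon}_{\mathrm{Br}}^{s}\overline{T}_{\mathrm{Br}}^{s}\circ\overline{T}_{\mathrm{Br}}^{s}\overline{\eta}_{\mathrm{Br}}^{s}=\mathrm{Id}$), one deduces that $\overline{p}^{s}\Lambda_{\mathrm{Br}}M_{2}$ coequalizes the Beck pair $(\overline{T}_{\mathrm{Br}}^{s}\mu_{0},\,\overline{\epsilon}_{\mathrm{Br}}^{s}\overline{T}_{\mathrm{Br}}^{s}M_{0})$. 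The universal property of the coequalizer $\pi_{1}^{s}M_{1}$ then produces $\tau M_{2}$ with $\tau M_{2}\circ\pi_{1}^{s}M_{1}=\overline{p}^{s}\Lambda_{\mathrm{Br}}M_{2}$, and since both $\pi_{1}^{s}M_{1}$ and $\overline{p}^{s}\Lambda_{\mathrm{Br}}M_{2}$ are epimorphisms in $\mathcal{M}$ (Proposition~\ref{pro:BeckBr}), $\tau M_{2}$ and $\overline{\chi}_{\mathrm{Br}}^{s}M_{2}$ are mutually inverse. Only then does (\ref{form:chiBrsBar}) give 2), and 3), 4), 5) follow. This concrete inversion step is the missing idea in your plan.

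A smaller point on the construction of $\Lambda_{\mathrm{Br}}$: the bracket uses only $\mu_{0}$, via $[-]:=\mu_{0}\circ\overline{\theta}_{(M,c)}$ where $\overline{\theta}_{(M,c)}$ is the lift of $\theta_{(M,c)}$ through $\xi\overline{T}_{\mathrm{Br}}(M,c)$ (possible because of (\ref{form:Un1}) and symmetry of $c$). The second-level datum $\mu_{1}$ is not part of the bracket; its role is to make $(\overline{\eta}_{\mathrm{Br}}^{s})_{1}M_{1}$ a split monomorphism, which is what allows you to reflect the Lie axioms from $\mathcal{P}_{\mathrm{Br}}^{s}\mathbb{S}$ back to $(M,c,[-])$.
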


\begin{proof}
By \ref{cl:TbarStrict} we have an adjunction $\left( \overline{T}_{\mathrm{Br%
}}^{s},P_{\mathrm{Br}}^{s}\right) .$ By Proposition \ref{pro:BeckBr}, the
right adjoint functor $R=P_{\mathrm{Br}}^{s}$ is comparable and we can use
the notation of Definition \ref{def:comparable}.

Let $M_{2}=\left( M_{1},\mu _{1}\right) \in \left( \mathrm{Br}_{\mathcal{M}%
}^{s}\right) _{2}$. Then we can write $M_{1}=\left( M_{0},\mu _{0}\right)
\in \left( \mathrm{Br}_{\mathcal{M}}^{s}\right) _{1}\ $and $M_{0}=\left(
M,c\right) \in \mathrm{Br}_{\mathcal{M}}^{s}.$ Let $\theta _{\left(
M,c\right) }:=\theta _{\mathbb{I}_{\mathrm{Br}}^{s}\left( M,c\right)
}:M\otimes M\rightarrow \Omega T\left( M\right) $ be defined as in (\ref%
{def:theta}) and set $\mathbb{A}:=\left( A,m_{A},u_{A},\Delta
_{A},\varepsilon _{A},c_{A}\right) :=\overline{T}_{\mathrm{Br}}M_{0}=%
\overline{T}_{\mathrm{Br}}\left( M,c\right) $. Since $c^{2}=\mathrm{Id}%
_{M\otimes M}$ we have that $\theta _{\left( M,c\right) }$ fulfills (\ref%
{form:Un1}). Thus there is a unique morphism $\overline{\theta }_{\left(
M,c\right) }:=\overline{\theta }_{\mathbb{I}_{\mathrm{Br}}^{s}\left(
M,c\right) }:M\otimes M\rightarrow P\left( \overline{T}_{\mathrm{Br}}\left(
M,c\right) \right) $ such that%
\begin{equation}
\xi \mathbb{A}\circ \overline{\theta }_{\left( M,c\right) }=\theta _{\left(
M,c\right) }.  \label{form:zetaBar}
\end{equation}%
Set%
\begin{equation*}
\left[ -\right] :=H\mathbb{I}_{\mathrm{Br}}^{s}\mu _{0}\circ \overline{%
\theta }_{\left( M,c\right) }:M\otimes M\rightarrow M.
\end{equation*}%
Let us check that $\left( M,c,\left[ -\right] \right) \in \mathrm{BrLie}_{%
\mathcal{M}}^{s}$. Now $\mu _{1}\circ \left( \overline{\eta }_{\mathrm{Br}%
}^{s}\right) _{1}M_{1}=\mathrm{Id}_{M_{1}}$ so that $\left( \overline{\eta }%
_{\mathrm{Br}}^{s}\right) _{1}M_{1}$ is a split monomorphism. Set $\mathbb{S}%
:=\left( S,m_{S},u_{S},\Delta _{S},\varepsilon _{S},c_{S}\right) :=\left(
\overline{T}_{\mathrm{Br}}^{s}\right) _{1}M_{1}.$ Thus $H\mathbb{I}_{\mathrm{%
Br}}^{s}U_{0,1}\left( \overline{\eta }_{\mathrm{Br}}^{s}\right)
_{1}M_{1}:M\rightarrow P\left( \mathbb{S}\right) $ is a split monomorphism
too. Let $\pi _{1}^{s}:\overline{T}_{\mathrm{Br}}^{s}U_{0,1}\rightarrow
\left( \overline{T}_{\mathrm{Br}}^{s}\right) _{1}$ be the canonical natural
transformation defining $\left( \overline{T}_{\mathrm{Br}}^{s}\right) _{1}.$
By construction one has
\begin{equation}
P_{\mathrm{Br}}^{s}\pi _{1}^{s}\circ \overline{\eta }_{\mathrm{Br}%
}^{s}U_{0,1}=U_{0,1}\left( \overline{\eta }_{\mathrm{Br}}^{s}\right) _{1}.
\label{form:pi1etaBar}
\end{equation}

We have%
\begin{equation}
H\xi \overline{T}_{\mathrm{Br}}\mathbb{I}_{\mathrm{Br}}^{s}\circ H\mathbb{I}%
_{\mathrm{Br}}^{s}\overline{\eta }_{\mathrm{Br}}^{s}=H\left( \xi \overline{T}%
_{\mathrm{Br}}\mathbb{I}_{\mathrm{Br}}^{s}\circ \mathbb{I}_{\mathrm{Br}}^{s}%
\overline{\eta }_{\mathrm{Br}}^{s}\right) \overset{(\ref{form:barbrs})}{=}%
H\left( \xi \overline{T}_{\mathrm{Br}}\mathbb{I}_{\mathrm{Br}}^{s}\circ
\overline{\eta }_{\mathrm{Br}}\mathbb{I}_{\mathrm{Br}}^{s}\right) \overset{(%
\ref{form:BarEta})}{=}H\eta _{\mathrm{Br}}\mathbb{I}_{\mathrm{Br}}^{s}%
\overset{(\ref{form:TbrStrict})}{=}\eta H\mathbb{I}_{\mathrm{Br}}^{s}.
\label{form:nonso1}
\end{equation}%
In particular, we have%
\begin{eqnarray*}
\xi \mathbb{A}\circ H\mathbb{I}_{\mathrm{Br}}^{s}\overline{\eta }_{\mathrm{Br%
}}^{s}M_{0} &=&\xi \mathbb{A}\circ H\mathbb{I}_{\mathrm{Br}}^{s}\overline{%
\eta }_{\mathrm{Br}}^{s}M_{0}=H\xi \overline{T}_{\mathrm{Br}}\mathbb{I}_{%
\mathrm{Br}}^{s}M_{0}\circ H\mathbb{I}_{\mathrm{Br}}^{s}\overline{\eta }_{%
\mathrm{Br}}^{s}M_{0} \\
\overset{\left( \ref{form:nonso1}\right) }{=}\eta H\mathbb{I}_{\mathrm{Br}%
}^{s}M_{0} &=&\eta M\overset{\left( \ref{form:etaeps}\right) }{=}\alpha _{1}M
\end{eqnarray*}%
so that%
\begin{equation}
\xi \mathbb{A}\circ H\mathbb{I}_{\mathrm{Br}}^{s}\overline{\eta }_{\mathrm{Br%
}}^{s}M_{0}=\alpha _{1}M  \label{form:nonso2}
\end{equation}%
We compute%
\begin{gather*}
\xi \mathbb{A}\circ \left[ -\right] _{P\left( \mathbb{A}\right) }\circ
\left( H\mathbb{I}_{\mathrm{Br}}^{s}\overline{\eta }_{\mathrm{Br}%
}^{s}M_{0}\otimes H\mathbb{I}_{\mathrm{Br}}^{s}\overline{\eta }_{\mathrm{Br}%
}^{s}M_{0}\right) =\left[ -\right] _{A}\circ \left( \xi \mathbb{A}\otimes
\xi \mathbb{A}\right) \circ \left( H\mathbb{I}_{\mathrm{Br}}^{s}\overline{%
\eta }_{\mathrm{Br}}^{s}M_{0}\otimes H\mathbb{I}_{\mathrm{Br}}^{s}\overline{%
\eta }_{\mathrm{Br}}^{s}M_{0}\right) \\
=m_{A}\circ \left( \mathrm{Id}_{A\otimes A}-c_{A}\right) \circ \left( \xi
\mathbb{A}\otimes \xi \mathbb{A}\right) \circ \left( H\mathbb{I}_{\mathrm{Br}%
}^{s}\overline{\eta }_{\mathrm{Br}}^{s}M_{0}\otimes H\mathbb{I}_{\mathrm{Br}%
}^{s}\overline{\eta }_{\mathrm{Br}}^{s}M_{0}\right) \\
\overset{(\ref{form:nonso2})}{=}m_{A}\circ \left( \mathrm{Id}_{A\otimes
A}-c_{A}\right) \circ \left( \alpha _{1}M\otimes \alpha _{1}M\right)
=m_{A}\circ \left( \alpha _{1}M\otimes \alpha _{1}M\right) \circ \left(
\mathrm{Id}_{M\otimes M}-c_{A}^{1,1}\right) \\
\overset{(\ref{form:TVm})}{=}\alpha _{2}M\circ \left( \mathrm{Id}_{M\otimes
M}-c\right) \overset{(\ref{def:theta})}{=}\theta _{\left( M,c\right) }%
\overset{(\ref{form:zetaBar})}{=}\xi \mathbb{A}\circ \overline{\theta }%
_{\left( M,c\right) }.
\end{gather*}%
Since $\xi \mathbb{A}$ is a monomorphism we get
\begin{equation}
\overline{\theta }_{\left( M,c\right) }=\left[ -\right] _{P\left( \mathbb{A}%
\right) }\circ \left( H\mathbb{I}_{\mathrm{Br}}^{s}\overline{\eta }_{\mathrm{%
Br}}^{s}M_{0}\otimes H\mathbb{I}_{\mathrm{Br}}^{s}\overline{\eta }_{\mathrm{%
Br}}^{s}M_{0}\right) .  \label{form:nonso3}
\end{equation}%
Moreover since $\pi _{1}^{s}M_{1}:\mathbb{A}=\overline{T}_{\mathrm{Br}%
}^{s}U_{0,1}M_{1}\rightarrow \left( \overline{T}_{\mathrm{Br}}^{s}\right)
_{1}M_{1}=\mathbb{S}$ is a morphism in $\mathrm{BrBialg}_{\mathcal{M}}^{s}$,
we have that $H\mathbb{I}_{\mathrm{Br}}^{s}P_{\mathrm{Br}}^{s}\pi
_{1}^{s}M_{1}\overset{(\ref{diag:Pcal})}{=}H\mathbb{I}_{\mathrm{Br}}^{s}H_{%
\mathrm{BrLie}}^{s}\mathcal{P}_{\mathrm{Br}}^{s}\pi _{1}^{s}M_{1}$ commutes
with lie brackets i.e.
\begin{equation}
\left[ -\right] _{P\left( \mathbb{S}\right) }\circ \left( H\mathbb{I}_{%
\mathrm{Br}}^{s}P_{\mathrm{Br}}^{s}\pi _{1}^{s}M_{1}\otimes H\mathbb{I}_{%
\mathrm{Br}}^{s}P_{\mathrm{Br}}^{s}\pi _{1}^{s}M_{1}\right) =H\mathbb{I}_{%
\mathrm{Br}}^{s}P_{\mathrm{Br}}^{s}\pi _{1}^{s}M_{1}\circ \left[ -\right]
_{P\left( \mathbb{A}\right) }  \label{form:nonso4}
\end{equation}%
Hence we get%
\begin{eqnarray*}
&&\left[ -\right] _{P\left( \mathbb{S}\right) }\circ \left( H\mathbb{I}_{%
\mathrm{Br}}^{s}U_{0,1}\left( \overline{\eta }_{\mathrm{Br}}^{s}\right)
_{1}M_{1}\otimes H\mathbb{I}_{\mathrm{Br}}^{s}U_{0,1}\left( \overline{\eta }%
_{\mathrm{Br}}^{s}\right) _{1}M_{1}\right) \\
&\overset{(\ref{form:pi1etaBar})}{=}&\left[ -\right] _{P\left( \mathbb{S}%
\right) }\circ \left( H\mathbb{I}_{\mathrm{Br}}^{s}P_{\mathrm{Br}}^{s}\pi
_{1}^{s}M_{1}\otimes H\mathbb{I}_{\mathrm{Br}}^{s}P_{\mathrm{Br}}^{s}\pi
_{1}^{s}M_{1}\right) \circ \left( H\mathbb{I}_{\mathrm{Br}}^{s}\overline{%
\eta }_{\mathrm{Br}}^{s}M_{0}\otimes H\mathbb{I}_{\mathrm{Br}}^{s}\overline{%
\eta }_{\mathrm{Br}}^{s}M_{0}\right) \\
&\overset{(\ref{form:nonso4})}{=}&H\mathbb{I}_{\mathrm{Br}}^{s}P_{\mathrm{Br}%
}^{s}\pi _{1}^{s}M_{1}\circ \left[ -\right] _{P\left( \mathbb{A}\right)
}\circ \left( H\mathbb{I}_{\mathrm{Br}}^{s}\overline{\eta }_{\mathrm{Br}%
}^{s}M_{0}\otimes H\mathbb{I}_{\mathrm{Br}}^{s}\overline{\eta }_{\mathrm{Br}%
}^{s}M_{0}\right) \\
&\overset{(\ref{form:nonso3})}{=}&H\mathbb{I}_{\mathrm{Br}}^{s}P_{\mathrm{Br}%
}^{s}\pi _{1}^{s}M_{1}\circ \overline{\theta }_{\left( M,c\right) }\overset{%
(\ast )}{=}H\mathbb{I}_{\mathrm{Br}}^{s}P_{\mathrm{Br}}^{s}\pi
_{1}^{s}M_{1}\circ H\mathbb{I}_{\mathrm{Br}}^{s}\overline{\eta }_{\mathrm{Br}%
}^{s}U_{0,1}M_{1}\circ H\mathbb{I}_{\mathrm{Br}}^{s}\mu _{0}\circ \overline{%
\theta }_{\left( M,c\right) } \\
&\overset{(\ref{form:pi1etaBar})}{=}&H\mathbb{I}_{\mathrm{Br}%
}^{s}U_{0,1}\left( \overline{\eta }_{\mathrm{Br}}^{s}\right) _{1}M_{1}\circ %
\left[ -\right]
\end{eqnarray*}%
where in (*) we used that $P_{\mathrm{Br}}^{s}\pi _{1}^{s}M_{1}\circ
\overline{\eta }_{\mathrm{Br}}^{s}U_{0,1}M_{1}\circ \mu _{0}=P_{\mathrm{Br}%
}^{s}\pi _{1}^{s}M_{1}$ which follows from $\pi _{1}^{s}M_{1}\circ \overline{%
T}_{\mathrm{Br}}^{s}\mu _{0}=\pi _{1}^{s}M_{1}\circ \overline{\epsilon }_{%
\mathrm{Br}}^{s}\overline{T}_{\mathrm{Br}}^{s}M_{0}$ (true by definition of $%
\pi _{1}$) and \cite[Lemma 3.3]{AGM-MonadicLie1}. We have so proved%
\begin{equation}
\left[ -\right] _{P\left( \mathbb{S}\right) }\circ \left( H\mathbb{I}_{%
\mathrm{Br}}^{s}U_{0,1}\left( \overline{\eta }_{\mathrm{Br}}^{s}\right)
_{1}M_{1}\otimes H\mathbb{I}_{\mathrm{Br}}^{s}U_{0,1}\left( \overline{\eta }%
_{\mathrm{Br}}^{s}\right) _{1}M_{1}\right) =H\mathbb{I}_{\mathrm{Br}%
}^{s}U_{0,1}\left( \overline{\eta }_{\mathrm{Br}}^{s}\right) _{1}M_{1}\circ %
\left[ -\right] .  \label{form:Immerg1}
\end{equation}%
Using the fact that $H\mathbb{I}_{\mathrm{Br}}^{s}U_{0,1}\left( \overline{%
\eta }_{\mathrm{Br}}^{s}\right) _{1}M_{1}$ is a monomorphism in $\mathcal{M}$
and
\begin{equation*}
\left( P\left( \mathbb{S}\right) ,c_{P\left( \mathbb{S}\right) },\left[ -%
\right] _{P\left( \mathbb{S}\right) }\right) =\mathcal{P}_{\mathrm{Br}}^{s}%
\mathbb{S}\in \mathrm{BrLie}_{\mathcal{M}}^{s},
\end{equation*}
one easily checks that $\Lambda _{\mathrm{Br}}\left( M_{2}\right) :=\left(
M,c,\left[ -\right] \right) \in \mathrm{BrLie}_{\mathcal{M}}^{s}$ and that $H%
\mathbb{I}_{\mathrm{Br}}^{s}U_{0,1}\left( \overline{\eta }_{\mathrm{Br}%
}^{s}\right) _{1}M_{1}:M\rightarrow P\left( \mathbb{S}\right) $ is a
morphism in $\mathrm{BrLie}_{\mathcal{M}}^{s}$. Let $\nu :M_{2}\rightarrow
M_{2}^{\prime }$ be a morphism in $\left( \mathrm{Br}_{\mathcal{M}%
}^{s}\right) _{2}.$ It is clearly a morphism of braided objects. Since, by (%
\ref{diag:Pcal}), we have $H\mathbb{I}_{\mathrm{Br}}^{s}P_{\mathrm{Br}}^{s}=H%
\mathbb{I}_{\mathrm{Br}}^{s}H_{\mathrm{BrLie}}^{s}\mathcal{P}_{\mathrm{Br}%
}^{s}$, then $H\mathbb{I}_{\mathrm{Br}}^{s}P_{\mathrm{Br}}^{s}\overline{T}_{%
\mathrm{Br}}^{s}U_{0,2}\nu $ commutes with Lie brackets and hence
\begin{eqnarray*}
&&\overline{\theta }_{\left( M^{\prime },c^{\prime }\right) }\circ \left( H%
\mathbb{I}_{\mathrm{Br}}^{s}U_{0,2}\nu \otimes H\mathbb{I}_{\mathrm{Br}%
}^{s}U_{0,2}\nu \right) \\
&\overset{(\ref{form:nonso3})}{=}&\left[ -\right] _{P\left( \mathbb{A}%
^{\prime }\right) }\circ \left( H\mathbb{I}_{\mathrm{Br}}^{s}\overline{\eta }%
_{\mathrm{Br}}^{s}M_{0}^{\prime }\otimes H\mathbb{I}_{\mathrm{Br}}^{s}%
\overline{\eta }_{\mathrm{Br}}^{s}M_{0}^{\prime }\right) \circ \left( H%
\mathbb{I}_{\mathrm{Br}}^{s}U_{0,2}\nu \otimes H\mathbb{I}_{\mathrm{Br}%
}^{s}U_{0,2}\nu \right) \\
&=&\left[ -\right] _{P\left( \mathbb{A}^{\prime }\right) }\circ \left( H%
\mathbb{I}_{\mathrm{Br}}^{s}P_{\mathrm{Br}}^{s}\overline{T}_{\mathrm{Br}%
}^{s}U_{0,2}\nu \otimes H\mathbb{I}_{\mathrm{Br}}^{s}P_{\mathrm{Br}}^{s}%
\overline{T}_{\mathrm{Br}}^{s}U_{0,2}\nu \right) \circ \left( H\mathbb{I}_{%
\mathrm{Br}}^{s}\overline{\eta }_{\mathrm{Br}}^{s}M_{0}\otimes H\mathbb{I}_{%
\mathrm{Br}}^{s}\overline{\eta }_{\mathrm{Br}}^{s}M_{0}\right) \\
&=&H\mathbb{I}_{\mathrm{Br}}^{s}P_{\mathrm{Br}}^{s}\overline{T}_{\mathrm{Br}%
}^{s}U_{0,2}\nu \circ \left[ -\right] _{P\left( \mathbb{A}\right) }\circ
\left( H\mathbb{I}_{\mathrm{Br}}^{s}\overline{\eta }_{\mathrm{Br}%
}^{s}M_{0}\otimes H\mathbb{I}_{\mathrm{Br}}^{s}\overline{\eta }_{\mathrm{Br}%
}^{s}M_{0}\right) \\
&\overset{(\ref{form:nonso3})}{=}&H\mathbb{I}_{\mathrm{Br}}^{s}P_{\mathrm{Br}%
}^{s}\overline{T}_{\mathrm{Br}}^{s}U_{0,2}\nu \circ \overline{\theta }%
_{\left( M,c\right) }
\end{eqnarray*}

so that $\overline{\theta }_{\left( M^{\prime },c^{\prime }\right) }\circ
\left( H\mathbb{I}_{\mathrm{Br}}^{s}U_{0,2}\nu \otimes H\mathbb{I}_{\mathrm{%
Br}}^{s}U_{0,2}\nu \right) =HP_{\mathrm{Br}}\overline{T}_{\mathrm{Br}}%
\mathbb{I}_{\mathrm{Br}}^{s}U_{0,2}\nu \circ \overline{\theta }_{\left(
M,c\right) }$. Using the latter equality, (\ref{diag:PsTsBr}) and that $\nu $
is a morphism in $\left( \mathrm{Br}_{\mathcal{M}}^{s}\right) _{2}$ we
obtain that $\left[ -\right] ^{\prime }\circ \left( H\mathbb{I}_{\mathrm{Br}%
}^{s}U_{0,2}\nu \otimes H\mathbb{I}_{\mathrm{Br}}^{s}U_{0,2}\nu \right) =H%
\mathbb{I}_{\mathrm{Br}}^{s}U_{0,2}\nu \circ \left[ -\right] $. Thus $\nu $
induces a morphism $\Lambda _{\mathrm{Br}}v\in \mathrm{BrLie}_{\mathcal{M}%
}^{s}$. It is clear that this defines a functor $\Lambda _{\mathrm{Br}%
}:\left( \mathrm{Br}_{\mathcal{M}}^{s}\right) _{2}\rightarrow \mathrm{BrLie}%
_{\mathcal{M}}^{s}$ acting as the identity on morphisms. Let $\mathbb{B}%
:=\left( B,m_{B},u_{B},\Delta _{B},\varepsilon _{B},c_{B}\right) \in \mathrm{%
BrBialg}_{\mathcal{M}}^{s}$. Set $M_{2}:=\left( P_{\mathrm{Br}}^{s}\right)
_{2}\mathbb{B}$. Then%
\begin{eqnarray*}
\left( M_{1},\mu _{1}\right) &:&=M_{2}=\left( \left( P_{\mathrm{Br}%
}^{s}\right) _{1}\mathbb{B},\left( P_{\mathrm{Br}}^{s}\right) _{1}\left(
\overline{\varepsilon }_{\mathrm{Br}}^{s}\right) _{1}\mathbb{B}\right) , \\
\left( M_{0},\mu _{0}\right) &:&=M_{1}=\left( P_{\mathrm{Br}}^{s}\right) _{1}%
\mathbb{B=}\left( P_{\mathrm{Br}}^{s}\mathbb{B},P_{\mathrm{Br}}^{s}\overline{%
\varepsilon }_{\mathrm{Br}}^{s}\mathbb{B}\right) , \\
\left( M,c\right) &:&=M_{0}=P_{\mathrm{Br}}^{s}\mathbb{B}
\end{eqnarray*}%
The bracket for this specific $M$ is
\begin{equation*}
\left[ -\right] :=H\mathbb{I}_{\mathrm{Br}}^{s}\mu _{0}\circ \overline{%
\theta }_{\left( M,c\right) }=H\mathbb{I}_{\mathrm{Br}}^{s}P_{\mathrm{Br}%
}^{s}\overline{\varepsilon }_{\mathrm{Br}}^{s}\mathbb{B}\circ \overline{%
\theta }_{P_{\mathrm{Br}}^{s}\mathbb{B}}.
\end{equation*}

It is straightforward to prove that $\xi \mathbb{B}\circ \left[ -\right] =%
\left[ -\right] _{B}\circ \left( \xi \mathbb{B\otimes }\xi \mathbb{B}\right)
=\xi \mathbb{B}\circ \left[ -\right] _{P(\mathbb{B})}$ so that $\left[ -%
\right] =\left[ -\right] _{P(\mathbb{B})}$ and hence%
\begin{equation*}
\Lambda _{\mathrm{Br}}\left( P_{\mathrm{Br}}^{s}\right) _{2}\mathbb{B}%
=\left( P_{\mathrm{Br}}^{s}\mathbb{B},\left[ -\right] _{P(\mathbb{B}%
)}\right) =\mathcal{P}_{\mathrm{Br}}^{s}\mathbb{B}.
\end{equation*}%
It is clear that the functors $\Lambda _{\mathrm{Br}}\left( P_{\mathrm{Br}%
}^{s}\right) _{2}$ and $\mathcal{P}_{\mathrm{Br}}^{s}$ coincide also on
morphisms so that we obtain $\Lambda _{\mathrm{Br}}\circ \left( P_{\mathrm{Br%
}}^{s}\right) _{2}=\mathcal{P}_{\mathrm{Br}}^{s}$. Let $M_{2}\in \left(
\mathrm{Br}_{\mathcal{M}}^{s}\right) _{2}.$ Then
\begin{equation*}
H_{\mathrm{BrLie}}^{s}\Lambda _{\mathrm{Br}}M_{2}=H_{\mathrm{BrLie}%
}^{s}\left( M,c,\left[ -\right] \right) =\left( M,c\right) =U_{0,2}M_{2}.
\end{equation*}%
Since $H_{\mathrm{BrLie}}^{s},\Lambda _{\mathrm{Br}}$ and $U_{0,2}$ act as
the identity on morphisms, we get $H_{\mathrm{BrLie}}^{s}\circ \Lambda _{%
\mathrm{Br}}=U_{0,2}.$

In view (\ref{form:nonso2}), naturality of $\xi ,$ the equality $(\ast )$
used above and (\ref{form:zetaBar}) we obtain%
\begin{equation*}
H\mathbb{I}_{\mathrm{Br}}^{s}\Omega _{\mathrm{Br}}^{s}\mho _{\mathrm{Br}%
}^{s}\pi _{1}^{s}M_{1}\circ \alpha _{1}M\circ \left[ -\right] =H\mathbb{I}_{%
\mathrm{Br}}^{s}\Omega _{\mathrm{Br}}^{s}\mho _{\mathrm{Br}}^{s}\pi
_{1}^{s}M_{1}\circ \theta _{\left( M,c\right) }.
\end{equation*}
Thus we get $H\mathbb{I}_{\mathrm{Br}}^{s}\Omega _{\mathrm{Br}}^{s}\mho _{%
\mathrm{Br}}^{s}\pi _{1}^{s}M_{1}\circ f_{\Lambda _{\mathrm{Br}}M_{2}}=0$.

Since $\pi _{1}^{s}M_{1}$ is an algebra map, we have $H\mathbb{I}_{\mathrm{Br%
}}^{s}\Omega _{\mathrm{Br}}^{s}\mho _{\mathrm{Br}}^{s}\pi _{1}^{s}M_{1}\circ
i_{f_{\Lambda _{\mathrm{Br}}M_{2}}}=0$ so that, by construction of $\mathcal{%
U}_{\mathrm{Br}}^{s}$ there is an braided algebra morphism $\chi _{\mathrm{Br%
}}^{s}M_{2}:\mathcal{U}_{\mathrm{Br}}^{s}\Lambda _{\mathrm{Br}%
}M_{2}\rightarrow \mho _{\mathrm{Br}}^{s}\left( \overline{T}_{\mathrm{Br}%
}^{s}\right) _{1}M_{1}$ such
\begin{equation*}
\chi _{\mathrm{Br}}^{s}M_{2}\circ p^{s}\Lambda _{\mathrm{Br}}M_{2}=\mho _{%
\mathrm{Br}}^{s}\pi _{1}^{s}M_{1}=\mho _{\mathrm{Br}}^{s}\pi
_{1}^{s}U_{1,2}M_{2}.
\end{equation*}%
By naturality of the other terms we obtain that also $\chi _{\mathrm{Br}%
}^{s}M_{2}$ is natural in $M_{2}$ so that we get%
\begin{equation*}
\chi _{\mathrm{Br}}^{s}\circ p^{s}\Lambda _{\mathrm{Br}}=\mho _{\mathrm{Br}%
}^{s}\pi _{1}^{s}U_{1,2}.
\end{equation*}%
By (\ref{eq:P&pBars}) we get $\chi _{\mathrm{Br}}^{s}\circ \mho _{\mathrm{Br}%
}^{s}\overline{p}^{s}\Lambda _{\mathrm{Br}}=\mho _{\mathrm{Br}}^{s}\pi
_{1}^{s}U_{1,2}.$ Since both $\overline{p}^{s}\Lambda _{\mathrm{Br}}$ and $%
\pi _{1}^{s}U_{1,2}$ are morphism of braided bialgebras and the underlying
morphism in $\mathcal{M}$ of $\overline{p}^{s}$ is $p$ which is an
epimorphism, one gets that $\chi _{\mathrm{Br}}^{s}$ is a morphism of
braided bialgebras too that will be denoted by $\overline{\chi }_{\mathrm{Br}%
}^{s}:\overline{\mathcal{U}}_{\mathrm{Br}}^{s}\Lambda _{\mathrm{Br}%
}\rightarrow \left( \overline{T}_{\mathrm{Br}}^{s}\right) _{1}U_{1,2}$. Thus
$\mho _{\mathrm{Br}}^{s}\overline{\chi }_{\mathrm{Br}}^{s}=\chi _{\mathrm{Br}%
}^{s}$ and hence
\begin{equation}
\overline{\chi }_{\mathrm{Br}}^{s}\circ \overline{p}^{s}\Lambda _{\mathrm{Br}%
}=\pi _{1}^{s}U_{1,2}.  \label{form:chiBar}
\end{equation}
A direct computation, shows that $\mathbb{I}_{\mathrm{Br}}^{s}\eta _{\mathrm{%
Br}}^{s}=\mathbb{I}_{\mathrm{Br}}^{s}\xi \overline{T}_{\mathrm{Br}}^{s}\circ
\mathbb{I}_{\mathrm{Br}}^{s}\overline{\eta }_{\mathrm{Br}}^{s}$ and hence
\begin{equation*}
\eta _{\mathrm{Br}}^{s}=\xi \overline{T}_{\mathrm{Br}}^{s}\circ \overline{%
\eta }_{\mathrm{Br}}^{s}.
\end{equation*}
Thus, using (\ref{diag:Pcal}), naturality of $\xi ,$ (\ref%
{form:EtaEpsBarBrLs}), (\ref{form:eps-etaBrL}), (\ref{eq:P&pBars}), (\ref%
{form:chiBar}), again naturality of $\xi $ and (\ref{form:pi1etaBar}) in the
given order, we get
\begin{equation*}
\xi \left( \overline{T}_{\mathrm{Br}}^{s}\right) _{1}U_{1,2}\circ H_{\mathrm{%
BrLie}}^{s}\left( \mathcal{P}_{\mathrm{Br}}^{s}\overline{\chi }_{\mathrm{Br}%
}^{s}\circ \overline{\eta }_{\mathrm{BrL}}^{s}\Lambda _{\mathrm{Br}}\right)
=\xi \left( \overline{T}_{\mathrm{Br}}^{s}\right) _{1}U_{1,2}\circ
U_{0,1}\left( \overline{\eta }_{\mathrm{Br}}^{s}\right) _{1}U_{1,2}.
\end{equation*}
Therefore, we obtain%
\begin{equation}
H_{\mathrm{BrLie}}^{s}\left( \mathcal{P}_{\mathrm{Br}}^{s}\overline{\chi }_{%
\mathrm{Br}}^{s}\circ \overline{\eta }_{\mathrm{BrL}}^{s}\Lambda _{\mathrm{Br%
}}\right) =U_{0,1}\left( \overline{\eta }_{\mathrm{Br}}^{s}\right)
_{1}U_{1,2}.  \label{form:chiBrsBar}
\end{equation}%
The latter is a split monomorphism. Since $H_{\mathrm{BrLie}}^{s}$ is
faithful, we get that the evaluation on objects of $\mathcal{P}_{\mathrm{Br}%
}^{s}\overline{\chi }_{\mathrm{Br}}^{s}\circ \overline{\eta }_{\mathrm{BrL}%
}^{s}\Lambda _{\mathrm{Br}}$ is a monomorphism.

Assume that $\overline{\eta }_{\mathrm{BrL}}^{s}\Lambda _{\mathrm{Br}}$ is
an isomorphism. Note that $\overline{\eta }_{\mathrm{BrL}}^{s}\Lambda _{%
\mathrm{Br}}$ isomorphism implies $\overline{\eta }_{\mathrm{BrL}%
}^{s}\Lambda _{\mathrm{Br}}\left( P_{\mathrm{Br}}^{s}\right) _{2}$
isomorphism. Since $\mathcal{P}_{\mathrm{Br}}^{s}=\Lambda _{\mathrm{Br}%
}\left( P_{\mathrm{Br}}^{s}\right) _{2}$ this means that $\overline{\eta }_{%
\mathrm{BrL}}^{s}\mathcal{P}_{\mathrm{Br}}^{s}$ is an isomorphism and hence
the adjunction $\left( \overline{\mathcal{U}}_{\mathrm{Br}}^{s},\mathcal{P}_{%
\mathrm{Br}}^{s}\right) $ is idempotent, cf. \cite[Proposition 2.8]{MS}.
Moreover, since $\overline{\eta }_{\mathrm{BrL}}^{s}\Lambda _{\mathrm{Br}}$
is an isomorphism, then the evaluation of $\mathcal{P}_{\mathrm{Br}}^{s}%
\overline{\chi }_{\mathrm{Br}}^{s}:\mathcal{P}_{\mathrm{Br}}^{s}\overline{%
\mathcal{U}}_{\mathrm{Br}}^{s}\Lambda _{\mathrm{Br}}\rightarrow \mathcal{P}_{%
\mathrm{Br}}^{s}\left( \overline{T}_{\mathrm{Br}}^{s}\right) _{1}U_{1,2}$ is
a monomorphism. Let $M_{2}\in \left( \mathrm{Br}_{\mathcal{M}}^{s}\right)
_{2}$ and consider the coequalizer%
\begin{equation*}
\xymatrix{ \overline{T}_\Br^s P_\Br^s \overline{T}_\Br^s M_0
\ar@<.5ex>[rr]^-{\overline{T}_\Br^s\mu_0 }
\ar@<-.5ex>[rr]_-{\overline{\epsilon}_\Br^s \overline{T}_\Br^s
M_0}&&\overline{T}_\Br^s M_0\ar[r]^{\pi_1^s M_1}&(\overline{T}_\Br^s)_1M_1 }
\end{equation*}

Then, from $\overline{\chi }_{\mathrm{Br}}^{s}\circ \overline{p}^{s}\Lambda
_{\mathrm{Br}}=\pi _{1}^{s}U_{1,2},$ we get $\overline{\chi }_{\mathrm{Br}%
}^{s}M_{2}\circ \overline{p}^{s}\Lambda _{\mathrm{Br}}M_{2}\circ \overline{T}%
_{\mathrm{Br}}^{s}\mu _{0}=\overline{\chi }_{\mathrm{Br}}^{s}M_{2}\circ
\overline{p}^{s}\Lambda _{\mathrm{Br}}M_{2}\circ \overline{\epsilon }_{%
\mathrm{Br}}^{s}\overline{T}_{\mathrm{Br}}^{s}M_{0}.$ If we apply $\mathcal{P%
}_{\mathrm{Br}}^{s}$, from the fact that $\mathcal{P}_{\mathrm{Br}}^{s}%
\overline{\chi }_{\mathrm{Br}}^{s}M_{2}$ is a monomorphism, we obtain%
\begin{equation*}
\mathcal{P}_{\mathrm{Br}}^{s}\left( \overline{p}^{s}\Lambda _{\mathrm{Br}%
}M_{2}\circ \overline{T}_{\mathrm{Br}}^{s}\mu _{0}\right) =\mathcal{P}_{%
\mathrm{Br}}^{s}\left( \overline{p}^{s}\Lambda _{\mathrm{Br}}M_{2}\circ
\overline{\epsilon }_{\mathrm{Br}}^{s}\overline{T}_{\mathrm{Br}%
}^{s}M_{0}\right) .
\end{equation*}

If we apply on both sides $H_{\mathrm{BrLie}}^{s},$ by (\ref{diag:Pcal}), we
obtain%
\begin{equation*}
P_{\mathrm{Br}}^{s}\left( \overline{p}^{s}\Lambda _{\mathrm{Br}}M_{2}\circ
\overline{T}_{\mathrm{Br}}^{s}\mu _{0}\right) =P_{\mathrm{Br}}^{s}\left(
\overline{p}^{s}\Lambda _{\mathrm{Br}}M_{2}\circ \overline{\epsilon }_{%
\mathrm{Br}}^{s}\overline{T}_{\mathrm{Br}}^{s}M_{0}\right) .
\end{equation*}

Since $\left( \overline{\mathcal{U}}_{\mathrm{Br}}^{s},\mathcal{P}_{\mathrm{%
Br}}^{s}\right) $ is idempotent, by \cite[Proposition 2.8]{MS}, we also have
that $\overline{\epsilon }_{\mathrm{BrL}}^{s}\overline{\mathcal{U}}_{\mathrm{%
Br}}^{s}$ is an isomorphism. Note that the arguments of $P_{\mathrm{Br}}^{s}$
in the above displayed equality are morphisms of the form $\overline{T}_{%
\mathrm{Br}}^{s}X\rightarrow Y$ for some objects $X,Y.$ Given two such
morphisms $f,g:\overline{T}_{\mathrm{Br}}^{s}X\rightarrow Y$ with $P_{%
\mathrm{Br}}^{s}f=P_{\mathrm{Br}}^{s}g$ we have%
\begin{eqnarray*}
f &=&f\circ \overline{\epsilon }_{\mathrm{Br}}^{s}\overline{T}_{\mathrm{Br}%
}^{s}X\circ \overline{T}_{\mathrm{Br}}^{s}\overline{\eta }_{\mathrm{Br}%
}^{s}X=\overline{\epsilon }_{\mathrm{Br}}^{s}Y\circ \overline{T}_{\mathrm{Br}%
}^{s}P_{\mathrm{Br}}^{s}f\circ \overline{T}_{\mathrm{Br}}^{s}\overline{\eta }%
_{\mathrm{Br}}^{s}X \\
&=&\overline{\epsilon }_{\mathrm{Br}}^{s}Y\circ \overline{T}_{\mathrm{Br}%
}^{s}P_{\mathrm{Br}}^{s}g\circ \overline{T}_{\mathrm{Br}}^{s}\overline{\eta }%
_{\mathrm{Br}}^{s}X=g\circ \overline{\epsilon }_{\mathrm{Br}}^{s}\overline{T}%
_{\mathrm{Br}}^{s}X\circ \overline{T}_{\mathrm{Br}}^{s}\overline{\eta }_{%
\mathrm{Br}}^{s}X=g.
\end{eqnarray*}%
In our case we get $\overline{p}^{s}\Lambda _{\mathrm{Br}}M_{2}\circ
\overline{T}_{\mathrm{Br}}^{s}\mu _{0}=\overline{p}^{s}\Lambda _{\mathrm{Br}%
}M_{2}\circ \overline{\epsilon }_{\mathrm{Br}}^{s}\overline{T}_{\mathrm{Br}%
}^{s}M_{0}.$ By the universal property of the coequalizer above, there is a
braided bialgebra morphism $\tau M_{2}:\left( \overline{T}_{\mathrm{Br}%
}^{s}\right) _{1}M_{1}\rightarrow \overline{\mathcal{U}}_{\mathrm{Br}%
}^{s}\Lambda _{\mathrm{Br}}M_{2}$ such that%
\begin{equation*}
\tau M_{2}\circ \pi _{1}^{s}M_{1}=\overline{p}^{s}\Lambda _{\mathrm{Br}%
}M_{2}.
\end{equation*}%
Note that, by Proposition \ref{pro:BeckBr}, the morphism $\pi _{1}^{s}M_{1}$
can be chosen in such a way to be a coequalizer when regarded as a morphism
in $\mathcal{M}$. We already observed that $\overline{p}^{s}$ is also an
epimorphism in $\mathcal{M}$. Using these facts one easily checks that $%
\overline{\chi }_{\mathrm{Br}}^{s}M_{2}$ and $\tau M_{2}$ are mutual
inverses and hence $\overline{\chi }_{\mathrm{Br}}^{s}:\overline{\mathcal{U}}%
_{\mathrm{Br}}^{s}\Lambda _{\mathrm{Br}}\rightarrow \left( \overline{T}_{%
\mathrm{Br}}^{s}\right) _{1}U_{1,2}$ is an isomorphism.

Therefore $U_{0,1}\left( \overline{\eta }_{\mathrm{Br}}^{s}\right)
_{1}U_{1,2}=H_{\mathrm{BrLie}}^{s}\left( \mathcal{P}_{\mathrm{Br}}^{s}%
\overline{\chi }_{\mathrm{Br}}^{s}\circ \overline{\eta }_{\mathrm{BrL}%
}^{s}\Lambda _{\mathrm{Br}}\right) \ $is an isomorphism. Since $U_{0,1}$
reflects is an isomorphism, we conclude that $\left( \overline{\eta }_{%
\mathrm{Br}}^{s}\right) _{1}U_{1,2}$ is an isomorphism. We have so proved
that the adjunction $\left( \left( \overline{T}_{\mathrm{Br}}^{s}\right)
_{1},\left( P_{\mathrm{Br}}^{s}\right) _{1}\right) $ is idempotent. Note
that in this case we can choose $\left( \overline{T}_{\mathrm{Br}%
}^{s}\right) _{2}:=\left( \overline{T}_{\mathrm{Br}}^{s}\right) _{1}U_{1,2}$
(and $\pi _{2}^{s}$ to be the identity) and it is full and faithful (cf.
\cite[Proposition 2.3]{AGM-MonadicLie1}) i.e. $\left( \overline{\eta }_{%
\mathrm{Br}}^{s}\right) _{2}$ is an isomorphism. By the quoted result we
also have $\left( \overline{\eta }_{\mathrm{Br}}^{s}\right)
_{1}U_{1,2}=U_{1,2}\left( \overline{\eta }_{\mathrm{Br}}^{s}\right) _{2}$ so
that%
\begin{equation*}
H_{\mathrm{BrLie}}^{s}\left( \mathcal{P}_{\mathrm{Br}}^{s}\overline{\chi }_{%
\mathrm{Br}}^{s}\circ \overline{\eta }_{\mathrm{BrL}}^{s}\Lambda _{\mathrm{Br%
}}\right) \overset{(\ref{form:chiBrsBar})}{=}U_{0,1}\left( \overline{\eta }_{%
\mathrm{Br}}^{s}\right) _{1}U_{1,2}=U_{0,1}U_{1,2}\left( \overline{\eta }_{%
\mathrm{Br}}^{s}\right) _{2}=H_{\mathrm{BrLie}}^{s}\Lambda _{\mathrm{Br}%
}\left( \overline{\eta }_{\mathrm{Br}}^{s}\right) _{2}
\end{equation*}%
and hence $\mathcal{P}_{\mathrm{Br}}^{s}\overline{\chi }_{\mathrm{Br}%
}^{s}\circ \overline{\eta }_{\mathrm{BrL}}^{s}\Lambda _{\mathrm{Br}}=\Lambda
_{\mathrm{Br}}\left( \overline{\eta }_{\mathrm{Br}}^{s}\right) _{2}.$ This
proves (\ref{form:zeta1}) holds i.e. that $(\mathrm{Id}_{\mathrm{BrBialg}_{%
\mathcal{M}}^{s}},\Lambda _{\mathrm{Br}}):(\left( \overline{T}_{\mathrm{Br}%
}^{s}\right) _{2},\left( P_{\mathrm{Br}}^{s}\right) _{2})\rightarrow (%
\overline{\mathcal{U}}_{\mathrm{Br}}^{s},\mathcal{P}_{\mathrm{Br}}^{s})$ is
a commutation datum whose canonical transformation is $\overline{\chi }_{%
\mathrm{Br}}^{s}$. Let us check that $\left( \left( P_{\mathrm{Br}%
}^{s}\right) _{2}\overline{\mathcal{U}}_{\mathrm{Br}}^{s},\Lambda _{\mathrm{%
Br}}\right) $ is an adjunction with unit and counit as in the statement. We
have%
\begin{gather*}
\Lambda _{\mathrm{Br}}\left( \left( \overline{\eta }_{\mathrm{Br}%
}^{s}\right) _{2}^{-1}\circ \left( P_{\mathrm{Br}}^{s}\right) _{2}\overline{%
\chi }_{\mathrm{Br}}^{s}\right) \circ \overline{\eta }_{\mathrm{BrL}%
}^{s}\Lambda _{\mathrm{Br}}=\Lambda _{\mathrm{Br}}\left( \overline{\eta }_{%
\mathrm{Br}}^{s}\right) _{2}^{-1}\circ \Lambda _{\mathrm{Br}}\left( P_{%
\mathrm{Br}}^{s}\right) _{2}\overline{\chi }_{\mathrm{Br}}^{s}\circ
\overline{\eta }_{\mathrm{BrL}}^{s}\Lambda _{\mathrm{Br}} \\
=\Lambda _{\mathrm{Br}}\left( \overline{\eta }_{\mathrm{Br}}^{s}\right)
_{2}^{-1}\circ \mathcal{P}_{\mathrm{Br}}^{s}\overline{\chi }_{\mathrm{Br}%
}^{s}\circ \overline{\eta }_{\mathrm{BrL}}^{s}\Lambda _{\mathrm{Br}}=\Lambda
_{\mathrm{Br}}\left( \overline{\eta }_{\mathrm{Br}}^{s}\right)
_{2}^{-1}\circ \Lambda _{\mathrm{Br}}\left( \overline{\eta }_{\mathrm{Br}%
}^{s}\right) _{2}=\Lambda _{\mathrm{Br}}.
\end{gather*}

Moreover, by (\ref{form:zeta2}) applied to our commutation datum, we have $%
\left( \overline{\epsilon }_{\mathrm{Br}}^{s}\right) _{2}\circ \overline{%
\chi }_{\mathrm{Br}}^{s}\left( P_{\mathrm{Br}}^{s}\right) _{2}=\overline{%
\epsilon }_{\mathrm{BrL}}^{s}$ so that%
\begin{eqnarray*}
&&\left( \left( \overline{\eta }_{\mathrm{Br}}^{s}\right) _{2}^{-1}\circ
\left( P_{\mathrm{Br}}^{s}\right) _{2}\overline{\chi }_{\mathrm{Br}%
}^{s}\right) \left( P_{\mathrm{Br}}^{s}\right) _{2}\overline{\mathcal{U}}_{%
\mathrm{Br}}^{s}\circ \left( P_{\mathrm{Br}}^{s}\right) _{2}\overline{%
\mathcal{U}}_{\mathrm{Br}}^{s}\overline{\eta }_{\mathrm{BrL}}^{s} \\
&=&\left( \overline{\eta }_{\mathrm{Br}}^{s}\right) _{2}^{-1}\left( P_{%
\mathrm{Br}}^{s}\right) _{2}\overline{\mathcal{U}}_{\mathrm{Br}}^{s}\circ
\left( P_{\mathrm{Br}}^{s}\right) _{2}\overline{\chi }_{\mathrm{Br}%
}^{s}\left( P_{\mathrm{Br}}^{s}\right) _{2}\overline{\mathcal{U}}_{\mathrm{Br%
}}^{s}\circ \left( P_{\mathrm{Br}}^{s}\right) _{2}\overline{\mathcal{U}}_{%
\mathrm{Br}}^{s}\overline{\eta }_{\mathrm{BrL}}^{s} \\
&=&\left( P_{\mathrm{Br}}^{s}\right) _{2}\left( \overline{\epsilon }_{%
\mathrm{Br}}^{s}\right) _{2}\overline{\mathcal{U}}_{\mathrm{Br}}^{s}\circ
\left( P_{\mathrm{Br}}^{s}\right) _{2}\overline{\chi }_{\mathrm{Br}%
}^{s}\left( P_{\mathrm{Br}}^{s}\right) _{2}\overline{\mathcal{U}}_{\mathrm{Br%
}}^{s}\circ \left( P_{\mathrm{Br}}^{s}\right) _{2}\overline{\mathcal{U}}_{%
\mathrm{Br}}^{s}\overline{\eta }_{\mathrm{BrL}}^{s} \\
&=&\left( P_{\mathrm{Br}}^{s}\right) _{2}\left[ \left( \overline{\epsilon }_{%
\mathrm{Br}}^{s}\right) _{2}\overline{\mathcal{U}}_{\mathrm{Br}}^{s}\circ
\overline{\chi }_{\mathrm{Br}}^{s}\left( P_{\mathrm{Br}}^{s}\right) _{2}%
\overline{\mathcal{U}}_{\mathrm{Br}}^{s}\circ \overline{\mathcal{U}}_{%
\mathrm{Br}}^{s}\overline{\eta }_{\mathrm{BrL}}^{s}\right] \\
&=&\left( P_{\mathrm{Br}}^{s}\right) _{2}\left[ \overline{\epsilon }_{%
\mathrm{BrL}}^{s}\overline{\mathcal{U}}_{\mathrm{Br}}^{s}\circ \overline{%
\mathcal{U}}_{\mathrm{Br}}^{s}\overline{\eta }_{\mathrm{BrL}}^{s}\right]
=\left( P_{\mathrm{Br}}^{s}\right) _{2}.
\end{eqnarray*}%
Note that the counit is an isomorphism so that $\Lambda _{\mathrm{Br}}$ is
full and faithful.

It is then clear that $\left( \left( P_{\mathrm{Br}}^{s}\right) _{2}%
\overline{\mathcal{U}}_{\mathrm{Br}}^{s},\Lambda _{\mathrm{Br}}\right) $ is
an equivalence of categories if and only if $\overline{\eta }_{\mathrm{BrL}%
}^{s}$ is an isomorphism (see e.g. \cite[Proposition 3.4.3]{Borceux1}).
\end{proof}

\begin{theorem}
\label{teo:LambdaSym}Let $\mathcal{M}$ be an abelian symmetric monoidal
category with denumerable coproducts. Assume that the tensor functors are
exact and preserve denumerable coproducts.
\begin{equation}
\xymatrixcolsep{1.5cm}\xymatrixrowsep{1cm}\xymatrix{\mathcal{M}_2%
\ar[r]^{J^s_2}\ar[d]_{\Lambda}&(\mathrm{Br}^s_\mathcal{M})_2\ar[d]^{\Lambda_%
\mathrm{Br}}\\
\mathrm{Lie}_{\mathcal{M}}\ar[r]^{J^s_\mathrm{Lie}}&\mathrm{BrLie}_{%
\mathcal{M}}^s}\quad \xymatrixcolsep{1cm}\xymatrixrowsep{0.40cm}%
\xymatrix{\mathrm{Bialg}_{\mathcal{M}}\ar@<.5ex>[dd]^{P}&&\mathrm{Bialg}_{%
\mathcal{M}}\ar@<.5ex>[dd]^{P_1}|(.30)\hole\ar[ll]_{\mathrm{Id}_{%
\mathrm{Bialg}_{\mathcal{M}}}}&&\mathrm{Bialg}_{\mathcal{M}}%
\ar@<.5ex>[dd]^{P_2}\ar[ll]_{\mathrm{Id}_{\mathrm{Bialg}_{\mathcal{M}}}}%
\ar[dl]|{\mathrm{Id}_{\mathrm{Bialg}_{\mathcal{M}}}}\\
&&&\mathrm{Bialg}_{\mathcal{M}}\ar@<.5ex>[dd]^(.30){\mathcal{P}}%
\ar[ulll]^(.70){\mathrm{Id}_{\mathrm{Bialg}_{\mathcal{M}}}}\\
\mathcal{M}\ar@<.5ex>@{.>}[uu]^{\overline{T}}&&\mathcal{M}_1%
\ar@<.5ex>@{.>}[uu]^{ \overline{T}_1}|(.70)\hole
\ar[ll]_{U_{0,1}}&&\mathcal{M}_2 \ar@<.5ex>@{.>}[uu]^{\overline{T}_2}
\ar[ll]_(.30){U_{1,2}}|\hole \ar[dl]^{\Lambda} \\
&&&\mathrm{Lie}_{\mathcal{M}}\ar@<.5ex>@{.>}[uu]^(.70){\overline{%
\mathcal{U}}}\ar[ulll]^{H_{\mathrm{Lie}}}}  \label{diag:Lambda}
\end{equation}
The functor $P$ is comparable so that we can use the notation of Definition %
\ref{def:comparable}. We have $H_{\mathrm{Lie}}\mathcal{P}=P$ and there is a
functor $\Lambda :\mathcal{M}_{2}\rightarrow \mathrm{Lie}_{\mathcal{M}}$
such that $\Lambda _{\mathrm{Br}}J_{2}^{s}=J_{\mathrm{Lie}}^{s}\Lambda $, $%
\Lambda \circ P_{2}=\mathcal{P}$ and $H_{\mathrm{Lie}}\circ \Lambda
=U_{0,2}. $ Moreover there exists a natural transformation $\overline{\chi }:%
\overline{\mathcal{U}}\Lambda \rightarrow \overline{T}_{1}U_{1,2}$ such that
such that
\begin{equation*}
J_{\mathrm{Bialg}}^{s}\overline{\chi }=\zeta _{1}^{s}U_{1,2}\circ \overline{%
\chi }_{\mathrm{Br}}^{s}J_{2}^{s},\qquad \overline{\chi }\circ \overline{p}%
\Lambda =\pi _{1}U_{1,2}
\end{equation*}%
where $\overline{p}$ is the natural transformation of Theorem \ref{teo:Env}
and $\pi _{1}:\overline{T}U_{0,1}\rightarrow \overline{T}_{1}$ is the
canonical natural transformation defining $\overline{T}_{1}.$

Assume $\overline{\eta }_{\mathrm{BrL}}^{s}\Lambda _{\mathrm{Br}}$ is an
isomorphism.

\begin{itemize}
\item[1)] The adjunction $(\overline{\mathcal{U}},\mathcal{P})$ is
idempotent.

\item[2)] The adjunction $\left( \overline{T}_{1},P_{1}\right) $ is
idempotent, we can choose $\overline{T}_{2}:=\overline{T}_{1}U_{1,2}$, $\pi
_{2}=\mathrm{Id}_{\overline{T}_{2}}$ and $\overline{T}_{2}$ is full and
faithful i.e. $\overline{\eta }_{2}$ is an isomorphism.

\item[3)] The functor $P$ has a monadic decomposition of monadic length at
most two.

\item[4)] $(\mathrm{Id}_{\mathrm{Bialg}_{\mathcal{M}}},\Lambda ):(\overline{T%
}_{2},P_{2})\rightarrow (\overline{\mathcal{U}},\mathcal{P})$ is a
commutation datum whose canonical transformation is $\overline{\chi }$.

\item[5)] The pair $\left( P_{2}\overline{\mathcal{U}},\Lambda \right) $ is
an adjunction with unit $\overline{\eta }_{\mathrm{L}}$ and counit $\left(
\overline{\eta }_{2}\right) ^{-1}\circ P_{2}\overline{\chi }$ so that $%
\Lambda $ is full and faithful. Hence $\overline{\eta }_{\mathrm{L}}$ is an
isomorphism if and only if $\left( P_{2}\overline{\mathcal{U}},\Lambda
\right) $ is an equivalence of categories. In this case $\left( \overline{T}%
_{2},P_{2}\right) $ identifies with $\left( \overline{\mathcal{U}},\mathcal{P%
}\right) $ via $\Lambda $.

\item[6)] If $\overline{\eta }_{\mathrm{BrL}}^{s}$ is an isomorphism so is $%
\overline{\eta }_{\mathrm{L}}$.
\end{itemize}
\end{theorem}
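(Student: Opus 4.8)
The plan is to derive the symmetric statement from the braided Theorem~\ref{teo:LambdaBr} by transporting every ingredient along the two commutation data with identity canonical transformation already at our disposal, namely $(J_{\mathrm{Bialg}}^{s},J^{s}):(\overline{T},P)\to(\overline{T}_{\mathrm{Br}}^{s},P_{\mathrm{Br}}^{s})$ from Proposition~\ref{pro:Tbars} and $(J_{\mathrm{Bialg}}^{s},J_{\mathrm{Lie}}^{s}):(\overline{\mathcal{U}},\mathcal{P})\to(\overline{\mathcal{U}}_{\mathrm{Br}}^{s},\mathcal{P}_{\mathrm{Br}}^{s})$ from Theorem~\ref{teo:Env}. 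First I would record that $P$ is comparable: as $\mathcal{M}$ is abelian, the appendix results underlying Proposition~\ref{pro:BeckBr} provide a choice of coequalizers of reflexive pairs in $\mathrm{Bialg}_{\mathcal{M}}$, so Lemma~\ref{lem: coequalizers} applies to the adjunction $(\overline{T},P)$. The identity $H_{\mathrm{Lie}}\mathcal{P}=P$ then follows by precomposing with $J^{s}$: using the commuting squares \eqref{diag:JLies}, \eqref{diag:Ubar}, \eqref{diag:JsTbar} and \eqref{diag:Pcal} one gets $J^{s}H_{\mathrm{Lie}}\mathcal{P}=H_{\mathrm{BrLie}}^{s}\mathcal{P}_{\mathrm{Br}}^{s}J_{\mathrm{Bialg}}^{s}=P_{\mathrm{Br}}^{s}J_{\mathrm{Bialg}}^{s}=J^{s}P$, and $J^{s}$ is faithful.

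Since $J_{\mathrm{Bialg}}^{s}$ is full, faithful, injective on objects and preserves coequalizers of reflexive pairs (these being computed in $\mathcal{M}$, with $\mathrm{BrBialg}_{\mathcal{M}}^{s}$ closed under the relevant quotients by Remark~\ref{rem:aureo}), I would iterate the first datum twice through Proposition~\ref{pro:zetaIter}. This produces faithful, conservative, full, injective-on-objects lifts $J_{1}^{s}$ and $J_{2}^{s}$ filling the first square of \eqref{diag:Lambda}, canonical transformations $\zeta_{1}^{s}$, and commutation data $(J_{\mathrm{Bialg}}^{s},J_{n}^{s}):(\overline{T}_{n},P_{n})\to((\overline{T}_{\mathrm{Br}}^{s})_{n},(P_{\mathrm{Br}}^{s})_{n})$. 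For $M_{2}\in\mathcal{M}_{2}$ the underlying braided object of $\Lambda_{\mathrm{Br}}J_{2}^{s}M_{2}$ equals $U_{0,2}J_{2}^{s}M_{2}=J^{s}U_{0,2}M_{2}$, hence carries the symmetry of $\mathcal{M}$ and lies in the image of $J_{\mathrm{Lie}}^{s}$; Lemma~\ref{lem:Cappuccio} thus yields a unique $\Lambda:\mathcal{M}_{2}\to\mathrm{Lie}_{\mathcal{M}}$ with $J_{\mathrm{Lie}}^{s}\Lambda=\Lambda_{\mathrm{Br}}J_{2}^{s}$. Cancelling the faithful functors $J_{\mathrm{Lie}}^{s}$ and $J^{s}$ against the braided identities of Theorem~\ref{teo:LambdaBr} gives $\Lambda P_{2}=\mathcal{P}$ and $H_{\mathrm{Lie}}\Lambda=U_{0,2}$. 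The transformation $\overline{\chi}$ is obtained by descending $\zeta_{1}^{s}U_{1,2}\circ\overline{\chi}_{\mathrm{Br}}^{s}J_{2}^{s}$ along $J_{\mathrm{Bialg}}^{s}$ via Lemma~\ref{lem:Cappuccio}(2); the relation $\overline{\chi}\circ\overline{p}\Lambda=\pi_{1}U_{1,2}$ is checked after applying $J_{\mathrm{Bialg}}^{s}$, where it reduces via \eqref{eq:PBar&pBars} and \eqref{form:chibarBrs} to the identity $\zeta_{1}^{s}\circ\pi_{1}^{s}J_{1}^{s}=J_{\mathrm{Bialg}}^{s}\pi_{1}$ that is built into the construction of $\zeta_{1}^{s}$ in Proposition~\ref{pro:zetaIter} (taking $\zeta=\mathrm{Id}$).

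Under the standing hypothesis that $\overline{\eta}_{\mathrm{BrL}}^{s}\Lambda_{\mathrm{Br}}$ is an isomorphism, Theorem~\ref{teo:LambdaBr} gives idempotency of $(\overline{\mathcal{U}}_{\mathrm{Br}}^{s},\mathcal{P}_{\mathrm{Br}}^{s})$ and of $((\overline{T}_{\mathrm{Br}}^{s})_{1},(P_{\mathrm{Br}}^{s})_{1})$. Part~1 then follows from Lemma~\ref{lem:IdempSquare}(2) applied to the second datum, whose canonical transformation is the identity (so that $\mathcal{P}_{\mathrm{Br}}^{s}\,\zeta\,\mathcal{P}$ is trivially invertible) and whose $G=J_{\mathrm{Lie}}^{s}$ is conservative. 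Parts~2 and~3 follow from Corollary~\ref{coro:idempcom}(2) applied to the first datum with $N=1$, transferring idempotency from $((\overline{T}_{\mathrm{Br}}^{s})_{1},(P_{\mathrm{Br}}^{s})_{1})$ to $(\overline{T}_{1},P_{1})$; as in the braided proof one may then take $\overline{T}_{2}:=\overline{T}_{1}U_{1,2}$ with $\pi_{2}=\mathrm{Id}$, which is full and faithful, so $P$ has monadic length at most two.

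For part~4 I would observe that $\overline{\chi}$ is an isomorphism: $\overline{\chi}_{\mathrm{Br}}^{s}$ is one by Theorem~\ref{teo:LambdaBr}, $\zeta_{1}^{s}$ is the canonical transformation of a commutation datum, and $J_{\mathrm{Bialg}}^{s}$ reflects isomorphisms; together with the relation \eqref{form:zeta1} (verified by cancelling $H_{\mathrm{BrLie}}^{s}$ exactly as in the braided argument) this makes $(\mathrm{Id}_{\mathrm{Bialg}_{\mathcal{M}}},\Lambda)$ a commutation datum with canonical transformation $\overline{\chi}$. Parts~5 and~6 are then formal: the triangle identities for unit $\overline{\eta}_{\mathrm{L}}$ and counit $(\overline{\eta}_{2})^{-1}\circ P_{2}\overline{\chi}$ are verified verbatim as in Theorem~\ref{teo:LambdaBr}, showing $\Lambda$ is full and faithful and that $(P_{2}\overline{\mathcal{U}},\Lambda)$ is an equivalence exactly when $\overline{\eta}_{\mathrm{L}}$ is invertible, while part~6 is immediate from Lemma~\ref{lem:faithcom} applied to the conservative $J_{\mathrm{Lie}}^{s}$. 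I expect the main obstacle to be the bookkeeping around $\overline{\chi}$: verifying that $J_{\mathrm{Bialg}}^{s}$ preserves coequalizers of reflexive pairs so that the iteration of Proposition~\ref{pro:zetaIter} runs at both levels, and correctly tracking the canonical transformations $\zeta_{1}^{s}$ and the comparison maps $\pi_{1},\pi_{1}^{s}$ through the two storeys of the tower.
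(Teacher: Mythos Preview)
Your proposal is correct and follows essentially the same route as the paper: transport Theorem~\ref{teo:LambdaBr} along the commutation data $(J_{\mathrm{Bialg}}^{s},J^{s})$ and $(J_{\mathrm{Bialg}}^{s},J_{\mathrm{Lie}}^{s})$, iterating Proposition~\ref{pro:zetaIter} to produce $J_{1}^{s},J_{2}^{s}$ and descending $\Lambda_{\mathrm{Br}}$ and $\overline{\chi}_{\mathrm{Br}}^{s}$ through Lemma~\ref{lem:Cappuccio}. Two small differences worth noting: for the preservation of coequalizers by $J_{\mathrm{Bialg}}^{s}$ the paper invokes the dedicated Lemma~\ref{lem:JBialgPres} (a factorization argument) rather than Remark~\ref{rem:aureo}, and for part~5 the paper does not redo the triangle identities but instead applies Lemma~\ref{lem:LiftAdj} to lift the adjunction $((P_{\mathrm{Br}}^{s})_{2}\overline{\mathcal{U}}_{\mathrm{Br}}^{s},\Lambda_{\mathrm{Br}})$ directly, using that $J_{2}^{s}$ and $J_{\mathrm{Lie}}^{s}$ are full, faithful and injective on objects; your alternative shortcuts via Lemma~\ref{lem:IdempSquare}(2) and Corollary~\ref{coro:idempcom}(2) for parts~1--3 are valid and slightly cleaner.
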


\begin{proof}
We have%
\begin{equation*}
J^{s}H_{\mathrm{Lie}}\mathcal{P}\overset{(\ref{diag:JLies})}{=}H_{\mathrm{%
BrLie}}^{s}J_{\mathrm{Lie}}^{s}\mathcal{P}\overset{(\ref{diag:Ubar})}{=}H_{%
\mathrm{BrLie}}^{s}\mathcal{P}_{\mathrm{Br}}^{s}J_{\mathrm{Bialg}}^{s}%
\overset{(\ref{diag:Pcal})}{=}P_{\mathrm{Br}}^{s}J_{\mathrm{Bialg}}^{s}%
\overset{(\ref{diag:JsTbar})}{=}J^{s}P
\end{equation*}%
so that $H_{\mathrm{Lie}}\mathcal{P}=P.$ By Proposition \ref{pro:Tbars}, $%
\left( J_{\mathrm{Bialg}}^{s},J^{s}\right) :\left( \overline{T},P\right)
\rightarrow \left( \overline{T}_{\mathrm{Br}}^{s},P_{\mathrm{Br}}^{s}\right)
$ is a commutation datum. Moreover, by Lemma \ref{lem:JBialgPres}, $J_{%
\mathrm{Bialg}}^{s}:\mathrm{Bialg}_{\mathcal{M}}\rightarrow \mathrm{BrBialg}%
_{\mathcal{M}}^{s}$ preserves coequalizers. By Proposition \ref{pro:BeckBr},
the right adjoint functor $R=P_{\mathrm{Br}}^{s}$ is comparable and we can
use the notation of Definition \ref{def:comparable}. By Lemma \ref%
{lem:BialgCoeq} and Lemma \ref{lem: coequalizers} we have that $P$ is also
comparable. Applying iteratively Proposition \ref{pro:zetaIter}, we get
functors $J_{n}^{s}:\mathcal{M}_{n}\rightarrow \left( \mathrm{Br}_{\mathcal{M%
}}^{s}\right) _{n},$ for all $n\in \mathbb{N}$, such that $J_{n}^{s}\circ
P_{n}=(P_{\mathrm{Br}}^{s})_{n}\circ J_{\mathrm{Bialg}}^{s}$. Let $M_{2}\in
\mathcal{M}_{2}$ and consider $\Lambda _{\mathrm{Br}}J_{2}^{s}M_{2}.$ Note
that, by construction we have
\begin{equation*}
J_{2}^{s}M_{2}=\left( J_{1}^{s}M_{1},J_{1}^{s}\mu _{1}\circ (P_{\mathrm{Br}%
}^{s})_{1}\zeta _{1}^{s}M_{1}\right) \qquad \text{and}\qquad
J_{1}^{s}M_{1}=\left( J^{s}M_{0},J^{s}\mu _{0}\circ P_{\mathrm{Br}}^{s}\zeta
_{0}^{s}M_{0}\right)
\end{equation*}%
where $\zeta _{i}^{s}:(\overline{T}_{\mathrm{Br}}^{s})_{i}J_{i}^{s}%
\rightarrow J_{\mathrm{Bialg}}^{s}\overline{T}_{i}$ for $i=0,1$ are the
canonical transformations of the respective commutation data. By
construction we have $\Lambda _{\mathrm{Br}}J_{2}^{s}M_{2}=\left(
M_{0},c_{M_{0},M_{0}},\left[ -\right] \right) $ where
\begin{equation*}
\left[ -\right] :=H\mathbb{I}_{\mathrm{Br}}^{s}J^{s}\mu _{0}\circ H\mathbb{I}%
_{\mathrm{Br}}^{s}P_{\mathrm{Br}}^{s}\zeta _{0}^{s}M_{0}\circ \overline{%
\theta }_{\left( M_{0},c_{M_{0},M_{0}}\right) }=\mu _{0}\circ H\mathbb{I}_{%
\mathrm{Br}}^{s}P_{\mathrm{Br}}^{s}\zeta _{0}^{s}M_{0}\circ \overline{\theta
}_{M_{0}}.
\end{equation*}

Now $\Lambda _{\mathrm{Br}}J_{2}^{s}M_{2}\in \mathrm{BrLie}_{\mathcal{M}%
}^{s} $ so that $\left( M_{0},c_{M_{0},M_{0}},\left[ -\right] \right) \in
\mathrm{BrLie}_{\mathcal{M}}$ i.e. $\left( M_{0},\left[ -\right] \right) \in
\mathrm{Lie}_{\mathcal{M}}$ and $\Lambda _{\mathrm{Br}}J_{2}^{s}M_{2}=J_{%
\mathrm{Lie}}^{s}\left( M_{0},\left[ -\right] \right) .$ Thus any object in
the image of $\Lambda _{\mathrm{Br}}J_{2}^{s}$ is also in the image of $J_{%
\mathrm{Lie}}^{s}.$ Thus, by Lemma \ref{lem:Cappuccio}, there is a unique
functor $\Lambda :\mathcal{M}_{2}\rightarrow \mathrm{Lie}_{\mathcal{M}}$
such that $\Lambda _{\mathrm{Br}}J_{2}^{s}=J_{\mathrm{Lie}}^{s}\Lambda .$
This equality implies that $\Lambda $ acts as the identity on morphisms and
that
\begin{equation*}
\Lambda M_{2}=\left( M_{0},\left[ -\right] \right) .
\end{equation*}%
Note that, by Proposition \ref{pro:Tbars}, we have $\zeta _{0}^{s}=\mathrm{Id%
}_{\overline{T}_{\mathrm{Br}}^{s}J^{s}}\ $so that we obtain%
\begin{equation*}
\left[ -\right] :=\mu _{0}\circ \overline{\theta }_{M_{0}}.
\end{equation*}%
We have%
\begin{equation*}
J_{\mathrm{Lie}}^{s}\Lambda P_{2}=\Lambda _{\mathrm{Br}}J_{2}^{s}P_{2}=%
\Lambda _{\mathrm{Br}}(P_{\mathrm{Br}}^{s})_{2}J_{\mathrm{Bialg}}^{s}\overset%
{(\ref{diag:LambdaBr})}{=}\mathcal{P}_{\mathrm{Br}}^{s}J_{\mathrm{Bialg}}^{s}%
\overset{(\ref{diag:Ubar})}{=}J_{\mathrm{Lie}}^{s}\mathcal{P}\text{.}
\end{equation*}%
Since $J_{\mathrm{Lie}}^{s}$ is both injective on morphisms and objects, we
get $\Lambda P_{2}=\mathcal{P}$. It is clear that $H_{\mathrm{Lie}}\Lambda
=U_{0,2}$. We have%
\begin{equation*}
J_{\mathrm{Bialg}}^{s}\overline{\mathcal{U}}\Lambda \overset{(\ref{diag:Ubar}%
)}{=}\overline{\mathcal{U}}_{\mathrm{Br}}^{s}J_{\mathrm{Lie}}^{s}\Lambda =%
\overline{\mathcal{U}}_{\mathrm{Br}}^{s}\Lambda _{\mathrm{Br}}J_{2}^{s}
\end{equation*}%
so that $\widehat{\overline{\mathcal{U}}_{\mathrm{Br}}^{s}\Lambda _{\mathrm{%
Br}}J_{2}^{s}}=\overline{\mathcal{U}}\Lambda .$ Thus, by Lemma \ref%
{lem:Cappuccio}, there is a natural transformation $\overline{\chi }:=%
\widehat{\zeta _{1}^{s}U_{1,2}\circ \overline{\chi }_{\mathrm{Br}%
}^{s}J_{2}^{s}}:\overline{\mathcal{U}}\Lambda \rightarrow \overline{T}%
_{1}U_{1,2}$ such that $J_{\mathrm{Bialg}}^{s}\overline{\chi }=\zeta
_{1}^{s}U_{1,2}\circ \overline{\chi }_{\mathrm{Br}}^{s}J_{2}^{s}$. We compute%
\begin{equation}
J_{\mathrm{Lie}}^{s}\overline{\eta }_{\mathrm{L}}\Lambda \overset{(\ref%
{Form:EpsEtaLbar})}{=}\overline{\eta }_{\mathrm{BrL}}^{s}J_{\mathrm{Lie}%
}^{s}\Lambda =\overline{\eta }_{\mathrm{BrL}}^{s}\Lambda _{\mathrm{Br}%
}J_{2}^{s}  \label{form:etabarL1}
\end{equation}%
so that%
\begin{gather*}
J^{s}H_{\mathrm{Lie}}\left( \mathcal{P}\overline{\chi }\circ \overline{\eta }%
_{\mathrm{L}}\Lambda \right) =J^{s}H_{\mathrm{Lie}}\mathcal{P}\overline{\chi
}\circ J^{s}H_{\mathrm{Lie}}\overline{\eta }_{\mathrm{L}}\Lambda \\
\overset{(\ref{diag:JLies})}{=}J^{s}P\overline{\chi }\circ H_{\mathrm{BrLie}%
}^{s}J_{\mathrm{Lie}}^{s}\overline{\eta }_{\mathrm{L}}\Lambda \overset{(\ref%
{diag:JsTbar}),(\ref{form:etabarL1})}{=}P_{\mathrm{Br}}^{s}J_{\mathrm{Bialg}%
}^{s}\overline{\chi }\circ H_{\mathrm{BrLie}}^{s}\overline{\eta }_{\mathrm{%
BrL}}^{s}\Lambda _{\mathrm{Br}}J_{2}^{s} \\
=P_{\mathrm{Br}}^{s}\zeta _{1}^{s}U_{1,2}\circ P_{\mathrm{Br}}^{s}\overline{%
\chi }_{\mathrm{Br}}^{s}J_{2}^{s}\circ H_{\mathrm{BrLie}}^{s}\overline{\eta }%
_{\mathrm{BrL}}^{s}\Lambda _{\mathrm{Br}}J_{2}^{s}\overset{(\ref{diag:Pcal})}%
{=}P_{\mathrm{Br}}^{s}\zeta _{1}^{s}U_{1,2}\circ H_{\mathrm{BrLie}}^{s}%
\mathcal{P}_{\mathrm{Br}}^{s}\overline{\chi }_{\mathrm{Br}%
}^{s}J_{2}^{s}\circ H_{\mathrm{BrLie}}^{s}\overline{\eta }_{\mathrm{BrL}%
}^{s}\Lambda _{\mathrm{Br}}J_{2}^{s} \\
=P_{\mathrm{Br}}^{s}\zeta _{1}^{s}U_{1,2}\circ H_{\mathrm{BrLie}}^{s}\left(
\mathcal{P}_{\mathrm{Br}}^{s}\overline{\chi }_{\mathrm{Br}}^{s}\circ
\overline{\eta }_{\mathrm{BrL}}^{s}\Lambda _{\mathrm{Br}}\right) J_{2}^{s}%
\overset{(\ref{form:chiBrsBar})}{=}U_{0,1}\left( P_{\mathrm{Br}}^{s}\right)
_{1}\zeta _{1}^{s}U_{1,2}\circ U_{0,1}\left( \overline{\eta }_{\mathrm{Br}%
}^{s}\right) _{1}U_{1,2}J_{2}^{s} \\
\overset{(\ref{form:chiBrsBar})}{=}U_{0,1}\left( P_{\mathrm{Br}}^{s}\right)
_{1}\zeta _{1}^{s}U_{1,2}\circ U_{0,1}\left( \overline{\eta }_{\mathrm{Br}%
}^{s}\right) _{1}J_{1}^{s}U_{1,2}=U_{0,1}\left( \left( P_{\mathrm{Br}%
}^{s}\right) _{1}\zeta _{1}^{s}\circ \left( \overline{\eta }_{\mathrm{Br}%
}^{s}\right) _{1}J_{1}^{s}\right) U_{1,2} \\
=U_{0,1}J_{1}^{s}\overline{\eta }_{1}U_{1,2}=J^{s}U_{0,1}\overline{\eta }%
_{1}U_{1,2}
\end{gather*}%
so that%
\begin{equation}
H_{\mathrm{Lie}}\left( \mathcal{P}\overline{\chi }\circ \overline{\eta }_{%
\mathrm{L}}\Lambda \right) =U_{0,1}\overline{\eta }_{1}U_{1,2}.
\label{form:etabarL2}
\end{equation}%
We have%
\begin{eqnarray*}
J_{\mathrm{Bialg}}^{s}\left( \overline{\chi }\circ \overline{p}\Lambda
\right) &=&J_{\mathrm{Bialg}}^{s}\overline{\chi }\circ J_{\mathrm{Bialg}}^{s}%
\overline{p}\Lambda \overset{(\ref{eq:PBar&pBars})}{=}\zeta
_{1}^{s}U_{1,2}\circ \overline{\chi }_{\mathrm{Br}}^{s}J_{2}^{s}\circ
\overline{p}^{s}J_{\mathrm{Lie}}^{s}\Lambda \\
&=&\zeta _{1}^{s}U_{1,2}\circ \overline{\chi }_{\mathrm{Br}%
}^{s}J_{2}^{s}\circ \overline{p}^{s}\Lambda _{\mathrm{Br}}J_{2}^{s}\overset{(%
\ref{form:chibarBrs})}{=}\zeta _{1}^{s}U_{1,2}\circ \pi
_{1}^{s}U_{1,2}J_{2}^{s} \\
&=&\zeta _{1}^{s}U_{1,2}\circ \pi _{1}^{s}J_{1}^{s}U_{1,2}=\left( \zeta
_{1}^{s}\circ \pi _{1}^{s}J_{1}^{s}\right) U_{1,2}\overset{(\ast )}{=}\left(
J_{\mathrm{Bialg}}^{s}\pi _{1}\circ \zeta _{0}^{s}\right) U_{1,2}=J_{\mathrm{%
Bialg}}^{s}\pi _{1}U_{1,2}
\end{eqnarray*}%
where (*) follows by construction of $\zeta _{1}^{s}$ (see the proof of
Proposition \ref{pro:zetaIter}). Thus we obtain $\overline{\chi }\circ
\overline{p}\Lambda =\pi _{1}U_{1,2}$.

Assume $\overline{\eta }_{\mathrm{BrL}}^{s}\Lambda _{\mathrm{Br}}$ is an
isomorphism. By Theorem \ref{teo:LambdaBr}, we have that $\overline{\chi }_{%
\mathrm{Br}}^{s}$ is an isomorphism. Thus, from $J_{\mathrm{Bialg}}^{s}%
\overline{\chi }=\zeta _{1}^{s}U_{1,2}\circ \overline{\chi }_{\mathrm{Br}%
}^{s}J_{2}^{s}$ and the fact that $\zeta _{1}^{s}$ is an isomorphism, we
deduce that $\overline{\chi }$ is an isomorphism too. Moreover, by (\ref%
{form:etabarL1}), we also have that $\overline{\eta }_{\mathrm{L}}\Lambda $
is an isomorphism. From this we get that $\overline{\eta }_{\mathrm{L}%
}\Lambda P_{2}$ is an isomorphism. Since $\Lambda P_{2}=\mathcal{P}$ we have
that $\overline{\eta }_{\mathrm{L}}\mathcal{P}$ is an isomorphism. By \cite[%
Proposition 2.8]{MS}, this means that the adjunction $(\overline{\mathcal{U}}%
,\mathcal{P})$ is idempotent.

Moreover, since $\overline{\eta }_{\mathrm{L}}\Lambda $ is an isomorphism,
by (\ref{form:etabarL2}), we deduce that $\overline{\eta }_{1}U_{1,2}$ is an
isomorphism i.e. $\left( \overline{T}_{1},P_{1}\right) $ is idempotent (cf.
\cite[Remark 2.2]{AGM-MonadicLie1}). Note that in this case we can choose $%
\overline{T}_{2}:=\overline{T}_{1}U_{1,2}$ and it is full and faithful (cf.
\cite[Proposition 2.3]{AGM-MonadicLie1}) i.e. $\overline{\eta }_{2}$ is an
isomorphism. The choice $\overline{T}_{2}:=\overline{T}_{1}U_{1,2}$ implies
we can choose the canonical projection $\pi _{2}:\overline{T}%
_{1}U_{1,2}\rightarrow \overline{T}_{2}$ to be the identity. In this case by
definition, $\overline{{\eta }}_{1}$ is given by the formula $\overline{\eta
}_{1}U_{1,2}=U_{1,2}\overline{\eta }_{2}.$ Thus the second term of \ref%
{form:etabarL2} becomes $U_{0,1}\overline{\eta }_{1}U_{1,2}=U_{0,1}U_{1,2}%
\overline{\eta }_{2}=U_{0,2}\overline{\eta }_{2}=H_{\mathrm{Lie}}\Lambda
\overline{\eta }_{2}$. Since $H_{\mathrm{Lie}}$ is faithful, by %
\eqref{form:etabarL2} we obtain $\mathcal{P}\overline{\chi }\circ \overline{%
\eta }_{\mathrm{L}}\Lambda =\Lambda \overline{\eta }_{2}$ which means that $(%
\mathrm{Id}_{\mathrm{Bialg}_{\mathcal{M}}},\Lambda ):(\overline{T}%
_{2},P_{2})\rightarrow (\overline{\mathcal{U}},\mathcal{P})$ is a
commutation datum whose canonical transformation is $\overline{\chi }$.

We already observed that $\Lambda _{\mathrm{Br}}J_{2}^{s}=J_{\mathrm{Lie}%
}^{s}\Lambda .$ Moreover, from $J_{n}^{s}\circ P_{n}=(P_{\mathrm{Br}%
}^{s})_{n}\circ J_{\mathrm{Bialg}}^{s}$, we deduce
\begin{equation*}
J_{2}^{s}\left( P_{2}\overline{\mathcal{U}}\right) =(P_{\mathrm{Br}%
}^{s})_{2}J_{\mathrm{Bialg}}^{s}\overline{\mathcal{U}}\overset{(\ref%
{diag:Ubar})}{=}\left( \left( P_{\mathrm{Br}}^{s}\right) _{2}\overline{%
\mathcal{U}}_{\mathrm{Br}}^{s}\right) J_{\mathrm{Lie}}^{s}.
\end{equation*}%
We know that $J_{\mathrm{Lie}}^{s}$ is full, faithful and injective on
objects. Since $J^{s}$ fulfils the same properties, by Proposition \ref%
{pro:zetaIter} applied to the commutation datum $\left( J_{\mathrm{Bialg}%
}^{s},J^{s}\right) :\left( \overline{T},P\right) \rightarrow \left(
\overline{T}_{\mathrm{Br}}^{s},P_{\mathrm{Br}}^{s}\right) $, we deduce that
the same is true for $J_{1}^{s}$ and hence, by same argument, also for $%
J_{2}^{s}$. Thus we can apply Lemma \ref{lem:LiftAdj} to the case $L^{\prime
}=\left( \left( P_{\mathrm{Br}}^{s}\right) _{2}\overline{\mathcal{U}}_{%
\mathrm{Br}}^{s}\right) ,R^{\prime }=\Lambda _{\mathrm{Br}},F=J_{2}^{s},G=J_{%
\mathrm{Lie}}^{s}$. Then $L=P_{2}\overline{\mathcal{U}}$ and $R=\Lambda ,$
the pair $\left( L,R\right) $ is an adjunction and the unit and counit of $%
\left( L,R\right) $ and $\left( L^{\prime },R^{\prime }\right) $ are related
by (\ref{form:LiftAdj}). Since $F$ and $G$ are both conservative, we get
that $\epsilon $ and $\eta $ are an isomorphism whenever $\epsilon ^{\prime
} $ and $\eta ^{\prime }=\overline{\eta }_{\mathrm{BrL}}^{s}$ are. By
Theorem \ref{teo:LambdaBr}, we know that $\Lambda _{\mathrm{Br}}$ is full
and faithful i.e. $\epsilon ^{\prime }$ is an isomorphism and hence $%
\epsilon $ is an isomorphism i.e. $\Lambda $ is full and faithful. It is
clear that $\left( P_{2}\overline{\mathcal{U}},\Lambda \right) $ is an
equivalence if and only if $\eta $ is an isomorphism. By (\ref%
{Form:EpsEtaLbar}), we have $J_{\mathrm{Lie}}^{s}\overline{\eta }_{\mathrm{L}%
}=\overline{\eta }_{\mathrm{BrL}}^{s}J_{\mathrm{Lie}}^{s}$ i.e. $G\overline{%
\eta }_{\mathrm{L}}=\eta ^{\prime }G.$ Thus, since $G$ is faithful, (\ref%
{form:LiftAdj}) implies $\eta =\overline{\eta }_{\mathrm{L}}.$ If we write %
\ref{form:zeta1} for the commutation datum $\left( J_{\mathrm{Bialg}%
}^{s},J_{2}^{s}\right) :\left( \overline{T}_{2},P_{2}\right) \rightarrow
\left( (\overline{T}_{\mathrm{Br}}^{s})_{2},(P_{\mathrm{Br}}^{s})_{2}\right)
,$ we get $(P_{\mathrm{Br}}^{s})_{2}\zeta _{2}^{s}\circ \left( \overline{%
\eta }_{\mathrm{Br}}^{s}\right) _{2}J_{2}^{s}=J_{2}^{s}\overline{\eta }_{2}$
(note that $\left( \overline{\eta }_{\mathrm{Br}}^{s}\right) _{2}$ is an
isomorphism by Theorem \ref{teo:LambdaBr}-2)). Using this equality we compute%
\begin{eqnarray*}
J_{2}^{s}\left( \left( \overline{\eta }_{2}\right) ^{-1}\circ P_{2}\overline{%
\chi }\right) &=&J_{2}^{s}\left( \overline{\eta }_{2}\right) ^{-1}\circ
J_{2}^{s}P_{2}\overline{\chi }=J_{2}^{s}\left( \overline{\eta }_{2}\right)
^{-1}\circ (P_{\mathrm{Br}}^{s})_{2}J_{\mathrm{Bialg}}^{s}\overline{\chi } \\
&=&J_{2}^{s}\left( \overline{\eta }_{2}\right) ^{-1}\circ (P_{\mathrm{Br}%
}^{s})_{2}\left( \zeta _{1}^{s}U_{1,2}\circ \overline{\chi }_{\mathrm{Br}%
}^{s}J_{2}^{s}\right) \\
&=&\left[ (P_{\mathrm{Br}}^{s})_{2}\zeta _{2}^{s}\circ \left( \overline{\eta
}_{\mathrm{Br}}^{s}\right) _{2}J_{2}^{s}\right] ^{-1}\circ (P_{\mathrm{Br}%
}^{s})_{2}\zeta _{1}^{s}U_{1,2}\circ (P_{\mathrm{Br}}^{s})_{2}\overline{\chi
}_{\mathrm{Br}}^{s}J_{2}^{s} \\
&=&\left( \overline{\eta }_{\mathrm{Br}}^{s}\right) _{2}^{-1}J_{2}^{s}\circ
(P_{\mathrm{Br}}^{s})_{2}\left( \zeta _{2}^{s}\right) ^{-1}\circ (P_{\mathrm{%
Br}}^{s})_{2}\zeta _{1}^{s}U_{1,2}\circ (P_{\mathrm{Br}}^{s})_{2}\overline{%
\chi }_{\mathrm{Br}}^{s}J_{2}^{s}.
\end{eqnarray*}

Now, by construction of $\zeta _{2}^{s}$ (see the proof of Proposition \ref%
{pro:zetaIter}), the fact that $\pi _{2}:\overline{T}_{1}U_{1,2}\rightarrow
\overline{T}_{2}$ is the identity and that also $\pi _{2}^{s}$ is the
identity (see Theorem \ref{teo:LambdaBr}-2)), we have that $\zeta
_{2}^{s}=\zeta _{1}^{s}U_{1,2}$ and hence
\begin{equation*}
F\left( \left( \overline{\eta }_{2}\right) ^{-1}\circ P_{2}\overline{\chi }%
\right) =J_{2}^{s}\left( \left( \overline{\eta }_{2}\right) ^{-1}\circ P_{2}%
\overline{\chi }\right) =\left( \overline{\eta }_{\mathrm{Br}}^{s}\right)
_{2}^{-1}J_{2}^{s}\circ (P_{\mathrm{Br}}^{s})_{2}\overline{\chi }_{\mathrm{Br%
}}^{s}J_{2}^{s}=\left( \left( \overline{\eta }_{\mathrm{Br}}^{s}\right)
_{2}^{-1}\circ (P_{\mathrm{Br}}^{s})_{2}\overline{\chi }_{\mathrm{Br}%
}^{s}\right) J_{2}^{s}=\epsilon ^{\prime }F.
\end{equation*}%
Thus, by (\ref{form:LiftAdj}), we get $\epsilon =\left( \overline{\eta }%
_{2}\right) ^{-1}\circ P_{2}\overline{\chi }.$
\end{proof}

\begin{definition}
\label{def:MM} An \textbf{MM-category} (Milnor-Moore-category) is an abelian
monoidal category $\mathcal{M}$ with denumerable coproducts such that the
tensor functors are exact and preserve denumerable coproducts and such that
the unit $\overline{\eta }_{\mathrm{BrL}}^{s}:\mathrm{Id}_{\mathrm{BrLie}_{%
\mathcal{M}}^{s}}\rightarrow \mathcal{P}_{\mathrm{Br}}^{s}\overline{\mathcal{%
U}}_{\mathrm{Br}}^{s}$ of the adjunction $\left( \overline{\mathcal{U}}_{%
\mathrm{Br}}^{s},\mathcal{P}_{\mathrm{Br}}^{s}\right) $ is a functorial
isomorphism i.e. the functor $\overline{\mathcal{U}}_{\mathrm{Br}}^{s}:%
\mathrm{BrLie}_{\mathcal{M}}^{s}\rightarrow \mathrm{BrBialg}_{\mathcal{M}%
}^{s}$ is full and faithful (see e.g. \cite[dual of Proposition 3.4.1, page
114]{Borceux1}).
\end{definition}

\begin{remark}
\label{rem:MM}1) The celebrated Milnor-Moore Theorem, cf. \cite[Theorem 5.18]%
{Milnor-Moore} states that, in characteristic zero, there is a category
equivalence between the category of Lie algebras and the category of
primitively generated bialgebras. The fact that the counit of the adjunction
involved is an isomorphism just encodes the fact that the bialgebras
considered are primitively generated. On the other hand the crucial point in
the proof is that the unit of the adjunction is an isomorphism.

In our wider context this translates to the unit of the adjunction $\left(
\overline{\mathcal{U}}_{\mathrm{Br}}^{s},\mathcal{P}_{\mathrm{Br}%
}^{s}\right) $ being a functorial isomorphism. From this the definition of
MM-category stems. Note that for an MM-category $\mathcal{M}$ we can apply
Theorem \ref{teo:LambdaBr} to obtain that the functor $P_{\mathrm{Br}}^{s}$
has a monadic decomposition of monadic length at most two. Moreover we can
identify the category $\left( \mathrm{Br}_{\mathcal{M}}^{s}\right) _{2}$
with $\mathrm{BrLie}_{\mathcal{M}}^{s}$.

2) In the case of a symmetric MM-category $\mathcal{M}$ the connection with
Milnor-Moore Theorem becomes more evident. In fact, in this case, we can
apply Theorem \ref{teo:LambdaSym} to obtain that the unit of the adjunction $%
\left( \overline{\mathcal{U}},\mathcal{P}\right) $ is a functorial
isomorphism.
\end{remark}

\section{Lifting the structure of MM-category}

We first prove a crucial result for braided vector spaces.

\begin{theorem}
\label{teo:mainVS} The category of vector spaces over a fixed field $\Bbbk $
of characteristic zero is an MM-category.
\end{theorem}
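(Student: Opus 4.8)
The plan is to verify the structural hypotheses directly and then to isolate the one substantive point, namely that $\overline{\eta}_{\mathrm{BrL}}^{s}$ is a functorial isomorphism, which is where a result of Kharchenko enters. The category $\vet$ of vector spaces over $\Bbbk$ is abelian, has denumerable coproducts (direct sums), and the tensor product over a field is exact and commutes with arbitrary direct sums. Hence all the conditions on $\M$ in Definition \ref{def:MM} and Theorem \ref{teo:LambdaBr} hold, the adjunction $(\overline{\mathcal{U}}_{\mathrm{Br}}^{s},\mathcal{P}_{\mathrm{Br}}^{s})$ is available, and proving that $\vet$ is an MM-category amounts to showing that, for every $(M,c,[-])\in\BrLie_{\vet}^{s}$, the component $\overline{\eta}_{\mathrm{BrL}}^{s}(M,c,[-])$ is an isomorphism.

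First I would reduce to the underlying braided vector spaces. Since $H_{\mathrm{BrLie}}^{s}$ is faithful and conservative, it suffices to show that $H_{\mathrm{BrLie}}^{s}\overline{\eta}_{\mathrm{BrL}}^{s}(M,c,[-])$ is an isomorphism in $\Br_{\vet}^{s}$. Unwinding the constructions, by \eqref{form:EtaEpsBarBrLs} we have $\xi\,\overline{\mathcal{U}}_{\mathrm{Br}}^{s}\circ\overline{\eta}_{\mathrm{BrL}}^{s}=\eta_{\mathrm{BrL}}^{s}$, and by \eqref{form:etaBrLsW} the underlying map of $\eta_{\mathrm{BrL}}^{s}(M,c,[-])$ is $p_{R}\circ\alpha_{1}M\colon M\to R$ with $R=\mathcal{U}_{\mathrm{Br}}^{s}(M,c,[-])$, while $\xi$ is the canonical inclusion of primitives from the equalizer \eqref{diag:prim}. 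Therefore $\overline{\eta}_{\mathrm{BrL}}^{s}(M,c,[-])$ is, on underlying objects, exactly the corestriction of $p_{R}\circ\alpha_{1}M$ to the space of primitives of $\overline{\mathcal{U}}_{\mathrm{Br}}^{s}(M,c,[-])$. So I must prove two things: that $p_{R}\circ\alpha_{1}M$ is injective, and that its image is precisely the space of primitives.

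Injectivity is a braided Poincaré--Birkhoff--Witt statement. Recall from Proposition \ref{pro:Ubr} that $R$ is the quotient of $\Omega TM$ by the ideal generated by $f=\alpha_{1}M\circ[-]-\theta_{(M,c)}$, i.e.\ by the braided relations identifying $m_{1}m_{2}-c(m_{1}\otimes m_{2})$ with $[m_{1},m_{2}]$. Filtering $R$ by word length and passing to the associated graded kills the bracket term, so the associated graded is the braided symmetric algebra $S_{c}(M)=\Omega TM/\langle\,\mathrm{Id}_{M\otimes M}-c\,\rangle$ attached to the symmetry $c$; by Lemma \ref{lem:Un} (see \eqref{form:Un1}) the generating relations are primitive, so $S_{c}(M)$ is a connected graded braided bialgebra. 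Because $c^{2}=\mathrm{Id}$, the $\pm 1$-eigenspace decomposition of $c$ on $M\otimes M$ reduces the combinatorics to the classical symmetric/exterior PBW argument, and in characteristic zero this places $M$ in degree one of $S_{c}(M)$ and hence in $R$, giving injectivity of $p_{R}\circ\alpha_{1}M$.

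The hard part, and the place where characteristic zero is indispensable, is the surjectivity onto primitives: every primitive element of $\overline{\mathcal{U}}_{\mathrm{Br}}^{s}(M,c,[-])$ must already lie in $M$. Here I would invoke Kharchenko's description of the primitives of a connected graded braided bialgebra with involutive braiding over a field of characteristic zero, which shows that such a bialgebra is primitively generated with primitives concentrated in degree one. Applied to the graded bialgebra $S_{c}(M)$ this yields $P(S_{c}(M))=M$, and a standard filtered-to-graded comparison then forces the image of $p_{R}\circ\alpha_{1}M$ to be exactly $P(R)$. Combining injectivity with this identification of the image shows $\overline{\eta}_{\mathrm{BrL}}^{s}(M,c,[-])$ is an isomorphism, so $\overline{\eta}_{\mathrm{BrL}}^{s}$ is a functorial isomorphism and $\vet$ is an MM-category. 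The main obstacle I anticipate is making the passage from $R$ to its associated graded $S_{c}(M)$ compatible with the coalgebra structure, so that Kharchenko's primitive-generation result can be applied to $S_{c}(M)$ and then transported back along the filtration of $R$.
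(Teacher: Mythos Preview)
Your reduction is correct: the structural hypotheses on $\vet$ are trivially satisfied, $H_{\mathrm{BrLie}}^{s}$ (and more precisely $H\mathbb{I}_{\mathrm{Br}}^{s}H_{\mathrm{BrLie}}^{s}$) is conservative, and via \eqref{form:EtaEpsBarBrLs} and \eqref{form:etaBrLsW} the component $\overline{\eta}_{\mathrm{BrL}}^{s}(M,c,[-])$ is the corestriction of $p_{R}\circ\alpha_{1}M$ to the primitives of $\overline{\mathcal{U}}_{\mathrm{Br}}^{s}(M,c,[-])$. From here, however, you take a significantly longer road than the paper.

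The paper does not split the argument into injectivity and surjectivity, nor does it pass through the associated graded $S_{c}(M)$. Instead it observes, via Lemma~\ref{lem:Jac2}, that $(M,c,[-])$ is a Lie $c$-algebra in Kharchenko's sense and that $\overline{\mathcal{U}}_{\mathrm{Br}}^{s}(M,c,[-])$ coincides with Kharchenko's universal enveloping algebra; then \cite[Lemma~6.2]{Kharchenko-Connected} states directly that the canonical map from $M$ to the space of primitives is bijective. That single citation handles both injectivity and surjectivity at once, and the conservativity of the forgetful functors finishes the proof.

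Your route is not wrong in spirit, but the detour is costly. The PBW sketch for injectivity (the $\pm 1$-eigenspace reduction of $c$ on $M\otimes M$) is not quite a proof: an involutive $c$ need not arise from a $\mathbb{Z}_{2}$-grading on $M$, so the ``classical symmetric/exterior'' picture does not apply verbatim. And the obstacle you yourself flag---transporting coalgebra information along the filtration of $R$ to identify $P(R)$ from $P(S_{c}(M))$---is real and nontrivial. All of this is subsumed by Kharchenko's lemma, which is precisely about the enveloping algebra $R$ itself, not its associated graded; citing it directly avoids the filtered-to-graded comparison entirely.
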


\begin{proof}
Let $\mathcal{M}=\mathfrak{M}$ be the category of vector spaces over $\Bbbk
. $ We have just to prove that $\overline{\eta }_{\mathrm{BrL}}^{s}$ is an
isomorphism. Let $\left( M,c,\left[ -\right] \right) \in \mathrm{BrLie}_{%
\mathcal{M}}^{s}$. Since we are working on vector spaces, we can express
explicitly the universal enveloping algebra $\overline{\mathcal{U}}_{\mathrm{%
Br}}^{s}\left( M,c,\left[ -\right] \right) $ with elements as follows
\begin{equation*}
\overline{\mathcal{U}}_{\mathrm{Br}}^{s}\left( M,c,\left[ -\right] \right) =%
\frac{\overline{T}_{\mathrm{Br}}^{s}\left( M,c\right) }{\left( \left[
x\otimes y\right] -x\otimes y+c\left( x\otimes y\right) \mid x,y\in M\right)
}.
\end{equation*}

By Lemma \ref{lem:Jac2}, $\left( M,\left[ -\right] \right) $ is a Lie $c$%
-algebra and $\overline{\mathcal{U}}_{\mathrm{Br}}^{s}\left( M,c,\left[ -%
\right] \right) $ coincides with the corresponding universal enveloping
algebra in the sense of \cite[Section 2.5]{Kharchenko-Connected}. Hence we
can apply \cite[Lemma 6.2]{Kharchenko-Connected} to conclude that the
canonical map from $M$ into the primitive part of $\overline{\mathcal{U}}_{%
\mathrm{Br}}^{s}\left( M,c,\left[ -\right] \right) $ is an isomorphism. In
our notation this means that
\begin{equation*}
H\mathbb{I}_{\mathrm{Br}}^{s}H_{\mathrm{BrLie}}^{s}\overline{\eta }_{\mathrm{%
BrL}}^{s}\left( M,c,\left[ -\right] \right) :M\rightarrow H\mathbb{I}_{%
\mathrm{Br}}^{s}H_{\mathrm{BrLie}}^{s}\mathcal{P}_{\mathrm{Br}}^{s}\overline{%
\mathcal{U}}_{\mathrm{Br}}^{s}\left( M,c,\left[ -\right] \right)
\end{equation*}%
is bijective. Note that $H,\mathbb{I}_{\mathrm{Br}}^{s}$ and $H_{\mathrm{%
BrLie}}^{s}$ are conservative by \ref{cl:BrBialg}, Definition \ref{def:braid}
and Definition \ref{def:Lie} respectively. Thus $H\mathbb{I}_{\mathrm{Br}%
}^{s}H_{\mathrm{BrLie}}^{s}$ is conservative and hence we get that $%
\overline{\eta }_{\mathrm{BrL}}^{s}\left( M,c,\left[ -\right] \right) $ is
an isomorphism for all $\left( M,c,\left[ -\right] \right) \in \mathrm{BrLie}%
_{\mathcal{M}}^{s}$. We have so proved that $\overline{\eta }_{\mathrm{BrL}%
}^{s}$ is an isomorphism.
\end{proof}

In the rest of this section we will deal with symmetric braided monoidal
categories $\mathcal{M}$ endowed with a faithful monoidal functor $W:%
\mathcal{M}\rightarrow \mathfrak{M}$ which is not necessarily braided. The
examples we will treat take $\mathcal{M}={\mathfrak{M}^{H}}$ for a dual
quasi-bialgebra $H$ or $\mathcal{M}={_{H}}\mathfrak{M}$ for a
quasi-bialgebra case. Note that in general the obvious forgetful functors
need not to be monoidal, see e.g. \cite[Example 9.1.4]{Maj} so that further
conditions will be required on $H$. Note that the results on ${\mathfrak{M}%
^{H}}$ and ${_{H}}\mathfrak{M}$ are not dual each other, unless $H$ is
finite-dimensional.

\begin{lemma}
\label{lem:BrLieF}Let $\mathcal{M}$ and $\mathcal{N}$ be monoidal
categories. Any monoidal functor $\left( F,\phi _{0},\phi _{2}\right) :%
\mathcal{M}\rightarrow \mathcal{N}$ induces a functor $\mathrm{BrLie}F:%
\mathrm{BrLie}_{\mathcal{M}}\rightarrow \mathrm{BrLie}_{\mathcal{N}}$ which
acts as $F$ on morphisms and such that $\mathrm{BrLie}F\left( M,c_{M},\left[
-\right] _{M}\right) :=\left( FM,c_{FM},\left[ -\right] _{FM}\right) $ where
$\left( FM,c_{FM}\right) =\mathrm{Br}F\left( M,c_{M}\right) $ and%
\begin{equation*}
\left[ -\right] _{FM}:=F\left[ -\right] _{M}\circ \phi _{2}\left( M,M\right)
:FM\otimes FM\rightarrow F\left( M\right) .
\end{equation*}%
Moreover the first diagram below commutes and there is a unique functor $%
\mathrm{BrLie}^{s}F$ such that the second diagram commutes.%
\begin{equation}
\xymatrixrowsep{15pt}\xymatrixcolsep{10pt} \xymatrix{\BrLie_\M
\ar[d]_{H_\BrLie}\ar[rr]^{\BrLie F}&& \BrLie_\N\ar[d]^{H_\BrLie}\\ \Br_\M
\ar[rr]^{\Br F}&& \Br_\N } \quad \xymatrix{\BrLie_\M^s
\ar[d]_{\I_\BrLie^s}\ar[rr]^{\BrLie^s F}&& \BrLie_\N^s\ar[d]^{\I_\BrLie^s}\\
\BrLie_\M \ar[rr]^{\BrLie F}&& \BrLie_\N } \quad \xymatrix{\BrLie_\M^s
\ar[d]_{H_\BrLie^s}\ar[rr]^{\BrLie^s F}&& \BrLie_\N^s\ar[d]^{H_\BrLie^s}\\
\Br_\M^s \ar[rr]^{\Br^s F}&& \Br_\N^s }  \label{diag:BrLieF}
\end{equation}%
Furthermore the functors $\mathrm{BrLie}F$ and $\mathrm{BrLie}^{s}F$ are
conservative whenever $F$ is.
\end{lemma}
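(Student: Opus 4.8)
The plan is to first check that $\BrLie F(M,c_{M},[-]_{M}):=(FM,c_{FM},[-]_{FM})$ is a genuine object of $\BrLie_{\N}$, and then to assemble the two functors, verify the three diagrams and conclude conservativity. Recall from \ref{cl:BrBialg} that $\Br F$ already supplies $(FM,c_{FM})\in\Br_{\N}$, where the induced braiding is $c_{FM}=\phi_{2}(M,M)^{-1}\circ F(c_{M})\circ\phi_{2}(M,M)$; in particular the quantum Yang--Baxter equation for $c_{FM}$ and the compatibility of $c_{FM}$ with the coherence isomorphisms $\widehat{\phi}_{n}(M)\colon(FM)^{\otimes n}\to F(M^{\otimes n})$ of Proposition \ref{pro:comdat1} are already at our disposal. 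It remains to verify the four identities (\ref{Lie1}), (\ref{Lie2}), (\ref{form:cbraid}) and (\ref{form:cbraid2}) for $[-]_{FM}=F[-]_{M}\circ\phi_{2}(M,M)$.

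The mechanism for all four is the same: transport each identity from $\M$ to $\N$ along the coherence isomorphisms. The elementary ingredient is naturality of $\phi_{2}$, which yields
\[
\phi_{2}(M,M)\circ\left([-]_{FM}\otimes FM\right)=F\left([-]_{M}\otimes M\right)\circ\widehat{\phi}_{3}(M),
\]
together with its mirror image for $FM\otimes[-]_{FM}$, and the braiding-transport identities $\widehat{\phi}_{3}(M)\circ(c_{FM}\otimes FM)=F(c_{M}\otimes M)\circ\widehat{\phi}_{3}(M)$ and $\widehat{\phi}_{3}(M)\circ(FM\otimes c_{FM})=F(M\otimes c_{M})\circ\widehat{\phi}_{3}(M)$ coming from the construction of $\Br F$ (the two bracketings of $\widehat{\phi}_{3}(M)$ agree by the coherence diagram for $F$). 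Composing each of the four braided Lie identities with the appropriate $\widehat{\phi}_{n}(M)$ — and using that $F$ preserves sums and differences of parallel morphisms for the signs occurring in skew-symmetry and in the Jacobi bracket — reduces the $\N$-identity to $F$ applied to the already-known $\M$-identity, so functoriality of $F$ closes it. For instance skew-symmetry is immediate, since $[-]_{FM}\circ c_{FM}=F([-]_{M}\circ c_{M})\circ\phi_{2}(M,M)=-[-]_{FM}$. That $\BrLie F$ is well defined on morphisms is the same naturality computation: if $f$ commutes with the brackets then $Ff\circ[-]_{FM}=F[-]_{M'}\circ F(f\otimes f)\circ\phi_{2}(M,M)=[-]_{FM'}\circ(Ff\otimes Ff)$, again by naturality of $\phi_{2}$. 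Commutativity of the first diagram is then immediate from the definitions, both composites sending $(M,c_{M},[-]_{M})$ to $(FM,c_{FM})$ and a morphism to $Ff$.

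For the symmetric refinement I would invoke \ref{cl:BrBialg}(3): since $F$ preserves symmetric objects, $\BrLie F$ carries $\BrLie_{\M}^{s}$ into $\BrLie_{\N}^{s}$, i.e. every object in the image of $\BrLie F\circ\I_{\BrLie}^{s}$ lies in the image of $\I_{\BrLie}^{s}$. Lemma \ref{lem:Cappuccio} then produces a unique $\BrLie^{s}F$ with $\I_{\BrLie}^{s}\circ\BrLie^{s}F=\BrLie F\circ\I_{\BrLie}^{s}$, which is the second diagram. The third diagram follows by a diagram chase: composing $H_{\BrLie}^{s}\circ\BrLie^{s}F$ with the faithful, injective-on-objects functor $\I_{\Br}^{s}$ and using (\ref{diag:HBrLies}), the second diagram, the first diagram and the definition of $\Br^{s}F$ in (\ref{diag:BrsF}) rewrites it as $\I_{\Br}^{s}\circ\Br^{s}F\circ H_{\BrLie}^{s}$, whence equality by faithfulness of $\I_{\Br}^{s}$.

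Finally, conservativity is inherited through the first (resp. third) diagram: if $\BrLie F(g)$ is an isomorphism then so is $H_{\BrLie}\BrLie F(g)=\Br F(H_{\BrLie}g)$, and conservativity of $\Br F$ (\ref{cl:BrBialg}(2)) together with conservativity of the forgetful functor $H_{\BrLie}$ (Definition \ref{def:Lie}) forces $g$ to be an isomorphism; the symmetric case is identical using $\Br^{s}F$ and $H_{\BrLie}^{s}$. I expect the only genuine labor to be the bookkeeping in the four transport identities of the second paragraph — particularly the two triple-tensor summands of the Jacobi condition — where one must keep careful track of the associativity constraints hidden inside $\widehat{\phi}_{3}(M)$; everything else is formal.
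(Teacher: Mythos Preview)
Your proposal is correct and follows exactly the direct-verification strategy that the paper has in mind; indeed the paper's entire proof reads ``It is straightforward,'' so you have supplied precisely the bookkeeping the authors elide. One small caveat worth flagging: your transport argument for (\ref{Lie1}) and (\ref{Lie2}) uses that $F$ preserves sums and negatives of parallel morphisms, i.e.\ that $F$ is additive, which is not literally part of the hypotheses as stated (the lemma says only ``monoidal categories'' and ``monoidal functor''); since $\BrLie_{\mathcal{M}}$ is only defined when $\mathcal{M}$ is abelian and every application in the paper has $F$ exact, this is a harmless implicit assumption shared by the paper itself rather than a defect in your argument.
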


\begin{proof}
It is straightforward.
\end{proof}

\begin{theorem}
\label{teo:Udata}Let $\mathcal{M}$ and $\mathcal{N}$ be monoidal categories.
Assume that both $\mathcal{M}$ and $\mathcal{N}$ are abelian with
denumerable coproducts, and that the tensor functors are exact and preserves
denumerable coproducts. Assume that there exists an exact monoidal functor $%
\left( F,\phi _{0},\phi _{2}\right) :\mathcal{M}\rightarrow \mathcal{N}$
which preserves denumerable coproducts. Then we have the following
commutation data with the respective canonical transformations%
\begin{eqnarray*}
\left( \mathrm{BrAlg}^{s}F,\mathrm{BrLie}^{s}F\right) &:&\left( \mathcal{U}_{%
\mathrm{Br}}^{s},\mathcal{L}_{\mathrm{Br}}^{s}\right) \rightarrow \left(
\mathcal{U}_{\mathrm{Br}}^{s},\mathcal{L}_{\mathrm{Br}}^{s}\right) ,\qquad
\zeta _{\mathrm{BrL}}^{s}:\mathcal{U}_{\mathrm{Br}}^{s}\left( \mathrm{BrLie}%
^{s}F\right) \rightarrow \left( \mathrm{BrAlg}^{s}F\right) \mathcal{U}_{%
\mathrm{Br}}^{s}, \\
\left( \mathrm{BrBialg}^{s}F,\mathrm{BrLie}^{s}F\right) &:&\left( \overline{%
\mathcal{U}}_{\mathrm{Br}}^{s},\mathcal{P}_{\mathrm{Br}}^{s}\right)
\rightarrow \left( \overline{\mathcal{U}}_{\mathrm{Br}}^{s},\mathcal{P}_{%
\mathrm{Br}}^{s}\right) ,\qquad \overline{\zeta }_{\mathrm{BrL}}^{s}:%
\overline{\mathcal{U}}_{\mathrm{Br}}^{s}\left( \mathrm{BrLie}^{s}F\right)
\rightarrow \left( \mathrm{BrBialg}^{s}F\right) \overline{\mathcal{U}}_{%
\mathrm{Br}}^{s}.
\end{eqnarray*}
\end{theorem}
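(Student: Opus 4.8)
The plan is to check, for each of the two pairs, the two requirements of a commutation datum in the sense of Definition \ref{def:zetadatum}: the commutativity of the square \eqref{diag:cd} and the invertibility of the associated canonical transformation of Lemma \ref{lem:zeta}. I would dispose of the squares first. For the pair built on $\left( \mathcal{U}_{\mathrm{Br}}^{s},\mathcal{L}_{\mathrm{Br}}^{s}\right) $ the square amounts to $\mathcal{L}_{\mathrm{Br}}^{s}\circ \mathrm{BrAlg}^{s}F=\mathrm{BrLie}^{s}F\circ \mathcal{L}_{\mathrm{Br}}^{s}$. Both sides send a braided algebra to an object whose underlying braided object is $\mathrm{Br}^{s}F$ of the original one, so they agree there by Lemma \ref{lem:BrLieF} and \ref{cl:BrBialg}; on brackets the equality reduces to $Fm_{B}\circ \phi _{2}(B,B)\circ \left( \mathrm{Id}-c_{FB}\right) =F\left( m_{B}\circ (\mathrm{Id}-c_{B})\right) \circ \phi _{2}(B,B)$, which follows at once from the defining compatibility $\phi _{2}(B,B)\circ c_{FB}=F(c_{B})\circ \phi _{2}(B,B)$ of the $F$-induced braiding recalled in \ref{cl:BrBialg}. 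For the pair built on $\left( \overline{\mathcal{U}}_{\mathrm{Br}}^{s},\mathcal{P}_{\mathrm{Br}}^{s}\right) $ the square $\mathcal{P}_{\mathrm{Br}}^{s}\circ \mathrm{BrBialg}^{s}F=\mathrm{BrLie}^{s}F\circ \mathcal{P}_{\mathrm{Br}}^{s}$ follows, after forgetting the bracket via $H_{\mathrm{BrLie}}^{s}$, from the symmetric version of the square \eqref{diag:BrF-PBr} of Proposition \ref{pro:PrimFunct}, while the brackets match by the same braiding computation together with the compatibility $\xi ^{\prime }\left( \mathrm{BrBialg}F\right) =\left( \mathrm{Br}F\right) \xi $ of \eqref{form:comdat3}.

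The key input for invertibility is the auxiliary commutation datum $\left( \mathrm{BrAlg}^{s}F,\mathrm{Br}^{s}F\right) :\left( T_{\mathrm{Br}}^{s},\Omega _{\mathrm{Br}}^{s}\right) \rightarrow \left( (T_{\mathrm{Br}}^{s})^{\prime },(\Omega _{\mathrm{Br}}^{s})^{\prime }\right) $, whose canonical transformation $\zeta _{\mathrm{Br}}^{s}$ is a functorial isomorphism. I would obtain it from Proposition \ref{pro:comdat1}, which supplies the non-symmetric datum $\left( \mathrm{BrAlg}F,\mathrm{Br}F\right) :\left( T_{\mathrm{Br}},\Omega _{\mathrm{Br}}\right) \rightarrow \left( T_{\mathrm{Br}}^{\prime },\Omega _{\mathrm{Br}}^{\prime }\right) $ with its isomorphism $\zeta _{\mathrm{Br}}$: either compose it with the identity datum $\left( \mathbb{I}_{\mathrm{BrAlg}}^{s},\mathbb{I}_{\mathrm{Br}}^{s}\right) $ of \ref{cl:TbrStrict} via Proposition \ref{pro:compcomdat}, or, more directly, apply the conservative $\mathbb{I}_{\mathrm{BrAlg}}^{s}$ and identify $\mathbb{I}_{\mathrm{BrAlg}}^{s}\zeta _{\mathrm{Br}}^{s}$ with $\zeta _{\mathrm{Br}}\,\mathbb{I}_{\mathrm{Br}}^{s}$ through the uniqueness clause of Lemma \ref{lem:zeta}; conservativity of $\mathbb{I}_{\mathrm{BrAlg}}^{s}$ then forces $\zeta _{\mathrm{Br}}^{s}$ to be an isomorphism.

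Now I come to the heart of the argument, the invertibility of $\zeta _{\mathrm{BrL}}^{s}$, which I expect to be the main obstacle. Recall from Proposition \ref{pro:Ubr} that $\mathcal{U}_{\mathrm{Br}}^{s}$ is a quotient of $T_{\mathrm{Br}}^{s}H_{\mathrm{BrLie}}^{s}$ through the natural transformation $p^{s}$, whose underlying components are epimorphisms. Since $H_{\mathrm{BrLie}}^{s}\circ \mathrm{BrLie}^{s}F=\mathrm{Br}^{s}F\circ H_{\mathrm{BrLie}}^{s}$ by Lemma \ref{lem:BrLieF}, the isomorphism $\zeta _{\mathrm{Br}}^{s}H_{\mathrm{BrLie}}^{s}$ compares $(T_{\mathrm{Br}}^{s})^{\prime }\mathrm{Br}^{s}F\,H_{\mathrm{BrLie}}^{s}$ with $\left( \mathrm{BrAlg}^{s}F\right) T_{\mathrm{Br}}^{s}H_{\mathrm{BrLie}}^{s}$. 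I would show that $\zeta _{\mathrm{BrL}}^{s}$ fits into a commuting square having $\zeta _{\mathrm{Br}}^{s}H_{\mathrm{BrLie}}^{s}$ on top and the epimorphisms $(p^{s})^{\prime }\mathrm{BrLie}^{s}F$ and $\left( \mathrm{BrAlg}^{s}F\right) p^{s}$ as vertical maps; this compatibility is a direct consequence of the defining formula \eqref{form:zeta} for $\zeta $ and of the naturality of $p^{s}$ and of the (co)units involved. Because $F$ is exact, $\mathrm{BrAlg}^{s}F$ carries the cokernel presentation of $\mathcal{U}_{\mathrm{Br}}^{s}$ to the cokernel presentation of $\mathcal{U}_{\mathrm{Br}}^{s}\mathrm{BrLie}^{s}F$: the two-sided ideals generated by the respective $f$'s correspond under $\zeta _{\mathrm{Br}}^{s}$, since $F$ commutes with $\alpha _{1}$, with $\theta _{(-,-)}$ up to the isomorphisms $\widehat{\phi }$ of Proposition \ref{pro:comdat1}, and with the bracket. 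Hence $\zeta _{\mathrm{Br}}^{s}$ maps one relation subobject isomorphically onto the other, so the map it induces between the cokernels, which is exactly $\zeta _{\mathrm{BrL}}^{s}$, is an isomorphism. The careful bookkeeping of these quotients, rather than any conceptual difficulty, is where the real work lies.

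Finally, for the second pair I would avoid repeating the quotient analysis by descending along the conservative forgetful functor $\mho _{\mathrm{Br}}^{s}$, exactly as in the proof of Proposition \ref{pro:PrimFunct}. Using $\mho _{\mathrm{Br}}^{s}\overline{\mathcal{U}}_{\mathrm{Br}}^{s}=\mathcal{U}_{\mathrm{Br}}^{s}$ from \eqref{diag:UbrsBar} and the compatibility $\left( \mathrm{BrAlg}^{s}F\right) \mho _{\mathrm{Br}}^{s}=(\mho _{\mathrm{Br}}^{s})^{\prime }\left( \mathrm{BrBialg}^{s}F\right) $ of \ref{cl:BrBialg}, a computation with the defining formula \eqref{form:zeta} shows $\mho _{\mathrm{Br}}^{s}\overline{\zeta }_{\mathrm{BrL}}^{s}=\zeta _{\mathrm{BrL}}^{s}$. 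Since $\mho _{\mathrm{Br}}^{s}$ is conservative and $\zeta _{\mathrm{BrL}}^{s}$ has just been proven to be an isomorphism, $\overline{\zeta }_{\mathrm{BrL}}^{s}$ is an isomorphism as well, which completes the verification that both pairs are commutation data with the stated canonical transformations.
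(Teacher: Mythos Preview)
Your proposal is correct and follows essentially the same strategy as the paper: verify the squares, establish $\zeta_{\mathrm{Br}}^{s}$ as an isomorphism via $\mathbb{I}_{\mathrm{BrAlg}}^{s}\zeta_{\mathrm{Br}}^{s}=\zeta_{\mathrm{Br}}\mathbb{I}_{\mathrm{Br}}^{s}$, prove the compatibility $\zeta_{\mathrm{BrL}}^{s}\circ p^{s}(\mathrm{BrLie}^{s}F)=(\mathrm{BrAlg}^{s}F)p^{s}\circ \zeta_{\mathrm{Br}}^{s}H_{\mathrm{BrLie}}^{s}$, deduce that $\zeta_{\mathrm{BrL}}^{s}$ is an isomorphism from the quotient description of $\mathcal{U}_{\mathrm{Br}}^{s}$, and finally descend along the conservative $\mho_{\mathrm{Br}}^{s}$ via $\mho_{\mathrm{Br}}^{s}\overline{\zeta}_{\mathrm{BrL}}^{s}=\zeta_{\mathrm{BrL}}^{s}$ to handle $\overline{\zeta}_{\mathrm{BrL}}^{s}$.

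The only notable difference is in packaging the quotient step. You argue directly with cokernels in $\mathcal{M}$, matching the two-sided ideals $\langle f\rangle$ under $\zeta_{\mathrm{Br}}^{s}$ using exactness of $F$. The paper instead rewrites the quotient as a coequalizer of \emph{algebra} maps $T_{\mathrm{Br}}^{s}(M\otimes M,c_{M\otimes M})\rightrightarrows T_{\mathrm{Br}}^{s}(M,c_{M})$ (diagrams \eqref{diag:ps} and \eqref{diag:p}), then invokes Lemma \ref{lem:BBialgWpres} to see that $\mathrm{Alg}F$ preserves this coequalizer and uses that $H_{\mathrm{Alg}}$ and $\mathbb{I}_{\mathrm{BrAlg}}^{s}$ reflect coequalizers to lift the conclusion to $\mathrm{BrAlg}^{s}$. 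The coequalizer formulation has the advantage that preservation by $\mathrm{Alg}F$ is a clean categorical statement already established in the paper, whereas your ideal-matching argument requires verifying by hand that $F(\Lambda_{f})$ corresponds to $\Lambda_{f'}$ through the $\widehat{\phi}$ isomorphisms; both routes work, and you correctly flag the bookkeeping as the real content.
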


\begin{proof}
A direct computation using (\ref{diag:BrLieF}) shows that%
\begin{equation*}
\mathbb{I}_{\mathrm{BrLie}}^{s}\left( \mathrm{BrLie}^{s}F\right) \mathcal{L}%
_{\mathrm{Br}}^{s}\left( B,m_{B},u_{B},c_{B}\right) =\mathbb{I}_{\mathrm{%
BrLie}}^{s}\mathcal{L}_{\mathrm{Br}}^{s}\left( \mathrm{BrAlg}^{s}F\right)
\left( B,m_{B},u_{B},c_{B}\right) .
\end{equation*}
Since both functors act as $F$ on morphisms, we get $\mathbb{I}_{\mathrm{%
BrLie}}^{s}\left( \mathrm{BrLie}^{s}F\right) \mathcal{L}_{\mathrm{Br}}^{s}=%
\mathbb{I}_{\mathrm{BrLie}}^{s}\mathcal{L}_{\mathrm{Br}}^{s}\left( \mathrm{%
BrAlg}^{s}F\right) $. Since $\mathbb{I}_{\mathrm{BrLie}}^{s}$ is both
injective on morphisms and objects we obtain
\begin{equation*}
\left( \mathrm{BrLie}^{s}F\right) \mathcal{L}_{\mathrm{Br}}^{s}=\mathcal{L}_{%
\mathrm{Br}}^{s}\left( \mathrm{BrAlg}^{s}F\right) .
\end{equation*}%
Now, using in the given order (\ref{diag:HBrLies}), (\ref{diag:BrLieF}),
again (\ref{diag:HBrLies}), (\ref{form:comdat3}) and again (\ref{diag:BrLieF}%
), we get the equality $\mathbb{I}_{\mathrm{Br}}^{s}H_{\mathrm{BrLie}%
}^{s}\left( \mathrm{BrLie}^{s}F\right) \xi =\mathbb{I}_{\mathrm{Br}}^{s}H_{%
\mathrm{BrLie}}^{s}\xi \left( \mathrm{BrBialg}^{s}F\right) .$ Then one shows
that $\left( \mathrm{BrLie}^{s}F\right) \xi $ and $\xi \left( \mathrm{BrBialg%
}^{s}F\right) $ has the same domain and codomain. Thus, from $\mathbb{I}_{%
\mathrm{Br}}^{s}H_{\mathrm{BrLie}}^{s}\left( \mathrm{BrLie}^{s}F\right) \xi =%
\mathbb{I}_{\mathrm{Br}}^{s}H_{\mathrm{BrLie}}^{s}\xi \left( \mathrm{BrBialg}%
^{s}F\right) $ we deduce that%
\begin{equation*}
\left( \mathrm{BrLie}^{s}F\right) \xi =\xi \left( \mathrm{BrBialg}%
^{s}F\right) .
\end{equation*}%
Consider the natural transformation $\overline{\zeta }_{\mathrm{BrL}}^{s}:%
\overline{\mathcal{U}}_{\mathrm{Br}}^{s}\left( \mathrm{BrLie}^{s}F\right)
\rightarrow \left( \mathrm{BrBialg}^{s}F\right) \overline{\mathcal{U}}_{%
\mathrm{Br}}^{s}$ of Lemma \ref{lem:zeta}. By definition%
\begin{equation*}
\overline{\zeta }_{\mathrm{BrL}}^{s}:=\overline{\epsilon }_{\mathrm{BrL}%
}^{s}\left( \mathrm{BrBialg}^{s}F\right) \overline{\mathcal{U}}_{\mathrm{Br}%
}^{s}\circ \overline{\mathcal{U}}_{\mathrm{Br}}^{s}\left( \mathrm{BrLie}%
^{s}F\right) \overline{\eta }_{\mathrm{BrL}}^{s}.
\end{equation*}%
It is straightforward to check that%
\begin{equation}
\mho _{\mathrm{Br}}^{s}\overline{\zeta }_{\mathrm{BrL}}^{s}=\zeta _{\mathrm{%
BrL}}^{s}  \label{form:zetaBrLBars}
\end{equation}%
where $\zeta _{\mathrm{BrL}}^{s}:\mathcal{U}_{\mathrm{Br}}^{s}\left( \mathrm{%
BrLie}^{s}F\right) \rightarrow \left( \mathrm{BrAlg}^{s}F\right) \mathcal{U}%
_{\mathrm{Br}}^{s}$ is the canonical morphism of Lemma \ref{lem:zeta}, and
also
\begin{equation*}
\zeta _{\mathrm{BrL}}^{s}\circ p^{s}\left( \mathrm{BrLie}^{s}F\right)
=\left( \mathrm{BrAlg}^{s}F\right) p^{s}\circ \zeta _{\mathrm{Br}}^{s}H_{%
\mathrm{BrLie}}^{s}.
\end{equation*}
Let $\left( M,c_{M},\left[ -\right] _{M}\right) \in \mathrm{BrLie}_{\mathcal{%
M}}^{s}.$ Then we have that $\left( M\otimes M,c_{M\otimes M}\right) \in
\mathrm{BrLie}_{\mathcal{M}}^{s}$ where $c_{M\otimes M}:=\left( M\otimes
c_{M}\otimes M\right) \left( c_{M}\otimes c_{M}\right) \left( M\otimes
c_{M}\otimes M\right) .$ It is easy to check that $\left[ -\right] :M\otimes
M\rightarrow M$ and $\theta _{\left( M,c_{M}\right) }:M\otimes M\rightarrow
\Omega TM$ induce morphisms of braided objects%
\begin{equation*}
\left[ -\right] ^{s}:\left( M\otimes M,c_{M\otimes M}\right) \rightarrow
\left( M,c_{M}\right) \qquad \text{and}\qquad \theta _{\left( M,c_{M}\right)
}^{s}:\left( M\otimes M,c_{M\otimes M}\right) \rightarrow \Omega _{\mathrm{Br%
}}^{s}T_{\mathrm{Br}}^{s}\left( M,c_{M}\right)
\end{equation*}%
such that
\begin{equation*}
H\mathbb{I}_{\mathrm{Br}}^{s}\left[ -\right] ^{s}=\left[ -\right] \qquad
\text{and}\qquad H\mathbb{I}_{\mathrm{Br}}^{s}\theta _{\left( M,c_{M}\right)
}^{s}=\theta _{\left( M,c_{M}\right) }.
\end{equation*}
Let us check that the following is a coequalizer in $\mathrm{BrAlg}_{%
\mathcal{M}}^{s}$%
\begin{equation}
\xymatrixrowsep{15pt}\xymatrixcolsep{50pt} \xymatrix{
T_{\mathrm{Br}}^{s}\left( M\otimes M,c_{M\otimes M}\right)
\ar@<.5ex>[rr]^-{T_{\mathrm{Br}}^{s}\left[ -\right] ^{s} }
\ar@<-.5ex>[rr]_-{\epsilon _{\mathrm{Br}}^{s}T_{\mathrm{Br}}^{s}\left(
M,c_{M}\right) \circ T_{\mathrm{Br}}^{s}\theta _{\left( M,c_{M}\right) }^{s}
}&&T_{\mathrm{Br}}^{s}\left( M,c_{M}\right)\ar[r]^-{p^{s}\left(
M,c_{M},\left[ -\right] _{M}\right) }&\mathcal{U}_{\mathrm{Br}}^{s}\left(
M,c_M,\left[ -\right] \right) _M}.  \label{diag:ps}
\end{equation}
Apply $H_{\mathrm{Alg}}\mathbb{I}_{\mathrm{BrAlg}}^{s}$ to this diagram we
get the diagram%
\begin{equation}
\xymatrixcolsep{40pt} \xymatrix{T\left( M\otimes M\right)
\ar@<.5ex>[rr]^-{T\left[ -\right] } \ar@<-.5ex>[rr]_-{\epsilon TM\circ
T\theta _{\left( M,c_{M}\right) } }&&T\left(
M\right)\ar[rr]^-{H_{\mathrm{Alg}}p\mathbb{I}_{\mathrm{BrLie}}^{s}\left(
M,c_{M},\left[ -\right] _{M}\right)
}&&H_{\mathrm{Alg}}\mathcal{U}_{\mathrm{Br}}\mathbb{I}_{\mathrm{BrLie}}^{s}%
\left( M,c,\left[ -\right] \right) }.  \label{diag:p}
\end{equation}
which can be checked to be a coequalizer in $\mathrm{Alg}_{\mathcal{M}}$. By
Lemma \ref{lem:HAlgRefCo} we have that $H_{\mathrm{Alg}}$ reflects
coequalizers and by \cite[Proposition 2.9.9]{Borceux1}, we have that $%
\mathbb{I}_{\mathrm{BrAlg}}^{s}$ reflects coequalizers. Thus (\ref{diag:ps})
is also a coequalizer. By Lemma \ref{lem:BBialgWpres}, since $F$ preserves
coequalizers, we get that $\mathrm{Alg}F$ preserves the coequalizer (\ref%
{diag:p}). Denote by $\mathrm{Alg}F(\ref{diag:p})$ the coequalizer obtained
in this way. Now, with the same notation, $\mathrm{Alg}F(\ref{diag:p})$ can
also be obtained as $H_{\mathrm{Alg}}\mathbb{I}_{\mathrm{BrAlg}}^{s}(\mathrm{%
BrAlg} ^{s}F)\eqref{diag:ps}$ (this is straightforward). Since we already
observed that both $H_{\mathrm{Alg}}$ and $\mathbb{I}_{\mathrm{BrAlg}}^{s}$
reflect coequalizers, we deduce that $(\mathrm{BrAlg}^{s}F)\eqref{diag:ps}$
is a coequalizer too. This coequalizer appears in the second line of the
diagram
\begin{equation*}
\xymatrixcolsep{35pt}\xymatrix{ T_{\mathrm{Br}}^{s}\left( FM\otimes
FM,c_{FM\otimes FM}\right)\ar@{.>}[d]
\ar@<.5ex>[rr]^-{T_{\mathrm{Br}}^{s}\left[ -\right] _{FM}^{s} }
\ar@<-.5ex>[rr]_-{\epsilon _{\mathrm{Br}}^{s}T_{\mathrm{Br}}^{s}\left(
FM,c_{FM}\right) \circ T_{\mathrm{Br}}^{s}\theta _{\left( FM,c_{FM}\right)
}^{s} }&&T_{\mathrm{Br}}^{s}H_{\mathrm{BrLie}}^{s}\left(
\mathrm{BrLie}^{s}F\right) \overline{M}\ar[d]^{\zeta
_{\mathrm{Br}}^{s}H_{\mathrm{BrLie}}^{s}\overline{M}}\ar[rr]^-{p^{s}\left(
\mathrm{BrLie}^{s}F\right) \overline{M}
}&&\mathcal{U}_{\mathrm{Br}}^{s}\left( \mathrm{BrLie}^{s}F\right)
\overline{M} \ar[d]^{\zeta _{\mathrm{BrL}}^{s}\overline{M}} \\ \overline{F}
T_{\mathrm{Br}}^{s}\left( M\otimes M,c_{M\otimes M}\right) \ar@<.5ex>[rr]^-{
\overline{F} T_{\mathrm{Br}}^{s}\left[ -\right] ^{s}}
\ar@<-.5ex>[rr]_-{\overline{F} \left( \epsilon
_{\mathrm{Br}}^{s}T_{\mathrm{Br}}^{s}\left( M,c_{M}\right) \circ
T_{\mathrm{Br}}^{s}\theta _{\left( M,c_{M}\right) }^{s}\right) }&&
\overline{F} T_{\mathrm{Br}}^{s}\left( M,c_{M}\right)\ar[rr]^-{ \overline{F}
p^{s}\overline{M} }&& \overline{F} \mathcal{U}_{\mathrm{Br}}^{s}\overline{M}
}
\end{equation*}%
where, for sake of shortness, we set $\overline{M}:=\left( M,c_{M},\left[ -%
\right] _{M}\right) $ and $\overline{F}:=\mathrm{BrAlg}^{s}F$. One proves
that the morphism
\begin{gather*}
T_{\mathrm{Br}}^{s}\left( FM\otimes FM,c_{FM\otimes FM}\right) \overset{T_{%
\mathrm{Br}}^{s}\phi _{2}\left( M,M\right) }{\longrightarrow }T_{\mathrm{Br}%
}^{s}\left( F\left( M\otimes M\right) ,c_{F\left( M\otimes M\right) }\right)
= \\
=T_{\mathrm{Br}}^{s}\left( \mathrm{Br}^{s}F\right) \left( M\otimes
M,c_{M\otimes M}\right) \overset{\zeta _{\mathrm{Br}}^{s}\left( M\otimes
M,c_{M\otimes M}\right) }{\longrightarrow }\left( \mathrm{BrAlg}^{s}F\right)
T_{\mathrm{Br}}^{s}\left( M\otimes M,c_{M\otimes M}\right)
\end{gather*}

is an isomorphism (we just point out that, as one easily checks, the
morphism $\phi _{2}\left( M,M\right) $ is a braided morphism so that the
morphism above is well-defined) and it completes the diagram above on the
left making it a serially commutative diagram. The fact it is serially
commutative depends on the following equality that can be easily checked%
\begin{equation}
\mathbb{I}_{\mathrm{BrAlg}}^{s}\zeta _{\mathrm{Br}}^{s}=\zeta _{\mathrm{Br}}%
\mathbb{I}_{\mathrm{Br}}^{s}.  \label{form:zetas}
\end{equation}
Now, by (\ref{form:zetas}) we have $\mathbb{I}_{\mathrm{BrAlg}}^{s}\zeta _{%
\mathrm{Br}}^{s}=\zeta _{\mathrm{Br}}\mathbb{I}_{\mathrm{Br}}^{s}.$ On the
other hand, by Proposition \ref{pro:comdat1} (here we use the fact that $F$
preserves denumerable coproducts), we know that $\zeta _{\mathrm{Br}}$ is a
functorial isomorphism. Since $\mathbb{I}_{\mathrm{BrAlg}}^{s}$ is
conservative, we deduce that $\zeta _{\mathrm{Br}}^{s}$ is a functorial
isomorphism. Thus, by the uniqueness of coequalizers (note that the first
line in the diagram above is just (\ref{diag:ps}) applied to $\left( \mathrm{%
BrLie}^{s}F\right) \left( M,c_{M},\left[ -\right] _{M}\right) =\left(
FM,c_{FM},\left[ -\right] _{FM}\right) $ instead of $\left( M,c_{M},\left[ -%
\right] _{M}\right) )$, we get that $\zeta _{\mathrm{BrL}}^{s}\left( M,c,%
\left[ -\right] \right) $ is an isomorphism too. Thus $\zeta _{\mathrm{BrL}%
}^{s}$ is a functorial isomorphism.

By (\ref{form:zetaBrLBars}) we have $\mho _{\mathrm{Br}}^{s}\overline{\zeta }%
_{\mathrm{BrL}}^{s}=\zeta _{\mathrm{BrL}}^{s}$ so that $\overline{\zeta }_{%
\mathrm{BrL}}^{s}$ is a functorial isomorphism too.
\end{proof}

\begin{theorem}
\label{teo:mainNew}Let $\mathcal{M}$ be an abelian monoidal category with
denumerable coproducts and such that the tensor functors are exact and
preserve denumerable coproducts. Let $\mathcal{N}$ be an MM-category and
assume that there exists a conservative (see \ref{def:conservative}) and
exact monoidal functor $\left( F,\phi _{0},\phi _{2}\right) :\mathcal{M}%
\rightarrow \mathcal{N}$ which preserves denumerable coproducts. Then $%
\mathcal{M}$ is an MM-category.
\end{theorem}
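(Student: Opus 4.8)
The plan is to reduce the statement to the single assertion that the unit $\overline{\eta}_{\mathrm{BrL}}^{s}\colon\mathrm{Id}_{\mathrm{BrLie}_{\mathcal{M}}^{s}}\to\mathcal{P}_{\mathrm{Br}}^{s}\overline{\mathcal{U}}_{\mathrm{Br}}^{s}$ of the adjunction $(\overline{\mathcal{U}}_{\mathrm{Br}}^{s},\mathcal{P}_{\mathrm{Br}}^{s})$ associated to $\mathcal{M}$ is a functorial isomorphism. Indeed, by Definition \ref{def:MM}, the remaining requirements for $\mathcal{M}$ to be an MM-category---that $\mathcal{M}$ be abelian with denumerable coproducts and that the tensor functors be exact and preserve such coproducts---are already part of the hypotheses. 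Thus the whole problem becomes a transfer-of-isomorphism question for the unit across the functor $F$.

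The key input is Theorem \ref{teo:Udata}. Since $F\colon\mathcal{M}\to\mathcal{N}$ is exact monoidal and preserves denumerable coproducts, and since both $\mathcal{M}$ and $\mathcal{N}$ satisfy the standing hypotheses (note that $\mathcal{N}$, being an MM-category, enjoys them by Definition \ref{def:MM}), that theorem provides the commutation datum $(\mathrm{BrBialg}^{s}F,\mathrm{BrLie}^{s}F)\colon(\overline{\mathcal{U}}_{\mathrm{Br}}^{s},\mathcal{P}_{\mathrm{Br}}^{s})\to(\overline{\mathcal{U}}_{\mathrm{Br}}^{s},\mathcal{P}_{\mathrm{Br}}^{s})$, where the source adjunction lives over $\mathcal{M}$ and the target adjunction over $\mathcal{N}$. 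I would then invoke Lemma \ref{lem:faithcom} for this datum: with $G=\mathrm{BrLie}^{s}F$ and with the unit $\eta'$ of the target adjunction identified with $\overline{\eta}_{\mathrm{BrL}}^{s}$ computed in $\mathcal{N}$, the lemma yields that the unit $\eta=\overline{\eta}_{\mathrm{BrL}}^{s}$ computed in $\mathcal{M}$ is an isomorphism, provided $G$ is conservative and $\eta'$ is an isomorphism.

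To close the argument I would verify the two hypotheses of Lemma \ref{lem:faithcom}. First, $\eta'=\overline{\eta}_{\mathrm{BrL}}^{s}$ over $\mathcal{N}$ is a functorial isomorphism precisely because $\mathcal{N}$ is assumed to be an MM-category. Second, $G=\mathrm{BrLie}^{s}F$ is conservative: this is exactly the last assertion of Lemma \ref{lem:BrLieF}, using that $F$ is conservative by hypothesis. With both conditions in place, $\overline{\eta}_{\mathrm{BrL}}^{s}$ over $\mathcal{M}$ is an isomorphism, whence $\mathcal{M}$ is an MM-category.

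I do not expect a genuine obstacle at this stage, since all the substantial categorical work has been front-loaded into Theorem \ref{teo:Udata} and the constructions of $\overline{\mathcal{U}}_{\mathrm{Br}}^{s}$ and $\mathcal{P}_{\mathrm{Br}}^{s}$ preceding it. The only point demanding care is the bookkeeping of identifications: one must check that the abstract unit $\eta$ furnished by the commutation datum really is $\overline{\eta}_{\mathrm{BrL}}^{s}$ on each side, and that the conservativity of $F$ propagates to $\mathrm{BrLie}^{s}F$. Both are immediate consequences of Lemmata \ref{lem:faithcom} and \ref{lem:BrLieF}, so the final proof should be short.
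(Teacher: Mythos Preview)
Your proposal is correct and follows essentially the same approach as the paper: invoke Theorem \ref{teo:Udata} to obtain the commutation datum $(\mathrm{BrBialg}^{s}F,\mathrm{BrLie}^{s}F)$, use Lemma \ref{lem:BrLieF} to see that $\mathrm{BrLie}^{s}F$ is conservative since $F$ is, and then apply Lemma \ref{lem:faithcom} to conclude that $\overline{\eta}_{\mathrm{BrL}}^{s}$ over $\mathcal{M}$ is an isomorphism.
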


\begin{proof}
By Theorem \ref{teo:Udata}, we have the following commutation datum%
\begin{equation*}
\left( \mathrm{BrBialg}^{s}F,\mathrm{BrLie}^{s}F\right) :\left( \overline{%
\mathcal{U}}_{\mathrm{Br}}^{s},\mathcal{P}_{\mathrm{Br}}^{s}\right)
\rightarrow \left( \overline{\mathcal{U}}_{\mathrm{Br}}^{s},\mathcal{P}_{%
\mathrm{Br}}^{s}\right) .
\end{equation*}%
By Lemma \ref{lem:BrLieF}, we know that $\mathrm{BrLie}^{s}F$ is
conservative as $F$ is. By Lemma \ref{lem:faithcom}, we have that the unit $%
\overline{\eta }_{\mathrm{BrL}}^{s}:\mathrm{Id}_{\mathrm{BrLie}_{\mathcal{M}%
}^{s}}\rightarrow \mathcal{P}_{\mathrm{Br}}^{s}\overline{\mathcal{U}}_{%
\mathrm{Br}}^{s}$ is a functorial isomorphism.
\end{proof}

\begin{theorem}
\label{teo:LieSymNew}Let $\mathfrak{M}$ be the category of vector spaces
over a field $\Bbbk $ with $\mathrm{char}\Bbbk =0.$ Let $\mathcal{M}$ be an
abelian monoidal category with denumerable coproducts, such that the tensor
functors are exact and preserve denumerable coproducts. Assume that there
exists a conservative and exact monoidal functor $\left( F,\phi _{0},\phi
_{2}\right) :\mathcal{M}\rightarrow \mathfrak{M}$ which preserves
denumerable coproducts. Then $\mathcal{M}$ is an MM-category.
\end{theorem}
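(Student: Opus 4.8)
The plan is to recognize the statement as the special case $\mathcal{N}=\mathfrak{M}$ of Theorem \ref{teo:mainNew}. First I would invoke Theorem \ref{teo:mainVS}, which asserts that the category $\mathfrak{M}$ of vector spaces over a field of characteristic zero is an MM-category; this provides the target MM-category needed to play the role of $\mathcal{N}$. By hypothesis we are handed a functor $\left( F,\phi _{0},\phi _{2}\right) :\mathcal{M}\rightarrow \mathfrak{M}$ which is conservative, exact, monoidal and preserves denumerable coproducts, while $\mathcal{M}$ is assumed abelian with denumerable coproducts and with tensor functors that are exact and preserve such coproducts. These are precisely the hypotheses required by Theorem \ref{teo:mainNew} once one takes $\mathcal{N}=\mathfrak{M}$.

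The second step is then simply to apply Theorem \ref{teo:mainNew}, whose conclusion is exactly that $\mathcal{M}$ is an MM-category. No further computation is needed: the whole content has been packaged into the two cited results, and the only genuine verification is that the list of hypotheses matches. In particular one must note that Theorem \ref{teo:mainNew} only demands a monoidal functor $\mathcal{M}\rightarrow \mathfrak{M}$ (not necessarily braided), which is all that is assumed here.

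The substantive work lies in the two inputs rather than in their combination. Theorem \ref{teo:mainVS} rests on Kharchenko's result \cite[Lemma 6.2]{Kharchenko-Connected} that, in characteristic zero, the canonical map from a symmetric braided Lie algebra into the primitive part of its universal enveloping algebra is an isomorphism, together with the identification (via Lemma \ref{lem:Jac2}) of $\overline{\mathcal{U}}_{\mathrm{Br}}^{s}$ with the enveloping algebra of the associated Lie $c$-algebra. Theorem \ref{teo:mainNew} in turn rests on the commutation datum $\left( \mathrm{BrBialg}^{s}F,\mathrm{BrLie}^{s}F\right) :\left( \overline{\mathcal{U}}_{\mathrm{Br}}^{s},\mathcal{P}_{\mathrm{Br}}^{s}\right) \rightarrow \left( \overline{\mathcal{U}}_{\mathrm{Br}}^{s},\mathcal{P}_{\mathrm{Br}}^{s}\right) $ produced in Theorem \ref{teo:Udata}, combined with Lemma \ref{lem:faithcom}: since $\mathrm{BrLie}^{s}F$ is conservative by Lemma \ref{lem:BrLieF} and the unit $\overline{\eta }_{\mathrm{BrL}}^{s}$ is an isomorphism for $\mathfrak{M}$, the unit $\overline{\eta }_{\mathrm{BrL}}^{s}$ is forced to be an isomorphism for $\mathcal{M}$ as well.

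Accordingly, there is essentially no obstacle to overcome within this proof itself; it is a direct corollary obtained by specializing Theorem \ref{teo:mainNew} to the MM-category $\mathfrak{M}$ furnished by Theorem \ref{teo:mainVS}. The only point requiring attention is the bookkeeping check that the ambient assumptions on $\mathcal{M}$ and on $F$ in the present statement coincide verbatim with those of Theorem \ref{teo:mainNew}, which they do.
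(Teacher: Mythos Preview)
Your proposal is correct and matches the paper's proof exactly: invoke Theorem \ref{teo:mainVS} to know that $\mathfrak{M}$ is an MM-category, then apply Theorem \ref{teo:mainNew} with $\mathcal{N}=\mathfrak{M}$. The additional commentary you provide on what underlies those two results is accurate but goes beyond what the paper records for this particular proof.
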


\begin{proof}
By Theorem \ref{teo:mainVS} we have $\mathfrak{M}$ is an MM-category. We
conclude by Theorem \ref{teo:mainNew}.
\end{proof}

\section{Examples of MM-categories}

\begin{example}
\label{ex:YD} Let $\Bbbk $ be a field with $\mathrm{char}\left( \Bbbk
\right) =0$. Let $H$ be any Hopf algebra over $\Bbbk $ of and consider the
monoidal category of Yetter-Drinfeld modules $\left( {_{H}^{H}\mathcal{YD}}%
,\otimes _{\Bbbk },\Bbbk \right) $. Then the forgetful functor $F:\left( {%
_{H}^{H}\mathcal{YD}},\otimes _{\Bbbk },\Bbbk \right) \rightarrow \left(
\mathfrak{M},\otimes _{\Bbbk },\Bbbk \right) $ is monoidal. One can prove by
hand that ${_{H}^{H}\mathcal{YD}}$ is abelian with denumerable coproducts.
The tensor functors are clearly exact and preserve denumerable coproducts in
${_{H}^{H}\mathcal{YD}}$ as this is the case in $\mathfrak{M}$. Furthermore $%
F$ is clearly conservative and exact and preserves denumerable coproducts.
By \ref{teo:LieSymNew}, we conclude that $\left( {_{H}^{H}\mathcal{YD}}%
,\otimes _{\Bbbk },\Bbbk \right) $ is an MM-category. Note that, by Theorem
\cite{Pa}, this category, with respect to its standard pre-braiding, is not
symmetric unless $H=\Bbbk $.
\end{example}

\subsection{Quasi-Bialgebras\label{subs:Quasi-Bialgebras}}

The following definition is not the original one given in \cite[page 1421]%
{Drinfeld-QuasiHopf}. We adopt the more general form of \cite[Remark 1, page
1423]{Drinfeld-QuasiHopf} (see also \cite[Proposition XV.1.2]{Kassel}) in
order to comprise the case of Hom-Lie algebras. Later on, for dual
quasi-bialgebras, we will take the simplified respective definition from the
very beginning having no meaningful example to treat in the full generality.

\begin{definition}
A \emph{quasi-bialgebra} is a datum $\left( H,m,u,\Delta ,\varepsilon ,\phi
,\lambda ,\rho \right) $ where $\left( H,m,u\right) $ is an associative
algebra, $\Delta :H\rightarrow H\otimes H$ and $\varepsilon :H\rightarrow
\Bbbk $ are algebra maps, $\lambda ,\rho \in H$ are invertible elements, $%
\phi \in H\otimes H\otimes H$ is a counital $3$-cocycle i.e. it is an
invertible element and satisfies%
\begin{eqnarray*}
\left( H\otimes H\otimes \Delta \right) \left( \phi \right) \cdot \left(
\Delta \otimes H\otimes H\right) \left( \phi \right) &=&\left( 1_{H}\otimes
\phi \right) \cdot \left( H\otimes \Delta \otimes H\right) \left( \phi
\right) \cdot \left( \phi \otimes 1_{H}\right) , \\
\left( H\otimes \varepsilon \otimes H\right) \left( \phi \right) &=&\rho
\otimes \lambda ^{-1}.
\end{eqnarray*}%
Moreover $\Delta $ is required to be quasi-coassociative and counitary i.e.
to satisfy%
\begin{gather*}
\left( H\otimes \Delta \right) \Delta \left( h\right) =\phi \cdot \left(
\Delta \otimes H\right) \Delta \left( h\right) \cdot \phi ^{-1}, \\
\left( \varepsilon \otimes H\right) \Delta \left( h\right) =\lambda
^{-1}h\lambda ,\qquad \left( H\otimes \varepsilon \right) \Delta \left(
h\right) =\rho ^{-1}h\rho .
\end{gather*}%
A morphism of quasi-bialgebras $\Xi :\left( H,m,u,\Delta ,\varepsilon ,\phi
,\lambda ,\rho \right) \rightarrow \left( H^{\prime },m^{\prime },u^{\prime
},\Delta ^{\prime },\varepsilon ^{\prime },\phi ^{\prime },\lambda ^{\prime
},\rho ^{\prime }\right) $ (see \cite[page 371]{Kassel}) is an algebra
homomorphism $\Xi :\left( H,m,u\right) \rightarrow \left( H^{\prime
},m^{\prime },u^{\prime }\right) $ such that $(\Xi \otimes \Xi )\Delta
=\Delta ^{\prime }\Xi ,$ $\varepsilon ^{\prime }\Xi =\varepsilon ,$ $\left(
\Xi \otimes \Xi \otimes \Xi \right) \left( \phi \right) =\phi ^{\prime },$ $%
\Xi \left( \lambda \right) =\lambda ^{\prime }$ and $\Xi \left( \rho \right)
=\rho ^{\prime }.$ It is an isomorphism of quasi-bialgebras if, in addition,
it is invertible. We will adopt the standard notation%
\begin{equation*}
\phi ^{1}\otimes \phi ^{2}\otimes \phi ^{3}:=\phi \,\text{(summation
understood).}
\end{equation*}%
In the case when $\phi $ is not trivial and $\lambda =\rho =1_{H},$ we call $%
H$ an \emph{ordinary quasi-bialgebra. }If further $\phi $ is trivial we then
land at the classical concept of bialgebra.

A quasi-subbialgebra of a quasi-bialgebra $H^{\prime }$ is a quasi-bialgebra
$H$ such that $H$ is a vector subspace of $H^{\prime }$ and the canonical
inclusion is a morphism of dual quasi-bialgebras.
\end{definition}

Let $(H,m,u,\Delta ,\varepsilon ,\phi ,\lambda ,\rho )$ be a
quasi-bialgebra. It is well-known, see \cite[page 285 and Proposition XV.1.2]%
{Kassel}, that the category ${_{H}}\mathfrak{M}$ of left $H$-modules becomes
a monoidal category as follows. Given a left $H$-module $V$, we denote by $%
\mu =\mu _{V}^{l}:H\otimes V\rightarrow V,\mu (h\otimes v)=hv$, its left $H$%
-action. The tensor product of two left $H$-modules $V$ and $W$ is a module
via diagonal action i.e. $h\left( v\otimes w\right) =h_{1}v\otimes h_{2}w.$
The unit is $\Bbbk ,$ which is regarded as a left $H$-module via the trivial
action i.e. $hk=\varepsilon \left( h\right) k$, for all $h\in H,k\in \Bbbk $%
. The associativity and unit constraints are defined, for all $V,W,Z\in {_{H}%
}\mathfrak{M}$ and $v\in V,w\in W,z\in Z,$ by $a_{V,W,Z}((v\otimes w)\otimes
z):=\phi ^{1}v\otimes (\phi ^{2}w\otimes \phi ^{3}z)$, $l_{V}(1\otimes
v):=\lambda v$ and $r_{V}(v\otimes 1):=\rho v.$ This monoidal category will
be denoted by $({_{H}}\mathfrak{M},\otimes ,\Bbbk ,a,l,r).$ Given an
invertible element $\alpha \in H\otimes H,$ we can construct a new
quasi-bialgebra $H_{\alpha }=\left( H,m,u,\Delta _{\alpha },\varepsilon
,\phi _{\alpha },\lambda _{\alpha },\rho _{\alpha }\right) $ where
\begin{eqnarray*}
\Delta _{\alpha }\left( h\right) &=&\alpha \cdot \Delta \left( h\right)
\cdot \alpha ^{-1},\quad \lambda _{\alpha }=\lambda \cdot \left( \varepsilon
_{H}\otimes H\right) \left( \alpha ^{-1}\right) ,\quad \rho _{\alpha }=\rho
\cdot \left( H\otimes \varepsilon _{H}\right) \left( \alpha ^{-1}\right) , \\
\phi _{\alpha } &=&\left( 1_{H}\otimes \alpha \right) \cdot \left( H\otimes
\Delta \right) \left( \alpha \right) \cdot \phi \cdot \left( \Delta \otimes
H\right) \left( \alpha ^{-1}\right) \cdot \left( \alpha ^{-1}\otimes
1_{H}\right) .
\end{eqnarray*}

\begin{definition}
We refer to \cite[Proposition XV.2.2]{Kassel} but with a different
terminology (cf. \cite[page 1439]{Drinfeld-QuasiHopf}). A quasi-bialgebra $%
\left( H,m,u,\Delta ,\varepsilon ,\phi ,\lambda ,\rho \right) $ is called
\emph{quasi-triangular} whenever there exists an invertible element $R\in
H\otimes H$ such that, for every $h\in H,$ one has%
\begin{eqnarray*}
\left( \Delta \otimes H\right) \left( R\right) &=&\left[
\begin{array}{c}
\left( \phi ^{2}\otimes \phi ^{3}\otimes \phi ^{1}\right) \left(
R^{1}\otimes 1\otimes R^{2}\right) \left( \phi ^{1}\otimes \phi ^{3}\otimes
\phi ^{2}\right) ^{-1} \\
\left( 1\otimes R^{1}\otimes R^{2}\right) \left( \phi ^{1}\otimes \phi
^{2}\otimes \phi ^{3}\right)%
\end{array}%
\right] \\
\left( H\otimes \Delta \right) \left( R\right) &=&\left[
\begin{array}{c}
\left( \phi ^{3}\otimes \phi ^{1}\otimes \phi ^{2}\right) ^{-1}\left(
R^{1}\otimes 1\otimes R^{2}\right) \left( \phi ^{2}\otimes \phi ^{1}\otimes
\phi ^{3}\right) \\
\left( R^{1}\otimes R^{2}\otimes 1\right) \left( \phi ^{1}\otimes \phi
^{2}\otimes \phi ^{3}\right) ^{-1}%
\end{array}%
\right] \\
\Delta ^{cop}\left( h\right) &=&R\Delta \left( h\right) R^{-1}
\end{eqnarray*}%
where $\phi :=\phi ^{1}\otimes \phi ^{2}\otimes \phi ^{3},$ $R=R^{1}\otimes
R^{2}.$ A \emph{morphism of quasi-triangular quasi-bialgebras} is a morphism
$\Xi :H\rightarrow H^{\prime }$ of quasi-bialgebras such that $\left( \Xi
\otimes \Xi \right) \left( R\right) =R^{\prime }.$
\end{definition}

By \cite[Proposition XV.2.2]{Kassel}, ${_{H}}\mathfrak{M}=({_{H}}\mathfrak{M}%
,\otimes ,\Bbbk ,a,l,r)$ is braided if and only if there is an invertible
element $R\in H\otimes H$ such that $\left( H,m,u,\Delta ,\varepsilon ,\phi
,\lambda ,\rho ,R\right) $ is quasi-triangular. Note that the braiding is
given, for all $X,Y\in {_{H}}\mathfrak{M},$ by%
\begin{equation*}
c_{X,Y}:X\otimes Y\rightarrow Y\otimes X:x\otimes y\mapsto R^{2}y\otimes
R^{1}x.
\end{equation*}%
Moreover ${_{H}}\mathfrak{M}$ is symmetric if and only if we further assume $%
R^{2}\otimes R^{1}=R^{-1}.$ Such a quasi-bialgebra will be called a \emph{%
triangular quasi-bialgebra}. A morphism of triangular quasi-bialgebras is
just a morphism of the underlying quasi-triangular quasi-bialgebras
structures.

Given an invertible element $\alpha \in H\otimes H,$ if $H$ is
(quasi-)triangular so is $H_{\alpha }$ with respect to $R_{\alpha }=\left(
\alpha ^{2}\otimes \alpha ^{1}\right) R\alpha ^{-1}$, where $\alpha :=\alpha
^{1}\otimes \alpha ^{2}.$

Let $\left( H,m,u,\Delta ,\varepsilon ,\phi ,\lambda ,\rho \right) $ be a
quasi-bialgebra. We want to apply Theorem \ref{teo:LieSymNew} to the case $%
\mathcal{M}={_{H}}\mathfrak{M}.$ Let $F:{_{H}}\mathfrak{M}\rightarrow
\mathfrak{M}$ be the forgetful functor. We need a monoidal $\left( F,\psi
_{0},\psi _{2}\right) :({_{H}}\mathfrak{M},\otimes ,\Bbbk ,a,l,r)\rightarrow
\mathfrak{M.}$

\begin{lemma}
\label{lem:gamma}Let $\left( H,m,u,\Delta ,\varepsilon ,\phi ,\lambda ,\rho
\right) $ be a quasi-bialgebra. Let $F:{_{H}}\mathfrak{M}\rightarrow
\mathfrak{M}$ be the forgetful functor. The following are equivalent.

$\left( 1\right) $ There is a natural transformation $\psi _{2}$ such that $%
\left( F,\mathrm{Id}_{\Bbbk },\psi _{2}\right) :({_{H}}\mathfrak{M},\otimes
,\Bbbk ,a,l,r)\rightarrow \mathfrak{M}$ is monoidal.

$\left( 2\right) $ There is an invertible element $\alpha \in H\otimes H$
such that $H_{\alpha }$ is an ordinary bialgebra.

$\left( 3\right) $ There is an invertible element $\alpha \in H\otimes H$
such that
\begin{gather}
\phi =\left( H\otimes \Delta \right) \left( \alpha ^{-1}\right) \cdot \left(
1_{H}\otimes \alpha ^{-1}\right) \cdot \left( \alpha \otimes 1_{H}\right)
\cdot \left( \Delta \otimes H\right) \left( \alpha \right) ,
\label{form:triavial1} \\
\left( \varepsilon _{H}\otimes H\right) \left( \alpha \right) =\lambda
,\qquad \left( H\otimes \varepsilon _{H}\right) \left( \alpha \right) =\rho .
\label{form:triavial2}
\end{gather}%
Moreover, if $\left( 2\right) $ holds, we can choose $\psi _{2}\left(
V,W\right) \left( v\otimes w\right) :=\alpha ^{-1}\left( v\otimes w\right) $.
\end{lemma}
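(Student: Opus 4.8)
The plan is to establish the two equivalences $(2)\Leftrightarrow(3)$ and $(1)\Leftrightarrow(3)$ separately. The first is a purely algebraic unwinding of the gauge transformation $H\mapsto H_{\alpha}$, while the second rests on a representability argument that pins down all possible natural transformations $\psi_{2}$ and then reads the monoidal-functor axioms as identities in $H\otimes H$ and $H^{\otimes3}$.

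For $(2)\Leftrightarrow(3)$ I would just inspect the definition of $H_{\alpha}=(H,m,u,\Delta_{\alpha},\varepsilon,\phi_{\alpha},\lambda_{\alpha},\rho_{\alpha})$. By construction $H_{\alpha}$ is an ordinary bialgebra exactly when $\lambda_{\alpha}=\rho_{\alpha}=1_{H}$ and $\phi_{\alpha}=1_{H}\otimes1_{H}\otimes1_{H}$. Since $\varepsilon_{H}\otimes H$ and $H\otimes\varepsilon_{H}$ are algebra maps, so that $(\varepsilon_{H}\otimes H)(\alpha^{-1})=[(\varepsilon_{H}\otimes H)(\alpha)]^{-1}$ and similarly on the right, the conditions $\lambda_{\alpha}=1_{H}$ and $\rho_{\alpha}=1_{H}$ become $(\varepsilon_{H}\otimes H)(\alpha)=\lambda$ and $(H\otimes\varepsilon_{H})(\alpha)=\rho$, i.e. \eqref{form:triavial2}. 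Solving $\phi_{\alpha}=1_{H}^{\otimes3}$ for $\phi$ — moving the outer factors $(1_{H}\otimes\alpha)(H\otimes\Delta)(\alpha)$ and $(\Delta\otimes H)(\alpha^{-1})(\alpha^{-1}\otimes1_{H})$ across and using $(\Delta\otimes H)(\alpha^{-1})=(\Delta\otimes H)(\alpha)^{-1}$ — yields precisely \eqref{form:triavial1}. This step is routine once the inverses are tracked carefully.

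For $(1)\Leftrightarrow(3)$ the conceptual key is to determine every candidate for $\psi_{2}$. I would use that $H$, as the regular left module, is a free generator: each left $H$-module map $H\to U$ has the form $1_{H}\mapsto u$ for a unique $u\in U$. Putting $\beta:=\psi_{2}(H,H)(1_{H}\otimes1_{H})\in H\otimes H$ and applying naturality of $\psi_{2}$ to the pair of module maps $\widehat{u}\colon H\to U$, $\widehat{v}\colon H\to V$ determined by $u,v$, I get $\psi_{2}(U,V)(u\otimes v)=\beta^{1}u\otimes\beta^{2}v$, the diagonal action of $\beta$; conversely any $\beta$ produces such a natural transformation. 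Since the inverse of a natural isomorphism is again natural, it too has this form, which forces $\beta$ to be invertible in $H\otimes H$. Setting $\alpha:=\beta^{-1}$ matches the normalization $\psi_{2}(V,W)(v\otimes w)=\alpha^{-1}(v\otimes w)$ of the statement, and also settles the final sentence of the lemma.

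With $\psi_{2}$ so described, the remaining task — and the main obstacle, being the only genuinely computational point — is to translate the associativity coherence axiom (the hexagon in the definition of monoidal functor) and the two unit axioms into identities in $H^{\otimes3}$ and $H$. Here $\mathfrak{M}$ is strict while the constraints of ${_{H}}\mathfrak{M}$ are the actions of $\phi,\lambda,\rho$. Evaluating the two composites of the coherence square on $(u\otimes v)\otimes w$ for $U=V=W=H$ and $u=v=w=1_{H}$, the left composite becomes the action of $\phi\cdot(\Delta\otimes H)(\alpha^{-1})\cdot(\alpha^{-1}\otimes1_{H})$ and the right composite the action of $(H\otimes\Delta)(\alpha^{-1})\cdot(1_{H}\otimes\alpha^{-1})$, whose equality is exactly \eqref{form:triavial1}. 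Likewise the unit axioms $F(l_{U})\circ\psi_{2}(\Bbbk,U)=l'_{FU}$ and $F(r_{U})\circ\psi_{2}(U,\Bbbk)=r'_{FU}$, evaluated at $U=H$, reduce via $h\cdot1_{\Bbbk}=\varepsilon(h)1_{\Bbbk}$ to $\lambda\,(\varepsilon\otimes H)(\alpha^{-1})=1_{H}$ and $\rho\,(H\otimes\varepsilon)(\alpha^{-1})=1_{H}$, i.e. \eqref{form:triavial2}. The only care required is the bookkeeping of the nested diagonal $H$-actions and the order of multiplication; nothing beyond the representability step is needed conceptually.
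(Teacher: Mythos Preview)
Your argument is correct. For $(2)\Leftrightarrow(3)$ you do exactly what the paper does: unwind the conditions $\phi_{\alpha}=1_{H}^{\otimes 3}$, $\lambda_{\alpha}=\rho_{\alpha}=1_{H}$ in terms of $\alpha$. For the equivalence with $(1)$ the paper simply cites \cite[Proposition 1]{ABM-HomLie} for $(1)\Leftrightarrow(2)$, whereas you supply the argument in full, going directly $(1)\Leftrightarrow(3)$: the representability trick (using the regular module $H$ to identify natural $\psi_{2}$'s with invertible elements of $H\otimes H$) together with the evaluation of the hexagon and unit axioms at $H$ is precisely the standard proof of the cited result. So the content is the same; your version is self-contained where the paper outsources, and your choice to link $(1)$ with $(3)$ rather than $(2)$ is immaterial since $(2)\Leftrightarrow(3)$ is a rewriting.
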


\begin{proof}
$\left( 1\right) \Leftrightarrow \left( 2\right) $ Cf. \cite[Proposition 1]%
{ABM-HomLie}. $\left( 2\right) \Leftrightarrow \left( 3\right) $ We have
that $H_{\alpha }$ is an ordinary bialgebra if and only if $\phi _{\alpha
}=1_{H}\otimes 1_{H}\otimes 1_{H},\lambda _{\alpha }=1_{H}$ and $\rho
_{\alpha }=1_{H},$ if and only if $\alpha $ fulfills the equations in $%
\left( 3\right) $.
\end{proof}

Note that $F:{_{H}}\mathfrak{M}\rightarrow \mathfrak{M}$ is clearly
conservative and preserves equalizers, epimorphisms and coequalizers.
Furthermore we need ${_{H}}\mathfrak{M}$ to be braided.

\begin{theorem}
\label{teo:LieQuasiTriang}Let $\left( H,m,u,\Delta ,\varepsilon ,\phi
,\lambda ,\rho \right) $ be a quasi-bialgebra such that (\ref{form:triavial1}%
) and (\ref{form:triavial2}) hold for some invertible element $\gamma \in
H\otimes H$. Let $\mathcal{M}$ be the monoidal category $({_{H}}\mathfrak{M}%
,\otimes ,\Bbbk ,a,l,r)$ of left modules over $H$. Assume $\mathrm{char}%
\Bbbk =0.$ Then $\mathcal{M}$ is an MM-category. In particular, if $\left(
H,m,u,\Delta ,\varepsilon ,\phi ,\lambda ,\rho \right) $ is endowed with a
triangular structure, then $\mathcal{M}$ is a symmetric MM-category.
\end{theorem}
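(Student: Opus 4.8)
The plan is to deduce the statement directly from Theorem \ref{teo:LieSymNew}, whose hypotheses are exactly suited to this setting: the only substantive point is to equip the forgetful functor $F:{_{H}}\mathfrak{M}\rightarrow \mathfrak{M}$ with a monoidal structure, after which the remaining assumptions are routine facts about module categories over a field.

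First I would invoke Lemma \ref{lem:gamma}. By hypothesis the invertible element $\gamma \in H\otimes H$ satisfies (\ref{form:triavial1}) and (\ref{form:triavial2}), which is precisely condition $(3)$ of that lemma with $\alpha =\gamma$. Hence condition $(1)$ holds, providing a natural transformation $\psi _{2}$ for which $\left( F,\mathrm{Id}_{\Bbbk },\psi _{2}\right) :({_{H}}\mathfrak{M},\otimes ,\Bbbk ,a,l,r)\rightarrow \mathfrak{M}$ is a monoidal functor; explicitly one may take $\psi _{2}\left( V,W\right) \left( v\otimes w\right) =\gamma ^{-1}\left( v\otimes w\right) $.

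Next I would check the remaining hypotheses of Theorem \ref{teo:LieSymNew} for $\mathcal{M}={_{H}}\mathfrak{M}$. The category of left $H$-modules is abelian and has denumerable coproducts (direct sums). Since the tensor product of ${_{H}}\mathfrak{M}$ is $\otimes _{\Bbbk }$ equipped with the diagonal action, and since both exactness and preservation of coproducts are detected on underlying vector spaces, the tensor functors of ${_{H}}\mathfrak{M}$ are exact and preserve denumerable coproducts. Moreover $F$ is conservative, exact and preserves denumerable coproducts, as already remarked before the statement. Combining these facts with the monoidal structure produced in the previous step, all the assumptions of Theorem \ref{teo:LieSymNew} are satisfied, and we conclude that $\mathcal{M}={_{H}}\mathfrak{M}$ is an MM-category. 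Note that this first conclusion requires no (quasi-)triangularity of $H$, in accordance with the fact that the notion of MM-category is formulated for an abelian monoidal category which need not be braided.

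Finally, for the last assertion I would use the description of the braiding recalled above: when $H$ carries a triangular structure, i.e. a quasi-triangular element $R$ with $R^{2}\otimes R^{1}=R^{-1}$, the monoidal category ${_{H}}\mathfrak{M}$ is symmetric. Thus $\mathcal{M}$ is a symmetric monoidal category which is also an MM-category, that is a symmetric MM-category. I do not anticipate any genuine obstacle here; the one step requiring attention is the identification of the hypotheses (\ref{form:triavial1})--(\ref{form:triavial2}) with condition $(3)$ of Lemma \ref{lem:gamma}, which is what upgrades the forgetful functor to a monoidal one, everything else being standard for module categories over a field.
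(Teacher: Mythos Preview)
Your proof is correct and follows essentially the same approach as the paper: both invoke Lemma \ref{lem:gamma} to equip the forgetful functor with a monoidal structure via the element $\gamma$, verify the standing hypotheses on ${_{H}}\mathfrak{M}$ and $F$, and then apply Theorem \ref{teo:LieSymNew}; the triangular case is handled identically by noting that the resulting braiding is symmetric.
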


\begin{proof}
First note that $\mathcal{M}$ is a Grothendieck category. In $\mathcal{M}$
the tensor products are exact and preserve denumerable coproducts. We can
apply Theorem \ref{teo:LieSymNew} to the monoidal functor $\left( F,\mathrm{%
Id}_{\Bbbk },\psi _{2}\right) :({_{H}}\mathfrak{M},\otimes ,\Bbbk
,a,l,r)\rightarrow \mathfrak{M}$ of Lemma \ref{lem:gamma}. Then $\mathcal{M}$
is an MM-category.

If $\left( H,m,u,\Delta ,\varepsilon ,\phi ,\lambda ,\rho \right) $ is
endowed with a triangular structure, by the foregoing $\mathcal{M}$ is also
symmetric monoidal.
\end{proof}

\begin{example}
Let $H$ be a bialgebra over a field $\Bbbk $ of characteristic zero. Then $H$
is a quasi-bialgebra with $\phi ,\lambda ,\rho $ trivial. Note that (\ref%
{form:triavial1}) and (\ref{form:triavial2}) hold for $\gamma =\varepsilon
_{H}\otimes \varepsilon _{H}.$ Thus, by Theorem \ref{teo:LieQuasiTriang},
the monoidal category ${_{H}}\mathfrak{M}$ of left modules over $H$ is an
MM-category.
\end{example}

\begin{example}
Examples of triangular quasi-bialgebra structures on the group algebra $%
\Bbbk \left[ G\right] $ over a torsion-free abelian group $G$ are
investigated in \cite[Proposition 3]{ABM-HomLie}. Consider the particular
case when $G=\left\langle g\right\rangle $ is the group $\mathbb{Z}$ in
multiplicative notation, where $g$ is a generator. Let $\left( q,a,b\right)
\in (\Bbbk \backslash \{0\})\times \mathbb{Z}\times \mathbb{Z}.$ In view of
\cite[Proposition 3]{ABM-HomLie}, we have the triangular quasi-bialgebra
\begin{equation*}
\Bbbk \lbrack \left\langle g\right\rangle ]_{q}^{a,b}=\left( \Bbbk \lbrack
\left\langle g\right\rangle ],\Delta ,\varepsilon ,\phi ,\lambda ,\rho
,R\right)
\end{equation*}%
on the group algebra $\Bbbk \lbrack \left\langle g\right\rangle ]$ which is
defined by
\begin{eqnarray*}
\Delta \left( g\right) &=&g\otimes g,\qquad \varepsilon \left( g\right)
=1,\qquad \phi =g^{a}\otimes 1_{H}\otimes g^{b} \\
\lambda &=&qg^{-b},\qquad \rho =qg^{a},\qquad R=g^{a+b}\otimes g^{-a-b}.
\end{eqnarray*}%
In order to apply Theorem \ref{teo:LieQuasiTriang} in case $H=\Bbbk \lbrack
\left\langle g\right\rangle ]_{q}^{a,b}$, we must check that (\ref%
{form:triavial1}) and (\ref{form:triavial2}) hold for some invertible
element $\gamma \in H\otimes H$. By \cite[Theorem 2]{ABM-HomLie}, one has $%
\Bbbk \lbrack \left\langle g\right\rangle ]_{q}^{a,b}=\Bbbk \lbrack
\left\langle g\right\rangle ]_{\alpha }$ where $\alpha :=q^{-1}g^{-a}\otimes
g^{b}$ and $\Bbbk \lbrack \left\langle g\right\rangle ]$ is the usual
bialgebra structure on the group algebra regarded as a trivial triangular
quasi-bialgebras (i.e. $\phi ,\lambda ,\rho ,R$ are all trivial). Set $%
\gamma :=\alpha ^{-1}=qg^{a}\otimes g^{-b}.$ Then $H_{\gamma }=\Bbbk
\left\langle g\right\rangle $ which is an ordinary bialgebra so that, by
Lemma \ref{lem:gamma} we have that (\ref{form:triavial1}) and (\ref%
{form:triavial2}) hold for our $\gamma $. Hence by Theorem \ref%
{teo:LieQuasiTriang} the symmetric monoidal category $({_{H}}\mathfrak{M}%
,\otimes ,\Bbbk ,a,l,r)$ of left modules over $H$ is an MM-category.
\end{example}

\begin{definition}
Let $\mathcal{C}$ be an ordinary category. Following \cite[Section 1]{CG},
we associate to $\mathcal{C}$ a new category $\mathcal{H}\left( \mathcal{C}%
\right) $ as follows. Objects are pairs $\left( M,f_{M}\right) $ with $M\in
\mathcal{C}$ and $f_{M}\in \mathrm{Aut}_{\mathcal{C}}(M).$ A morphism $\xi
:\left( M,f_{M}\right) \rightarrow \left( N,f_{N}\right) $ is a morphism $%
\xi :M\rightarrow N$ in $\mathcal{C}$ such that $f_{N}\circ \xi =\xi \circ
f_{M}.$ The category $\mathcal{H}\left( \mathcal{C}\right) $ is called the%
\emph{\ Hom-category} associated to $\mathcal{C}$.
\end{definition}

\begin{example}
\label{ex:HH} Take $\mathcal{C}:=\mathfrak{M.}$ In view of \cite[Theorem 4]%
{ABM-HomLie}, to each datum $\left( q,a,b\right) \in (\Bbbk \backslash
\{0\})\times \mathbb{Z}\times \mathbb{Z}$ one associates a monoidal category
\begin{equation*}
\mathcal{H}_{q}^{a,b}(\mathfrak{M})=(\mathcal{H}(\mathfrak{M}),\otimes
,(\Bbbk ,f_{\Bbbk }),a,l,r)
\end{equation*}%
which consists of the category $\mathcal{H}(\mathfrak{M})$ equipped with a
suitable braided (actually symmetric) monoidal structure. By \cite[Theorem 4]%
{ABM-HomLie} there is a strict symmetric monoidal category isomorphism
\begin{equation*}
\left( W,w_{0},w_{2}\right) :{_{\Bbbk \lbrack \left\langle g\right\rangle
]_{q}^{a,b}}}\mathfrak{M}\rightarrow \mathcal{H}_{q}^{a,b}\left( \mathfrak{M}%
\right) .
\end{equation*}%
The underlying functor $W:{_{\Bbbk \lbrack \langle g\rangle ]}}\mathfrak{M}%
\rightarrow \mathcal{H}\left( \mathfrak{M}\right) $ is given on objects by%
\begin{equation*}
W\left( X,\mu _{X}:\Bbbk \lbrack \left\langle g\right\rangle ]\otimes
X\rightarrow X\right) =\left( X,f_{X}:X\rightarrow X\right) ,
\end{equation*}%
where $f_{X}\left( x\right) :=\mu _{X}\left( g\otimes x\right) ,$ for all $%
x\in X$, and on morphisms by $W\xi =\xi $.

Composing $W^{-1}$ with the forgetful functor ${_{\Bbbk \lbrack \left\langle
g\right\rangle ]_{q}^{a,b}}}\mathfrak{M}\rightarrow \mathfrak{M}$ we get a
monoidal functor $\mathcal{H}_{q}^{a,b}\left( \mathfrak{M}\right)
\rightarrow \mathfrak{M}$ to which we can apply Theorem \ref{teo:LieSymNew}
to get that $\mathcal{H}_{q}^{a,b}\left( \mathfrak{M}\right) $ is an
MM-category.
\end{example}

\begin{remark}
\label{rem:HH} By \cite[Proposition 5]{ABM-HomLie}, $\mathcal{M}:=\mathcal{H}%
_{1}^{1,-1}\left( \mathfrak{M}\right) $ is the symmetric braided monoidal
category $\widetilde{\mathcal{H}}\left( \mathfrak{M}\right) $ of \cite[%
Proposition 1.1]{CG}. Thus, by the foregoing, $\widetilde{\mathcal{H}}\left(
\mathfrak{M}\right) $ is an MM-category. By \cite[page 2236]{CG}, an object
in $\left( M,\left[ -\right] \right) \in \mathrm{Lie}_{\mathcal{M}}$ is
nothing but a Hom-Lie algebra. By Remark \ref{rem:UbarNature}, $\overline{%
\mathcal{U}}\left( M,\left[ -\right] \right) $ as a bialgebra is a quotient
of $\overline{T}M$. The morphism giving the projection is induced by the
canonical projection $p_{R}:\Omega TM\rightarrow R:=\mathcal{U}_{\mathrm{Br}%
}^{s}J_{\mathrm{Lie}}\left( M,\left[ -\right] \right) $ defining the
universal enveloping algebra. At algebra level we have%
\begin{eqnarray*}
\mho \overline{\mathcal{U}}\left( M,\left[ -\right] \right) &=&\frac{TM}{%
\left( f_{J_{\mathrm{Lie}}^{s}\left( M,\left[ -\right] \right) }\left(
x\otimes y\right) |x,y\in M\right) }=\frac{TM}{\left( \left[ x,y\right]
-\theta _{\left( M,c_{M,M}\right) }\left( x\otimes y\right) |x,y\in M\right)
} \\
&=&\frac{TM}{\left( \left[ x,y\right] -x\otimes y+c_{M,M}\left( x\otimes
y\right) |x,y\in M\right) }=\frac{TM}{\left( \left[ x,y\right] -x\otimes
y+y\otimes x|x,y\in M\right) }
\end{eqnarray*}%
which is the Hom-version of the universal enveloping algebra, see \cite[%
Section 8]{CG}. Note that, as a by-product, we have that $\overline{\eta }_{%
\mathrm{L}}:\mathrm{Id}_{\mathrm{Lie}_{\mathcal{M}}}\rightarrow \mathcal{P}%
\overline{\mathcal{U}}$ is an isomorphism so that $\left( M,\left[ -\right]
\right) \cong \mathcal{P}\overline{\mathcal{U}}\left( M,\left[ -\right]
\right) .$
\end{remark}

\subsection{Dual Quasi-Bialgebras\label{subs:DualQuasiBialgebras}}

First, observe that dual quasi-bialgebras can be understood as a dual
version of quasi-bialgebras just in the finite-dimensional case. In fact,
for an infinite-dimensional quasi-bialgebra $H$ (as in the case for $H=\Bbbk
\mathbb{Z}$ considered above) it is not true that the dual is a dual
quasi-bialgebra so that the results in the two settings are independent, in
general.

\begin{definition}
A \emph{dual quasi-bialgebra} is a datum $(H,m,u,\Delta ,\varepsilon ,\omega
)$ where $(H,\Delta ,\varepsilon )$ is a coassociative coalgebra, $%
m:H\otimes H\rightarrow H$ and $u:\Bbbk \rightarrow H$ are coalgebra maps
called multiplication and unit respectively, we set $1_{H}:=u(1_{\Bbbk })$, $%
\omega :H\otimes H\otimes H\rightarrow \Bbbk $ is a unital $3$-cocycle i.e.
it is convolution invertible and satisfies%
\begin{eqnarray}
\omega \left( H\otimes H\otimes m\right) \ast \omega \left( m\otimes
H\otimes H\right) &=&\left( \varepsilon \otimes \omega \right) \ast \omega
\left( H\otimes m\otimes H\right) \ast \left( \omega \otimes \varepsilon
\right)  \label{eq:3-cocycle} \\
\text{and}\quad \omega \left( h\otimes k\otimes l\right) &=&\varepsilon
\left( h\right) \varepsilon \left( k\right) \varepsilon \left( l\right)
\qquad \text{whenever}\qquad 1_{H}\in \{h,k,l\}.
\label{eq:qusi-unitairity cocycle}
\end{eqnarray}

Moreover $m$ is quasi-associative and unitary i.e. it satisfies%
\begin{gather*}
m\left( H\otimes m\right) \ast \omega =\omega \ast m\left( m\otimes H\right)
, \\
m\left( 1_{H}\otimes h\right) =h\qquad \text{and}\qquad m\left( h\otimes
1_{H}\right) =h,\qquad \text{for all }h\in H.
\end{gather*}%
The map $\omega $ is called \textit{the }\emph{reassociator} of the dual
quasi-bialgebra.

A morphism of dual quasi-bialgebras $\Xi :\left( H,m,u,\Delta ,\varepsilon
,\omega \right) \rightarrow \left( H^{\prime },m^{\prime },u^{\prime
},\Delta ^{\prime },\varepsilon ^{\prime },\omega ^{\prime }\right) $ is a
coalgebra homomorphism $\Xi :\left( H,\Delta ,\varepsilon \right)
\rightarrow \left( H^{\prime },\Delta ^{\prime },\varepsilon ^{\prime
}\right) $ such that $m^{\prime }(\Xi \otimes \Xi )=\Xi m,$ $\Xi u=u^{\prime
}$ and $\omega ^{\prime }\left( \Xi \otimes \Xi \otimes \Xi \right) =\omega
. $ It is an isomorphism of dual quasi-bialgebras if, in addition, it is
invertible.

A dual quasi-subbialgebra of a dual quasi-bialgebra $H^{\prime }$ is a
quasi-bialgebra $H$ such that $H$ is a vector subspace of $H^{\prime }$ and
the canonical inclusion is a morphism of dual quasi-bialgebras.
\end{definition}

Let $(H,m,u,\Delta ,\varepsilon ,\omega )$ be a dual quasi-bialgebra. It is
well-known that the category $\mathfrak{M}^{H}$ of right $H$-comodules
becomes a monoidal category as follows. Given a right $H$-comodule $V$, we
denote by $\rho =\rho _{V}^{r}:V\rightarrow V\otimes H,\rho (v)=v_{0}\otimes
v_{1}$, its right $H$-coaction. The tensor product of two right $H$%
-comodules $V$ and $W$ is a comodule via diagonal coaction i.e. $\rho \left(
v\otimes w\right) =v_{0}\otimes w_{0}\otimes v_{1}w_{1}.$ The unit is $\Bbbk
,$ which is regarded as a right $H$-comodule via the trivial coaction i.e. $%
\rho \left( \Bbbk \right) =\Bbbk \otimes 1_{H}$. The associativity and unit
constraints are defined, for all $U,V,W\in \mathfrak{M}^{H}$ and $u\in
U,v\in V,w\in W,\Bbbk \in \Bbbk ,$ by $a_{U,V,W}((u\otimes v)\otimes
w):=u_{0}\otimes (v_{0}\otimes w_{0})\omega (u_{1}\otimes v_{1}\otimes
w_{1}),$ $l_{U}(\Bbbk \otimes u):=\Bbbk u$ and $r_{U}(u\otimes \Bbbk
):=u\Bbbk .$ This monoidal category will be denoted by $(\mathfrak{M}%
^{H},\otimes ,\Bbbk ,a,l,r).$

Let $\left( H,m,u,\Delta ,\varepsilon ,\omega \right) $ be a dual
quasi-bialgebra. Let $v:H\otimes H\rightarrow \Bbbk $ be a \emph{gauge
transformation} i.e. a convolution invertible map such that $v\left(
1_{H}\otimes h\right) =\varepsilon \left( h\right) =v\left( h\otimes
1_{H}\right) $ for all $h\in H.$ Then $H^{v}:=\left( H,m^{v},u,\Delta
,\varepsilon ,\omega ^{v}\right) $ is also a dual quasi-bialgebra where
\begin{eqnarray}
m^{v} &:&=v\ast m\ast v^{-1}  \label{form: twisted mult} \\
\omega ^{v} &:&=\left( \varepsilon \otimes {v}\right) \ast {v}\left(
H\otimes m\right) \ast \omega \ast {v}^{-1}\left( m\otimes H\right) \ast
\left( {v}^{-1}\otimes \varepsilon \right) .  \label{form: reassociator}
\end{eqnarray}

\begin{definition}
A dual quasi-bialgebra $\left( H,m,u,\Delta ,\varepsilon ,\omega \right) $
is called \emph{quasi-co-triangular} whenever there exists $R\in \mathrm{Reg}%
\left( H^{\otimes 2},\Bbbk \right) $ such that%
\begin{align*}
R\left( m\otimes H\right) & =\left[
\begin{array}{c}
\omega \tau _{H\otimes H,H}\ast R\left( H\otimes l_{H}\right) \left(
H\otimes \varepsilon \otimes H\right) \\
\ast \omega ^{-1}\left( H\otimes \tau _{H,H}\right) \ast m_{\Bbbk }\left(
\varepsilon \otimes R\right) \ast \omega%
\end{array}%
\right] , \\
R\left( H\otimes m\right) & =\left[
\begin{array}{c}
\omega ^{-1}\tau _{H,H\otimes H}\ast R\left( H\otimes l_{H}\right) \left(
H\otimes \varepsilon \otimes H\right) \\
\ast \omega \left( \tau _{H,H}\otimes H\right) \ast m_{\Bbbk }\left(
R\otimes \varepsilon \right) \ast \omega ^{-1}%
\end{array}%
\right] , \\
m\tau _{H,H}\ast R& =R\ast m.
\end{align*}
\end{definition}

By \cite[Exercice 9.2.9, page 437]{Maj} \cite[dual to Proposition XIII.1.4,
page 318]{Kassel}, $\mathfrak{M}^{H}=\left( \mathfrak{M}^{H},\otimes _{\Bbbk
},\Bbbk ,a,l,r\right) $ is braided if and only if there is a map $R\in
\mathrm{Reg}\left( H^{\otimes 2},\Bbbk \right) $ such that $\left(
H,m,u,\Delta ,\varepsilon ,\omega ,R\right) $ is quasi-co-triangular. Note
that the braiding is given, for all $X,Y\in \mathfrak{M}^{H},$ by%
\begin{equation*}
c_{X,Y}:X\otimes Y\rightarrow Y\otimes X:x\otimes y\mapsto \sum
y_{\left\langle 0\right\rangle }\otimes x_{\left\langle 0\right\rangle
}R\left( x_{\left\langle 1\right\rangle }\otimes y_{\left\langle
1\right\rangle }\right) .
\end{equation*}%
Moreover $\mathfrak{M}^{H}$ is symmetric if and only if $c_{Y,X}\circ
c_{X,Y}=\mathrm{Id}_{X\otimes Y}$ for all $X,Y\in \mathfrak{M}^{H}$ i.e. if
and only if
\begin{equation*}
\sum x_{\left\langle 0\right\rangle }\otimes y_{\left\langle 0\right\rangle
}R\left( y_{\left\langle 1\right\rangle }\otimes x_{\left\langle
1\right\rangle }\right) R\left( x_{\left\langle 2\right\rangle }\otimes
y_{\left\langle 2\right\rangle }\right) =x\otimes y.
\end{equation*}%
This is equivalent to require that
\begin{equation}
R\left( h_{\left\langle 1\right\rangle }\otimes l_{\left\langle
1\right\rangle }\right) R\left( l_{\left\langle 2\right\rangle }\otimes
h_{\left\langle 2\right\rangle }\right) =\varepsilon _{H}\left( h\right)
\varepsilon _{H}\left( l\right) ,\text{ for every }h,l\in H.
\label{form:cotriang}
\end{equation}%
Such a dual quasi-bialgebra will be called a \emph{co-triangular dual
quasi-bialgebra}.

Let $(H,m,u,\Delta ,\varepsilon ,\omega )$ be a dual quasi-bialgebra. We
want to apply Theorem \ref{teo:LieSymNew} to the case $\mathcal{M}=\mathfrak{%
M}^{H}.$ We need a monoidal functor $\left( F,\phi _{0},\phi _{2}\right) :(%
\mathfrak{M}^{H},\otimes ,\Bbbk ,a,l,r)\rightarrow \mathfrak{M.}$ Take $F:%
\mathfrak{M}^{H}\rightarrow \mathfrak{M}$ to be the forgetful functor. Note
that $F$ is clearly conservative and preserves equalizers, epimorphisms and
coequalizers. Note also that we will further need $\mathfrak{M}^{H}$ to be
braided.

\begin{lemma}
\label{lem:gammaDual}Let $(H,m,u,\Delta ,\varepsilon ,\omega )$ be a dual
quasi-bialgebra. Let $F:\mathfrak{M}^{H}\rightarrow \mathfrak{M}$ be the
forgetful functor. The following are equivalent.

$\left( 1\right) $ There is a natural transformation $\psi _{2}$ such that $%
\left( F,\mathrm{Id}_{\Bbbk },\phi _{2}\right) :(\mathfrak{M}^{H},\otimes
,\Bbbk ,a,l,r)\rightarrow \mathfrak{M}$ is monoidal.

$\left( 2\right) $ There is a gauge transformation $v:H\otimes H\rightarrow
\Bbbk $ such that $H^{v}$ is an ordinary bialgebra.

$\left( 3\right) $ There is a gauge transformation $v:H\otimes H\rightarrow
\Bbbk $ such that
\begin{equation}
\omega ={v}^{-1}\left( H\otimes m\right) \ast \left( \varepsilon \otimes {v}%
^{-1}\right) \ast \left( {v}\otimes \varepsilon \right) \ast {v}\left(
m\otimes H\right)  \label{form:triavialOmega}
\end{equation}%
Moreover, if $\left( 2\right) $ holds, we can choose $\phi _{2}\left(
V,W\right) \left( x\otimes y\right) =x_{0}\otimes y_{0}v^{-1}\left(
x_{1}\otimes y_{1}\right) $.
\end{lemma}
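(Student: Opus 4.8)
The plan is to follow the template of the proof of Lemma \ref{lem:gamma}, establishing the two equivalences $(2)\Leftrightarrow(3)$ by a direct twisting computation and $(1)\Leftrightarrow(2)$ by analysing the monoidal-functor axioms (equivalently, by invoking the comodule-theoretic dual of \cite[Proposition 1]{ABM-HomLie}), with products systematically replaced by convolution products and actions by coactions.

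For $(2)\Leftrightarrow(3)$ I would start from the observation that, since $H^{v}=\left( H,m^{v},u,\Delta ,\varepsilon ,\omega ^{v}\right) $ is again a dual quasi-bialgebra for every gauge transformation $v$, it is an ordinary bialgebra precisely when its reassociator is trivial, i.e. $\omega ^{v}=\varepsilon \otimes \varepsilon \otimes \varepsilon $; indeed, once $\omega ^{v}$ is trivial the quasi-associativity axiom forces $m^{v}$ to be associative, while $m^{v}$ and $u$ are coalgebra maps by construction. I would then impose $\omega ^{v}=\varepsilon \otimes \varepsilon \otimes \varepsilon$ in the defining formula \eqref{form: reassociator} and solve for $\omega$ inside the convolution algebra $\mathrm{Reg}\left( H^{\otimes 3},\Bbbk \right) $. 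Using that $H\otimes m$ and $m\otimes H$ are coalgebra maps, so that the convolution inverse of ${v}\left( H\otimes m\right) $ is ${v}^{-1}\left( H\otimes m\right) $ and likewise for the other composite factor, together with $\left( \varepsilon \otimes {v}\right) ^{-1}=\varepsilon \otimes {v}^{-1}$ and $\left( {v}^{-1}\otimes \varepsilon \right) ^{-1}={v}\otimes \varepsilon$, convolving both sides by the appropriate inverses yields exactly \eqref{form:triavialOmega}. Every step is reversible, so this closes $(2)\Leftrightarrow(3)$ with nothing beyond bookkeeping in $\mathrm{Reg}\left( H^{\otimes 3},\Bbbk \right)$.

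For $(1)\Leftrightarrow(2)$ I would identify the datum of a monoidal structure $\left( F,\mathrm{Id}_{\Bbbk },\phi _{2}\right) $ on the forgetful functor with that of a gauge transformation trivialising $\omega$. A natural transformation $\phi _{2}\left( V,W\right) :V\otimes W\rightarrow V\otimes W$, natural in $V,W\in \mathfrak{M}^{H}$, is determined by a single linear map $w:H\otimes H\rightarrow \Bbbk$ via $\phi _{2}\left( V,W\right) \left( x\otimes y\right) =x_{0}\otimes y_{0}\,w\left( x_{1}\otimes y_{1}\right) $, the functional $w$ being recovered by evaluating $\phi _{2}$ on the regular comodule $H$; moreover $\phi _{2}$ is a natural isomorphism if and only if $w$ is convolution invertible, in which case I set $v:=w^{-1}$. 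The two unit constraints of a monoidal functor then translate into the normalisations $v\left( 1_{H}\otimes h\right) =\varepsilon \left( h\right) =v\left( h\otimes 1_{H}\right) $, i.e. into $v$ being a gauge transformation, while the associativity hexagon, read with the trivial constraints of $\mathfrak{M}$ on one side and the constraint $a$ of $\mathfrak{M}^{H}$, which carries $\omega$, on the other, translates into the condition that $H^{v}$ have trivial reassociator, i.e. into $(2)$ (equivalently $(3)$, by the previous step). The final sentence of the lemma is then read off from $\phi _{2}\left( V,W\right) \left( x\otimes y\right) =x_{0}\otimes y_{0}\,v^{-1}\left( x_{1}\otimes y_{1}\right)$.

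The main obstacle will be the faithful transcription of the coherence axioms into identities of convolution-invertible maps $H^{\otimes n}\rightarrow \Bbbk$: keeping the direction of the coactions, the placement of $\omega$ versus $v$, and the order of the convolution factors consistent, so that the hexagon reproduces \eqref{form:triavialOmega} verbatim and the unit axioms match the gauge normalisation, and justifying the identification of $\phi _{2}$ with the functional $w$. The conceptual content, that trivialising the $3$-cocycle $\omega$ by a $2$-coboundary built from $v$ is the same as rigidifying the forgetful functor to a strong monoidal functor, is transparent; the work lies entirely in this bookkeeping.
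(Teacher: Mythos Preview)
Your proposal is correct and follows exactly the template the paper intends: its proof reads in full ``It is similar to the one of Lemma \ref{lem:gamma},'' and that lemma in turn handles $(1)\Leftrightarrow(2)$ by citing \cite[Proposition~1]{ABM-HomLie} and $(2)\Leftrightarrow(3)$ by unwinding when the twisted structure is an ordinary bialgebra. Your elaboration of both steps in the convolution algebra, including the identification of natural $\phi_2$'s with functionals on $H\otimes H$ and the bookkeeping that yields \eqref{form:triavialOmega}, is precisely the dualisation the paper has in mind.
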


\begin{proof}
It is similar to the one of Lemma \ref{lem:gamma}.
\end{proof}

\begin{lemma}
\label{lem:unitalQT}\cite[cf. Lemma 2.2.2]{Maj} Let $\left( H,m,u,\Delta
,\varepsilon ,\omega ,R\right) $ be a quasi-co-triangular dual
quasi-bialgebra. Then $R$ is unital i.e. $R\left( 1_{H}\otimes h\right)
=\varepsilon \left( h\right) =R\left( h\otimes 1_{H}\right) $ for all $h\in
H.$
\end{lemma}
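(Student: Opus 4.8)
The plan is to extract the two identities $R(1_H\otimes h)=\varepsilon(h)$ and $R(h\otimes 1_H)=\varepsilon(h)$ from the two hexagon-type axioms for $R$, by evaluating each axiom on a tensor in which $1_H$ occupies two of the three slots. Throughout I use that all the structure maps $\tau$, $l_H$, $m_{\Bbbk}$ appearing in the axioms live in $\mathfrak{M}$, that $u$ is a coalgebra map so that $\Delta(1_H)=1_H\otimes 1_H$ and $\varepsilon(1_H)=1_{\Bbbk}$, and that the reassociator collapses by \eqref{eq:qusi-unitairity cocycle} to a product of counits as soon as one of its arguments is $1_H$.

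First I would treat the axiom for $R(m\otimes H)$ and evaluate both sides on $1_H\otimes 1_H\otimes h$. The left-hand side is $R(m(1_H\otimes 1_H)\otimes h)=R(1_H\otimes h)$. On the right-hand side I expand the five-fold convolution product using the comultiplication of $H^{\otimes 3}$; since $\Delta(1_H)=1_H\otimes 1_H$, each copy of $1_H$ stays put while $h$ splits as $h_{(1)}\otimes\cdots\otimes h_{(5)}$. The three factors built from $\omega^{\pm 1}$ then reduce to $\varepsilon(h_{(1)})$, $\varepsilon(h_{(3)})$, $\varepsilon(h_{(5)})$ by \eqref{eq:qusi-unitairity cocycle} (once the flips $\tau$ and the leg map $H\otimes\varepsilon\otimes H$ have moved a $1_H$ into $\omega$), whereas the factor $R(H\otimes l_H)(H\otimes\varepsilon\otimes H)$ simplifies to $R(1_H\otimes h_{(2)})$ and the factor $m_{\Bbbk}(\varepsilon\otimes R)$ to $R(1_H\otimes h_{(4)})$. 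Collapsing the counits yields $R(1_H\otimes h)=R(1_H\otimes h_{(1)})\,R(1_H\otimes h_{(2)})$.

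Writing $f:=R(1_H\otimes-)\colon H\to\Bbbk$, the last relation says precisely $f=f\ast f$ in the convolution algebra $\mathrm{Hom}(H,\Bbbk)$. To conclude $f=\varepsilon$ I would show that $f$ is convolution invertible, with inverse $R^{-1}(1_H\otimes-)$: evaluating the identity $R\ast R^{-1}=\varepsilon\otimes\varepsilon$ (the convolution inverse of $R$ on $H\otimes H$) at $1_H\otimes h$ and using $\Delta(1_H)=1_H\otimes 1_H$ gives $R(1_H\otimes h_{(1)})\,R^{-1}(1_H\otimes h_{(2)})=\varepsilon(h)$, and symmetrically on the other side. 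Then $f\ast f=f=f\ast\varepsilon$ together with invertibility of $f$ forces $f=\varepsilon$, i.e. $R(1_H\otimes h)=\varepsilon(h)$. The identity $R(h\otimes 1_H)=\varepsilon(h)$ follows by the entirely parallel computation, this time evaluating the axiom for $R(H\otimes m)$ on $h\otimes 1_H\otimes 1_H$ and setting $g:=R(-\otimes 1_H)$.

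The only delicate point is the bookkeeping in the second paragraph: one must check that the braidings $\tau_{H\otimes H,H}$, $\tau_{H,H}$, the unit constraint $l_H$ and the leg maps interact with the placement of the two copies of $1_H$ exactly so that two of the five convolution factors return the sought value $R(1_H\otimes h_{(i)})$ while the remaining three become counits. Granting this routine verification, the argument is purely formal and uses no hypothesis beyond convolution invertibility of $R$ and the unital normalization \eqref{eq:qusi-unitairity cocycle} of $\omega$.
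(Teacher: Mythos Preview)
Your argument is correct and is precisely the standard one (as in Majid's Lemma~2.2.2, to which the paper refers): specialize the hexagon axiom for $R(m\otimes H)$ at $1_H\otimes 1_H\otimes h$, use the unitality \eqref{eq:qusi-unitairity cocycle} of $\omega$ to kill the three $\omega^{\pm1}$-factors, obtain that $f:=R(1_H\otimes-)$ is a convolution idempotent, and then use convolution invertibility of $R$ (restricted via $\Delta(1_H)=1_H\otimes 1_H$) to force $f=\varepsilon$; the other identity is symmetric. The paper itself does not supply a proof---it simply records the lemma with the citation---so there is nothing further to compare; your write-up fills in exactly the details that the reference would contain.
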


\begin{theorem}
\label{teo:LieCoquasiTriang}Let $(H,m,u,\Delta ,\varepsilon ,\omega )$ be a
dual quasi-bialgebra such that $\omega $ fulfills (\ref{form:triavialOmega})
for some gauge transformation $\gamma :H\otimes H\rightarrow \Bbbk $. Let $%
\mathcal{M}$ be the monoidal category $\left( \mathfrak{M}^{H},\otimes
_{\Bbbk },\Bbbk ,a,l,r\right) $ of right comodules over $H$. Assume $\mathrm{%
char}\Bbbk =0.$ Then $\mathcal{M}$ is an MM-category. In particular, if $%
(H,m,u,\Delta ,\varepsilon ,\omega )$ is endowed with a co-triangular
structure, then $\mathcal{M}$ is a symmetric MM-category.
\end{theorem}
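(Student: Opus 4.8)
The plan is to deduce the statement from Theorem \ref{teo:LieSymNew}, applied to $\mathcal{M}=(\mathfrak{M}^{H},\otimes_{\Bbbk},\Bbbk,a,l,r)$ together with the forgetful functor $F:\mathfrak{M}^{H}\rightarrow\mathfrak{M}$. This follows verbatim the strategy of the proof of Theorem \ref{teo:LieQuasiTriang} on the module side, with the role of Lemma \ref{lem:gamma} now played by Lemma \ref{lem:gammaDual}.

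First I would check the categorical hypotheses required by Theorem \ref{teo:LieSymNew}. The category $\mathfrak{M}^{H}$ of right comodules over the coalgebra underlying $H$ is a Grothendieck category, hence abelian and equipped with denumerable coproducts. Its monoidal product is the tensor product over $\Bbbk$ carrying the diagonal coaction, and exactness together with preservation of denumerable coproducts can be verified after applying $F$: since $F(V\otimes W)=F(V)\otimes_{\Bbbk}F(W)$, since $F$ is exact and conservative, and since $\otimes_{\Bbbk}$ is exact and preserves arbitrary coproducts on $\mathfrak{M}$, the tensor functors on $\mathfrak{M}^{H}$ are exact and preserve denumerable coproducts.

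Next I would produce the monoidal functor. By hypothesis $\omega$ satisfies (\ref{form:triavialOmega}) for a gauge transformation $\gamma$, so the implication $(3)\Rightarrow(1)$ of Lemma \ref{lem:gammaDual} supplies a natural transformation $\phi_{2}$, explicitly $\phi_{2}(V,W)(x\otimes y)=x_{0}\otimes y_{0}\gamma^{-1}(x_{1}\otimes y_{1})$, turning $(F,\mathrm{Id}_{\Bbbk},\phi_{2})$ into a monoidal functor $(\mathfrak{M}^{H},\otimes_{\Bbbk},\Bbbk,a,l,r)\rightarrow\mathfrak{M}$. As already observed just before Lemma \ref{lem:gammaDual}, $F$ is conservative and exact and preserves denumerable coproducts. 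All hypotheses of Theorem \ref{teo:LieSymNew} now hold, using $\mathrm{char}\Bbbk=0$, so $\mathcal{M}$ is an MM-category. For the last sentence, a co-triangular structure $R$ on $H$ means that (\ref{form:cotriang}) holds, and by the discussion preceding Lemma \ref{lem:gammaDual} this makes $(\mathfrak{M}^{H},\otimes_{\Bbbk},\Bbbk,a,l,r)$ a symmetric monoidal category; hence $\mathcal{M}$ is a symmetric MM-category.

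There is no genuinely hard step left here: the substance has been absorbed into Lemma \ref{lem:gammaDual} (which encodes that triviality of $\omega$ up to a gauge transformation is exactly what lets the forgetful functor be monoidal) and into the general lifting result Theorem \ref{teo:LieSymNew} (itself resting on Theorem \ref{teo:mainNew}). The only point deserving a little care is the verification that the tensor functors of $\mathfrak{M}^{H}$ are exact, which I would justify by transporting exactness through the conservative exact functor $F$ rather than by a direct comodule computation.
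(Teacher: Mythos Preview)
Your proposal is correct and follows essentially the same approach as the paper: apply Theorem~\ref{teo:LieSymNew} to the forgetful functor $F:\mathfrak{M}^{H}\rightarrow\mathfrak{M}$, made monoidal via Lemma~\ref{lem:gammaDual}, after checking that $\mathfrak{M}^{H}$ is a Grothendieck category whose tensor functors are exact and preserve denumerable coproducts. The paper's own proof simply refers back to the analogous argument for Theorem~\ref{teo:LieQuasiTriang}, and your write-up is a faithful unpacking of that reference.
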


\begin{proof}
It is analogous to the proof of Theorem \ref{teo:LieQuasiTriang} but using,
from Lemma \ref{lem:gammaDual}, the functor $\left( F,\mathrm{Id}_{\Bbbk
},\phi _{2}\right) :(\mathfrak{M}^{H},\otimes ,\Bbbk ,a,l,r)\rightarrow
\mathfrak{M}$.
\end{proof}

\begin{example}
Let $H$ be a bialgebra over a field $\Bbbk $ of characteristic zero. Then $H$
is a dual quasi-bialgebra with reassociator $\omega =\varepsilon _{H}\otimes
\varepsilon _{H}\otimes \varepsilon _{H}$. Note that $\omega $ fulfills (\ref%
{form:triavialOmega}) for $\gamma =\varepsilon _{H}\otimes \varepsilon _{H}.$
Thus, by Theorem \ref{teo:LieCoquasiTriang}, the monoidal category $%
\mathfrak{M}^{H}$ of right comodules over $H$ is an MM-category. In
particular, for $H=\Bbbk \left[ \mathbb{N}\right] ,$ the monoid bialgebra
over the naturals, defined by taking $\Delta n=n\otimes n$ and $\varepsilon
\left( n\right) =1$ for every $n\in \mathbb{N}$, then the category $%
\mathfrak{M}^{H}$ is the category of $\mathbb{N}$-graded vector spaces $%
V=\oplus _{n\in \mathbb{N}}V_{n}$ with monoidal structure having tensor
product given by $\left( V\otimes W\right) _{n}=\oplus _{i=0}^{n}\left(
V_{i}\otimes W_{n-i}\right) $ and unit $\Bbbk $ concentrated in degree $0$.
The constraints are the same of vector spaces. The category $\mathfrak{M}%
^{H} $ is braided with respect to the canonical flip (this can be seen by
showing that $R=\varepsilon _{H}\otimes \varepsilon _{H}$ turns $H$ into a
co-triangular bialgebra, see remark below).
\end{example}

\begin{remark}
\label{rem:cotriang}Let $\left( H,m,u,\Delta ,\varepsilon ,\omega ,R\right) $
be a co-triangular dual quasi-bialgebra. Assume that $\omega $ fulfills (\ref%
{form:triavialOmega}) for $\gamma =\varepsilon _{H}\otimes \varepsilon _{H}.$
This means $\omega =\varepsilon _{H}\otimes \varepsilon _{H}\otimes
\varepsilon _{H}$ and $\left( H,m,u,\Delta ,\varepsilon ,R\right) $ is a
co-triangular bialgebra i.e. for every $x,y,z\in H$ we have%
\begin{gather*}
R\left( xy\otimes z\right) =R\left( x\otimes z_{1}\right) R\left( y\otimes
z_{2}\right) ,\quad R\left( x\otimes yz\right) =R\left( x_{1}\otimes
z\right) R\left( x_{2}\otimes y\right) , \\
y_{1}x_{1}R\left( x_{2}\otimes y_{2}\right) =R\left( x_{1}\otimes
y_{1}\right) x_{2}y_{2}.
\end{gather*}%
Let $\left( M,\left[ -\right] \right) \in \mathrm{Lie}_{\mathcal{M}}.$ Then %
\ref{Lie1} and \ref{Lie3} become%
\begin{gather*}
\left[ x,y\right] =-\sum \left[ y_{\left\langle 0\right\rangle
},x_{\left\langle 0\right\rangle }\right] R\left( x_{\left\langle
1\right\rangle }\otimes y_{\left\langle 1\right\rangle }\right) , \\
\sum \left[ \left[ x,y\right] ,z\right] +\sum \left[ \left[ y_{\left\langle
0\right\rangle },z_{\left\langle 0\right\rangle }\right] ,x_{\left\langle
0\right\rangle }\right] R\left( x_{\left\langle 1\right\rangle }\otimes
y_{\left\langle 1\right\rangle }z_{\left\langle 1\right\rangle }\right)
+\sum \left[ \left[ z_{\left\langle 0\right\rangle },x_{\left\langle
0\right\rangle }\right] ,y_{\left\langle 0\right\rangle }\right] R\left(
x_{\left\langle 1\right\rangle }y_{\left\langle 1\right\rangle }\otimes
z_{\left\langle 1\right\rangle }\right) =0.
\end{gather*}%
This means that $\left( M,\left[ -\right] \right) $ is an $\left( H,R\right)
$-Lie algebra in the sense of \cite[Definition 4.1]{BFM}. By Remark \ref%
{rem:UbarNature}, $\overline{\mathcal{U}}\left( M,\left[ -\right] \right) $
as a bialgebra is a quotient of $\overline{T}M$. The morphism giving the
projection is induced by the canonical projection $p_{R}:\Omega
TM\rightarrow R:=\mathcal{U}_{\mathrm{Br}}^{s}J_{\mathrm{Lie}}\left( M,\left[
-\right] \right) $ defining the universal enveloping algebra. At algebra
level we have%
\begin{gather*}
\mathcal{U}\left( M,\left[ -\right] \right) \overset{(\ref{diag:Ubar})}{=}%
\mho \overline{\mathcal{U}}\left( M,\left[ -\right] \right) =\frac{TM}{%
\left( f_{J_{\mathrm{Lie}}^{s}\left( M,\left[ -\right] \right) }\left(
x\otimes y\right) |x,y\in M\right) } \\
=\frac{TM}{\left( \left[ x,y\right] -\theta _{\left( M,c_{M,M}\right)
}\left( x\otimes y\right) |x,y\in M\right) }=\frac{TM}{\left( \left[ x,y%
\right] -x\otimes y+c_{M,M}\left( x\otimes y\right) |x,y\in M\right) } \\
=\frac{TM}{\left( \left[ x,y\right] -x\otimes y+\sum y_{\left\langle
0\right\rangle }\otimes x_{\left\langle 0\right\rangle }R\left(
x_{\left\langle 1\right\rangle }\otimes y_{\left\langle 1\right\rangle
}\right) |x,y\in M\right) }
\end{gather*}%
which is the universal enveloping algebra of our $\left( H,R\right) $-Lie
algebra, see e.g. \cite[(2.6)]{FM}. Note that, as a by-product, we have that
$\overline{\eta }_{\mathrm{L}}:\mathrm{Id}_{\mathrm{Lie}_{\mathcal{M}%
}}\rightarrow \mathcal{P}\overline{\mathcal{U}}$ is an isomorphism so that $%
\left( M,\left[ -\right] \right) \cong \mathcal{P}\overline{\mathcal{U}}%
\left( M,\left[ -\right] \right) .$
\end{remark}

\begin{example}
\label{ex:colorLie} Let $\Bbbk $ be a field with $\mathrm{char}(\Bbbk )=0$
and let $G$ be an abelian group endowed with an anti-symmetric bicharacter $%
\chi :G\times G\rightarrow \Bbbk \setminus \{0\}$, i.e. for all $g,h,k\in G,$
we have:
\begin{equation*}
\chi (g,hk)=\chi (g,h)\chi (g,k),\quad \chi (gh,k)=\chi (g,k)\chi
(h,k),\quad \chi (g,h)\chi (h,g)=1.
\end{equation*}%
Extend $\chi $ by linearity to a $\Bbbk $-linear map $R:\Bbbk \left[ G\right]
\otimes \Bbbk \left[ G\right] \rightarrow \Bbbk $, where $\Bbbk \left[ G%
\right] $ denotes the group algebra. Then $\left( \Bbbk \left[ G\right]
,R\right) $ is a co-triangular bialgebra, cf. \cite[Example 2.2.5]{Maj}.
Hence, we can apply Theorem \ref{teo:LieCoquasiTriang} and Remark \ref%
{rem:cotriang} to $H=\Bbbk \left[ G\right] $. Note that the category $\left(
\mathfrak{M}^{H},\otimes _{\Bbbk },\Bbbk ,a,l,r,c\right) $ consists of $G$%
-graded modules $V=\oplus _{g\in G}V_{g}.$ Given $G$-graded modules $V\ $and
$W,$ their tensor product $V\otimes W$ is graded with $\left( V\otimes
W\right) _{g}:=\oplus _{hl=g}\left( V_{h}\otimes W_{l}\right) .$ The
braiding is given on homogeneous elements by%
\begin{equation*}
c_{V,W}:V\otimes W\rightarrow W\otimes V,\quad c_{V,W}(v\otimes w)=w\otimes
v\chi (|v|,|w|),
\end{equation*}%
where $|v|$ denotes the degree of $v.$ In this case a $\left( H,R\right) $%
-lie algebra $\left( V,\left[ -,-\right] \right) $ in the sense of \cite[%
Definition 4.1]{BFM} means
\begin{gather*}
\left[ x,y\right] =-\left[ y,x\right] \chi \left( |x|,|y|\right) , \\
\left[ \left[ x,y\right] ,z\right] +\sum \left[ \left[ y,z\right] ,x\right]
\chi \left( \left\vert x\right\vert ,\left\vert y\right\vert \left\vert
z\right\vert \right) +\sum \left[ \left[ z,x\right] ,y\right] \chi \left(
\left\vert x\right\vert \left\vert y\right\vert ,\left\vert z\right\vert
\right) =0.
\end{gather*}%
Multiplying by $\chi \left( \left\vert z\right\vert ,\left\vert x\right\vert
\right) $ the two sides of the second equality, we get the equivalent
\begin{equation*}
\left[ \left[ x,y\right] ,z\right] \chi \left( \left\vert z\right\vert
,\left\vert x\right\vert \right) +\sum \left[ \left[ y,z\right] ,x\right]
\chi \left( \left\vert x\right\vert ,\left\vert y\right\vert \right) +\sum %
\left[ \left[ z,x\right] ,y\right] \chi \left( \left\vert y\right\vert
,\left\vert z\right\vert \right) =0.
\end{equation*}%
This means that $\left( V,\left[ -,-\right] \right) $ is a $\left( G,\chi
\right) $-Lie color algebra in the sense of \cite[Example 10.5.14]{Mo}. Note
that the braiding defined in \cite[page 200]{Mo} is $c_{V,W}^{\prime
}(v\otimes w)=w\otimes v\chi (|w|,|v|)=w\otimes v\chi ^{-1}(|v|,|w|)$ so
that we should say more precisely that $\left( V,\left[ -,-\right] \right) $
is a $\left( G,\chi ^{-1}\right) $-Lie color algebra. The corresponding
enveloping algebra is%
\begin{equation*}
\mathcal{U}\left( M,\left[ -\right] \right) =\frac{TM}{\left( \left[ x,y%
\right] -x\otimes y+y\otimes x\chi \left( |x|,|y|\right) \mid x,y\in V\text{
homogeneous}\right) }.
\end{equation*}
\end{example}

\begin{example}
\label{ex:superLie} Lie superalgebras are a particular instance of the
construction above. One has to take $G=\mathbb{Z}_{2}$ and consider the
anti-symmetric bicharacter $\chi :G\times G\rightarrow \Bbbk \setminus \{0\}$
defined by $\chi \left( \overline{a},\overline{b}\right) :=\left( -1\right)
^{ab}$ for all $a,b\in \mathbb{Z}.$
\end{example}

\begin{example}
Let $G:=\left( \mathbb{Z},+,0\right) $. Let $\Bbbk $ be a field and let $%
q\in \Bbbk \setminus \left\{ 0\right\} $. Then it is easy to check that $%
\left\langle -,-\right\rangle :G\times G\rightarrow \Bbbk ,\left\langle
a,b\right\rangle :=q^{ab}$ is a bicharacter of $G$.
\end{example}

\begin{remark}
Let $\Bbbk $ be a field with $\mathrm{char}\left( \Bbbk \right) =0$. Let $H$
be a finite-dimensional Hopf algebra. By \cite[Proposition 6]{RT}, the
category of Yetter-Drinfeld modules ${_{H}^{H}\mathcal{YD}}$ and ${_{H}%
\mathcal{YD}}^{H}$ are isomorphic. Moreover, by \cite[Proposition 10.6.16]%
{Mo}, the ${_{H}\mathcal{YD}}^{H}$ can be identified with the category ${%
_{D\left( H\right) }}\mathfrak{M}$ of left modules over the Drinfeld double $%
D\left( H\right) .$ Now ${_{D\left( H\right) }}\mathfrak{M}\cong \mathfrak{M}%
^{D\left( H\right) ^{\ast }}$ and $D\left( H\right) ^{\ast }$ is a
quasi-co-triangular bialgebra. Thus we can identify ${_{H}^{H}\mathcal{YD}}$
with $\mathfrak{M}^{D\left( H\right) ^{\ast }}$. One is tempted to apply
Theorem \ref{teo:LieCoquasiTriang}. Unfortunately, $D\left( H\right) $ is
never triangular (whence $D\left( H\right) ^{\ast }$ is never co-triangular)
in view of \cite{Pa}, unless $H=\Bbbk $.
\end{remark}

\appendix

\section{(Co)equalizers and (co)monadicity}

\begin{definition}
\cite[page 112]{MacLane} Let $\mathcal{I}$ be a small category. Recall that
a functor $V:\mathcal{A}\rightarrow \mathcal{B}$ \textbf{creates limits}
\textbf{for a functor} $F:\mathcal{I}\rightarrow \mathcal{A}$ if in case $VF$
has a limit $\left( X,\left( \tau _{I}:X\rightarrow VFI\right) _{I\in
\mathcal{I}}\right) $, then there is exactly one pair $\left( L,\left(
\sigma _{I}:L\rightarrow FI\right) _{I\in \mathcal{I}}\right) $ which is a
limit of $F$ and such that $VL=X,$ $V\sigma _{I}=\tau _{I}$ for every $I\in
\mathcal{I}$. We just say that $V:\mathcal{A}\rightarrow \mathcal{B}$
\textbf{creates limits} if it creates limits for all functors $F:\mathcal{I}%
\rightarrow \mathcal{A}$ and for all small category $\mathcal{I}$. Similarly
one defines creation of colimits.
\end{definition}

\begin{lemma}
\label{lem:OmegaCreates}Let $\mathcal{M}$ be a monoidal category. Then the
functor $\Omega :\mathrm{Alg}_{\mathcal{M}}\rightarrow \mathcal{M}$ creates
limits and the functor $\mho :\mathrm{Coalg}_{\mathcal{M}}\rightarrow
\mathcal{M}$ creates colimits.
\end{lemma}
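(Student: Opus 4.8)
The plan is to verify the definition of creation of limits directly, transporting the algebra structure along the universal property of the limit in $\mathcal{M}$. Fix a small category $\mathcal{I}$ and a functor $F:\mathcal{I}\to \mathrm{Alg}_{\mathcal{M}}$, writing $F(I)=(A_{I},m_{I},u_{I})$ and denoting by $\Omega F(\phi)$ the underlying morphism of $F(\phi)$ for an arrow $\phi$ of $\mathcal{I}$. Suppose $\Omega F$ admits a limit $(X,(\tau_{I}:X\to A_{I})_{I})$ in $\mathcal{M}$; recall that the limiting cone $(\tau_{I})_{I}$ is jointly monic. First I would check that the families $\bigl(m_{I}\circ(\tau_{I}\otimes\tau_{I})\bigr)_{I}$ and $(u_{I})_{I}$ are again cones over $\Omega F$, now with vertices $X\otimes X$ and $\mathbf{1}$ respectively: this uses precisely that each $F(\phi)$ is a morphism of algebras, so that it commutes with the multiplications and units, together with the cone identity $\Omega F(\phi)\circ\tau_{I}=\tau_{J}$. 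Note that $X\otimes X$ is in general not itself a limit, so these cones must be produced by hand rather than read off from a universal property of $X\otimes X$. By the universal property of $(X,(\tau_{I}))$ there are then unique morphisms $m_{X}:X\otimes X\to X$ and $u_{X}:\mathbf{1}\to X$ with $\tau_{I}\circ m_{X}=m_{I}\circ(\tau_{I}\otimes\tau_{I})$ and $\tau_{I}\circ u_{X}=u_{I}$ for every $I$.

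Next I would check that $(X,m_{X},u_{X})$ is an algebra in $\mathcal{M}$. Both associativity and unitality are equalities between morphisms out of tensor powers of $X$; to verify each I compose with an arbitrary $\tau_{I}$, push the projections inward using the defining equations of $m_{X}$ and $u_{X}$, and invoke the corresponding axiom in the algebra $A_{I}$, taking care to move the associativity constraint $a$ (resp. the unit constraints $l,r$) across the projections by means of their naturality. Since the two sides then agree after composing with every $\tau_{I}$, and $(\tau_{I})$ is jointly monic, the axioms hold in $X$. By construction each $\tau_{I}$ is now a morphism of algebras, so $(\tau_{I})$ is a cone on $F$ in $\mathrm{Alg}_{\mathcal{M}}$ whose image under $\Omega$ is the original limiting cone.

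I would then show this cone is limiting in $\mathrm{Alg}_{\mathcal{M}}$ and that the lift is unique. Given any cone $(\sigma_{I}:(Y,m_{Y},u_{Y})\to F(I))_{I}$ in $\mathrm{Alg}_{\mathcal{M}}$, the underlying cone in $\mathcal{M}$ factors uniquely through a morphism $h:Y\to X$ with $\tau_{I}\circ h=\sigma_{I}$; the same compose-with-$\tau_{I}$ and joint monicity argument shows that $h$ is multiplicative and unital, hence a morphism of algebras, and it is the unique such factorization since $\Omega$ is faithful. Finally, uniqueness of the created limit is immediate: any algebra structure on $X$ making every $\tau_{I}$ an algebra morphism must satisfy the two defining equations above, which by joint monicity force $m_{X}$ and $u_{X}$. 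This proves that $\Omega$ creates limits. The assertion for $\mho:\mathrm{Coalg}_{\mathcal{M}}\to\mathcal{M}$ is the formal dual, obtained by replacing $\mathcal{M}$ with $\mathcal{M}^{\mathrm{op}}$ equipped with the opposite monoidal structure, under which coalgebras become algebras and creation of colimits becomes creation of limits.

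I expect the only genuinely delicate step to be the bookkeeping of the constraints in the verification of the algebra axioms for $(X,m_{X},u_{X})$: this is where the naturality of $a$, $l$, $r$ must be used to slide the projections $\tau_{I}$ past the constraints so that the axiom in each $A_{I}$ becomes applicable. Everything else is a routine consequence of the universal property and the joint monicity of the limiting cone.
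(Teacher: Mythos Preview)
Your argument is correct and is precisely the standard verification the paper has in mind: the paper's own proof reads simply ``It is straightforward.'' Your careful handling of the constraints via naturality and of uniqueness via joint monicity of the limiting cone is exactly what is needed, and your observation that $X\otimes X$ need not itself be a limit (so the cones on $X\otimes X$ and $\mathbf{1}$ must be built by hand) is the only point requiring any attention.
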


\begin{proof}
It is straightforward.
\end{proof}

\begin{claim}
\label{cl:CoeqAlg} Let $\mathcal{M}$ be a monoidal category. Assume that $%
\mathcal{M}$ has coequalizers and that the tensor functors preserve them. It
is well-known that $\mathrm{Alg}_{\mathcal{M}}$ has coequalizers, see e.g.
\cite[Proposition 2.1.5]{AEM}. Given an algebra morphism $\alpha
:E\rightarrow A$, consider $\Lambda _{\alpha }:=m_{A}^{2}\circ \left(
A\otimes \alpha \otimes A\right) $ of (\ref{def:Lambdaf}) where $%
m_{A}^{2}:A\otimes A\otimes A\rightarrow A$ is the iterated multiplication.
The coequalizer of algebra morphisms $\alpha ,\beta :E\rightarrow A$ is, as
an object in $\mathcal{M}$, the coequalizer $\left( B,\pi :A\rightarrow
B\right) $ of $\left( \Lambda _{\alpha },\Lambda _{\beta }\right) $ in $%
\mathcal{M}$ and the algebra structure is the unique one making $\pi $ an
algebra morphism.
\end{claim}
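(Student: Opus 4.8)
The plan is to construct the coequalizer by hand as a suitable quotient of $A$ in $\mathcal{M}$, equip it with an algebra structure, and verify its universal property directly in $\mathrm{Alg}_{\mathcal{M}}$; since coequalizers are unique, this identifies the (already-existing) coequalizer with the described object. First I would record the two bookkeeping identities that drive the construction. Writing $\Lambda_{\alpha},\Lambda_{\beta}\colon A\otimes E\otimes A\to A$ as in \eqref{def:Lambdaf} (so $\Lambda_{\alpha}=m_{A}^{2}\circ(A\otimes\alpha\otimes A)$), a short diagram chase using associativity of $m_{A}$ gives
\begin{equation*}
m_{A}\circ(A\otimes\Lambda_{\alpha})=\Lambda_{\alpha}\circ(m_{A}\otimes E\otimes A)\qquad\text{and}\qquad m_{A}\circ(\Lambda_{\alpha}\otimes A)=\Lambda_{\alpha}\circ(A\otimes E\otimes m_{A}),
\end{equation*}
and likewise for $\beta$. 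Moreover, using the unit axioms of $A$ together with the unit constraints, the composite $E\cong\mathbf{1}\otimes E\otimes\mathbf{1}\xrightarrow{u_{A}\otimes E\otimes u_{A}}A\otimes E\otimes A\xrightarrow{\Lambda_{\alpha}}A$ equals $\alpha$, and similarly for $\beta$. Precomposing $\pi\circ\Lambda_{\alpha}=\pi\circ\Lambda_{\beta}$ with $u_{A}\otimes E\otimes u_{A}$ then yields $\pi\circ\alpha=\pi\circ\beta$ in $\mathcal{M}$.

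Next I would put an algebra structure on $B$. Set $u_{B}:=\pi\circ u_{A}$. Since the tensor functors preserve coequalizers, each of $A\otimes\pi$ and $\pi\otimes B$ is again a coequalizer, hence an epimorphism, and so is $\pi\otimes\pi=(\pi\otimes B)\circ(A\otimes\pi)$. Using the first displayed identity I would check that $\pi\circ m_{A}$ coequalizes the pair $(A\otimes\Lambda_{\alpha},A\otimes\Lambda_{\beta})$, so it factors uniquely as $\mu\circ(A\otimes\pi)$ for a unique $\mu\colon A\otimes B\to B$; then, using the second identity and the fact that $(A\otimes E\otimes A)\otimes\pi$ is epi, I would check that $\mu$ coequalizes $(\Lambda_{\alpha}\otimes B,\Lambda_{\beta}\otimes B)$, hence factors uniquely as $m_{B}\circ(\pi\otimes B)$. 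By construction $m_{B}\circ(\pi\otimes\pi)=\pi\circ m_{A}$, which is exactly multiplicativity of $\pi$, while $u_{B}=\pi\circ u_{A}$ gives unitality. The associativity and unitality of $(B,m_{B},u_{B})$ then follow by cancelling the epimorphisms $\pi\otimes\pi\otimes\pi$ and $\pi$ from the corresponding identities for $A$, and the same cancellation shows that the algebra structure making $\pi$ a morphism of $\mathrm{Alg}_{\mathcal{M}}$ is unique.

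Finally I would verify the universal property in $\mathrm{Alg}_{\mathcal{M}}$. Given an algebra morphism $g\colon A\to C$ with $g\circ\alpha=g\circ\beta$, iterated multiplicativity of $g$ gives $g\circ\Lambda_{\alpha}=m_{C}^{2}\circ(g\otimes(g\circ\alpha)\otimes g)$ and the same with $\beta$, so $g\circ\alpha=g\circ\beta$ forces $g\circ\Lambda_{\alpha}=g\circ\Lambda_{\beta}$; the universal property of $(B,\pi)$ in $\mathcal{M}$ then produces a unique $\overline{g}\colon B\to C$ in $\mathcal{M}$ with $\overline{g}\circ\pi=g$. That $\overline{g}$ is an algebra morphism follows by cancelling the epimorphism $\pi\otimes\pi$ from $\overline{g}\circ m_{B}\circ(\pi\otimes\pi)=g\circ m_{A}=m_{C}\circ(\overline{g}\otimes\overline{g})\circ(\pi\otimes\pi)$ and from $\overline{g}\circ u_{B}=g\circ u_{A}=u_{C}$, while uniqueness of $\overline{g}$ in $\mathrm{Alg}_{\mathcal{M}}$ is immediate from $\pi$ being epi. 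Hence $(B,\pi)$ is the coequalizer of $(\alpha,\beta)$ in $\mathrm{Alg}_{\mathcal{M}}$, with underlying object the coequalizer of $(\Lambda_{\alpha},\Lambda_{\beta})$ in $\mathcal{M}$. The main obstacle is the two-step construction of $m_{B}$: this is precisely where the hypothesis that the tensor functors preserve coequalizers is needed, both to perform the successive factorizations through $A\otimes\pi$ and $\pi\otimes B$ and to license the epimorphism cancellations used throughout.
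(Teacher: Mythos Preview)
Your proof is correct. The paper does not actually give its own proof of this claim: it treats the existence of coequalizers in $\mathrm{Alg}_{\mathcal{M}}$ as well-known, citing \cite[Proposition 2.1.5]{AEM}, and merely records the explicit description of the coequalizer for later use. Your argument is precisely the standard direct verification one would expect behind such a citation: the two-step factorization of $m_B$ through $A\otimes\pi$ and then $\pi\otimes B$, the epimorphism cancellations to check the algebra axioms, and the translation of $g\circ\alpha=g\circ\beta$ into $g\circ\Lambda_\alpha=g\circ\Lambda_\beta$ via multiplicativity of $g$ are all handled correctly, and you are right that preservation of coequalizers by the tensor functors is exactly what is needed to make the factorizations and cancellations go through.
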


\begin{lemma}
\label{lem:BialgCoeq}Let $\mathcal{M}$ be a monoidal category.

1) If $\mathcal{M}$ has coequalizers then $\mathrm{Coalg}_{\mathcal{M}}$ has
coequalizers, and $\mho :\mathrm{Coalg}_{\mathcal{M}}\rightarrow \mathcal{M}$
preserves coequalizers. Moreover if the tensor products preserve the
coequalizers in $\mathcal{M}$, then $\mathrm{Alg}_{\mathcal{M}}$ has
coequalizers.

2) If $\mathcal{M}$ has equalizers then $\mathrm{Alg}_{\mathcal{M}}$ has
equalizers, and $\Omega :\mathrm{Alg}_{\mathcal{M}}\rightarrow \mathcal{M}$
preserves equalizers. Moreover if the tensor products preserve the
equalizers in $\mathcal{M}$, then $\mathrm{Coalg}_{\mathcal{M}}$ has
equalizers.

3) If $\mathcal{M}$ is braided, it has coequalizers and the tensor products
preserve them, then $\mathrm{Bialg}_{\mathcal{M}}$ has coequalizers and $%
\mho :\mathrm{Bialg}_{\mathcal{M}}\rightarrow \mathrm{Alg}_{\mathcal{M}}$
preserves coequalizers.

4)\ If $\mathcal{M}$ is braided, it has equalizers and the tensor products
preserve them, then $\mathrm{Bialg}_{\mathcal{M}}$ has equalizers and $%
\Omega :\mathrm{Bialg}_{\mathcal{M}}\rightarrow \mathrm{Coalg}_{\mathcal{M}}$
preserves equalizers.
\end{lemma}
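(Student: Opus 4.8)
The plan is to obtain all four assertions from Lemma~\ref{lem:OmegaCreates} (the forgetful functors $\Omega$ and $\mho$ create limits, respectively colimits) together with the explicit coequalizer construction of \ref{cl:CoeqAlg}, exploiting the two dual descriptions of a bialgebra as a coalgebra object in $\mathrm{Alg}_{\mathcal{M}}$ and as an algebra object in $\mathrm{Coalg}_{\mathcal{M}}$.

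For 1), since $\mho:\mathrm{Coalg}_{\mathcal{M}}\rightarrow\mathcal{M}$ creates colimits it in particular creates coequalizers. Hence, given a parallel pair in $\mathrm{Coalg}_{\mathcal{M}}$, its image under $\mho$ has a coequalizer in $\mathcal{M}$ (as $\mathcal{M}$ has coequalizers); this coequalizer lifts uniquely to a coequalizer in $\mathrm{Coalg}_{\mathcal{M}}$, and $\mho$ sends the lift to the coequalizer downstairs. This proves both that $\mathrm{Coalg}_{\mathcal{M}}$ has coequalizers and that $\mho$ preserves them. The remaining claim, that $\mathrm{Alg}_{\mathcal{M}}$ has coequalizers once the tensor functors preserve them, is precisely \ref{cl:CoeqAlg}. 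Assertion 2) is the formal dual: $\Omega:\mathrm{Alg}_{\mathcal{M}}\rightarrow\mathcal{M}$ creates limits, hence equalizers, so $\mathrm{Alg}_{\mathcal{M}}$ has equalizers and $\Omega$ preserves them, while the existence of equalizers in $\mathrm{Coalg}_{\mathcal{M}}$ follows from the construction dual to \ref{cl:CoeqAlg}, now using that the tensor functors preserve equalizers.

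For 3) and 4) I would first record that, $\mathcal{M}$ being braided, both $\mathrm{Alg}_{\mathcal{M}}$ and $\mathrm{Coalg}_{\mathcal{M}}$ are (braided) monoidal, and that $\mathrm{Bialg}_{\mathcal{M}}=\mathrm{Coalg}_{\mathrm{Alg}_{\mathcal{M}}}$ (Definition~\ref{cl: brdBialg}) and equally $\mathrm{Bialg}_{\mathcal{M}}=\mathrm{Alg}_{\mathrm{Coalg}_{\mathcal{M}}}$, the latter by the dual reading of the bialgebra compatibility. Then for 3) I would apply 1) to the monoidal category $\mathrm{Alg}_{\mathcal{M}}$: it has coequalizers by the \emph{moreover} clause of 1) applied to $\mathcal{M}$, so $\mathrm{Coalg}_{\mathrm{Alg}_{\mathcal{M}}}=\mathrm{Bialg}_{\mathcal{M}}$ has coequalizers and the forgetful functor $\mho:\mathrm{Bialg}_{\mathcal{M}}\rightarrow\mathrm{Alg}_{\mathcal{M}}$ preserves them. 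Dually, for 4) I would apply 2) to the monoidal category $\mathrm{Coalg}_{\mathcal{M}}$: it has equalizers by the \emph{moreover} clause of 2) applied to $\mathcal{M}$, so $\mathrm{Alg}_{\mathrm{Coalg}_{\mathcal{M}}}=\mathrm{Bialg}_{\mathcal{M}}$ has equalizers and $\Omega:\mathrm{Bialg}_{\mathcal{M}}\rightarrow\mathrm{Coalg}_{\mathcal{M}}$ preserves them.

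The part demanding the most care is the choice of description of $\mathrm{Bialg}_{\mathcal{M}}$ in each of 3) and 4): one must check that the forgetful functor named in the statement coincides with the one produced by the iterated construction. For 4) the \emph{algebra-in-coalgebras} description is in fact forced, since the \emph{coalgebra-in-algebras} viewpoint would yield $\mho$ into $\mathrm{Alg}_{\mathcal{M}}$ rather than $\Omega$ into $\mathrm{Coalg}_{\mathcal{M}}$, and would moreover require the tensor product of $\mathrm{Alg}_{\mathcal{M}}$ to preserve equalizers, a hypothesis we do not have. Checking that Lemma~\ref{lem:OmegaCreates} genuinely applies to the iterated monoidal categories $\mathrm{Alg}_{\mathcal{M}}$ and $\mathrm{Coalg}_{\mathcal{M}}$, with the expected forgetful functors, is routine but is where the bookkeeping lies.
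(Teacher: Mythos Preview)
Your proposal is correct and follows essentially the same approach as the paper: use Lemma~\ref{lem:OmegaCreates} (creation of (co)limits) together with \ref{cl:CoeqAlg} for parts 1) and 2), then apply 1) to $\mathcal{N}=\mathrm{Alg}_{\mathcal{M}}$ for part 3) and dually 2) to $\mathrm{Coalg}_{\mathcal{M}}$ for part 4). Your explicit observation that 4) requires the description $\mathrm{Bialg}_{\mathcal{M}}=\mathrm{Alg}_{\mathrm{Coalg}_{\mathcal{M}}}$ (rather than the one in Definition~\ref{cl: brdBialg}) is a helpful clarification of what the paper's terse ``4) is dual to 3)'' entails.
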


\begin{proof}
1) The first part follows by Lemma \ref{lem:OmegaCreates} and uniqueness of
coequalizers in $\mathrm{Coalg}_{\mathcal{M}}$. By \ref{cl:CoeqAlg}, $%
\mathrm{Alg}_{\mathcal{M}}$ has coequalizers. 2) is dual to 1).

3) Note that $\mathrm{Bialg}_{\mathcal{M}}=\mathrm{Coalg}_{\mathcal{N}}$ for
$\mathcal{N}:=\mathrm{Alg}_{\mathcal{M}}$. By 1) we have that $\mathcal{N}$
has coequalizers and then $\mathrm{Coalg}_{\mathcal{N}}$ has coequalizers,
and $\mho :\mathrm{Coalg}_{\mathcal{N}}\rightarrow \mathcal{N}$ preserves
coequalizers. 4) is dual to 3).
\end{proof}

\begin{lemma}
\label{lem:JBialgPres}Let $\mathcal{M}$ be a braided monoidal category.
Assume that $\mathcal{M}$ is abelian and that the tensor functors preserves
equalizers, coequalizers.

1) Let $\alpha :J_{\mathrm{Bialg}}D\rightarrow E$ be a morphism in $\mathrm{%
BrBialg}_{\mathcal{M}}.$ Then there is a bialgebra $Q\in \mathrm{Bialg}_{%
\mathcal{M}}$ a morphism $\pi :D\rightarrow Q$ in $\mathrm{Bialg}_{\mathcal{M%
}}$ and a morphism $\sigma :J_{\mathrm{Bialg}}Q\rightarrow E$ in $\mathrm{%
BrBialg}_{\mathcal{M}}$ such that $\alpha =\sigma \circ J_{\mathrm{Bialg}%
}\left( \pi \right) $ and $\sigma $ and $\pi $ are a monomorphism and an
epimorphism respectively when regarded as morphism in $\mathcal{M}$.

2) The functor $J_{\mathrm{Bialg}}:\mathrm{Bialg}_{\mathcal{M}}\rightarrow
\mathrm{BrBialg}_{\mathcal{M}}$ preserves coequalizers.

3) Assume that $\mathcal{M}$ is symmetric. Then $J_{\mathrm{Bialg}}^{s}:%
\mathrm{Bialg}_{\mathcal{M}}\rightarrow \mathrm{BrBialg}_{\mathcal{M}}^{s}$
preserves coequalizers.
\end{lemma}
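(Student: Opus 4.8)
The plan is to establish part 1) first, since parts 2) and 3) will then follow formally from it, together with the coequalizer results of Lemma~\ref{lem:BialgCoeq} and the fact that a full and faithful functor reflects coequalizers, already used in the proof of Theorem~\ref{teo:Udata}.

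For part 1), I would construct $Q$ as an image of $\alpha$. Write $\beta:D\rightarrow E$ for the underlying morphism of $\alpha$ in $\mathcal{M}$ (recalling from \ref{cl:defJ} that $J_{\mathrm{Bialg}}D$ has the same underlying bialgebra as $D$ and braiding $c_{D,D}$). Since $\mathcal{M}$ is abelian I factor $\beta=i\circ p$ with $p:D\rightarrow Q_{0}$ an epimorphism and $i:Q_{0}\rightarrow E$ a monomorphism, where $Q_{0}:=\mathrm{Im}(\beta)$. First I equip $Q_{0}$ with a braided bialgebra structure making $p$ and $i$ morphisms of braided bialgebras. Because the tensor functors preserve equalizers and coequalizers and $\mathcal{M}$ is abelian, they preserve monomorphisms and epimorphisms, so $p\otimes p$ is an epimorphism and $i\otimes i$ a monomorphism. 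From $m_{E}(i\otimes i)(p\otimes p)=\beta m_{D}=i\,p\,m_{D}$ one sees that $m_{E}(i\otimes i)$ factors through $i$, which defines $m_{Q_{0}}$ and makes $i$ multiplicative; cancelling the epimorphism $p\otimes p$ then makes $p$ multiplicative. Dually, from $(i\otimes i)(p\otimes p)\Delta_{D}=\Delta_{E}\,i\,p$ together with $(p\otimes p)\Delta_{D}\circ\ker(p)=0$ (using that $i\otimes i$ is a monomorphism and $p=\mathrm{coker}(\ker p)$) one obtains $\Delta_{Q_{0}}$ making $p$, and then $i$, comultiplicative. The unit, counit and braiding on $Q_{0}$ are produced by the same factorisation trick, and the braided bialgebra axioms \eqref{Br1}--\eqref{Br9} descend from those of $D$ by precomposing with the epimorphisms $p\otimes p$ and $p\otimes p\otimes p$.

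The crucial point, which I expect to be the real obstacle, is to see that $Q_{0}$ lies in the image of $J_{\mathrm{Bialg}}$, i.e. that its braiding $c_{Q_{0}}$ is the \emph{canonical} braiding $c_{Q_{0},Q_{0}}$ of $\mathcal{M}$ and not merely one inherited from $E$. By construction $c_{Q_{0}}(p\otimes p)=(p\otimes p)c_{D,D}$, while naturality of the braiding $c$ of $\mathcal{M}$ applied to $p:D\rightarrow Q_{0}$ gives $c_{Q_{0},Q_{0}}(p\otimes p)=(p\otimes p)c_{D,D}$; since $p\otimes p$ is an epimorphism the two agree, so $c_{Q_{0}}=c_{Q_{0},Q_{0}}$. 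Hence $Q_{0}=J_{\mathrm{Bialg}}Q$ for a unique ordinary bialgebra $Q\in\mathrm{Bialg}_{\mathcal{M}}$, and as $J_{\mathrm{Bialg}}$ is full and faithful the braided bialgebra morphisms $p$ and $i$ are $J_{\mathrm{Bialg}}(\pi)$ for a unique $\pi:D\rightarrow Q$ in $\mathrm{Bialg}_{\mathcal{M}}$ and $\sigma:=i:J_{\mathrm{Bialg}}Q\rightarrow E$, with $\alpha=\sigma\circ J_{\mathrm{Bialg}}(\pi)$, $\pi$ an epimorphism and $\sigma$ a monomorphism in $\mathcal{M}$.

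For part 2), let $f,g:D\rightarrow D'$ in $\mathrm{Bialg}_{\mathcal{M}}$ have coequalizer $q:D'\rightarrow C$, which exists by Lemma~\ref{lem:BialgCoeq}-3) and is an epimorphism in $\mathcal{M}$ by \ref{cl:CoeqAlg}. Given $\alpha:J_{\mathrm{Bialg}}D'\rightarrow E$ in $\mathrm{BrBialg}_{\mathcal{M}}$ with $\alpha\circ J_{\mathrm{Bialg}}f=\alpha\circ J_{\mathrm{Bialg}}g$, I factor $\alpha=\sigma\circ J_{\mathrm{Bialg}}(\pi)$ as in part 1); since $\sigma$ is a monomorphism and $J_{\mathrm{Bialg}}$ is faithful one deduces $\pi f=\pi g$, so $\pi$ factors uniquely as $\pi=\bar{\pi}\circ q$ through the coequalizer, and $\gamma:=\sigma\circ J_{\mathrm{Bialg}}(\bar{\pi})$ satisfies $\gamma\circ J_{\mathrm{Bialg}}q=\alpha$; uniqueness of $\gamma$ holds because $q$, hence $J_{\mathrm{Bialg}}q$, is an epimorphism in $\mathcal{M}$ and the forgetful functor is faithful. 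Thus $J_{\mathrm{Bialg}}q$ is the coequalizer of $J_{\mathrm{Bialg}}f,J_{\mathrm{Bialg}}g$. Finally, for part 3), using the factorisation $J_{\mathrm{Bialg}}=\mathbb{I}_{\mathrm{BrBialg}}^{s}\circ J_{\mathrm{Bialg}}^{s}$ from \eqref{eq:Js}, part 2) shows that $\mathbb{I}_{\mathrm{BrBialg}}^{s}(J_{\mathrm{Bialg}}^{s}q)$ is a coequalizer in $\mathrm{BrBialg}_{\mathcal{M}}$; since $\mathbb{I}_{\mathrm{BrBialg}}^{s}$ is full and faithful it reflects coequalizers (\cite[Proposition 2.9.9]{Borceux1}), so $J_{\mathrm{Bialg}}^{s}q$ is a coequalizer in $\mathrm{BrBialg}_{\mathcal{M}}^{s}$, i.e. $J_{\mathrm{Bialg}}^{s}$ preserves coequalizers.
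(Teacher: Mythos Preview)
Your proposal is correct and follows essentially the same approach as the paper: factor through the image in $\mathcal{M}$ for part~1), then use this factorisation to verify the universal property of the coequalizer for part~2), and reflect along the full and faithful $\mathbb{I}_{\mathrm{BrBialg}}^{s}$ for part~3). Your explicit argument that the induced braiding $c_{Q_0}$ coincides with the canonical one $c_{Q_0,Q_0}$ (via naturality of $c$ and the epimorphism $p\otimes p$) is exactly the point the paper leaves as ``straightforward to check'', and your formulation of $\gamma=\sigma\circ J_{\mathrm{Bialg}}(\bar{\pi})$ as a composite in $\mathrm{BrBialg}_{\mathcal{M}}$ makes the braiding compatibility automatic, whereas the paper verifies it by a short direct computation.
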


\begin{proof}
1) Denote by $D$ and $E$ the underlying objects in $\mathcal{M}$ of $D$ and $%
E.$ Since $\mathcal{M}$ is abelian we can factor $\alpha :D\rightarrow E$ as
the composition of a monomorphism $\sigma :Q\rightarrow E$ and an
epimorphism $\pi :D\rightarrow Q$ in $\mathcal{M}$ where $Q$ is the image of
$\alpha $ in $\mathcal{M}$.

It is straightforward to check that $Q$ fulfils the require properties.

2) By \ref{cl:defJ}, we have $J_{\mathrm{Bialg}}\left( B,m_{B},u_{B},\Delta
_{B},\varepsilon _{B}\right) =\left( B,m_{B},u_{B},\Delta _{B},\varepsilon
_{B},c_{B,B}\right) $ and $J_{\mathrm{Bialg}}\left( f\right) =f.$

Let $\left( e_{0},e_{1}\right) $ from $\left( B,m_{B},u_{B},\Delta
_{B},\varepsilon _{B}\right) $ to $\left( D,m_{D},u_{D},\Delta
_{D},\varepsilon _{D}\right) $ be a pair of morphisms in $\mathrm{Bialg}_{%
\mathcal{M}}$. Assume that this pair has coequalizer $\left( E,p\right) $ in
$\mathrm{Bialg}_{\mathcal{M}}$
\begin{equation*}
\xymatrix{ B \ar@<.5ex>[rr]^-{e_{0} } \ar@<-.5ex>[rr]_-{e_{1}
}&&D\ar[r]^{p}&E }
\end{equation*}%
Let us check that $J_{\mathrm{Bialg}}$ preserves this coequalizer. Let $%
\alpha :J_{\mathrm{Bialg}}D\rightarrow Z$ be a morphism in $\mathrm{BrBialg}%
_{\mathcal{M}}$ such that $\alpha e_{0}=\alpha e_{1}.$ By 1) we write $%
\alpha =\sigma \circ J_{\mathrm{Bialg}}\left( \pi \right) $. Since $\sigma $
is a monomorphism in $\mathcal{M}$, we have that $\pi e_{0}=\pi e_{1}.$
Since the coequalizer $\left( E,p\right) $ is in $\mathrm{Bialg}_{\mathcal{M}%
},$ there is a unique morphism $\overline{\pi }:E\rightarrow Q$ in $\mathrm{%
Bialg}_{\mathcal{M}}$ such that $\overline{\pi }\circ p=\pi .$ Set $%
\overline{\alpha }:=\sigma \overline{\pi }:E\rightarrow Z$ as morphisms in $%
\mathcal{M}$. Then $\overline{\alpha }p=\sigma \overline{\pi }p=\sigma \pi
=\alpha .$ Moreover $\sigma $ and $\overline{\pi }$ commute with
(co)multiplications and (co)units and
\begin{equation*}
\left( \overline{\alpha }\otimes \overline{\alpha }\right) c_{E,E}=\left(
\sigma \otimes \sigma \right) \left( \overline{\pi }\otimes \overline{\pi }%
\right) c_{E,E}=\left( \sigma \otimes \sigma \right) c_{Q,Q}\left( \overline{%
\pi }\otimes \overline{\pi }\right) =c_{Z}\left( \sigma \otimes \sigma
\right) \left( \overline{\pi }\otimes \overline{\pi }\right) =c_{Z}\left(
\overline{\alpha }\otimes \overline{\alpha }\right) .
\end{equation*}%
We have so proved that $\overline{\alpha }$ is a morphism in $\mathrm{BrBialg%
}_{\mathcal{M}}$ from $J_{\mathrm{Bialg}}E$ to $Z.$

Let $\beta :J_{\mathrm{Bialg}}E\rightarrow Z$ in $\mathrm{BrBialg}_{\mathcal{%
M}}$ be such that $\beta p=\alpha $ as morphisms in $\mathrm{BrBialg}_{%
\mathcal{M}}.$ Then $\beta p=\overline{\alpha }p$ as morphisms in $\mathcal{M%
}$. Since $\left( E,p\right) $ is a coequalizer in $\mathrm{Bialg}_{\mathcal{%
M}}$ and $\mathcal{M}$ has coequalizers (it is abelian) we have that $\left(
E,p\right) $ can be constructed as a suitable coequalizer in $\mathcal{M}$
(cf. the proof of Lemma \ref{lem:BialgCoeq}) so that $p$ is an epimorphism
in $\mathcal{M}$. Hence we get $\beta =\overline{\alpha }$ as morphisms in $%
\mathcal{M}$ whence in $\mathrm{BrBialg}_{\mathcal{M}}$.

3) By 2) $J_{\mathrm{Bialg}}:\mathrm{Bialg}_{\mathcal{M}}\rightarrow \mathrm{%
BrBialg}_{\mathcal{M}}$ preserves the coequalizers. Since $J_{\mathrm{Bialg}%
}=\mathbb{I}_{\mathrm{BrBialg}}^{s}\circ J_{\mathrm{Bialg}}^{s}$ we get that
$\mathbb{I}_{\mathrm{BrBialg}}^{s}\circ J_{\mathrm{Bialg}}^{s}$ preserves
coequalizers. Since $\mathbb{I}_{\mathrm{BrBialg}}^{s}$ is both full and
faithful, it reflects colimits (see the dual of \cite[Proposition 2.9.9]%
{Borceux1}) so that $J_{\mathrm{Bialg}}^{s}$ preserves coequalizers.
\end{proof}

The following result can be obtained mimicking the proof of $\left( 1\right)
\Rightarrow \left( 2\right) $ in \cite[Theorem 4.6.2]{Borceux2}. For the
reader's sake we write here a proof in the specific case we are concerned.

\begin{theorem}
Let $\mathcal{M}$ be a monoidal category.

\begin{itemize}
\item[1)] \label{teo: Monoids}If the forgetful functor $\Omega :\mathrm{Alg}%
_{\mathcal{M}}\rightarrow \mathcal{M}$ has a left adjoint, then $\Omega $ is
monadic. In fact the comparison functor is a category isomorphism.

\item[2)] \label{teo:Comonoids}If the forgetful functor $\mho :\mathrm{Coalg}%
_{\mathcal{M}}\rightarrow \mathcal{M}$ has a right adjoint, then $\mho $ is
comonadic. In fact the comparison functor is a category isomorphism.
\end{itemize}
\end{theorem}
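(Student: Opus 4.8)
The plan is to prove monadicity by constructing an explicit inverse to the comparison functor, thereby establishing the stronger claim that the comparison functor is a category isomorphism rather than merely an equivalence. I shall treat part 1) in detail; part 2) follows by the formal dual argument (replacing algebras, multiplication and limits by coalgebras, comultiplication and colimits throughout), so I would simply remark that 2) is dual to 1).

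First I would set up notation. Write $\left( T,\Omega \right) $ for the adjunction, where $T:\mathcal{M}\rightarrow \mathrm{Alg}_{\mathcal{M}}$ is the assumed left adjoint, and let $\mathbb{Q}:=\left( Q,m,u\right) :=\left( \Omega T,\Omega \epsilon T,\eta \right) $ be the induced monad on $\mathcal{M}$, with $\epsilon $ and $\eta $ the counit and unit. The comparison functor $K:\mathrm{Alg}_{\mathcal{M}}\rightarrow {_{\mathbb{Q}}\mathcal{M}}$ is given by $K\left( A,m_{A},u_{A}\right) :=\left( \Omega A,\Omega \epsilon A\right) $ and $Kf:=\Omega f$. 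The heart of the matter is the observation, recorded in Remark \ref{rem: AlgMon}, that $\Omega QV=\Omega T V=\oplus _{n\in \mathbb{N}}V^{\otimes n}$ together with the explicit description of $\eta $, $\epsilon $ and the algebra structure on $TV$ via the canonical injections $\alpha _{n}V$ and the formulas \eqref{form:etaeps}, \eqref{form:TVm}, \eqref{form:TVu}. The key structural fact is that $\Omega \epsilon \left( A,m_{A},u_{A}\right) \circ \alpha _{n}A=m_{A}^{n-1}$, so the $\mathbb{Q}$-algebra structure $\Omega \epsilon A$ on a $\mathbb{Q}$-algebra completely determines, and is determined by, the iterated multiplications of $A$.

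Next I would construct the inverse functor $K^{-1}:{_{\mathbb{Q}}\mathcal{M}}\rightarrow \mathrm{Alg}_{\mathcal{M}}$ directly. Given a $\mathbb{Q}$-algebra $\left( X,\mu \right) $ with $\mu :QX=\oplus _{n}X^{\otimes n}\rightarrow X$, I define a candidate multiplication and unit by $m_{X}:=\mu \circ \alpha _{2}X:X\otimes X\rightarrow X$ and $u_{X}:=\mu \circ \alpha _{0}X:\mathbf{1}\rightarrow X$, using the unit and associativity axioms of the monad algebra ($\mu \circ Q\mu =\mu \circ mX$ and $\mu \circ uX=\mathrm{Id}_{X}$) to verify that $\left( X,m_{X},u_{X}\right) \in \mathrm{Alg}_{\mathcal{M}}$. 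The associativity of $m_{X}$ and the unit laws both fall out of the compatibility $\mu \circ mX=\mu \circ Q\mu $ by restricting along the injections $\alpha _{n}$ and using \eqref{form:TVm}; this computation is the natural place where one actually uses that $\Omega $ creates limits in the background, but at the level of structure it is a direct diagram chase. On morphisms $K^{-1}$ acts as the identity (a $\mathbb{Q}$-algebra morphism is already a morphism in $\mathcal{M}$ commuting with the structure maps, hence an algebra morphism). I would then check $K\circ K^{-1}=\mathrm{Id}$ and $K^{-1}\circ K=\mathrm{Id}$ on the nose: one direction recovers $\Omega \epsilon A\circ \alpha _{2}=m_{A}$ from \eqref{form:etaeps}, and the other recovers all of $\mu $ from $m_{X}$ and $u_{X}$ because the iterated multiplications determine $\mu $ on every summand $X^{\otimes n}$.

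The main obstacle — and the crux of proving an \emph{isomorphism} rather than a mere equivalence — is the last verification: showing that the multiplication $m_{X}$ together with its iterates reconstructs the full structure map $\mu $ on \emph{all} summands $X^{\otimes n}$ simultaneously, i.e. that $\mu \circ \alpha _{n}X=\left( m_{X}\right) ^{n-1}$ for every $n$. I expect this to require an induction on $n$, feeding the monad associativity axiom $\mu \circ \Omega \epsilon TX=\mu \circ Q\mu $ through the decomposition $Q=\oplus _{n}\left( -\right) ^{\otimes n}$ and carefully tracking the unit constraints when $n=0,1$ exactly as flagged after \eqref{form:TVm}. Once that identity is in hand, the two composites $KK^{-1}$ and $K^{-1}K$ are literally the identity functors, giving the category isomorphism and hence monadicity.
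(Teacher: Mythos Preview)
Your approach has a genuine gap: it relies on the explicit description $\Omega TV=\oplus_{n\in\mathbb{N}}V^{\otimes n}$ and the formulas \eqref{form:etaeps}, \eqref{form:TVm}, \eqref{form:TVu} recorded in Remark~\ref{rem: AlgMon}. But that remark carries the standing hypothesis that $\mathcal{M}$ has denumerable coproducts and that the tensor functors preserve them. The theorem you are proving assumes only that $\Omega$ \emph{has} a left adjoint, with no hypothesis whatsoever on coproducts in $\mathcal{M}$. Without those hypotheses there is no reason for the free algebra to decompose as a coproduct of tensor powers (the coproduct need not even exist), so your definitions $m_X:=\mu\circ\alpha_2 X$ and $u_X:=\mu\circ\alpha_0 X$ are not available, and the entire construction of $K^{-1}$ collapses.

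The paper circumvents this by never touching the internal structure of $T$. It invokes a form of Beck's precise monadicity theorem \cite[Theorem 2.1]{BLV}: one checks that $\Omega$ is conservative (trivial: the inverse of an algebra map with invertible underlying morphism is automatically an algebra map) and that $\Omega$ creates coequalizers of those reflexive pairs whose image under $\Omega$ is a \emph{split} coequalizer. The point is that split coequalizers are absolute, hence preserved by every functor, in particular by each $(-)^{\otimes n}$; this is what lets one build $m_C$ on the coequalizer without ever assuming the tensor functors preserve general colimits. The upgrade from equivalence to isomorphism is then handled by the elementary iso-lifting observation that any $\mathcal{M}$-isomorphism $\Omega X\cong B$ transports the algebra structure uniquely. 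If you add the coproduct hypotheses your direct-inverse construction does go through and is arguably more explicit, but as stated the theorem needs the Beck-style argument.
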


\begin{proof}
1) We will apply Theorem \cite[Theorem 2.1]{BLV} (which is a form of Beck's
Theorem). First, in order to prove that $\Omega $ is monadic, we have to
check that $\Omega $ is conservative and that for any reflexive pair of
morphisms in $\mathrm{Alg}_{\mathcal{M}}$ whose image by $\Omega $ has a
split coequalizer has a coequalizer which is preserved by $\Omega .$ Clearly
if $f$ is a morphism in $\mathrm{Alg}_{\mathcal{M}}$ such that $\Omega f$ is
an isomorphism then the inverse of $\Omega f$ is a morphism of monoids
whence it gives rise to an inverse of $f$ in $\mathrm{Alg}_{\mathcal{M}}$.
Thus $\Omega $ is conservative.

Let $\left( d_{0},d_{1}\right) $ from $A$ to $A^{\prime }$ be a reflexive
pair as above. Then there exists $C\in \mathcal{M}$ and a morphism $c:\Omega
A^{\prime }\rightarrow C$ such that
\begin{equation*}
\xymatrix{ \Omega A \ar@<.5ex>[rr]^-{\Omega d_{0} } \ar@<-.5ex>[rr]_-{\Omega
d_{1} }&&\Omega A^{\prime }\ar[r]^{c}&C }
\end{equation*}%
is a split coequalizer, whence preserved by any functor in particular by $%
F_{n}:\mathcal{M}\rightarrow \mathcal{M}$, the functor defined by $%
F_{n}:=\left( -\right) ^{\otimes n}$ i.e. the $n$th tensor power functor.
Then we have a commutative diagram with exact rows%
\begin{equation*}
\xymatrix{ \Omega A \otimes \Omega A \ar[d]_{m_{\Omega
A}}\ar@<.5ex>[rr]^-{\Omega d_{0}\otimes \Omega d_{0}}
\ar@<-.5ex>[rr]_-{\Omega d_{1}\otimes \Omega d_{1} }&&\Omega A^{\prime
}\otimes\Omega A^{\prime }\ar[d]_{m_{\Omega A^\prime}}\ar[r]^-{c\otimes
c}&C\otimes C\\ \Omega A \ar@<.5ex>[rr]^-{\Omega d_{0} }
\ar@<-.5ex>[rr]_-{\Omega d_{1} }&&\Omega A^{\prime }\ar[r]^{c}&C}
\end{equation*}%
By the universal property of coequalizers there is a unique morphism $%
m_{C}:C\otimes C\rightarrow C$ in $\mathcal{M}$ such that $m_{C}\circ \left(
c\otimes c\right) =c\circ m_{\Omega A^{\prime }}.$ One easily checks that $%
Q:=\left( C,m_{C},u_{C}\right) \in \mathrm{Alg}_{\mathcal{M}}$ where $%
u_{C}:=c\circ u_{\Omega A^{\prime }}.$ Moreover $c$ gives rise to a morphism
$q:A^{\prime }\rightarrow Q$ in $\mathrm{Alg}_{\mathcal{M}}$ such that $%
\Omega q=c.$ Since $\Omega $ is faithful, it is straightforward to check
that $\left( Q,q\right) $ is the coequalizer of $\left( d_{0},d_{1}\right) $
in $\mathcal{M}_{m}$. Thus $\Omega $ is monadic.

Let us check that the comparison functor is indeed a category isomorphism.
It suffices to check that for any isomorphism $f:\Omega X\rightarrow B$ in
the category $\mathcal{M}$ there exists a unique pair $\left(
A,g:X\rightarrow A\right) ,$ where $A$ is an object in $\mathrm{Alg}_{%
\mathcal{M}}$ and $g$ a morphism in $\mathrm{Alg}_{\mathcal{M}}$, such that $%
\Omega A=B$ and $\Omega g=f$. This is trivial (just induce on $B$ the monoid
structure of $X$ via $f$).

2) It is dual to 1).
\end{proof}

\begin{example}
Let $\Bbbk $ be a field. Let $\mathfrak{M}$ be the category of vector spaces
over $\Bbbk $.

1) By \cite[Theorem 6.4.1]{Sw}, the forgetful functor $\mho :\mathrm{Coalg}_{%
\mathfrak{M}}\rightarrow \mathfrak{M}$ has a right adjoint given by the
cofree coalgebra functor.

2) By \cite[Theorem 2.3]{Agore}, the forgetful functor $\mho :\mathrm{Bialg}%
_{\mathfrak{M}}\rightarrow \mathrm{Alg}_{\mathfrak{M}}$ has a right adjoint.

In both cases, by Theorem \ref{teo:Comonoids}, we have that $\mho $ is
comonadic and that the comparison functor is a category isomorphism.
\end{example}

\begin{lemma}
\label{lem:Vallette}Let $\mathcal{M}$ be a monoidal category. Assume that
the tensor functors preserve coequalizers of reflexive pairs in $\mathcal{M}$%
. Given two coequalizers%
\begin{equation*}
\xymatrix{X_{1} \ar@<.5ex>[rr]^-{f_1 } \ar@<-.5ex>[rr]_-{g_1
}&&Y_1\ar[r]^-{p_1}&Z_1 } \qquad \xymatrix{X_{2} \ar@<.5ex>[rr]^-{f_2 }
\ar@<-.5ex>[rr]_-{g_2 }&&Y_2\ar[r]^-{p_2}&Z_2 }
\end{equation*}%
in $\mathcal{M}$, where $\left( f_{1},g_{1}\right) $ and $\left(
f_{2},g_{2}\right) $ are reflexive pairs of morphisms in $\mathcal{M}$, we
have that
\begin{equation*}
\xymatrix{X_{1}\otimes X_2 \ar@<.5ex>[rr]^-{f_1\otimes f_2 }
\ar@<-.5ex>[rr]_-{g_1\otimes g_2 }&&Y_1\otimes Y_2\ar[r]^-{p_1\otimes
p_2}&Z_1\otimes Z_2 }
\end{equation*}%
is a coequalizer too.
\end{lemma}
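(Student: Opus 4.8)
The plan is to reduce this two-variable statement to the one-variable hypothesis by tensoring one factor at a time. Write $s_1\colon Y_1\to X_1$ and $s_2\colon Y_2\to X_2$ for common sections witnessing that $(f_1,g_1)$ and $(f_2,g_2)$ are reflexive, so that $f_is_i=g_is_i=\mathrm{Id}$. Applying the functor $(-)\otimes Y_2$ to the first coequalizer and the functor $Z_1\otimes(-)$ to the second, the hypothesis on the tensor functors yields two coequalizers
\[
X_1\otimes Y_2\rightrightarrows Y_1\otimes Y_2\xrightarrow{\,p_1\otimes Y_2\,}Z_1\otimes Y_2,\qquad Z_1\otimes X_2\rightrightarrows Z_1\otimes Y_2\xrightarrow{\,Z_1\otimes p_2\,}Z_1\otimes Z_2,
\]
whose pairs are again reflexive, with sections $s_1\otimes Y_2$ and $Z_1\otimes s_2$. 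Since $p_1\otimes p_2=(Z_1\otimes p_2)\circ(p_1\otimes Y_2)$ is a composite of two coequalizers, it is in particular an epimorphism, which will give uniqueness of factorizations for free; so only the existence part of the universal property needs work. (As elsewhere in the paper the associativity and unit constraints are suppressed, so I may treat $\otimes$ as a strict bifunctor and use freely the interchange law $(a\otimes b)(c\otimes d)=(ac)\otimes(bd)$.)

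For existence I would start from a morphism $h\colon Y_1\otimes Y_2\to W$ with $h(f_1\otimes f_2)=h(g_1\otimes g_2)$ and factor it in two steps. The first key observation is the bifunctoriality identity $f_1\otimes Y_2=(f_1\otimes f_2)(X_1\otimes s_2)$, together with its companion $g_1\otimes Y_2=(g_1\otimes g_2)(X_1\otimes s_2)$, both following from the interchange law and $f_2s_2=g_2s_2=\mathrm{Id}_{Y_2}$. Precomposing the hypothesis on $h$ with $X_1\otimes s_2$ then gives $h(f_1\otimes Y_2)=h(g_1\otimes Y_2)$, so by the first coequalizer above $h$ factors uniquely as $h=h'(p_1\otimes Y_2)$ for a unique $h'\colon Z_1\otimes Y_2\to W$.

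The second step is to check that $h'$ coequalizes $(Z_1\otimes f_2,\,Z_1\otimes g_2)$. Using the interchange identity $(Z_1\otimes f_2)(p_1\otimes X_2)=p_1\otimes f_2=(p_1\otimes Y_2)(Y_1\otimes f_2)$, one gets $h'(Z_1\otimes f_2)(p_1\otimes X_2)=h(Y_1\otimes f_2)$, and likewise with $g_2$. The remaining equality $h(Y_1\otimes f_2)=h(Y_1\otimes g_2)$ is obtained exactly as in the first step, now precomposing the hypothesis on $h$ with $s_1\otimes X_2$ and using $f_1s_1=g_1s_1=\mathrm{Id}_{Y_1}$. Since $p_1\otimes X_2$ is a coequalizer (apply $(-)\otimes X_2$ to the first diagram), hence epi, it cancels on the right to yield $h'(Z_1\otimes f_2)=h'(Z_1\otimes g_2)$. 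The second coequalizer above then produces a unique $\bar h\colon Z_1\otimes Z_2\to W$ with $\bar h(Z_1\otimes p_2)=h'$, and $\bar h(p_1\otimes p_2)=h$ follows from $p_1\otimes p_2=(Z_1\otimes p_2)(p_1\otimes Y_2)$.

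The step I expect to be the main obstacle is the transfer of the coequalizing condition between the diagonal pair $(f_1\otimes f_2,\,g_1\otimes g_2)$ and the single-variable pairs: the hypothesis on $h$ is a priori weaker than coequalizing each variable separately, and it is precisely reflexivity — the existence of the common sections $s_1,s_2$ — that bridges this gap through the interchange-law identities above. Care must also be taken to record that each auxiliary pair to which the preservation hypothesis is applied is genuinely reflexive, which is why the sections are tensored along and noted at every stage.
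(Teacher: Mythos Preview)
Your argument is correct and is precisely the standard proof of the fact that a bifunctor preserving reflexive coequalizers in each variable preserves them jointly; this is exactly \cite[Lemma 0.17]{Joh-Topos}, which the paper invokes (via \cite[Proposition 2]{Vallette}) in lieu of a written-out proof. So you have simply unpacked the cited reference, and there is nothing to add.
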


\begin{proof}
See \cite[Proposition 2]{Vallette} (where we can drop the assumption on
abelianity as the result follows by \cite[Lemma 0.17]{Joh-Topos} where this
condition is not used).
\end{proof}

\begin{proposition}
\label{pro:CreAlg}Let $\mathcal{M}$ be a monoidal category. Assume that the
tensor functors preserve coequalizers of reflexive pairs in $\mathcal{M}$.
Then the forgetful functor $\Omega :\mathrm{Alg}{\mathcal{M}}\rightarrow
\mathcal{M}$ creates coequalizers of those pairs $\left( f,g\right) $ in $%
\mathrm{Alg}{\mathcal{M}}$ for which $\left( \Omega f,\Omega g\right) $ is a
reflexive pair.
\end{proposition}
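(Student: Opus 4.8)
The plan is to mimic, almost verbatim, the monadicity argument carried out earlier for $\Omega$, replacing the input ``split coequalizers are absolute'' by Lemma \ref{lem:Vallette}, which is exactly the tool that converts the reflexivity hypothesis on $\left( \Omega f,\Omega g\right) $ into preservation of coequalizers by the tensor powers. Fix a pair $\left( f,g\right) \colon A\rightrightarrows A^{\prime }$ in $\mathrm{Alg}_{\mathcal{M}}$ for which $\left( \Omega f,\Omega g\right) $ is reflexive, and (as required by the notion of creation of colimits) assume that $\left( \Omega f,\Omega g\right) $ admits a coequalizer $\left( C,c\colon \Omega A^{\prime }\rightarrow C\right) $ in $\mathcal{M}$. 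I must produce a unique algebra structure on $C$ together with a unique algebra morphism lifting $c$, and show that the resulting pair is the coequalizer of $\left( f,g\right) $ in $\mathrm{Alg}_{\mathcal{M}}$.

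First I would pass to tensor powers. If $s$ is a common section of $\left( \Omega f,\Omega g\right) $, then $s\otimes s$ is a common section of $\left( \Omega f\otimes \Omega f,\Omega g\otimes \Omega g\right) $, so this pair is again reflexive; iterating, every $\left( \Omega f^{\otimes n},\Omega g^{\otimes n}\right) $ is reflexive. Hence, applying Lemma \ref{lem:Vallette} once (resp. twice), the forks
\[
\Omega A\otimes \Omega A\rightrightarrows \Omega A^{\prime }\otimes \Omega A^{\prime }\xrightarrow{c\otimes c}C\otimes C,\qquad \left( \Omega A\right) ^{\otimes 3}\rightrightarrows \left( \Omega A^{\prime }\right) ^{\otimes 3}\xrightarrow{c\otimes c\otimes c}C^{\otimes 3}
\]
are coequalizers in $\mathcal{M}$; in particular $c$, $c\otimes c$ and $c\otimes c\otimes c$ are epimorphisms. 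Because $f$ and $g$ are algebra maps, $c\circ m_{\Omega A^{\prime }}$ coequalizes $\left( \Omega f\otimes \Omega f,\Omega g\otimes \Omega g\right) $, so the universal property of $\left( C\otimes C,c\otimes c\right) $ yields a unique $m_{C}\colon C\otimes C\rightarrow C$ with $m_{C}\circ \left( c\otimes c\right) =c\circ m_{\Omega A^{\prime }}$; I then set $u_{C}:=c\circ u_{\Omega A^{\prime }}$. Associativity and unitality of $\left( C,m_{C},u_{C}\right) $ follow by a routine diagram chase in which the triple-tensor coequalizer lets me cancel the epimorphism $c\otimes c\otimes c$ (resp.\ $c$). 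This gives $Q:=\left( C,m_{C},u_{C}\right) \in \mathrm{Alg}_{\mathcal{M}}$, and the defining relations say precisely that $c$ underlies an algebra morphism $q\colon A^{\prime }\rightarrow Q$ with $\Omega q=c$.

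It then remains to check the universal property and the uniqueness demanded by creation. Given an algebra map $h\colon A^{\prime }\rightarrow Z$ with $hf=hg$, faithfulness of $\Omega $ gives $\Omega h\circ \Omega f=\Omega h\circ \Omega g$, so $\Omega h$ factors uniquely through $c$ as $\Omega h=\bar{h}\circ c$ in $\mathcal{M}$; precomposing with the epimorphisms $c\otimes c$ and $c$ as above shows $\bar{h}$ is multiplicative and unital, hence the unique algebra morphism $Q\rightarrow Z$ with $\bar{h}\circ q=h$. For uniqueness of the lift, suppose $\left( Q^{\prime },q^{\prime }\right) $ is any coequalizer of $\left( f,g\right) $ in $\mathrm{Alg}_{\mathcal{M}}$ with $\Omega Q^{\prime }=C$ and $\Omega q^{\prime }=c$; since $q^{\prime }$ is an algebra map, $m_{Q^{\prime }}\circ \left( c\otimes c\right) =c\circ m_{\Omega A^{\prime }}$ and $u_{Q^{\prime }}=c\circ u_{\Omega A^{\prime }}$, and as $c\otimes c$ is epi these force $m_{Q^{\prime }}=m_{C}$ and $u_{Q^{\prime }}=u_{C}$, whence $Q^{\prime }=Q$. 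Thus $\Omega $ creates coequalizers of the pairs in question.

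I expect the only real work beyond the monadicity proof to be the passage to tensor powers: the crux is verifying that reflexivity is inherited by tensor products, so that Lemma \ref{lem:Vallette} delivers $c\otimes c$ and $c\otimes c\otimes c$ as coequalizers, hence epimorphisms. Once these are in hand, every structural identity on $C$ is forced and the remaining verifications are routine cancellations.
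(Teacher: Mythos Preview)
Your approach is essentially identical to the paper's: both apply Lemma \ref{lem:Vallette} to obtain the coequalizer $c\otimes c$, use the universal property to induce $m_C$ from $c\circ m_{\Omega A'}$, set $u_C:=c\circ u_{\Omega A'}$, and then verify the algebra axioms and the coequalizer property. You are simply more explicit than the paper about the iterated application of Lemma \ref{lem:Vallette} (to get $c\otimes c\otimes c$ epi for the associativity check) and about the uniqueness clause in the definition of creation, both of which the paper leaves implicit under ``it is easy to check''.
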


\begin{proof}
Let $f,g:\left( A,m_{A},u_{A}\right) \rightarrow \left( B,m_{B},u_{B}\right)
$ be a pair of morphism in $\mathrm{Alg}{\mathcal{M}}$ that fits into a
coequalizer
\begin{equation*}
\xymatrix{ A \ar@<.5ex>[rr]^-{\Omega f } \ar@<-.5ex>[rr]_-{\Omega g
}&&B\ar[r]^-{p}&C }
\end{equation*}%
in $\mathcal{M}$ such that $\left( \Omega f,\Omega g\right) $ is a reflexive
pair. By Lemma \ref{lem:Vallette}, we have the following coequalizer
\begin{equation*}
\xymatrix{ A\otimes A \ar@<.5ex>[rr]^-{\Omega f\otimes \Omega f }
\ar@<-.5ex>[rr]_-{\Omega g\otimes \Omega g }&&B\otimes B\ar[r]^-{p\otimes
p}&C\otimes C }
\end{equation*}%
We have%
\begin{equation*}
p\circ m_{B}\circ \left( \Omega f\otimes \Omega f\right) =p\circ \Omega
f\circ m_{A}=p\circ \Omega g\circ m_{A}=p\circ m_{B}\circ \left( \Omega
g\otimes \Omega g\right) .
\end{equation*}%
The universal property of the latter coequalizer entails there is a unique
morphism $m_{C}:C\otimes C\rightarrow C$ such that $m_{C}\circ \left(
p\otimes p\right) =p\circ m_{B}.$ Set $u_{C}:=p\circ u_{B}.$ It is easy to
check that $\left( C,m_{C},u_{C}\right) \in \mathrm{Alg}{\mathcal{M}}$, that
$p$ becomes an algebra morphism from $\left( B,m_{B},u_{B}\right) $ to $%
\left( C,m_{C},u_{C}\right) $ and that
\begin{equation*}
\xymatrix{ (A,m_{A},u_{A}) \ar@<.5ex>[rr]^-{f } \ar@<-.5ex>[rr]_-{ g
}&&(B,m_{B},u_{B})\ar[r]^-{p}&(C,m_{C},u_{C}) }
\end{equation*}%
is a coequalizer in $\mathrm{Alg}{\mathcal{M}}$.
\end{proof}

\begin{corollary}
\label{coro:OmegaPres}Let $\mathcal{M}$ be a monoidal category with
coequalizers of reflexive pairs. Assume these coequalizers are preserved by
the tensor functors in $\mathcal{M}$. Then $\mathrm{Alg}{\mathcal{M}}$ has
coequalizers of reflexive pairs and they are preserved by the forgetful
functor $\Omega :\mathrm{Alg}{\mathcal{M}}\rightarrow \mathcal{M}$.
\end{corollary}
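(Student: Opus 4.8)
The plan is to deduce the statement directly from Proposition \ref{pro:CreAlg}, once we observe that the forgetful functor $\Omega$ carries reflexive pairs to reflexive pairs. First I would fix a reflexive pair $(f,g)$ in $\mathrm{Alg}_{\mathcal{M}}$, say $f,g:\left( A,m_{A},u_{A}\right) \rightarrow \left( B,m_{B},u_{B}\right)$ admitting a common section $s:\left( B,m_{B},u_{B}\right) \rightarrow \left( A,m_{A},u_{A}\right)$ in $\mathrm{Alg}_{\mathcal{M}}$ with $f\circ s=g\circ s=\mathrm{Id}$. Applying $\Omega$ produces morphisms $\Omega f,\Omega g:A\rightarrow B$ in $\mathcal{M}$ for which $\Omega s$ is a common section, so that $\left( \Omega f,\Omega g\right)$ is a reflexive pair in $\mathcal{M}$. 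This is the only hand-check required, and it is immediate from functoriality of $\Omega$.

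Next I would invoke the standing hypothesis that $\mathcal{M}$ has coequalizers of reflexive pairs: this yields a coequalizer of $\left( \Omega f,\Omega g\right)$ in $\mathcal{M}$. At this point Proposition \ref{pro:CreAlg} applies precisely to $(f,g)$, since $\left( \Omega f,\Omega g\right)$ is reflexive, and it tells us that $\Omega$ \emph{creates} the coequalizer of $(f,g)$. By the definition of creation of colimits recalled in the appendix, the existence of a coequalizer of $\Omega f,\Omega g$ in $\mathcal{M}$ guarantees a unique coequalizer of $(f,g)$ in $\mathrm{Alg}_{\mathcal{M}}$ lying over it; in particular its image under $\Omega$ is the prescribed coequalizer in $\mathcal{M}$. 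Since $(f,g)$ was an arbitrary reflexive pair, this shows at once that $\mathrm{Alg}_{\mathcal{M}}$ has coequalizers of reflexive pairs and that $\Omega$ preserves them.

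I do not expect any genuine obstacle here, as the substantive work has already been carried out in Proposition \ref{pro:CreAlg} (where Lemma \ref{lem:Vallette} supplies the key fact that the tensor functors preserve coequalizers of reflexive pairs in both variables). The single point deserving a line of care is reading off preservation from creation: creation furnishes a lift that is itself a coequalizer and whose image under $\Omega$ is the chosen coequalizer of the underlying pair, and this last clause is exactly the assertion that $\Omega$ preserves the coequalizer. Thus the corollary is essentially a bookkeeping consequence of the creation statement applied to reflexive pairs.
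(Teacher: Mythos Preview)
Your proposal is correct and follows essentially the same approach as the paper: both deduce the corollary directly from Proposition~\ref{pro:CreAlg} together with the uniqueness of coequalizers, the latter being precisely what you unpack in your final paragraph when you argue that preservation can be read off from creation. Your write-up is more detailed, but the content is identical to the paper's one-line proof.
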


\begin{proof}
It follows by Proposition \ref{pro:CreAlg} and uniqueness of coequalizers in
$\mathrm{Alg}{\mathcal{M}}$.
\end{proof}

\section{Braided (co)equalizers}

\begin{lemma}
\label{lem:cInv}Let $\mathcal{M}$ be a monoidal category. We have functors%
\begin{eqnarray*}
\mathrm{Br}_{\mathcal{M}} &\longrightarrow &\mathrm{Br}_{\mathcal{M}}:\left(
V,c\right) \rightarrow \left( V,c^{-1}\right) ,f\mapsto f \\
\mathrm{BrAlg}_{\mathcal{M}} &\rightarrow &\mathrm{BrAlg}_{\mathcal{M}%
}:\left( A,m,u,c\right) \rightarrow \left( A,m,u,c^{-1}\right) ,f\mapsto f
\end{eqnarray*}
\end{lemma}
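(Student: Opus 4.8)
The plan is to verify directly that replacing the braiding $c$ by its inverse $c^{-1}$ preserves every defining axiom, the one point requiring care being that certain pairs of axioms get interchanged rather than each preserved individually. Since in both cases the assignment is the identity on morphisms, once the object-level assignments are shown to be well defined, functoriality (preservation of identities and composites) will be immediate.

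First I would treat $\mathrm{Br}_{\mathcal{M}}$. Given $(V,c)\in\mathrm{Br}_{\mathcal{M}}$, the morphism $c^{-1}$ is invertible with inverse $c$, so it remains only to check the quantum Yang--Baxter equation \eqref{ec: braided equation} for $c^{-1}$. I would obtain this by taking inverses of both sides of \eqref{ec: braided equation}: since $(c\otimes V)^{-1}=c^{-1}\otimes V$ and $(V\otimes c)^{-1}=V\otimes c^{-1}$, and inversion reverses the order of a composite, the inverse of the left-hand side is $(c^{-1}\otimes V)(V\otimes c^{-1})(c^{-1}\otimes V)$ and that of the right-hand side is $(V\otimes c^{-1})(c^{-1}\otimes V)(V\otimes c^{-1})$; equality of the two originals forces equality of their inverses, which is precisely \eqref{ec: braided equation} for $c^{-1}$. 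Hence $(V,c^{-1})\in\mathrm{Br}_{\mathcal{M}}$. For a morphism $f\colon(V,c_V)\to(W,c_W)$, i.e. $c_W(f\otimes f)=(f\otimes f)c_V$, I would left-compose with $c_W^{-1}$ and right-compose with $c_V^{-1}$ to get $(f\otimes f)c_V^{-1}=c_W^{-1}(f\otimes f)$, so $f$ is again a morphism for the inverse braidings.

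Next I would treat $\mathrm{BrAlg}_{\mathcal{M}}$. The data $(A,m,u)$ is untouched and $(A,c^{-1})$ is a braided object by the first part, so the task reduces to verifying \eqref{Br2}, \eqref{Br3} and \eqref{Br4} with $c$ replaced by $c^{-1}$. The key observation is that \eqref{Br2} for $c$ is equivalent to \eqref{Br3} for $c^{-1}$: starting from \eqref{Br2} and right-composing with $(A\otimes c^{-1})(c^{-1}\otimes A)$ yields $A\otimes m=c(m\otimes A)(A\otimes c^{-1})(c^{-1}\otimes A)$, and left-composing this with $c^{-1}$ gives exactly $c^{-1}(A\otimes m)=(m\otimes A)(A\otimes c^{-1})(c^{-1}\otimes A)$, which is \eqref{Br3} for $c^{-1}$; the symmetric manipulation sends \eqref{Br3} for $c$ to \eqref{Br2} for $c^{-1}$. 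Likewise the two identities in \eqref{Br4} are interchanged under $c\mapsto c^{-1}$: left-composing each with $c^{-1}$ turns $c(u\otimes A)l_A^{-1}=(A\otimes u)r_A^{-1}$ into the second relation for $c^{-1}$ and $c(A\otimes u)r_A^{-1}=(u\otimes A)l_A^{-1}$ into the first. Thus $(A,m,u,c^{-1})\in\mathrm{BrAlg}_{\mathcal{M}}$; since a morphism of braided algebras is an algebra morphism that is also braided, and algebra morphisms do not involve $c$, the first part shows it remains a morphism for the inverse braiding. These are all formal manipulations with invertible morphisms, so there is no genuine obstacle; the only thing demanding attention is the bookkeeping that $c\mapsto c^{-1}$ \emph{swaps} \eqref{Br2} with \eqref{Br3} and swaps the two halves of \eqref{Br4}, which is why each axiom for $c^{-1}$ must be matched against the \emph{other} axiom for $c$.
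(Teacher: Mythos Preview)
Your proof is correct and is exactly the straightforward verification the paper alludes to (the paper's own proof consists of the single sentence ``It is straightforward''). Your observation that $c\mapsto c^{-1}$ swaps \eqref{Br2} with \eqref{Br3} and the two halves of \eqref{Br4} is precisely the bookkeeping needed, and nothing more is required.
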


\begin{proof}
It is straightforward.
\end{proof}

\begin{lemma}
\label{lem:quotdual}Let $\mathcal{M}$ be a monoidal category and let $\left(
V,c_{V}\right) $ be an object in $\mathrm{Br}_{\mathcal{M}}$. Assume there
is a morphism $d:D\rightarrow V$ in $\mathcal{M}$ and a morphism $%
c_{D}:D\otimes D\rightarrow D\otimes D$ such that $\left( d\otimes d\right)
c_{D}=c_{V}\left( d\otimes d\right) \ $and $d\otimes d\otimes d$ is a
monomorphism.

1)Assume that $c_{D}$ is an isomorphism. Then $\left( D,c_{D}\right) $ is an
object in $\mathrm{Br}_{\mathcal{M}}$ and $d$ becomes a morphism in $\mathrm{%
Br}_{\mathcal{M}}$ from $\left( D,c_{D}\right) $ to $\left( V,c_{V}\right) $.

2) Assume that $d\otimes d$ is a monomorphism. If $\left( V,c_{V}\right) \in
\mathrm{Br}_{\mathcal{M}}^{s}$ then $\left( D,c_{D}\right) \in \mathrm{Br}_{%
\mathcal{M}}^{s}$.
\end{lemma}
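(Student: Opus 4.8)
The plan is to transfer the defining identities of a (symmetric) braided object from $\left( V,c_{V}\right) $ down to $\left( D,c_{D}\right) $ along the monomorphism $d\otimes d\otimes d$, using the intertwining relation $\left( d\otimes d\right) c_{D}=c_{V}\left( d\otimes d\right) $ as the bridge. First I would record the two consequences of this relation that drive everything: by functoriality of the tensor product, i.e. $\left( f\otimes g\right) \circ \left( h\otimes k\right) =\left( f\circ h\right) \otimes \left( g\circ k\right) $, together with $\left( d\otimes d\right) c_{D}=c_{V}\left( d\otimes d\right) $ and $d\circ \mathrm{Id}_{D}=d$, one gets
\begin{equation*}
\left( d\otimes d\otimes d\right) \left( c_{D}\otimes D\right) =\left( c_{V}\otimes V\right) \left( d\otimes d\otimes d\right) \qquad \text{and}\qquad \left( d\otimes d\otimes d\right) \left( D\otimes c_{D}\right) =\left( V\otimes c_{V}\right) \left( d\otimes d\otimes d\right) .
\end{equation*}

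For part 1, I would post-compose the left-hand side of the quantum Yang--Baxter equation $\left( c_{D}\otimes D\right) \left( D\otimes c_{D}\right) \left( c_{D}\otimes D\right) $ with $d\otimes d\otimes d$ and, applying the two displayed identities three times in alternation, slide $d\otimes d\otimes d$ all the way to the right, obtaining $\left( c_{V}\otimes V\right) \left( V\otimes c_{V}\right) \left( c_{V}\otimes V\right) \left( d\otimes d\otimes d\right) $. Doing the same with $\left( D\otimes c_{D}\right) \left( c_{D}\otimes D\right) \left( D\otimes c_{D}\right) $ yields $\left( V\otimes c_{V}\right) \left( c_{V}\otimes V\right) \left( V\otimes c_{V}\right) \left( d\otimes d\otimes d\right) $. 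Since $\left( V,c_{V}\right) \in \mathrm{Br}_{\mathcal{M}}$ satisfies \eqref{ec: braided equation}, these two expressions agree, so $d\otimes d\otimes d$ equalizes the two sides of the Yang--Baxter equation for $c_{D}$; because $d\otimes d\otimes d$ is a monomorphism, the two sides coincide and $c_{D}$ satisfies \eqref{ec: braided equation}. Together with the hypothesis that $c_{D}$ is invertible this gives $\left( D,c_{D}\right) \in \mathrm{Br}_{\mathcal{M}}$, and the intertwining relation is exactly the condition (see Definition \ref{def:braid}) that $d$ be a morphism of braided objects $\left( D,c_{D}\right) \rightarrow \left( V,c_{V}\right) $.

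For part 2, I would first establish $c_{D}^{2}=\mathrm{Id}_{D\otimes D}$. Composing the intertwining relation with itself gives
\begin{equation*}
\left( d\otimes d\right) c_{D}^{2}=c_{V}\left( d\otimes d\right) c_{D}=c_{V}c_{V}\left( d\otimes d\right) =c_{V}^{2}\left( d\otimes d\right) =\left( d\otimes d\right) ,
\end{equation*}
where the last step uses $c_{V}^{2}=\mathrm{Id}$ since $\left( V,c_{V}\right) \in \mathrm{Br}_{\mathcal{M}}^{s}$. As $d\otimes d$ is a monomorphism, $c_{D}^{2}=\mathrm{Id}_{D\otimes D}$; in particular $c_{D}$ is invertible, being its own inverse. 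This invertibility lets me invoke part 1 (whose standing hypothesis that $d\otimes d\otimes d$ is a monomorphism still holds) to conclude $\left( D,c_{D}\right) \in \mathrm{Br}_{\mathcal{M}}$, and hence, being symmetric, $\left( D,c_{D}\right) \in \mathrm{Br}_{\mathcal{M}}^{s}$.

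The computations themselves are routine; the only points requiring care are the bookkeeping of which tensor factor $c_{D}$ acts on while propagating $d\otimes d\otimes d$ through the three factors, and the ordering of the two parts---part 2 must first produce the invertibility of $c_{D}$ before part 1 is applicable.
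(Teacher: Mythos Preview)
Your proof is correct and follows essentially the same approach as the paper: transfer the Yang--Baxter identity along $d\otimes d\otimes d$ via the intertwining relation, use the monomorphism hypothesis to cancel, and for part 2 first deduce $c_{D}^{2}=\mathrm{Id}$ and then appeal to part 1. The paper's argument is terser but structurally identical.
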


\begin{proof}
Using $\left( d\otimes d\right) c_{D}=c_{V}\left( d\otimes d\right) $ and
the quantum Yang-Baxter equation for $c_{V}$ one gets
\begin{equation*}
\left( d\otimes d\otimes d\right) \left( c_{D}\otimes D\right) \left(
D\otimes c_{D}\right) \left( c_{D}\otimes D\right) =\left( d\otimes d\otimes
d\right) \left( D\otimes c_{D}\right) \left( c_{D}\otimes D\right) \left(
D\otimes c_{D}\right) .
\end{equation*}%
Since $d\otimes d\otimes d$ is a monomorphism we get that $c_{D}$ satisfies
the quantum Yang-Baxter equation.

1) Since $c_{D}$ is an isomorphism it is clear that $\left( D,c_{D}\right)
\in \mathrm{Br}_{\mathcal{M}}$ and that $d:\left( D,c_{D}\right) \rightarrow
\left( V,c_{V}\right) $ is a morphism in $\mathrm{Br}_{\mathcal{M}}.$

2) Since $\left( d\otimes d\right) c_{D}^{2}=c_{V}^{2}\left( d\otimes
d\right) =d\otimes d$ and $d\otimes d$ is a monomorphism we get $c_{D}^{2}=%
\mathrm{Id}_{D\otimes D}$ so that we can apply 1).
\end{proof}

\begin{lemma}
\label{lem: BrVec}Let $\mathcal{M}$ be a monoidal category and let $H:%
\mathrm{Br}_{\mathcal{M}}\longrightarrow \mathcal{M}\ $be the forgetful
functor. Let $\left( e_{0},e_{1}\right) $ be a pair of morphisms in $\mathrm{%
Br}_{\mathcal{M}}$ such $\left( He_{0},He_{1}\right) $ is a coreflexive pair
of morphisms in $\mathcal{M}$. Assume that $\left( He_{0},He_{1}\right) $
has an equalizer which is preserved by the tensor functors. Then $\left(
e_{0},e_{1}\right) $ has an equalizer in $\mathrm{Br}_{\mathcal{M}}$ which
is preserved by $H$. The same statement holds when we replace $\mathrm{Br}_{%
\mathcal{M}}$ by $\mathrm{Br}_{\mathcal{M}}^{s}$ and $H$ by the
corresponding forgetful functor.
\end{lemma}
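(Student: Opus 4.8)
The plan is to construct the equalizer at the level of $\mathcal{M}$ and then transport the braiding onto it, the transport being legitimated by Lemma \ref{lem:quotdual}. Write $(f,g):=(He_0,He_1)\colon V\to W$ for the underlying coreflexive pair, fix a common retraction $r\colon W\to V$ with $rf=rg=\mathrm{Id}_V$, and let $(E,\xi\colon E\to V)$ be the chosen equalizer of $(f,g)$ in $\mathcal{M}$, which by hypothesis is preserved by the tensor functors. The goal is to manufacture a braiding $c_E\colon E\otimes E\to E\otimes E$ for which $(E,c_E)$ is the equalizer of $(e_0,e_1)$ in $\mathrm{Br}_{\mathcal{M}}$ and $H$ carries it back to $(E,\xi)$.

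The first step, which I expect to be the main obstacle, is to show that $\xi\otimes\xi$ is the equalizer of $(f\otimes f,g\otimes g)$ and that $\xi\otimes\xi\otimes\xi$ is a monomorphism. This is where coreflexivity is indispensable: the pairs $(f^{\otimes n},g^{\otimes n})$ remain coreflexive with retraction $r^{\otimes n}$, and applying the retraction in one tensor slot at a time to a morphism equalizing a tensor-power pair lets it factor successively through $\xi\otimes V$ and $E\otimes\xi$, exactly the dual of the situation treated in Lemma \ref{lem:Vallette}. Carried out for the square and the cube of $\xi$, this identifies both as equalizers of the corresponding coreflexive pairs, hence as monomorphisms. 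Granting that $\xi\otimes\xi$ is an equalizer, I would define $c_E$ by its universal property: since $e_0,e_1$ are morphisms in $\mathrm{Br}_{\mathcal{M}}$ we have $(f\otimes f)c_V=c_W(f\otimes f)$ and likewise for $g$, so that $(f\otimes f)c_V(\xi\otimes\xi)=c_W(f\xi\otimes f\xi)=c_W(g\xi\otimes g\xi)=(g\otimes g)c_V(\xi\otimes\xi)$, using $f\xi=g\xi$. Thus $c_V(\xi\otimes\xi)$ factors uniquely through $\xi\otimes\xi$, yielding $c_E$ with $(\xi\otimes\xi)c_E=c_V(\xi\otimes\xi)$.

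Next I would verify that $c_E$ is invertible and assemble the braided object. By Lemma \ref{lem:cInv} the pair $(V,c_V^{-1})$ is again a braided object and $e_0,e_1$ are again morphisms of braided objects for it, so the very same factorization produces $c_E'$ with $(\xi\otimes\xi)c_E'=c_V^{-1}(\xi\otimes\xi)$; since $\xi\otimes\xi$ is a monomorphism one reads off $c_Ec_E'=\mathrm{Id}=c_E'c_E$. Now Lemma \ref{lem:quotdual}(1), applied with $d=\xi$, the identity $(\xi\otimes\xi)c_E=c_V(\xi\otimes\xi)$, the monomorphism $\xi\otimes\xi\otimes\xi$, and $c_E$ invertible, gives $(E,c_E)\in\mathrm{Br}_{\mathcal{M}}$ and makes $\xi$ a morphism of braided objects. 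Finally I would check the universal property in $\mathrm{Br}_{\mathcal{M}}$: any braided $u\colon(T,c_T)\to(V,c_V)$ with $e_0u=e_1u$ factors uniquely as $Hu=\xi w$ in $\mathcal{M}$, and testing the identity against the monomorphism $\xi\otimes\xi$ gives $(\xi\otimes\xi)c_E(w\otimes w)=c_V(Hu\otimes Hu)=(Hu\otimes Hu)c_T=(\xi\otimes\xi)(w\otimes w)c_T$, whence $c_E(w\otimes w)=(w\otimes w)c_T$ and $w$ lies in $\mathrm{Br}_{\mathcal{M}}$. Hence $(E,c_E)$ is the equalizer and $H$ preserves it.

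Everything after the tensor-power step is formal, so the crux really is controlling $\xi\otimes\xi$ and $\xi\otimes\xi\otimes\xi$. For the symmetric variant I would run the identical argument and, at the invocation of Lemma \ref{lem:quotdual}(1), add Lemma \ref{lem:quotdual}(2): when $c_V^2=\mathrm{Id}$ the relation $(\xi\otimes\xi)c_E^2=c_V^2(\xi\otimes\xi)=\xi\otimes\xi$ together with $\xi\otimes\xi$ mono forces $c_E^2=\mathrm{Id}$, so $(E,c_E)\in\mathrm{Br}_{\mathcal{M}}^s$. Since $\mathrm{Br}_{\mathcal{M}}^s$ is a full subcategory of $\mathrm{Br}_{\mathcal{M}}$, the same factorization exhibits $(E,c_E)$ as the equalizer there as well, and the corresponding forgetful functor preserves it.
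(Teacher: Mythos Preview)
Your proposal is correct and follows essentially the same route as the paper's proof: construct the equalizer $(E,\xi)$ in $\mathcal{M}$, use the dual of Lemma~\ref{lem:Vallette} to control $\xi\otimes\xi$ (and the monicity of $\xi^{\otimes 3}$), factor $c_V(\xi\otimes\xi)$ through $\xi\otimes\xi$ to obtain $c_E$, produce its inverse via $c_V^{-1}$, and then invoke Lemma~\ref{lem:quotdual} to land in $\mathrm{Br}_{\mathcal{M}}$. The only cosmetic differences are that the paper writes the monicity of $\xi^{\otimes 3}$ as a composite of three monomorphisms rather than as an equalizer in its own right, and for the symmetric case it appeals to Remark~\ref{rem:aureo} and reflection of equalizers by the full embedding $\mathbb{I}_{\mathrm{Br}}^{s}$, whereas you invoke Lemma~\ref{lem:quotdual}(2) directly; both amount to the same observation.
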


\begin{proof}
Let $\left( e_{0},e_{1}\right) $ from $\left( V,c_{V}\right) $ to $\left(
W,c_{W}\right) $ a coreflexive pair of morphisms in $\mathrm{Br}_{\mathcal{M}%
}$. We denote $\left( He_{0},He_{1}\right) $ by $\left( e_{0},e_{1}\right) $
to simplify the notation. By definition, there exists a morphism $%
p:W\rightarrow V$ in $\mathcal{M}$ such that $p\circ e_{0}=\mathrm{Id}%
_{V}=p\circ e_{1}$. Consider the equalizer%
\begin{equation*}
\xymatrix{D \ar[r]^-{d} & V \ar@<.5ex>[rr]^-{e_0 } \ar@<-.5ex>[rr]_-{e_1
}&&W }
\end{equation*}

By the dual version of Lemma \ref{lem:Vallette}, we have the following
equalizer%
\begin{equation*}
\xymatrix{\duplica{D} \ar[r]^-{\duplica{d}} & \duplica{V}
\ar@<.5ex>[rr]^-{\duplica{e_0} } \ar@<-.5ex>[rr]_-{\duplica{e_1}
}&&\duplica{W} }
\end{equation*}%
We have%
\begin{equation*}
\left( e_{0}\otimes e_{0}\right) c_{V}\left( d\otimes d\right) =c_{W}\left(
e_{0}\otimes e_{0}\right) \left( d\otimes d\right) =c_{W}\left( e_{1}\otimes
e_{1}\right) \left( d\otimes d\right) =\left( e_{1}\otimes e_{1}\right)
c_{V}\left( d\otimes d\right) .
\end{equation*}%
Hence there is a unique morphism $c_{D}:D\otimes D\rightarrow D\otimes D$
such that $\left( d\otimes d\right) c_{D}=c_{V}\left( d\otimes d\right) .$

Since $\left( V,c_{V}^{-1}\right) $ and $\left( W,c_{W}^{-1}\right) $ are
also braided objects, and $e_{0},e_{1}$ are also morphisms from $\left(
V,c_{V}^{-1}\right) $ to $\left( W,c_{W}^{-1}\right) $, as above we can
construct a morphism $\gamma _{D}:D\otimes D\rightarrow D\otimes D$ such
that $\left( d\otimes d\right) \gamma _{D}=c_{V}^{-1}\left( d\otimes
d\right) .$ We have $\left( d\otimes d\right) c_{D}\gamma _{D}=c_{V}\left(
d\otimes d\right) \gamma _{D}=c_{V}c_{V}^{-1}\left( d\otimes d\right)
=d\otimes d$ and hence $c_{D}\gamma _{D}=\mathrm{Id}_{D\otimes D}.$
Similarly $\gamma _{D}c_{D}=\mathrm{Id}_{D\otimes D}.$ Thus $c_{D}$ is
invertible. Since $d\otimes d\otimes d=\left( d\otimes V\otimes V\right)
\left( D\otimes d\otimes V\right) \left( D\otimes D\otimes d\right) $ we
have that $d\otimes d\otimes d$ is a monomorphism. Thus we can apply Lemma %
\ref{lem:quotdual} to get that $\left( D,c_{D}\right) $ is an object in $%
\mathrm{Br}_{\mathcal{M}}$ and $d:\left( D,c_{D}\right) \rightarrow \left(
V,c_{V}\right) $ is a morphism in $\mathrm{Br}_{\mathcal{M}}.$ It is
straightforward to check that
\begin{equation*}
\xymatrix{( D,c_{D}) \ar[r]^-{d} & ( V,c_{V}) \ar@<.5ex>[rr]^-{e_0 }
\ar@<-.5ex>[rr]_-{e_1 }&&( W,c_{W}) }
\end{equation*}%
is an equalizer in $\mathrm{Br}_{\mathcal{M}}$. Consider now the case of $%
\mathrm{Br}_{\mathcal{M}}^{s}$ so that $\left( e_{0},e_{1}\right) $ as above
is a pair in $\mathrm{Br}_{\mathcal{M}}^{s}$. Since $d$ is a monomorphism,
by Remark \ref{rem:aureo}, we get that $\left( D,c_{D}\right) \in \mathrm{Br}%
_{\mathcal{M}}^{s}$ and $d$ becomes a morphism in this category. Since $%
\mathrm{Br}_{\mathcal{M}}^{s}$ is a full subcategory of $\mathrm{Br}_{%
\mathcal{M}}$ we have that $\mathbb{I}_{\mathrm{Br}}^{s}:\mathrm{Br}_{%
\mathcal{M}}^{s}\rightarrow \mathrm{Br}_{\mathcal{M}}$ is full and faithful
and hence it reflects equalizers (see \cite[Proposition 2.9.9]{Borceux1}) so
that the above equalizer obtained in $\mathrm{Br}_{\mathcal{M}}$ is indeed
an equalizer in $\mathrm{Br}_{\mathcal{M}}^{s}$.
\end{proof}

\begin{lemma}
\label{lem:quot}Let $\mathcal{M}$ be a monoidal category and let $\left(
W,c_{W}\right) $ be an object in $\mathrm{Br}_{\mathcal{M}}$. Assume there
is a morphism $d:W\rightarrow D$ in $\mathcal{M}$ and a morphism $%
c_{D}:D\otimes D\rightarrow D\otimes D$ such that $c_{D}\left( d\otimes
d\right) =\left( d\otimes d\right) c_{W}\ $and $d\otimes d\otimes d$ is an
epimorphism.

1) Assume that $c_{D}$ is an isomorphism. Then $\left( D,c_{D}\right) $ is
an object in $\mathrm{Br}_{\mathcal{M}}$ and $d$ becomes a morphism in $%
\mathrm{Br}_{\mathcal{M}}$ from $\left( W,c_{W}\right) $ to this object.

2) Assume that $d\otimes d$ is an epimorphism. If $\left( V,c_{V}\right) \in
\mathrm{Br}_{\mathcal{M}}^{s}$ then $\left( D,c_{D}\right) \in \mathrm{Br}_{%
\mathcal{M}}^{s}$.
\end{lemma}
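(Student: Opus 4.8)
The plan is to dualize the proof of Lemma \ref{lem:quotdual}, of which this statement is the ``quotient'' counterpart: there the structure morphism was a subobject inclusion and $d^{\otimes 3}$ a monomorphism cancelled \emph{on the left}, whereas here $d$ plays the role of a quotient map and $d\otimes d\otimes d$ is an epimorphism to be cancelled \emph{on the right}. The heart of the matter is to check that $c_D$ satisfies the quantum Yang--Baxter equation \eqref{ec: braided equation}; once this is done, parts 1) and 2) follow exactly as in Lemma \ref{lem:quotdual}.

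First I would record the two intertwining relations obtained from $c_D(d\otimes d)=(d\otimes d)c_W$ by functoriality of the tensor product, namely
\begin{gather*}
(c_D\otimes D)(d\otimes d\otimes d)=(d\otimes d\otimes d)(c_W\otimes W),\\
(D\otimes c_D)(d\otimes d\otimes d)=(d\otimes d\otimes d)(W\otimes c_W).
\end{gather*}
Precomposing each side of the candidate equation, i.e. $(c_D\otimes D)(D\otimes c_D)(c_D\otimes D)$ and $(D\otimes c_D)(c_D\otimes D)(D\otimes c_D)$, with $d\otimes d\otimes d$ on the right and sliding $d\otimes d\otimes d$ to the left by repeated use of these relations, both expressions become $d\otimes d\otimes d$ composed with the corresponding word in $c_W$. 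Since $c_W$ obeys \eqref{ec: braided equation}, the two words in $c_W$ agree, so both sides of the candidate equation precomposed with $d\otimes d\otimes d$ coincide. As $d\otimes d\otimes d$ is assumed to be an epimorphism, I cancel it on the right and conclude that $c_D$ satisfies the quantum Yang--Baxter equation.

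For part 1), $c_D$ is invertible by hypothesis and satisfies \eqref{ec: braided equation} by the previous step, hence $(D,c_D)\in\mathrm{Br}_{\mathcal{M}}$; moreover $c_D(d\otimes d)=(d\otimes d)c_W$ is precisely the assertion that $d\colon(W,c_W)\to(D,c_D)$ is a morphism of braided objects. For part 2) (where the symmetry hypothesis is to be read on $(W,c_W)$, the reference to $(V,c_V)$ being a slip), I would compute $c_D^2(d\otimes d)=c_D(d\otimes d)c_W=(d\otimes d)c_W^2=d\otimes d$ using $c_W^2=\mathrm{Id}$; since $d\otimes d$ is now assumed to be an epimorphism, cancelling it on the right yields $c_D^2=\mathrm{Id}_{D\otimes D}$. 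This simultaneously shows $(D,c_D)\in\mathrm{Br}_{\mathcal{M}}^s$ and supplies the invertibility of $c_D$ (it is its own inverse) needed to appeal to part 1).

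The computations are routine; the only step requiring attention is the bookkeeping in the interchange law when pushing $d\otimes d\otimes d$ past $c_D\otimes D$ and $D\otimes c_D$, together with keeping the direction of cancellation correct, since here epimorphisms cancel on the right, in contrast with the monomorphisms cancelled on the left in the proof of Lemma \ref{lem:quotdual}.
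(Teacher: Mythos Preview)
Your proposal is correct and is exactly the dualization the paper has in mind: the paper's own proof of this lemma is the single sentence ``It is dual to Lemma \ref{lem:quotdual},'' and you have spelled out precisely that duality, including the correct switch from cancelling monomorphisms on the left to cancelling epimorphisms on the right. Your observation that the hypothesis in part 2) should read $(W,c_W)\in\mathrm{Br}_{\mathcal{M}}^{s}$ rather than $(V,c_V)$ is also right.
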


\begin{proof}
It is dual to Lemma \ref{lem:quotdual}.
\end{proof}

\begin{lemma}
\label{lem: BrCoeq}Let $\mathcal{M}$ be a monoidal category and let $H:%
\mathrm{Br}_{\mathcal{M}}\longrightarrow \mathcal{M}\ $be the forgetful
functor. Let $\left( e_{0},e_{1}\right) $ be a pair of morphisms in $\mathrm{%
Br}_{\mathcal{M}}$ such $\left( He_{0},He_{1}\right) $ is a reflexive pair
of morphisms in $\mathcal{M}$. Assume that $\left( He_{0},He_{1}\right) $
has a coequalizer which is preserved by the tensor functors. Then $\left(
e_{0},e_{1}\right) $ has an coequalizer in $\mathrm{Br}_{\mathcal{M}}$ which
is preserved by $H$.

The same statement holds when we replace $\mathrm{Br}_{\mathcal{M}}$ by $%
\mathrm{Br}_{\mathcal{M}}^{s}$ and $H$ by the corresponding forgetful
functor.
\end{lemma}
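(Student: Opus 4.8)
The plan is to dualize the proof of Lemma \ref{lem: BrVec} line by line, trading equalizers for coequalizers, monomorphisms for epimorphisms, and coreflexive pairs for reflexive ones. Write $\left(e_{0},e_{1}\right)$ for a reflexive pair from $\left(V,c_{V}\right)$ to $\left(W,c_{W}\right)$ in $\mathrm{Br}_{\mathcal{M}}$, abbreviating $\left(He_{0},He_{1}\right)$ again by $\left(e_{0},e_{1}\right)$, and let $d:W\rightarrow D$ be the assumed coequalizer of this pair in $\mathcal{M}$. First I would invoke Lemma \ref{lem:Vallette} applied to the pair $\left(e_{0},e_{1}\right)$ against itself; this is exactly the step at which the reflexivity hypothesis is used, mirroring how its dual is used in Lemma \ref{lem: BrVec}. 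It yields that $V\otimes V\rightrightarrows W\otimes W\xrightarrow{d\otimes d}D\otimes D$ is again a coequalizer, so that $d\otimes d$ is an epimorphism; iterating, $d\otimes d\otimes d$ is an epimorphism as well, being a composite of tensor products of $d$ with identity functors, each a coequalizer and hence epi.

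Next I would construct the braiding on $D$. Since $e_{0}$ and $e_{1}$ are braided morphisms and $de_{0}=de_{1}$, one computes $\left(d\otimes d\right)c_{W}\left(e_{i}\otimes e_{i}\right)=\left(de_{i}\otimes de_{i}\right)c_{V}$, which is independent of $i$; thus $\left(d\otimes d\right)c_{W}$ coequalizes $\left(e_{0}\otimes e_{0},e_{1}\otimes e_{1}\right)$, and the universal property of the coequalizer $d\otimes d$ produces a unique $c_{D}:D\otimes D\rightarrow D\otimes D$ with $c_{D}\left(d\otimes d\right)=\left(d\otimes d\right)c_{W}$. Running the same construction on $\left(V,c_{V}^{-1}\right)$ and $\left(W,c_{W}^{-1}\right)$, legitimate via the functor of Lemma \ref{lem:cInv}, yields $\gamma_{D}$ with $\gamma_{D}\left(d\otimes d\right)=\left(d\otimes d\right)c_{W}^{-1}$; precomposing with the epimorphism $d\otimes d$ shows $c_{D}\gamma_{D}=\mathrm{Id}=\gamma_{D}c_{D}$, so $c_{D}$ is invertible. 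Because $d\otimes d\otimes d$ is epi, Lemma \ref{lem:quot}(1) then gives $\left(D,c_{D}\right)\in\mathrm{Br}_{\mathcal{M}}$ with $d:\left(W,c_{W}\right)\rightarrow\left(D,c_{D}\right)$ a morphism there.

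It then remains to verify the universal property in $\mathrm{Br}_{\mathcal{M}}$. Given any braided $f:\left(W,c_{W}\right)\rightarrow\left(X,c_{X}\right)$ with $fe_{0}=fe_{1}$, the coequalizer $d$ in $\mathcal{M}$ furnishes a unique $g:D\rightarrow X$ with $gd=f$, and $g$ is braided because $c_{X}\left(g\otimes g\right)\left(d\otimes d\right)=\left(g\otimes g\right)c_{D}\left(d\otimes d\right)$ and $d\otimes d$ is an epimorphism; uniqueness of $g$ as a braided morphism is immediate since $H$ is faithful. Hence $\left(e_{0},e_{1}\right)$ admits the coequalizer $d:\left(W,c_{W}\right)\rightarrow\left(D,c_{D}\right)$ in $\mathrm{Br}_{\mathcal{M}}$, preserved by $H$ by construction.

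For the symmetric variant I would note that $d$ exhibits $\left(D,c_{D}\right)$ as a quotient of $\left(W,c_{W}\right)$ with $d\otimes d$ an epimorphism, so Lemma \ref{lem:quot}(2), equivalently Remark \ref{rem:aureo}-2), gives $\left(D,c_{D}\right)\in\mathrm{Br}_{\mathcal{M}}^{s}$ whenever $\left(W,c_{W}\right)\in\mathrm{Br}_{\mathcal{M}}^{s}$; the inclusion $\mathbb{I}_{\mathrm{Br}}^{s}$, being full and faithful, reflects coequalizers by the dual of \cite[Proposition 2.9.9]{Borceux1}, so the coequalizer obtained in $\mathrm{Br}_{\mathcal{M}}$ is already one in $\mathrm{Br}_{\mathcal{M}}^{s}$. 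I expect no genuine obstacle here: the only points needing care are the bookkeeping of arrow directions in the dualization and confirming that reflexivity is precisely what licenses Lemma \ref{lem:Vallette}, making $d\otimes d$ and $d\otimes d\otimes d$ epimorphisms — everything else is a routine transcription of the proof of Lemma \ref{lem: BrVec}.
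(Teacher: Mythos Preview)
Your proposal is correct and follows exactly the approach indicated by the paper, whose proof consists of the single sentence ``It is dual to Lemma \ref{lem: BrVec}.'' You have simply written out that dualization in detail, invoking Lemma \ref{lem:Vallette}, Lemma \ref{lem:cInv}, Lemma \ref{lem:quot}, and Remark \ref{rem:aureo} at the points corresponding to where their duals appear in the proof of Lemma \ref{lem: BrVec}.
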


\begin{proof}
It is dual to Lemma \ref{lem: BrVec}.
\end{proof}

\begin{lemma}
\label{lem:HAlgRefCo}Let $\mathcal{M}$ be a monoidal category. Assume that $%
\mathcal{M}$ has coequalizers and that the tensor functors preserve them.
Then the functor $H_{\mathrm{Alg}}:\mathrm{BrAlg}_{\mathcal{M}}\rightarrow
\mathrm{Alg}_{\mathcal{M}}$ reflects coequalizers.
\end{lemma}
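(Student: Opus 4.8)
The plan is to verify the universal property directly. Recall that the assertion ``$H_{\mathrm{Alg}}$ reflects coequalizers'' means the following: given a pair $e_{0},e_{1}$ in $\BrAlg_{\M}$ together with a morphism $q$ of braided algebras satisfying $qe_{0}=qe_{1}$ and such that $H_{\mathrm{Alg}}q$ is a coequalizer of $(H_{\mathrm{Alg}}e_{0},H_{\mathrm{Alg}}e_{1})$ in $\Alg_{\M}$, one must show that $q$ is already a coequalizer of $(e_{0},e_{1})$ in $\BrAlg_{\M}$. Write $\mathbb{A}=(A,m_{A},u_{A},c_{A})$ for the common codomain of the $e_{i}$ and $\mathbb{Q}=(Q,m_{Q},u_{Q},c_{Q})$ for the codomain of $q$.

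First I would record the two epimorphism facts that make the argument run. By \ref{cl:CoeqAlg}, the underlying morphism in $\M$ of the $\Alg_{\M}$-coequalizer $H_{\mathrm{Alg}}q$ is itself a coequalizer in $\M$, hence an epimorphism; here no hypothesis on reflexive pairs is needed since $\M$ is assumed to have all coequalizers. Since the tensor functors preserve coequalizers, applying $(-)\otimes A$ and $Q\otimes(-)$ to this coequalizer shows that $q\otimes A$ and $Q\otimes q$ are again coequalizers in $\M$, so that $q\otimes q=(Q\otimes q)\circ(q\otimes A)$ is an epimorphism in $\M$.

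Next, given any braided algebra $\mathbb{Z}=(Z,m_{Z},u_{Z},c_{Z})$ and a morphism $\alpha$ of braided algebras with $\alpha e_{0}=\alpha e_{1}$, the morphism $H_{\mathrm{Alg}}\alpha$ coequalizes $(H_{\mathrm{Alg}}e_{0},H_{\mathrm{Alg}}e_{1})$ in $\Alg_{\M}$, so the universal property of $H_{\mathrm{Alg}}q$ yields a unique algebra morphism $\beta\colon\mathbb{Q}\to\mathbb{Z}$ with $\beta\circ H_{\mathrm{Alg}}q=H_{\mathrm{Alg}}\alpha$. The only thing left to check is that $\beta$ is a morphism of braided objects, i.e. $c_{Z}(\beta\otimes\beta)=(\beta\otimes\beta)c_{Q}$. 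The plan is to precompose with the epimorphism $q\otimes q$ and use that $q$ and $\alpha$ are braided together with $\beta q=\alpha$, computing $(\beta\otimes\beta)c_{Q}(q\otimes q)=(\beta\otimes\beta)(q\otimes q)c_{A}=(\alpha\otimes\alpha)c_{A}=c_{Z}(\alpha\otimes\alpha)=c_{Z}(\beta\otimes\beta)(q\otimes q)$; cancelling the epimorphism $q\otimes q$ then gives the desired identity. Thus $\beta$ lifts to a morphism in $\BrAlg_{\M}$, and its uniqueness is immediate because $H_{\mathrm{Alg}}$ acts as the identity on morphisms, hence is faithful.

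I expect the only genuinely delicate point to be the verification that $q\otimes q$ is an epimorphism, which is exactly where the hypotheses that $\M$ has coequalizers and that the tensor functors preserve them are used, via \ref{cl:CoeqAlg}. Everything else follows the standard pattern for lifting a mediating morphism along a forgetful functor that discards a structure whose compatibility is tested on the second tensor power: once $q\otimes q$ is known to be epi, the braiding compatibility of $\beta$ is forced by cancellation, and faithfulness of $H_{\mathrm{Alg}}$ settles uniqueness.
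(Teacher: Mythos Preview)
Your proof is correct and follows essentially the same approach as the paper: obtain the mediating algebra morphism from the coequalizer property in $\Alg_{\M}$, use \ref{cl:CoeqAlg} together with preservation of coequalizers by the tensor functors to see that $q\otimes q$ is an epimorphism in $\M$, and then cancel $q\otimes q$ to verify compatibility with the braidings, with uniqueness coming from faithfulness of $H_{\mathrm{Alg}}$. The only cosmetic difference is that the paper invokes uniqueness of coequalizers in $\Alg_{\M}$ to identify the given $q$ with the one produced by \ref{cl:CoeqAlg}, whereas you state this identification directly.
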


\begin{proof}
Let
\begin{equation*}
\xymatrix{ ( A,c_{A}) \ar@<.5ex>[rr]^-{\alpha } \ar@<-.5ex>[rr]_-{\beta }&&(
B,c_{B})\ar[r]^-{p}&( D,c_{D}) }
\end{equation*}%
be a diagram of morphisms and objects in $\mathrm{BrAlg}_{\mathcal{M}}$
which is sent by $H_{\mathrm{Alg}}$ to a coequalizer in $\mathrm{Alg}_{%
\mathcal{M}}$. Since $H_{\mathrm{Alg}}$ is faithful we have that $p\alpha
=p\beta $ as morphisms in $\mathrm{BrAlg}_{\mathcal{M}}.$ Let $\lambda
:\left( B,c_{B}\right) \rightarrow \left( E,c_{E}\right) $ be a morphism in $%
\mathrm{BrAlg}_{\mathcal{M}}$ such that $\lambda \alpha =\lambda \beta $.
Then $H_{\mathrm{Alg}}\lambda \circ H_{\mathrm{Alg}}\alpha =H_{\mathrm{Alg}%
}\lambda \circ H_{\mathrm{Alg}}\beta $ so that there is a unique algebra
morphism $\lambda ^{\prime }:D\rightarrow E$ such that $\lambda ^{\prime
}\circ H_{\mathrm{Alg}}p=H_{\mathrm{Alg}}\lambda .$ We have%
\begin{gather*}
c_{E}\left( \Omega \lambda ^{\prime }\otimes \Omega \lambda ^{\prime
}\right) \left( \Omega H_{\mathrm{Alg}}p\otimes \Omega H_{\mathrm{Alg}%
}p\right) =c_{E}\left( \Omega H_{\mathrm{Alg}}\lambda \otimes \Omega H_{%
\mathrm{Alg}}\lambda \right) =\left( \Omega H_{\mathrm{Alg}}\lambda \otimes
\Omega H_{\mathrm{Alg}}\lambda \right) c_{B} \\
=\left( \Omega \lambda ^{\prime }\otimes \Omega \lambda ^{\prime }\right)
\left( \Omega H_{\mathrm{Alg}}p\otimes \Omega H_{\mathrm{Alg}}p\right)
c_{B}=\left( \Omega \lambda ^{\prime }\otimes \Omega \lambda ^{\prime
}\right) c_{D}\left( \Omega H_{\mathrm{Alg}}p\otimes \Omega H_{\mathrm{Alg}%
}p\right) .
\end{gather*}%
By \ref{cl:CoeqAlg}, we have that $\left( H_{\mathrm{Alg}}\alpha ,H_{\mathrm{%
Alg}}\beta \right) $ has a coequalizer in $\mathrm{Alg}_{\mathcal{M}}$ which
is a regular epimorphism in $\mathcal{M}$. By the uniqueness of coequalizers
in $\mathrm{Alg}_{\mathcal{M}}$, we get that $\Omega H_{\mathrm{Alg}}p$ is
also regular epimorphism in $\mathcal{M}$. By the assumption on the tensor
products, we get that $\Omega H_{\mathrm{Alg}}p\otimes \Omega H_{\mathrm{Alg}%
}p$ is an epimorphism in $\mathcal{M}$. Thus the computation above implies $%
c_{E}\left( \Omega \lambda ^{\prime }\otimes \Omega \lambda ^{\prime
}\right) =\left( \Omega \lambda ^{\prime }\otimes \Omega \lambda ^{\prime
}\right) c_{D}$ so that there is a morphism $\lambda ^{\prime \prime
}:\left( D,c_{D}\right) \rightarrow \left( E,c_{E}\right) $ in $\mathrm{BrAlg%
}_{\mathcal{M}}$ such that $H_{\mathrm{Alg}}\lambda ^{\prime \prime
}=\lambda ^{\prime }$. Since $H_{\mathrm{Alg}}$ is faithful we get $\lambda
^{\prime \prime }\circ p=\lambda .$ Also the uniqueness follows by the fact
that $H_{\mathrm{Alg}}$ is faithful.
\end{proof}

\begin{lemma}
\label{lem: BrAlgCoeq}Let $\mathcal{M}$ be a monoidal category. Let $\left(
e_{0},e_{1}\right) :\mathbb{A}\rightarrow \mathbb{B}$ be a pair of morphisms
in $\mathrm{BrAlg}_{\mathcal{M}}$ such $\left( \Omega H_{\mathrm{Alg}%
}e_{0},\Omega H_{\mathrm{Alg}}e_{1}\right) $ is a reflexive pair of
morphisms in $\mathcal{M}$. Assume that $\mathcal{M}$ has coequalizers and
that the tensor functors preserve them. Then $\left( e_{0},e_{1}\right) $
has a coequalizer $\left( \mathbb{C},p:\mathbb{B}\rightarrow \mathbb{C}%
\right) $ in $\mathrm{BrAlg}_{\mathcal{M}}$ which is preserved both by the
functor $H_{\mathrm{Alg}}:\mathrm{BrAlg}_{\mathcal{M}}\rightarrow \mathrm{Alg%
}_{\mathcal{M}}$ and the functor $\Omega H_{\mathrm{Alg}}$ (in particular $%
\Omega H_{\mathrm{Alg}}p$ is a regular epimorphism in $\mathcal{M}$ in the
sense of \cite[Definition 4.3.1]{Borceux1}).

The same statement holds when we replace $\mathrm{BrAlg}_{\mathcal{M}}$ by $%
\mathrm{BrAlg}_{\mathcal{M}}^{s}$ and $H_{\mathrm{Alg}}$ by the
corresponding forgetful functor.
\end{lemma}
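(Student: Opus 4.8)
The plan is to build the coequalizer on the object that is simultaneously supplied by the algebra-level coequalizer and by the braided coequalizer, and then to recognize the outcome as a coequalizer in $\mathrm{BrAlg}_{\mathcal{M}}$ by a reflection argument. Write $a_{i}:=\Omega H_{\mathrm{Alg}}e_{i}$, which form a reflexive pair in $\mathcal{M}$ by hypothesis. Since $\mathcal{M}$ has coequalizers preserved by the tensor functors, Proposition \ref{pro:CreAlg} shows that $\Omega$ creates the coequalizer of the pair $(H_{\mathrm{Alg}}e_{0},H_{\mathrm{Alg}}e_{1})$ in $\mathrm{Alg}_{\mathcal{M}}$ (its $\Omega$-image $(a_{0},a_{1})$ being reflexive). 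Denote this algebra coequalizer by $(\mathbb{D},q)$, so that $\Omega q\colon B\rightarrow D$ is the coequalizer of $(a_{0},a_{1})$ in $\mathcal{M}$ and is preserved by $\Omega$.

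Next I would equip $D$ with a braiding. Consider the pair $(\Omega _{\mathrm{Br}}e_{0},\Omega _{\mathrm{Br}}e_{1})$ in $\mathrm{Br}_{\mathcal{M}}$; its image under the forgetful functor $H\colon \mathrm{Br}_{\mathcal{M}}\rightarrow \mathcal{M}$ is exactly $(a_{0},a_{1})$, whose coequalizer $\Omega q$ is preserved by the tensor functors. Hence Lemma \ref{lem: BrCoeq} yields a coequalizer $((D,c_{D}),\overline{q})$ in $\mathrm{Br}_{\mathcal{M}}$ with $H\overline{q}=\Omega q$; here $c_{D}$ is the unique morphism satisfying $c_{D}(\Omega q\otimes \Omega q)=(\Omega q\otimes \Omega q)c_{B}$, the relation being forced by the fact that $\Omega q\otimes \Omega q$ is an epimorphism (it is itself a coequalizer of a reflexive pair by Lemma \ref{lem:Vallette}). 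Thus the algebra $\mathbb{D}$ and the braided object $(D,c_{D})$ share the same underlying object and coequalizing map $\Omega q$.

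The main technical step, and the only place requiring genuine computation, is to check that $\mathbb{C}:=(D,m_{D},u_{D},c_{D})$ is a braided algebra, i.e. that $(m_{D},u_{D})$ commute with $c_{D}$ in the sense of \eqref{Br2}, \eqref{Br3} and \eqref{Br4}. I would establish each identity by precomposing with an appropriate tensor power of $\Omega q$ — with $\Omega q^{\otimes 3}$ for \eqref{Br2} and \eqref{Br3}, and with $\Omega q$ (resp. $\Omega q\otimes \Omega q$) for \eqref{Br4} — all of which are epimorphisms since the tensor functors preserve the reflexive coequalizer $\Omega q$ (Lemma \ref{lem:Vallette}). Using the multiplicativity and unitality of $q$, the defining relation $c_{D}(\Omega q\otimes \Omega q)=(\Omega q\otimes \Omega q)c_{B}$, and the corresponding braided-algebra relations for $\mathbb{B}$, the two sides of each identity agree after precomposition; cancelling the epimorphism gives the identity. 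This is precisely the bookkeeping carried out in the proof of Proposition \ref{pro:Ubr}, transcribed to the present situation.

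With $\mathbb{C}\in \mathrm{BrAlg}_{\mathcal{M}}$, the map $q$ lifts to a morphism $p\colon \mathbb{B}\rightarrow \mathbb{C}$ of braided algebras (it is already an algebra map and commutes with the braidings by construction of $c_{D}$), and $H_{\mathrm{Alg}}$ sends the fork $\mathbb{A}\rightrightarrows \mathbb{B}\xrightarrow{\,p\,}\mathbb{C}$ to the coequalizer $(\mathbb{D},q)$ in $\mathrm{Alg}_{\mathcal{M}}$. Since $H_{\mathrm{Alg}}$ reflects coequalizers by Lemma \ref{lem:HAlgRefCo}, the pair $(\mathbb{C},p)$ is a coequalizer in $\mathrm{BrAlg}_{\mathcal{M}}$; it is preserved by $H_{\mathrm{Alg}}$ (with image $(\mathbb{D},q)$) and by $\Omega H_{\mathrm{Alg}}$ (with image $\Omega q$, a regular epimorphism). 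For the symmetric statement I would view $(e_{0},e_{1})$ inside $\mathrm{BrAlg}_{\mathcal{M}}$ via $\mathbb{I}_{\mathrm{BrAlg}}^{s}$ and repeat the construction: the symmetric version of Lemma \ref{lem: BrCoeq} (equivalently Lemma \ref{lem:quot}(2), as $\Omega q\otimes \Omega q$ is epi) shows $(D,c_{D})$ is symmetric, whence $\mathbb{C}\in \mathrm{BrAlg}_{\mathcal{M}}^{s}$. Because $\mathbb{I}_{\mathrm{BrAlg}}^{s}$ is full and faithful it reflects coequalizers (dual of \cite[Proposition 2.9.9]{Borceux1}), so the fork is already a coequalizer in $\mathrm{BrAlg}_{\mathcal{M}}^{s}$, preserved by the corresponding forgetful functors.
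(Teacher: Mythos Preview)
Your argument is correct and follows the same overall strategy as the paper: build the algebra coequalizer via Proposition~\ref{pro:CreAlg}, endow its underlying object with a braiding induced from $c_{B}$ along the regular epimorphism, verify the braided-algebra axioms by cancelling tensor powers of that epimorphism, and handle the symmetric case by reflecting along the full and faithful inclusion $\mathbb{I}_{\mathrm{BrAlg}}^{s}$. The only stylistic difference is that you invoke Lemma~\ref{lem: BrCoeq} and Lemma~\ref{lem:HAlgRefCo} as black boxes (to obtain $(D,c_{D})$ and to reflect the coequalizer, respectively), whereas the paper inlines the construction of $c_{C}$ and its inverse via Lemma~\ref{lem:cInv} and Lemma~\ref{lem:quot}, and then checks the coequalizer property directly; both routes amount to the same computation.
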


\begin{proof}
Let $\left( A,m_{A},u_{A},c_{A}\right) :=\mathbb{A}$ and let $\left(
B,m_{B},u_{B},c_{B}\right) :=\mathbb{B}$. By Proposition \ref{pro:CreAlg},
we have that $\left( H_{\mathrm{Alg}}e_{0},H_{\mathrm{Alg}}e_{1}\right) $
has a coequalizer $\left( \left( C,m_{C},u_{C}\right) ,p:\left(
B,m_{B},u_{B}\right) \rightarrow \left( C,m_{C},u_{C}\right) \right) $ in $%
\mathrm{Alg}_{\mathcal{M}}$ and it is preserved by $\Omega $. Thus, we have
the following coequalizer in $\mathcal{M}$%
\begin{equation*}
\xymatrix{ A \ar@<.5ex>[rr]^-{e_0} \ar@<-.5ex>[rr]_-{e_1}&&B\ar[r]^-{p}&C }
\end{equation*}%
where $e_{0},e_{1}$ and $p$ denotes the same morphisms regarded as morphisms
in $\mathcal{M}$(hence, by construction, $p$ is a regular epimorphism in $%
\mathcal{M}$). By Lemma \ref{lem:Vallette}, we have the following coequalizer%
\begin{equation*}
\xymatrix{ \duplica{A} \ar@<.5ex>[rr]^-{\duplica{e_0}}
\ar@<-.5ex>[rr]_-{\duplica{e_1}}&&\duplica{B}\ar[r]^-{\duplica{p}}&%
\duplica{C} }
\end{equation*}%
We have
\begin{equation*}
\left( p\otimes p\right) c_{B}\left( e_{0}\otimes e_{0}\right) =\left(
p\otimes p\right) \left( e_{0}\otimes e_{0}\right) c_{A}=\left( p\otimes
p\right) \left( e_{1}\otimes e_{1}\right) c_{A}=\left( p\otimes p\right)
c_{B}\left( e_{1}\otimes e_{1}\right)
\end{equation*}%
so that there is a unique morphism $c_{C}:C\otimes C\rightarrow C\otimes C$
such that
\begin{equation*}
c_{C}\left( p\otimes p\right) =\left( p\otimes p\right) c_{B}.
\end{equation*}%
Now, by Lemma \ref{lem:cInv}, we have that $e_{0},e_{1}:\left(
A,m_{A},u_{A},c_{A}^{-1}\right) \rightarrow \left(
B,m_{B},u_{B},c_{B}^{-1}\right) $ are morphisms of braided objects. Hence
the same argument we used above proves that there is a unique morphism $%
\widetilde{c}_{C}:C\otimes C\rightarrow C\otimes C$ such that $\widetilde{c}%
_{C}\left( p\otimes p\right) =\left( p\otimes p\right) c_{B}^{-1}.$ We have $%
\widetilde{c}_{C}c_{C}\left( p\otimes p\right) =\widetilde{c}_{C}\left(
p\otimes p\right) c_{B}=\left( p\otimes p\right) c_{B}^{-1}c_{B}=p\otimes p$
and hence $\widetilde{c}_{C}c_{C}=\mathrm{Id}_{C\otimes C}.$ Similarly $c_{C}%
\widetilde{c}_{C}=\mathrm{Id}_{C\otimes C}$ so that $c_{C}$ is invertible.
By Lemma \ref{lem:quot}, $\left( C,c_{C}\right) $ is an object in $\mathrm{Br%
}_{\mathcal{M}}$ and $p:\left( B,c_{B}\right) \rightarrow \left(
C,c_{C}\right) $ is a morphism in $\mathrm{Br}_{\mathcal{M}}.$ It is
straightforward to check that $\left( C,m_{C},u_{C},c_{C}\right) $ is a
braided algebra (whence $p$ is a braided algebra morphism) and that $\left(
\left( C,m_{C},u_{C},c_{C}\right) ,p:\left( B,m_{B},u_{B},c_{B}\right)
\rightarrow \left( C,m_{C},u_{C},c_{C}\right) \right) $ is the coequalizer
of $\left( e_{0},e_{1}\right) .$

Consider now the case of $\mathrm{BrAlg}_{\mathcal{M}}^{s}$ so that $\left(
e_{0},e_{1}\right) $ as above is a pair in $\mathrm{BrAlg}_{\mathcal{M}}^{s}$%
. Since $p$ is an epimorphism, by Remark \ref{rem:aureo}, we get that $%
\left( C,m_{C},u_{C},c_{C}\right) \in \mathrm{BrAlg}_{\mathcal{M}}^{s}$ and $%
p$ becomes a morphism in this category. Since $\mathrm{BrAlg}_{\mathcal{M}%
}^{s}$ is a full subcategory of $\mathrm{BrAlg}_{\mathcal{M}}$ we have that $%
\mathbb{I}_{\mathrm{BrAlg}}^{s}:\mathrm{BrAlg}_{\mathcal{M}}^{s}\rightarrow
\mathrm{BrAlg}_{\mathcal{M}}$ is full and faithful and hence it reflects
coequalizers (dual to \cite[Proposition 2.9.9]{Borceux1}) so that the above
coequalizer obtained in $\mathrm{BrAlg}_{\mathcal{M}}$ is indeed a
coequalizer in $\mathrm{BrAlg}_{\mathcal{M}}^{s}$.
\end{proof}

\begin{proposition}
\label{pro: BrBiAlgCoeqLift} Let $\mathcal{M}$ be a monoidal category such
that the tensor functors preserve coequalizers. Let $\left(
e_{0},e_{1}\right) :\mathbb{A}\rightarrow \mathbb{B}$ be a pair of morphisms
in $\mathrm{BrBialg}_{\mathcal{M}}$ such $\left( \mho _{\mathrm{Br}%
}e_{0},\mho _{\mathrm{Br}}e_{1}\right) $ has a coequalizer in $\mathrm{BrAlg}%
_{\mathcal{M}}$ which is preserved by the functor $H_{\mathrm{Alg}}:\mathrm{%
BrAlg}_{\mathcal{M}}\rightarrow \mathrm{Alg}_{\mathcal{M}}$ and which is a
regular epimorphism when regarded in $\mathcal{M}$. Then $\left(
e_{0},e_{1}\right) $ has a coequalizer in $\mathrm{BrBialg}_{\mathcal{M}}$
which is preserved by the functor $\mho _{\mathrm{Br}}:\mathrm{BrBialg}_{%
\mathcal{M}}\rightarrow \mathrm{BrAlg}_{\mathcal{M}}$.

The same statement holds when we replace $\mathrm{BrBialg}_{\mathcal{M}},%
\mathrm{BrAlg}_{\mathcal{M}}$ and $\mho _{\mathrm{Br}}$ by $\mathrm{BrAlg}_{%
\mathcal{M}}^{s},\mathrm{BrAlg}_{\mathcal{M}}^{s}$ and $\mho _{\mathrm{Br}%
}^{s}$ respectively and we replace $H_{\mathrm{Alg}}$ by the corresponding
forgetful functor.
\end{proposition}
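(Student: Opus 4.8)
The plan is to transport the coalgebra structure of $\mathbb{B}$ across the coequalizer. Write $(\mathbb{C},p\colon\mathbb{B}\rightarrow\mathbb{C})$ for the coequalizer of $(\mho_{\mathrm{Br}}e_0,\mho_{\mathrm{Br}}e_1)$ in $\mathrm{BrAlg}_{\mathcal{M}}$ guaranteed by hypothesis, where $p$ is moreover a regular epimorphism when regarded in $\mathcal{M}$. The crucial preliminary observation is that the tensor powers $p\otimes p$ and $p\otimes p\otimes p$ are epimorphisms in $\mathcal{M}$: since a regular epimorphism is a coequalizer and the tensor functors preserve coequalizers, each of $p\otimes C$, $C\otimes p$ (and their triple analogues) is again a coequalizer, hence a regular epimorphism, and $p\otimes p=(p\otimes C)(B\otimes p)$ is a composite of epimorphisms, so epic; similarly for $p\otimes p\otimes p$. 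This cancellation tool is what will let every identity on $\mathbb{C}$ be reduced to the corresponding identity on $\mathbb{B}$.

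First I would construct $\Delta_C$ and $\varepsilon_C$. By the definition of braided bialgebra, the comultiplication $\Delta_B\colon\mathbb{B}\rightarrow\mathbb{B}\otimes\mathbb{B}$ and counit $\varepsilon_B\colon\mathbb{B}\rightarrow\mathbf{1}$ are morphisms of braided algebras, where $\mathbb{B}\otimes\mathbb{B}$ carries the braided tensor-algebra structure (cf. \cite[Proposition 2.2]{AM-BraidedOb}); likewise $p\otimes p\colon\mathbb{B}\otimes\mathbb{B}\rightarrow\mathbb{C}\otimes\mathbb{C}$ is a morphism of braided algebras. Since $e_0,e_1$ are bialgebra morphisms and $p$ coequalizes them, one computes $(p\otimes p)\Delta_B e_0=(pe_0\otimes pe_0)\Delta_A=(pe_1\otimes pe_1)\Delta_A=(p\otimes p)\Delta_B e_1$ and $\varepsilon_B e_0=\varepsilon_A=\varepsilon_B e_1$. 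The universal property of the coequalizer $(\mathbb{C},p)$ in $\mathrm{BrAlg}_{\mathcal{M}}$ then yields unique braided algebra morphisms $\Delta_C\colon\mathbb{C}\rightarrow\mathbb{C}\otimes\mathbb{C}$ and $\varepsilon_C\colon\mathbb{C}\rightarrow\mathbf{1}$ with $\Delta_C p=(p\otimes p)\Delta_B$ and $\varepsilon_C p=\varepsilon_B$. Because $\Delta_C$ and $\varepsilon_C$ are algebra morphisms, the bialgebra compatibilities \eqref{Br1} and \eqref{Br9} hold automatically for $\mathbb{C}$.

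Next I would check the remaining axioms making $(C,m_C,u_C,\Delta_C,\varepsilon_C,c_C)$ a braided bialgebra, namely coassociativity and counitality of $(\Delta_C,\varepsilon_C)$ together with the braided-coalgebra conditions \eqref{Br5}, \eqref{Br6}, \eqref{Br7}. Each is verified by the cancellation trick: precompose the desired identity with the appropriate tensor power of $p$, rewrite using $\Delta_C p=(p\otimes p)\Delta_B$, $\varepsilon_C p=\varepsilon_B$ and the fact that $p$ is a braided morphism (so $c_C(p\otimes p)=(p\otimes p)c_B$), thereby reducing it to the corresponding axiom for $\mathbb{B}$; since the relevant tensor power of $p$ is epic, the identity descends to $\mathbb{C}$. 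For instance coassociativity follows from $(\Delta_C\otimes C)\Delta_C p=(p\otimes p\otimes p)(\Delta_B\otimes B)\Delta_B=(p\otimes p\otimes p)(B\otimes\Delta_B)\Delta_B=(C\otimes\Delta_C)\Delta_C p$ and cancellation of the epimorphism $p$. I expect the main obstacle — really just the most delicate bookkeeping — to be \eqref{Br5}--\eqref{Br7}, where $c_C$ and $\Delta_C$ must be commuted past $p\otimes p$ and $p\otimes p\otimes p$; conceptually nothing new happens, but one must push the braided-morphism property of $p$ through each factor carefully.

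Finally I would verify that $(\mathbb{C},p)$ is the coequalizer in $\mathrm{BrBialg}_{\mathcal{M}}$ and that $\mho_{\mathrm{Br}}$ preserves it. Given $\beta\colon\mathbb{B}\rightarrow\mathbb{Z}$ in $\mathrm{BrBialg}_{\mathcal{M}}$ with $\beta e_0=\beta e_1$, the braided algebra morphism $\mho_{\mathrm{Br}}\beta$ coequalizes $\mho_{\mathrm{Br}}e_0,\mho_{\mathrm{Br}}e_1$, so it factors uniquely as $\overline{\beta}\circ p$ for a braided algebra morphism $\overline{\beta}\colon\mathbb{C}\rightarrow\mho_{\mathrm{Br}}\mathbb{Z}$; that $\overline{\beta}$ is comultiplicative and counital is again obtained by cancelling the epimorphisms $p$ and $p\otimes p$, using that $\beta$ is a bialgebra morphism (e.g. $\Delta_Z\overline{\beta}p=(\beta\otimes\beta)\Delta_B=(\overline{\beta}\otimes\overline{\beta})\Delta_C p$). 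Uniqueness follows since $p$ is epic, and by construction $\mho_{\mathrm{Br}}$ sends this coequalizer to the given one. The symmetric statement is proved by the identical argument: the braiding $c_C$ is symmetric because $\mathbb{C}$ is the given coequalizer in $\mathrm{BrAlg}_{\mathcal{M}}^{s}$ (alternatively, by Remark \ref{rem:aureo}, $\mathbb{C}$ being a quotient of $\mathbb{B}$ via the epimorphism $p$), so $\mathbb{C}\in\mathrm{BrBialg}_{\mathcal{M}}^{s}$, and full faithfulness of $\mathbb{I}_{\mathrm{BrBialg}}^{s}$ (which makes it reflect coequalizers, dual to \cite[Proposition 2.9.9]{Borceux1}) shows the coequalizer is one in $\mathrm{BrBialg}_{\mathcal{M}}^{s}$.
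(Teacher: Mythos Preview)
Your proof is correct and follows essentially the same strategy as the paper: construct $\Delta_C$ and $\varepsilon_C$ from the coequalizer universal property, then verify all remaining braided-bialgebra axioms and the coequalizer property in $\mathrm{BrBialg}_{\mathcal{M}}$ by cancelling the epimorphisms $p$, $p\otimes p$, $p\otimes p\otimes p$. The only minor difference is that you invoke the universal property directly in $\mathrm{BrAlg}_{\mathcal{M}}$ (treating $\Delta_B$ as a braided algebra morphism into $\mathbb{B}\otimes\mathbb{B}$), whereas the paper uses the hypothesis that $H_{\mathrm{Alg}}$ preserves the coequalizer and works in $\mathrm{Alg}_{\mathcal{M}}$ instead, obtaining $\Delta_C$ first merely as an algebra map; both routes are valid and lead to the same verification.
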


\begin{proof}
Let $\left( A,m_{A},u_{A},\Delta _{A},\varepsilon _{A},c_{A}\right) $ be the
domain of $e_{0}$ and let $\left( B,m_{B},u_{B},\Delta _{B},\varepsilon
_{B},c_{B}\right) $ be its codomain. Now, the pair $\left( \mho _{\mathrm{Br}%
}e_{0},\mho _{\mathrm{Br}}e_{1}\right) $ has a coequalizer in $\mathrm{BrAlg}%
_{\mathcal{M}},$ say
\begin{equation*}
\left( \left( C,m_{C},u_{C},c_{C}\right) ,p:\left(
B,m_{B},u_{B},c_{B}\right) \rightarrow \left( C,m_{C},u_{C},c_{C}\right)
\right) ,
\end{equation*}%
which is preserved by the functor $H_{\mathrm{Alg}}:\mathrm{BrAlg}_{\mathcal{%
M}}\rightarrow \mathrm{Alg}_{\mathcal{M}}$ and such that $p$ is a regular
epimorphism in $\mathcal{M}$. Denote by $e_{0},e_{1}$ and $p$ the same
morphisms regarded as morphisms in $\mathrm{Alg}_{\mathcal{M}}$. By \cite[%
Lemma 2.3]{AM-BraidedOb}, $(A\otimes A,m_{A\otimes A},u_{A\otimes A}),\in
\mathrm{Alg}_{\mathcal{M}}$, where $m_{A\otimes A}:=\left( m_{A}\otimes
m_{A}\right) \left( A\otimes c_{A}\otimes B\right) $ and $u_{A\otimes
A}:=\left( u_{A}\otimes u_{A}\right) \Delta _{\mathbf{1}}$. Similarly $%
(C\otimes C,m_{C\otimes C},u_{C\otimes C})\in \mathrm{Alg}_{\mathcal{M}}$.
Since $\left( B,m_{B},u_{B},\Delta _{B},\varepsilon _{B},c_{B}\right) $ is a
braided bialgebra, we have that $\Delta _{B}:\left( B,m_{B},u_{B}\right)
\rightarrow (A\otimes A,m_{A\otimes A},u_{A\otimes A})$ is an algebra map.
Moreover, by Proposition \cite[3) of Proposition 2.2]{AM-BraidedOb}, we have
that $p\otimes p$ is a morphism in $\mathrm{Alg}_{\mathcal{M}}.$ Thus $%
\left( p\otimes p\right) \Delta _{B}$ is an algebra map. Since $H_{\mathrm{%
Alg}}:\mathrm{BrAlg}_{\mathcal{M}}\rightarrow \mathrm{Alg}_{\mathcal{M}}$
preserves the coequalizer of $\left( \mho _{\mathrm{Br}}e_{0},\mho _{\mathrm{%
Br}}e_{1}\right) $ the first row in the following diagram is a coequalizer
in $\mathrm{Alg}_{\mathcal{M}}$.
\begin{equation*}
\xymatrix{ A\ar[d]_{\Delta_A} \ar@<.5ex>[rr]^-{e_0}
\ar@<-.5ex>[rr]_-{e_1}&&B\ar[d]_{\Delta_B} \ar[r]^-{p}&C \\ \duplica{A}
\ar@<.5ex>[rr]^-{\duplica{e_0}}
\ar@<-.5ex>[rr]_-{\duplica{e_1}}&&\duplica{B}\ar[r]^-{\duplica{p}}&%
\duplica{C}}
\end{equation*}%
Since the same diagram serially commutes, by the universal property of the
coequalizer in $\mathrm{Alg}_{\mathcal{M}},$ we get that there is a unique
algebra morphism $\Delta _{C}:\left( C,m_{C},u_{C}\right) \rightarrow
(C\otimes C,m_{C\otimes C},u_{C\otimes C})$ such that $\Delta _{C}p=\left(
p\otimes p\right) \Delta _{B}.$ Denote by $\Delta _{C}$ the same morphism
regarded as a morphism in $\mathcal{M}$. Since $p$ is an epimorphism in $%
\mathcal{M}$, one easily checks that $\Delta _{C}$ is coassociative using
coassociativity of $\Delta _{B}$. Since the diagram
\begin{equation*}
\xymatrixrowsep{15pt} \xymatrix{ A\ar[drr]_{\varepsilon_A}
\ar@<.5ex>[rr]^(.7){e_0} \ar@<-.5ex>[rr]_(.7){e_1}&&B\ar[d]^{\varepsilon_B}
\ar[r]^-{p}&C\\ &&\mathbf{1}}
\end{equation*}%
serially commutes, we get that there is a unique algebra morphism $%
\varepsilon _{C}:\left( C,m_{C},u_{C}\right) \rightarrow (\mathbf{1},m_{%
\mathbf{1}},u_{\mathbf{1}})$ such that $\varepsilon _{C}p=\varepsilon _{B}.$
Denote by $\varepsilon _{C}$ the same morphism regarded as a morphism in $%
\mathcal{M}$. Since $p$ is an epimorphism in $\mathcal{M}$, one easily
checks that $\Delta _{C}$ is counitary using counitarity of $\Delta _{B}$.
Hence $\left( C,\Delta _{C},\varepsilon _{C}\right) $ is a coalgebra in $%
\mathcal{M}$ and $p:\left( B,\Delta _{B},\varepsilon _{B}\right) \rightarrow
\left( C,\Delta _{C},\varepsilon _{C}\right) $ is a coalgebra map.

Since $p$ is a regular epimorphism in $\mathcal{M}$ we have $p\otimes p$ is
an epimorphism too by the assumption on the tensor products. Using this
fact, that $\left( B,\Delta _{B},\varepsilon _{B},c_{B}\right) $ is a
braided coalgebra and that $p$ is a coalgebra morphism compatible with the
braiding, one easily checks that $\left( C,\Delta _{C},\varepsilon
_{C},c_{C}\right) $ is a braided coalgebra too and hence $p$ a morphism of
these braided coalgebras. Summing up $p:\left( B,m_{B},u_{B},\Delta
_{B},\varepsilon _{B},c_{B}\right) \rightarrow \left( C,m_{C},u_{C},\Delta
_{C},\varepsilon _{C},c_{C}\right) $ is a morphism of braided bialgebras in $%
\mathcal{M}$. Using the fact that $p$ is an epimorphism in $\mathcal{M},$
one easily checks it is the searched coequalizer. The symmetric case can be
treated analogously.
\end{proof}

\begin{corollary}
\label{coro: BrBiAlgCoeq}Let $\mathcal{M}$ be a monoidal category. Let $%
\left( e_{0},e_{1}\right) $ be a pair of morphisms in $\mathrm{BrBialg}_{%
\mathcal{M}}$ such $\left( \Gamma e_{0},\Gamma e_{1}\right) $ is a reflexive
pair of morphisms in $\mathcal{M}$ where $\Gamma :=\Omega H_{\mathrm{Alg}%
}\mho _{\mathrm{Br}}:\mathrm{BrBialg}_{\mathcal{M}}\rightarrow \mathcal{M}$
denotes the forgetful functor. Assume that $\mathcal{M}$ has coequalizers
and that the tensor functors preserve them. Then $\left( e_{0},e_{1}\right) $
has a coequalizer in $\mathrm{BrBialg}_{\mathcal{M}}$ which is preserved by
the functors $\mho _{\mathrm{Br}}:\mathrm{BrBialg}_{\mathcal{M}}\rightarrow
\mathrm{BrAlg}_{\mathcal{M}}$, $H_{\mathrm{Alg}}\mho _{\mathrm{Br}}:\mathrm{%
BrBialg}_{\mathcal{M}}\rightarrow \mathrm{Alg}_{\mathcal{M}}$ and $\Gamma ,$
and which is a regular epimorphism when regarded in $\mathcal{M}$.

The same statement holds when we replace $\mathrm{BrBialg}_{\mathcal{M}},%
\mathrm{BrAlg}_{\mathcal{M}}$ and $\mho _{\mathrm{Br}}$ by $\mathrm{BrAlg}_{%
\mathcal{M}}^{s},\mathrm{BrAlg}_{\mathcal{M}}^{s}$ and $\mho _{\mathrm{Br}%
}^{s}$ respectively and we replace $H_{\mathrm{Alg}}$ by the corresponding
forgetful functor.
\end{corollary}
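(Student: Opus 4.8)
The plan is to obtain the desired coequalizer by chaining Lemma \ref{lem: BrAlgCoeq} and Proposition \ref{pro: BrBiAlgCoeqLift}, so that essentially no new computation is required.

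First I would record that $\Gamma =\Omega H_{\mathrm{Alg}}\mho _{\mathrm{Br}}$, so the hypothesis that $\left( \Gamma e_{0},\Gamma e_{1}\right) $ is a reflexive pair in $\mathcal{M}$ says precisely that $\left( \Omega H_{\mathrm{Alg}}\mho _{\mathrm{Br}}e_{0},\Omega H_{\mathrm{Alg}}\mho _{\mathrm{Br}}e_{1}\right) $ is reflexive. Hence the pair $\left( \mho _{\mathrm{Br}}e_{0},\mho _{\mathrm{Br}}e_{1}\right) $ in $\mathrm{BrAlg}_{\mathcal{M}}$ satisfies the hypotheses of Lemma \ref{lem: BrAlgCoeq}. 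Applying that lemma produces a coequalizer $\left( \mathbb{C},p_{0}\right) $ of $\left( \mho _{\mathrm{Br}}e_{0},\mho _{\mathrm{Br}}e_{1}\right) $ in $\mathrm{BrAlg}_{\mathcal{M}}$ which is preserved both by $H_{\mathrm{Alg}}$ and by $\Omega H_{\mathrm{Alg}}$, and such that $\Omega H_{\mathrm{Alg}}p_{0}$ is a regular epimorphism in $\mathcal{M}$.

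These are exactly the hypotheses required by Proposition \ref{pro: BrBiAlgCoeqLift}: the coequalizer of $\left( \mho _{\mathrm{Br}}e_{0},\mho _{\mathrm{Br}}e_{1}\right) $ exists in $\mathrm{BrAlg}_{\mathcal{M}}$, is preserved by $H_{\mathrm{Alg}}$, and is a regular epimorphism when regarded in $\mathcal{M}$. I would therefore invoke that proposition to obtain a coequalizer $\left( \mathbb{D},p\right) $ of $\left( e_{0},e_{1}\right) $ in $\mathrm{BrBialg}_{\mathcal{M}}$ that is preserved by $\mho _{\mathrm{Br}}$. In particular $\left( \mho _{\mathrm{Br}}\mathbb{D},\mho _{\mathrm{Br}}p\right) $ coincides, up to the canonical identification of coequalizers, with $\left( \mathbb{C},p_{0}\right) $.

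It then remains only to collect the preservation statements, which is the sole point requiring a little care. Preservation by $\mho _{\mathrm{Br}}$ is the output of Proposition \ref{pro: BrBiAlgCoeqLift}. Since $\mho _{\mathrm{Br}}$ carries $\left( \mathbb{D},p\right) $ to the coequalizer $\left( \mathbb{C},p_{0}\right) $ of Lemma \ref{lem: BrAlgCoeq}, and the latter is preserved by $H_{\mathrm{Alg}}$ and by $\Omega H_{\mathrm{Alg}}$, the composite functors $H_{\mathrm{Alg}}\mho _{\mathrm{Br}}$ and $\Gamma =\Omega H_{\mathrm{Alg}}\mho _{\mathrm{Br}}$ preserve $\left( \mathbb{D},p\right) $ as well. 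Finally $\Gamma p=\Omega H_{\mathrm{Alg}}\left( \mho _{\mathrm{Br}}p\right) =\Omega H_{\mathrm{Alg}}p_{0}$ is a regular epimorphism in $\mathcal{M}$ by Lemma \ref{lem: BrAlgCoeq}. The symmetric case is proved verbatim, replacing $\mathrm{BrBialg}_{\mathcal{M}}$, $\mathrm{BrAlg}_{\mathcal{M}}$, $\mho _{\mathrm{Br}}$ and $H_{\mathrm{Alg}}$ by their $s$-analogues and using the symmetric versions of Lemma \ref{lem: BrAlgCoeq} and Proposition \ref{pro: BrBiAlgCoeqLift}. Since the whole argument is a formal composition of two established results, there is no genuine obstacle; the only thing to watch is the correct matching of the two coequalizers, so that all four preservation properties transfer simultaneously.
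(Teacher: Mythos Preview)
Your proposal is correct and follows exactly the same approach as the paper: apply Lemma~\ref{lem: BrAlgCoeq} to the pair $(\mho_{\mathrm{Br}}e_0,\mho_{\mathrm{Br}}e_1)$, then feed the resulting coequalizer into Proposition~\ref{pro: BrBiAlgCoeqLift}. Your write-up is in fact more explicit than the paper's in spelling out how the preservation by $H_{\mathrm{Alg}}\mho_{\mathrm{Br}}$ and $\Gamma$ and the regular-epimorphism property transfer from the $\mathrm{BrAlg}$-level coequalizer to the $\mathrm{BrBialg}$-level one.
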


\begin{proof}
The pair $\left( \mho _{\mathrm{Br}}e_{0},\mho _{\mathrm{Br}}e_{1}\right) $
fulfills the requirements of Lemma \ref{lem: BrAlgCoeq} so that $\left( \mho
_{\mathrm{Br}}e_{0},\mho _{\mathrm{Br}}e_{1}\right) $ has a coequalizer in $%
\mathrm{BrAlg}_{\mathcal{M}}$ which is preserved by the functors $H_{\mathrm{%
Alg}}:\mathrm{BrAlg}_{\mathcal{M}}\rightarrow \mathrm{Alg}_{\mathcal{M}}$
and $\Omega H_{\mathrm{Alg}}$ (and which is a regular epimorphism when
regarded in $\mathcal{M}$). Hence we can apply Proposition \ref{pro:
BrBiAlgCoeqLift} to conclude.
\end{proof}

\begin{lemma}
\label{lem:BBialgWpres}Let $\mathcal{M}$ and $\mathcal{N}$ be monoidal
categories. Assume that both $\mathcal{M}$ and $\mathcal{N}$ have
coequalizers and that the tensor functors preserve them. Assume that there
exists a monoidal functor $\left( F,\phi _{0},\phi _{2}\right) :\mathcal{M}%
\rightarrow \mathcal{N}$ which preserves coequalizers. Then

1) $\mathrm{Alg}F:\mathrm{Alg}_{\mathcal{M}}\rightarrow \mathrm{Alg}_{%
\mathcal{N}}$ preserves coequalizers.

2) $\mathrm{BrBialg}F:\mathrm{BrBialg}_{\mathcal{M}}\rightarrow \mathrm{%
BrBialg}_{\mathcal{N}}$ preserves coequalizers of reflexive pairs of
morphisms. The same statement holds when we replace $\mathrm{BrBialg}\ $by $%
\mathrm{BrBialg}^{s}$ everywhere.
\end{lemma}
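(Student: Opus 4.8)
The plan is to prove the two assertions separately, reducing each to the explicit constructions of coequalizers recalled earlier and to the fact that $F$ preserves coequalizers.

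For (1), I would start from the description of coequalizers in $\mathrm{Alg}_{\mathcal{M}}$ given in \ref{cl:CoeqAlg}: the coequalizer of a pair of algebra morphisms $\alpha,\beta\colon E\to A$ is, at the level of $\mathcal{M}$, the coequalizer $(B,\pi)$ of $(\Lambda_\alpha,\Lambda_\beta)$, where $\Lambda_\alpha=m_A^2\circ(A\otimes\alpha\otimes A)$, and the algebra structure on $B$ is the unique one making $\pi$ an algebra map. The key computation is to produce, using only the structure isomorphisms $\phi_2$ and their naturality (exactly as in the verification of \eqref{form:multphi} in the proof of Proposition \ref{pro:comdat1}), a natural isomorphism $\kappa\colon FA\otimes FE\otimes FA\to F(A\otimes E\otimes A)$, \emph{independent} of $\alpha$ and $\beta$, such that $\Lambda_{(\mathrm{Alg}F)\alpha}=F(\Lambda_\alpha)\circ\kappa$ and likewise for $\beta$; here one uses that the multiplication of $(\mathrm{Alg}F)A$ is $F(m_A)\circ\phi_2(A,A)$ and that $\mathrm{Alg}F$ acts as $F$ on underlying morphisms, so that $\alpha$ can be slid through $\kappa$ by naturality. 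Since $F$ preserves coequalizers, $F\pi$ is the coequalizer of $(F\Lambda_\alpha,F\Lambda_\beta)$, hence, precomposing with the isomorphism $\kappa$, also of $(\Lambda_{(\mathrm{Alg}F)\alpha},\Lambda_{(\mathrm{Alg}F)\beta})$. By \ref{cl:CoeqAlg} applied in $\mathcal{N}$ this is precisely the underlying object of the coequalizer of $((\mathrm{Alg}F)\alpha,(\mathrm{Alg}F)\beta)$ in $\mathrm{Alg}_{\mathcal{N}}$; as the algebra structure on a coequalizer is uniquely determined and $\mathrm{Alg}F(\pi)=F\pi$ is an algebra map, $\mathrm{Alg}F$ preserves the coequalizer.

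For (2), I would exploit the intrinsic construction of coequalizers of reflexive pairs of braided bialgebras in Corollary \ref{coro: BrBiAlgCoeq} together with the commuting squares of forgetful functors from \ref{cl:BrBialg}. Write $\Gamma:=\Omega H_{\mathrm{Alg}}\mho_{\mathrm{Br}}$ and let $\Gamma'$ be its analogue over $\mathcal{N}$; the identities $\Omega'(\mathrm{Alg}F)=F\Omega$, $H_{\mathrm{Alg}}'(\mathrm{BrAlg}F)=(\mathrm{Alg}F)H_{\mathrm{Alg}}$ and $\mho_{\mathrm{Br}}'(\mathrm{BrBialg}F)=(\mathrm{BrAlg}F)\mho_{\mathrm{Br}}$ give $\Gamma'\circ\mathrm{BrBialg}F=F\circ\Gamma$. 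Given a reflexive pair $(e_0,e_1)$ in $\mathrm{BrBialg}_{\mathcal{M}}$, the pair $(\Gamma e_0,\Gamma e_1)$ is reflexive in $\mathcal{M}$, so Corollary \ref{coro: BrBiAlgCoeq} yields a coequalizer $(\mathbb{C},p)$ preserved by $\Gamma$ with $\Gamma p$ a regular epimorphism. Applying $\mathrm{BrBialg}F$ gives a reflexive pair whose $\Gamma'$-image is $(F\Gamma e_0,F\Gamma e_1)$, and Corollary \ref{coro: BrBiAlgCoeq} over $\mathcal{N}$ furnishes its coequalizer $(\mathbb{D},q)$ preserved by $\Gamma'$. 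Since $\Gamma p$ is a coequalizer in $\mathcal{M}$ and $F$ preserves coequalizers, $\Gamma'\,\mathrm{BrBialg}F(p)=F\Gamma p$ is a coequalizer of $(F\Gamma e_0,F\Gamma e_1)$, as is $\Gamma'q$. Because $\mathrm{BrBialg}F(p)$ coequalizes $(\mathrm{BrBialg}F\,e_0,\mathrm{BrBialg}F\,e_1)$, there is a unique $r\colon\mathbb{D}\to\mathrm{BrBialg}F(\mathbb{C})$ with $rq=\mathrm{BrBialg}F(p)$; applying $\Gamma'$ exhibits $\Gamma'r$ as the comparison isomorphism between two coequalizers of the same pair, and since $\Gamma'$ is conservative (a composite of the conservative functors of \ref{cl:BrBialg}), $r$ is an isomorphism, so $\mathrm{BrBialg}F(p)$ is a coequalizer. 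The symmetric statement is obtained verbatim, replacing $\mho_{\mathrm{Br}}$ and the forgetful functors by $\mho_{\mathrm{Br}}^s$ and their symmetric counterparts and invoking the symmetric half of Corollary \ref{coro: BrBiAlgCoeq}.

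I expect the only genuinely computational point to be the construction of $\kappa$ and the identity $\Lambda_{(\mathrm{Alg}F)\alpha}=F(\Lambda_\alpha)\circ\kappa$ in (1), which is a coherence bookkeeping exercise governed by the Pentagon Axiom and the naturality of $\phi_2$, entirely parallel to \eqref{form:multphi}. Part (2) is then formal once the preservation and conservativity inputs are in place; the one subtlety to keep in mind is preserving the reflexivity hypothesis on both sides so that Corollary \ref{coro: BrBiAlgCoeq} is actually applicable in $\mathcal{M}$ and in $\mathcal{N}$, which is guaranteed because a common section of $(e_0,e_1)$ is carried by the functors $\Gamma$, $\mathrm{BrBialg}F$ and $\Gamma'$ to common sections of the respective images.
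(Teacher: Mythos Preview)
Your proof of part (1) is essentially identical to the paper's: both identify the isomorphism $\kappa=\phi_2(A\otimes E,A)\circ(\phi_2(A,E)\otimes FA)$ and verify $\Lambda_{F\alpha}=F(\Lambda_\alpha)\circ\kappa$, then invoke \ref{cl:CoeqAlg} in $\mathcal{N}$.

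For part (2) the overall architecture matches the paper's---construct coequalizers on both sides via Corollary \ref{coro: BrBiAlgCoeq} and compare---but you take a slightly cleaner route at the comparison step. The paper descends to $\mathrm{Alg}_{\mathcal{N}}$ using part (1), obtains an algebra isomorphism $\xi\colon E'\to FE$ by uniqueness of coequalizers there, and then verifies by hand (using that $\pi$ and $\pi\otimes\pi$ are epimorphisms) that $\xi$ is actually a morphism of braided bialgebras. You instead compare directly at the level of $\mathcal{N}$ via $\Gamma'$: the comparison map $r$ is a braided-bialgebra morphism from the start (it comes from the universal property in $\mathrm{BrBialg}_{\mathcal{N}}$), and you only need conservativity of $\Gamma'$ to conclude it is invertible. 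This avoids the manual upgrade of $\xi$ and does not even require invoking part (1) inside part (2). Both arguments are correct; yours is marginally more economical.
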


\begin{proof}
1) In view of \ref{cl:CoeqAlg}, the coequalizer of the pair $\left( \alpha
,\beta \right) $ of algebra morphisms $E\rightarrow A$ is, as an object in $%
\mathcal{M}$, the coequalizer $\left( B,\pi :A\rightarrow B\right) $ of $%
\left( \Lambda _{\alpha },\Lambda _{\beta }\right) $ in $\mathcal{M}$ and
the algebra structure is the unique one making $\pi $ an algebra morphism.
Since $F$ preserves coequalizers, we get the coequalizer in $\mathcal{N}$
\begin{equation*}
\xymatrix{F( A\otimes E\otimes A) \ar@<.5ex>[rr]^-{F(\Lambda_\alpha) }
\ar@<-.5ex>[rr]_-{F(\Lambda_\beta) }&&FA\ar[r]^-{F\pi}&FB }
\end{equation*}%
Note that, since $\mathrm{Alg}F$ is a functor, we have that $FA$, $FB$ are
algebras and $F\pi $ is an algebra morphism.

Using the definition of $\Lambda _{\alpha },$ the naturality of $\phi _{2}$,
the equality $m_{FA}=Fm_{A}\circ \phi _{2}\left( A,A\right) $ and the
definition of $\Lambda _{F\alpha }$ one proves that $F\left( \Lambda
_{\alpha }\right) \circ \phi _{2}\left( A\otimes E,A\right) \circ \left(
\phi _{2}\left( A,E\right) \otimes FA\right) =\Lambda _{F\alpha }$ and
similarly with $\beta $ in place of $\alpha $. Since $\phi _{2}\left(
A\otimes E,A\right) \circ \left( \phi _{2}\left( A,E\right) \otimes
FA\right) $ is an isomorphism, we get the coequalizer%
\begin{equation*}
\xymatrix{FA\otimes FE\otimes FA \ar@<.5ex>[rr]^-{\Lambda_{F\alpha} }
\ar@<-.5ex>[rr]_-{\Lambda_{F\beta} }&&FA\ar[r]^-{F\pi}&FB }.
\end{equation*}%
By construction we get that $\left( FB,F\pi \right) $ is the coequalizer of $%
\left( \Lambda _{F\alpha },\Lambda _{F\beta }\right) $ in $\mathcal{N}$.
Since, as observed, $FA$ and $FB$ are algebras and $F\pi $ is an algebra
morphism, we conclude that $\left( FB,F\pi \right) $ is the coequalizer of $%
\left( F\alpha ,F\beta \right) $ in $\mathcal{N}$ (apply \ref{cl:CoeqAlg}
again).

2) Consider a coequalizer of a reflexive pair in $\mathrm{BrBialg}_{\mathcal{%
M}}$
\begin{equation}
\xymatrix{ B \ar@<.5ex>[rr]^-{e_{0} } \ar@<-.5ex>[rr]_-{e_{1}
}&&D\ar[r]^-{d}&E }  \label{form:Coeqpres}
\end{equation}%
If we apply the forgetful functor $\Gamma :=\Omega H_{\mathrm{Alg}}\mho _{%
\mathrm{Br}}:\mathrm{BrBialg}_{\mathcal{M}}\rightarrow \mathcal{M}$ to the
pair, we get a reflexive pair in $\mathcal{M}$. By Corollary \ref{coro:
BrBiAlgCoeq}, $\left( e_{0},e_{1}\right) $ has a coequalizer in $\mathrm{%
BrBialg}_{\mathcal{M}}$ (different from (\ref{form:Coeqpres}), in principle)
which is preserved by the functor $H_{\mathrm{Alg}}\mho _{\mathrm{Br}}:%
\mathrm{BrBialg}_{\mathcal{M}}\rightarrow \mathrm{Alg}_{\mathcal{M}}$. By
uniqueness of coequalizers, we get that the coequalizer (\ref{form:Coeqpres}%
) is preserved by $H_{\mathrm{Alg}}\mho _{\mathrm{Br}}$ and hence, by 1), it
is preserved by $\left( \mathrm{Alg}F\right) H_{\mathrm{Alg}}\mho _{\mathrm{%
Br}}:\mathrm{BrBialg}_{\mathcal{M}}\rightarrow \mathrm{Alg}_{\mathcal{N}}$.
Hence $\left( FE,Fd\right) $ is the coequalizer of $\left(
Fe_{0},Fe_{1}\right) $ in $\mathrm{Alg}_{\mathcal{N}}$.

Note that $\left( Fe_{0},Fe_{1}\right) $ is a reflexive pair of morphisms in
$\mathrm{BrBialg}_{\mathcal{N}}.$ By Corollary \ref{coro: BrBiAlgCoeq}, $%
\left( Fe_{0},Fe_{1}\right) $ has a coequalizer $\left( E^{\prime },\pi
:FD\rightarrow E^{\prime }\right) $ in $\mathrm{BrBialg}_{\mathcal{N}}$
which is preserved by the functor $H_{\mathrm{Alg}}^{\prime }\mho _{\mathrm{%
Br}}^{\prime }:\mathrm{BrBialg}_{\mathcal{N}}\rightarrow \mathrm{Alg}_{%
\mathcal{N}}$. By uniqueness of coequalizers in $\mathrm{Alg}_{\mathcal{N}},$
there is an algebra isomorphism $\xi :E^{\prime }\rightarrow FE$ such that $%
\xi \circ \pi =Fd.$ Since $\mathrm{BrBialg}F$ is a functor we have that $FE$
is a braided bialgebra and $Fd$ is a morphism in $\mathrm{BrBialg}_{\mathcal{%
N}}$. Now, by construction $\pi $ is a suitable coequalizer in $\mathcal{N}$
(which further inherits a proper braided bialgebra structure) so that, by
assumption it is preserved by the tensor functors. Hence both $\pi $ and $%
\pi \otimes \pi $ are epimorphisms in $\mathcal{N}$. Using these properties
one proves that $\xi :E^{\prime }\rightarrow FE$ is a morphism in $\mathrm{%
BrBialg}_{\mathcal{N}}$.

Since $\xi $ is invertible, we obtain that $\left( FE,Fd\right) $ is the
coequalizer of $\left( Fe_{0},Fe_{1}\right) $ in $\mathrm{BrBialg}_{\mathcal{%
N}}$ i.e. $\mathrm{BrBialg}F:\mathrm{BrBialg}_{\mathcal{M}}\rightarrow
\mathrm{BrBialg}_{\mathcal{N}}$ preserves coequalizers of reflexive pairs of
morphisms. The symmetric case follows analogously once observed that $F$
preserves symmetric objects, see \ref{cl:BrBialg}.
\end{proof}

\begin{proposition}
\label{pro:BeckBr}Let $\mathcal{M}$ be a monoidal category. Assume that $%
\mathcal{M}$ has a coequalizers and that the tensor functors preserve them.
Consider a right adjoint functor $R:\mathrm{BrBialg}_{\mathcal{M}%
}\rightarrow \mathcal{B}$ into an arbitrary category $\mathcal{B}$. Then the
comparison functor $R_{1}$ has a left adjoint $L_{1}\ $which is uniquely
determined by the following properties.

1) For every object $\left( B,\mu \right) \in {_{RL}\mathcal{B}}$, there is
a morphism $\pi \left( B,\mu \right) :LB\rightarrow L_{1}\left( B,\mu
\right) $ such that
\begin{equation}
\xymatrix{ \Gamma LRLB \ar@<.5ex>[rr]^-{\Gamma L\mu }
\ar@<-.5ex>[rr]_-{\Gamma \epsilon LB }&&\Gamma LB\ar[rr]^-{\Gamma \pi \left(
B,\mu \right)}&&\Gamma L_{1}\left( B,\mu \right) }  \label{coeq:GammaPi}
\end{equation}%
is a coequalizer in $\mathcal{M}$, where $\Gamma :=\Omega H_{\mathrm{Alg}%
}\mho _{\mathrm{Br}}:\mathrm{BrBialg}_{\mathcal{M}}\rightarrow \mathcal{M}$
denotes the forgetful functor.

2) The bialgebra structure of $\Gamma L_{1}\left( B,\mu \right) $ is
uniquely determined by the fact that $\Gamma \pi \left( B,\mu \right) $ is a
morphism of braided bialgebras in $\mathcal{M}$.

3) $R$ is comparable.

4) The statements above still hold true when $\mathrm{BrBialg}_{\mathcal{M}%
}^{s}$ replaces $\mathrm{BrBialg}_{\mathcal{M}}$.
\end{proposition}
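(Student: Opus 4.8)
The plan is to carry out the construction from the proof of Beck's Theorem \cite{Beck}, the only new ingredient being Corollary \ref{coro: BrBiAlgCoeq}, which guarantees that the coequalizers needed inside $\mathrm{BrBialg}_{\mathcal{M}}$ exist and are computed in $\mathcal{M}$ through $\Gamma$. First I would write $(L,R)$ for the adjunction with unit $\eta$ and counit $\epsilon$, form the monad $RL$ on $\mathcal{B}$, and let $R_{1}=K:\mathrm{BrBialg}_{\mathcal{M}}\rightarrow {_{RL}\mathcal{B}}$ be the comparison functor. For each $\left( B,\mu \right) \in {_{RL}\mathcal{B}}$ I consider the pair $\left( L\mu ,\epsilon LB\right) :LRLB\rightarrow LB$ in $\mathrm{BrBialg}_{\mathcal{M}}$. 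This pair is reflexive, with common section $L\eta B$: indeed $L\mu \circ L\eta B=L\left( \mu \circ \eta B\right) =\mathrm{Id}_{LB}$ by unitality of the $RL$-algebra $\left( B,\mu \right) $ (so that $\mu \circ \eta B=\mathrm{Id}_{B}$), while $\epsilon LB\circ L\eta B=\mathrm{Id}_{LB}$ by a triangle identity. Since $\Gamma $ sends $L\eta B$ to a common section of the image pair, $\left( \Gamma L\mu ,\Gamma \epsilon LB\right) $ is a reflexive pair in $\mathcal{M}$.

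Hence Corollary \ref{coro: BrBiAlgCoeq} applies and yields a coequalizer $\left( L_{1}\left( B,\mu \right) ,\pi \left( B,\mu \right) :LB\rightarrow L_{1}\left( B,\mu \right) \right) $ in $\mathrm{BrBialg}_{\mathcal{M}}$, preserved by $\Gamma $, by $\mho _{\mathrm{Br}}$ and by $H_{\mathrm{Alg}}\mho _{\mathrm{Br}}$, and whose image under $\Gamma $ is a regular epimorphism in $\mathcal{M}$. This gives immediately the coequalizer (\ref{coeq:GammaPi}), proving 1); and 2) is precisely the description of the braided bialgebra structure on the coequalizer object recorded in Corollary \ref{coro: BrBiAlgCoeq} (equivalently in Proposition \ref{pro: BrBiAlgCoeqLift}), namely that it is the unique structure making $\Gamma \pi \left( B,\mu \right) $ a morphism of braided bialgebras.

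Next I would check that $\left( B,\mu \right) \mapsto L_{1}\left( B,\mu \right) $ is functorial and left adjoint to $R_{1}$, by the standard Beck argument: by the universal property of the coequalizer, a morphism $L_{1}\left( B,\mu \right) \rightarrow A$ corresponds to a morphism $g:LB\rightarrow A$ coequalizing $\left( L\mu ,\epsilon LB\right) $, which under $(L,R)$ transposes to a morphism $f:B\rightarrow RA$ subject to exactly the condition that $f$ be a morphism of $RL$-algebras $\left( B,\mu \right) \rightarrow R_{1}A=\left( RA,R\epsilon A\right) $, the resulting bijection being natural in $A$ and $\left( B,\mu \right) $. Taking $\pi =\epsilon L_{1}\circ LU\eta _{1}$ normalizes the unit as in Proposition \ref{pro:zetaIter}, and uniqueness of $L_{1}$ follows from uniqueness of coequalizers. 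For 3), every reflexive pair in $\mathrm{BrBialg}_{\mathcal{M}}$ is $\Gamma $-reflexive (a section is preserved by $\Gamma $), so Corollary \ref{coro: BrBiAlgCoeq} shows $\mathrm{BrBialg}_{\mathcal{M}}$ has coequalizers of all reflexive pairs; since the comparison functors $R_{n}$ all have the same domain $\mathrm{BrBialg}_{\mathcal{M}}$, the construction iterates to produce left adjoints $L_{n}$ for every $R_{n}$, so $R$ is comparable in the sense of Definition \ref{def:comparable} (equivalently, one invokes Lemma \ref{lem: coequalizers} directly). Finally 4) is obtained by repeating the argument verbatim with the symmetric versions of the cited statements, furnished alongside their unsymmetric counterparts in Corollary \ref{coro: BrBiAlgCoeq}.

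The bulk of the work is already carried by Corollary \ref{coro: BrBiAlgCoeq}, so the only genuinely delicate point is confirming that the coequalizer-based functor $L_{1}$ really is left adjoint to the comparison functor $R_{1}$, i.e.\ matching the coequalizing/transposition bijection with the $RL$-algebra morphism condition and checking the normalization $\pi =\epsilon L_{1}\circ LU\eta _{1}$ so that property 1) is literally the displayed coequalizer (\ref{coeq:GammaPi}).
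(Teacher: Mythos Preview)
Your proposal is correct and follows essentially the same route as the paper: show that $(L\mu,\epsilon LB)$ is a reflexive pair in $\mathrm{BrBialg}_{\mathcal{M}}$ with common section $L\eta B$, apply Corollary \ref{coro: BrBiAlgCoeq} to obtain the coequalizer with the stated preservation properties, and then invoke Lemma \ref{lem: coequalizers} (equivalently the Beck construction) to conclude that $R$ is comparable. You in fact spell out more than the paper does, both in verifying the adjunction bijection for $L_{1}\dashv R_{1}$ and in noting that every reflexive pair in $\mathrm{BrBialg}_{\mathcal{M}}$ is $\Gamma$-reflexive so that Lemma \ref{lem: coequalizers} applies at each stage.
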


\begin{proof}
By Beck's Theorem, it suffices to check that for every $\left( B,\mu \right)
\in {_{RL}\mathcal{B}}$ the fork $\left( L\mu ,\epsilon LB\right) $ has a
coequalizer in $\mathrm{BrBialg}_{\mathcal{M}},$ where $L$ denotes the left
adjoint of $R$ and $\epsilon :LR\rightarrow \mathrm{Id}_{\mathrm{BrBialg}_{%
\mathcal{M}}}$ the counit of the adjunction. Now $L\mu \circ L\eta B=\mathrm{%
Id}_{LB}=\epsilon LB\circ L\eta B$ where $\eta :\mathrm{Id}_{\mathcal{B}%
}\rightarrow RL$ is the unit of the adjunction. Thus $\left( L\mu ,\epsilon
LB\right) $ is a reflexive pair of morphisms in $\mathrm{BrBialg}_{\mathcal{M%
}}$. Therefore $\left( \Gamma L\mu ,\Gamma \epsilon LB\right) $ is a
reflexive pair of morphisms in $\mathcal{M}$. By Corollary \ref{coro:
BrBiAlgCoeq}, the pair $\left( L\mu ,\epsilon LB\right) $ has a coequalizer
in $\mathrm{BrBialg}_{\mathcal{M}}$ which is preserved both by the functors $%
\mho _{\mathrm{Br}}:\mathrm{BrBialg}_{\mathcal{M}}\rightarrow \mathrm{BrAlg}%
_{\mathcal{M}}$, $H_{\mathrm{Alg}}\mho _{\mathrm{Br}}:\mathrm{BrBialg}_{%
\mathcal{M}}\rightarrow \mathrm{Alg}_{\mathcal{M}}$ and $\Gamma :=\Omega H_{%
\mathrm{Alg}}\mho _{\mathrm{Br}}:\mathrm{BrBialg}_{\mathcal{M}}\rightarrow
\mathcal{M}$. By construction the coequalizer of $\left( L\mu ,\epsilon
LB\right) $ is $\left( L_{1}\left( B,\mu \right) ,\pi \left( B,\mu \right)
:LB\rightarrow L_{1}\left( B,\mu \right) \right) .$ Furthermore (\ref%
{coeq:GammaPi}) is a coequalizer in $\mathcal{M}\ $and the bialgebra
structure of $\Gamma L_{1}\left( B,\mu \right) $ is uniquely determined by
the fact that $\Gamma \pi \left( B,\mu \right) $ is a morphism of braided
bialgebras in $\mathcal{M}$. By Lemma \ref{lem: coequalizers}, $R$ is
comparable.

The symmetric case follows analogously.
\end{proof}

\end{document}